\documentclass[12pt]{amsart}
\usepackage{amssymb,amsmath,tabularx,mathrsfs,mathbbol,yfonts,upgreek}
\usepackage{amsthm,verbatim,comment}
\usepackage[bookmarks=true]{hyperref}
\usepackage{pstricks,pst-node,pst-plot}
\usepackage{geometry}
\usepackage{stmaryrd}
\usepackage{paralist}
\usepackage[all]{xy}
\usepackage{array}
\usepackage{url}
\usepackage{longtable}
\geometry{a4paper, top=4cm, left=3cm, right=3cm, bottom=4cm}
\numberwithin{equation}{section}

\DeclareSymbolFontAlphabet{\mathbb}{AMSb}
\DeclareSymbolFontAlphabet{\mathbbl}{bbold}

\newtheorem{thm}{Theorem}[section]

\newtheorem{lem}[thm]{Lemma}

\newtheorem{cor}[thm]{Corollary}

\theoremstyle{definition}

\newtheorem{nota}[thm]{Notation}

\newtheorem{rem}[thm]{Remark}

\newtheorem*{rem*}{Remarks}

\newtheoremstyle{case}{}{}{}{}{}{:}{ }{}
\theoremstyle{case}

\newcommand{\T}{\mathrm{T}}

\newcommand{\F}{\mathbb{F}}

\title[Terwilliger $\F$-algebras of certain Cayley tables]{Terwilliger $\F$-algebras of certain Cayley tables}
\begin{document}
\author{Yu Jiang}
\address[Y. Jiang]{School of Mathematical Sciences, Anhui University (Qingyuan Campus), No. 111, Jiulong Road, Hefei, 230601, China}
\email[Y. Jiang]{jiangyu@ahu.edu.cn}
\begin{abstract}
Let $\F$ be an arbitrary field. In this paper, we continue studying the Terwilliger algebras of association schemes over $\F$ that were called the Terwilliger $\F$-algebras of association schemes in \cite{Jiang2}. We determine the algebraic structures of the Terwilliger $\F$-algebras of the association schemes induced from the Cayley tables of elementary abelian $2$-groups, which provides infinite many examples and counterexamples for the study of the Terwilliger $\F$-algebras of association schemes.
\end{abstract}
\maketitle
\noindent
\textbf{Keywords.} {Association scheme; Terwilliger $\F$-algebra; Elementary abelian $2$-group}\\
\textbf{Mathematics Subject Classification 2020.} 05E30 (primary), 05E16 (secondary)
\vspace{-1.5em}
\section{Introduction}
Association schemes on nonempty finite sets, briefly called schemes, are important research objects in algebraic combinatorics. They have very strong connections with many different research objects such as groups, codes, graphs, statistical models, and so on. Conversely, many different tools have already been developed to study them.

The subconstituent algebras of commutative schemes, introduced by Terwilliger in \cite{T}, are also known as the Terwilliger algebras of commutative schemes. They are new algebraic tools for the study of schemes and are known to be finite-dimensional semisimple associative $\mathbb{C}$-algebras. While the original Terwilliger algebras were only defined for commutative schemes, Terwilliger algebras can be defined for an arbitrary scheme and an arbitrary field $\F$ (see \cite{Han}). In \cite{Jiang2}, the Terwilliger algebras of schemes over $\F$ are called the Terwilliger $\F$-algebras of schemes. In particular, the Terwilliger algebras of commutative schemes are the Terwilliger $\mathbb{C}$-algebras of these schemes.

Many advancements have been obtained in studying the Terwilliger $\mathbb{C}$-algebras of commutative schemes (for example, see \cite{CD}, \cite{LMP}, \cite{LM}, \cite{LMW}, \cite{T}, \cite{T1}, \cite{T2}). However, the investigation of the Terwilliger $\F$-algebras of schemes is almost completely open (see \cite{Her}). In \cite{CD}, the algebraic structures of the Terwilliger $\mathbb{C}$-algebras of the schemes induced from Latin squares were determined. As the main result of this paper, the algebraic structures of the Terwilliger $\F$-algebras of the schemes induced from the Cayley tables of elementary abelian $2$-groups are similarly given (see Theorem \ref{T;Coretheorem}). This result can be applied to provide infinite many examples and counterexamples for investigating the Terwilliger $\F$-algebras of schemes (see Remarks \ref{R;Coreremark1} and \ref{R;Coreremark2}).

This paper is presented as follows. In Section 2, we collect the basic notation and preliminaries. In Sections 3 and 4, we present some computational results that shall be repeatedly used in the following sections. We divide the whole proof of our main result into four cases and finish the proofs of these cases in Sections 5, 6, 7, and 8, respectively. Some remarks concerning our main result are also in Section 8.
\section{Basic notation and preliminaries}
For a general background on association schemes, one may refer to \cite{B}, \cite{EI}, or \cite{Z2}.

\subsection{General conventions}
Throughout this paper, fix a field $\F$ of characteristic $p$. Let $\mathbb{N}$ denote the set of all natural numbers. Set $\mathbb{N}_0=\mathbb{N}\cup\{0\}$. If $a, b\in \mathbb{N}_0$, put $[a,b]=\{c: a\leq c\leq b\}\subseteq\mathbb{N}_0$. Fix a nonempty finite set $X$. For a nonempty subset $Y$  of an $\F$-linear space, let $\langle Y\rangle_{\F}$ be the $\F$-linear subspace generated by $Y$. The addition, multiplication, and scalar multiplication of matrices displayed in this paper are the usual matrix operations. A scheme always means an association scheme on $X$.

\subsection{Schemes}
Let $S=\{R_0, R_1,\ldots, R_d\}$ denote a partition of the cartesian product $X\times X$. Then $S$ is called a scheme of class $d$ if the following conditions hold:
\begin{enumerate}[(i)]
\item $R_0=\{(x,x): x\in X\}$;
\item For any $i\in [0,d]$, there is $i'\in [0,d]$ such that $R_{i'}=\{(x,y): (y,x)\in R_i\}\in S$;
\item For any $g,h,i\in [0,d]$ and $(x,y), (\widetilde{x}, \widetilde{y})\in R_i$, the following equality holds:
$$ |\{z: (x,z)\in R_g,\ (z,y)\in R_h\}|=|\{z: (\widetilde{x}, z)\in R_g,\ (z, \widetilde{y})\in R_h\}|.$$
\end{enumerate}
Throughout this paper, $S=\{R_0, R_1, \ldots, R_d\}$ is a fixed scheme of class $d$. For any $g, h, i\in [0,d]$ and $(x,y)\in R_i$, (iii) implies that $|\{z: (x,z)\in R_g,\ (z,y)\in R_h\}|$ only depends on $g, h, i$ and is independent of the choices of the elements of $R_i$. Hence there is a constant $p_{gh}^i\in \mathbb{N}_0$ such that $p_{gh}^i=|\{z: (x,z)\in R_g,\ (z,y)\in R_h\}|$. Call $S$ a symmetric scheme if $R_i=R_{i'}$ for any $i\in [0,d]$. If $S$ is a symmetric scheme, notice that $p_{gh}^i=p_{hg}^i$ for any $g, h,i\in [0,d]$. Let $i\in [0,d]$ and $x,y\in X$. Set $k_i=p_{ii'}^0$ and $xR_i=\{z: (x,z)\in R_i\}$. Call $k_i$ the valency of $R_i$ and note that $|xR_i|=|yR_i|=k_i$.

Let $G$ be a finite group and $|G|\geq 3$. Let $G_{(3)}$ be the set of all $3$-tuples whose entries are from $G$. Put $X_G=\{\mathbf{x}: \mathbf{x}=(x_1, x_2, x_3)\in G_{(3)},\ x_1x_2=x_3\}$. So $X_G$ encodes the Cayley tables of $G$. For any $\mathbf{x}=(x_1, x_2, x_3), \mathbf{y}=(y_1, y_2, y_3)\in X_G$, and $\Delta\in [1,3]$, set $\mathbf{x}=_{\Delta}\mathbf{y}$ if $x_{\Delta}=y_{\Delta}$ and $\mathbf{x}\neq \mathbf{y}$. Also define $G_\Delta=\{(\mathbf{x},\mathbf{y}): \mathbf{x},\mathbf{y}\in X_G,\ \mathbf{x}=_{\Delta}\mathbf{y}\}$. By \cite{B}, call $S$ the scheme induced from the Cayley table of $G$ if $X=X_G$, $d=4$, $R_0\!=\!\{(\mathbf{x},\mathbf{x}): \mathbf{x}\in X_G\}$, $R_1=G_1$, $R_2=G_2$, $R_3=G_3$, $R_4=(X_G\times X_G)\setminus(\bigcup_{\ell=0}^3R_\ell)$. \ If $S$ is the scheme induced from the Cayley table of $G$, then $S$ is a symmetric scheme.

\subsection{Algebras}\label{S;Subsection3}
Let $\mathbb{Z}$ be the integer ring. Let $\F_p$ be the prime subfield of $\F$. Given $a\in \mathbb{Z}$, let $\overline{a}$ be the image of $a$ under the unital ring homomorphism from $\mathbb{Z}$ to $\F_p$.

Let $A$ be a finite-dimensional associative unital $\F$-algebra. Let $B$ be a two-sided ideal of $A$. Write $A/B$ for the quotient $\F$-algebra of $A$ with respect to the two-sided ideal $B$. Call $B$ a nilpotent two-sided ideal of $A$ if there exists $m\in \mathbb{N}$ such that the product of any $m$ elements of $B$ equals zero. Let $\mathrm{Rad}A$ be the Jacobson radical of $A$. Recall that $\mathrm{Rad}A$ is the nilpotent two-sided ideal of $A$ that contains all nilpotent two-sided ideals of $A$. Call $A$ a semisimple algebra if $A$ is a direct sum of its minimal two-sided ideals. Recall that $A$ is semisimple if and only if $\mathrm{Rad}A$ is the zero space. If $e\in A$ and $e^2=e$, write $eBe=\{ebe: b\in B\}$. Notice that $eBe$ is an $\F$-subalgebra of $A$ with the identity element $e$. The following lemma shall be occasionally used.
\begin{lem}\label{L;Radical}\cite[Proposition 3.2.4]{DK}
If $e\in A$ and $e^2=e$, then $\mathrm{Rad}eAe=e(\mathrm{Rad}A)e$. In particular, $\mathrm{Rad}eAe\subseteq\mathrm{Rad}A$.
\end{lem}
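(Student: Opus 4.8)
The plan is to prove the two inclusions separately, using throughout the characterization recalled just above the statement: in a finite-dimensional algebra the Jacobson radical is the (unique) nilpotent two-sided ideal that contains every nilpotent two-sided ideal. Since $eAe$ is itself a finite-dimensional associative unital $\F$-algebra with identity $e$, both $\mathrm{Rad}A$ and $\mathrm{Rad}eAe$ are nilpotent and have this maximality property, and this is the only structural fact I intend to invoke.

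For the inclusion $e(\mathrm{Rad}A)e\subseteq\mathrm{Rad}eAe$, I would first check that $e(\mathrm{Rad}A)e$ is a two-sided ideal of $eAe$: for $exe\in eAe$ and $ere\in e(\mathrm{Rad}A)e$, the product equals $(exe)(ere)=e(xer)e$, which lies in $e(\mathrm{Rad}A)e$ because $\mathrm{Rad}A$ is a two-sided ideal of $A$, and symmetrically on the other side. Next I would show this ideal is nilpotent: if $(\mathrm{Rad}A)^{m}=0$, then a product of $m$ elements $er_1e,\ldots,er_me$ equals $e\,r_1(er_2)\cdots(er_m)\,e$, and since each factor $er_j$ lies in $\mathrm{Rad}A$ the inner product falls in $(\mathrm{Rad}A)^{m}=0$. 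Being a nilpotent two-sided ideal of $eAe$, it is contained in $\mathrm{Rad}eAe$.

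For the reverse inclusion, set $J=\mathrm{Rad}eAe$; the goal is $J\subseteq e(\mathrm{Rad}A)e$. The idea is to enlarge $J$ to the two-sided ideal $AJA$ of $A$, show it is nilpotent, and conclude $AJA\subseteq\mathrm{Rad}A$, so that $J=eJe\subseteq e(AJA)e\subseteq e(\mathrm{Rad}A)e$. The crucial observation is that $J=eJe=eJ=Je$, which gives $JAJ=J(eAe)J$; since $J$ is a two-sided ideal of $eAe$ this forces $JAJ\subseteq J^{2}$, and by an easy induction $J(AJ)^{k}\subseteq J^{k+1}$. Using $A\cdot A=A$ one then computes $(AJA)^{m}=AJ(AJ)^{m-1}A\subseteq AJ^{m}A$, which vanishes once $m$ exceeds the nilpotency index of $J$. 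This is the step I expect to be the main obstacle, since it is the one place where the interaction between the idempotent $e$ and the ideal structure of $J$ must be tracked carefully; everything else is formal.

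Finally, combining the two inclusions yields $\mathrm{Rad}eAe=e(\mathrm{Rad}A)e$. The concluding assertion is then immediate: because $\mathrm{Rad}A$ is a two-sided ideal of $A$ and $e\in A$, we have $e(\mathrm{Rad}A)e\subseteq\mathrm{Rad}A$, whence $\mathrm{Rad}eAe\subseteq\mathrm{Rad}A$.
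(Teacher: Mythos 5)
Your proof is correct. Note that the paper does not prove this lemma at all---it simply cites \cite[Proposition 3.2.4]{DK}---so there is no in-paper argument to compare against; what you have written is a complete, self-contained proof of the cited fact, using only the characterization of $\mathrm{Rad}A$ as the largest nilpotent two-sided ideal, which is exactly the property the paper recalls in Subsection 2.3. Both inclusions are handled correctly: the containment $e(\mathrm{Rad}A)e\subseteq\mathrm{Rad}eAe$ via nilpotency of the corner of the radical, and the reverse containment via the standard device of passing from $J=\mathrm{Rad}eAe$ to the ideal $AJA$ of $A$ and using $JAJ=J(eAe)J\subseteq J^{2}$ (which is where $J=eJ=Je$ and the ideal property of $J$ in $eAe$ are genuinely needed) to force $(AJA)^{m}\subseteq AJ^{m}A=0$. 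The final deduction $\mathrm{Rad}eAe=e(\mathrm{Rad}A)e\subseteq\mathrm{Rad}A$ is immediate, as you say.
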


\subsection{Terwilliger $\F$-algebras of schemes}\label{S;Subsection4}
Let $M_X(\F)$ be the full matrix algebra of $\F$-square matrices whose rows and columns are labeled by the elements in $X$. Let $I, J, O$ be the identity matrix, the all-one matrix, and the all-zero matrix in $M_X(\F)$, respectively. If $M\in M_X(\F)$, the transpose of $M$ is denoted by $M^T$. If $x, y\in X$, let $E_{xy}$ be the $\{0,1\}$-matrix in $M_X(\F)$ whose unique nonzero entry is the $(x,y)$-entry. If $\ell\in \mathbb{N}$ and $m, n\in [1,\ell]$, let $M_\ell(\F)$ denote the full matrix algebra of $(\ell\times \ell)$-matrices whose entries are from $\F$. Use $E_{mn}(\ell)$ to denote the $\{0,1\}$-matrix in $M_{\ell}(\F)$ whose unique nonzero entry is the $(m,n)$-entry.

Assume that $h, i\in [0,d]$ and $x\in X$. The adjacency $\F$-matrix with respect to $R_i$, denoted by $A_i$, is the matrix $\sum_{(y,z)\in R_i}E_{yz}$. The dual $\F$-idempotent with respect to $R_i$ and $x$, denoted by $E_i^*(x)$, is the matrix $\sum_{y\in xR_i}E_{yy}$. If $a, b\in \mathbb{N}_0$, write $\delta_{ab}$ for the Kronecker delta of $a$ and $b$ whose values are from $\F$. By these definitions, note that
\begin{align}\label{Eq;1}
A_i^T=A_{i'}\ \text{and}\ {E_i^*(x)}^T=E_i^*(x)\ (A_i^T=A_i\ \text{if $S$ is a symmetric scheme}),
\end{align}
\begin{align}\label{Eq;2}
A_hA_i=\sum_{j=0}^d \overline{p_{hi}^j}A_j\ \text{and}\ E_h^*(x)E_i^*(x)=\delta_{hi}E_i^*(x),
\end{align}
\begin{align}\label{Eq;3}
J=\sum_{j=0}^dA_j\ \text{and}\ A_0=I=\sum_{j=0}^dE_j^*(x),
\end{align}
\begin{align}\label{Eq;4}
E_h^*(x)JE_i^*(x)\neq O\ \text{and}\ JE_i^*(x)J=\overline{k_i}J.
\end{align}
The Terwilliger $\F$-algebra of $S$ with respect to $x$ is the unital $\F$-subalgebra of $M_X(\F)$ generated by $\{E_a^*(x)A_bE_c^*(x):a,b,c\in [0,d]\}$. Use $T(x)$ to denote the Terwilliger $\F$-algebra of $S$ with respect to $x$. By the definition of $T(x)$ and \eqref{Eq;1}, $M\in T(x)$ if and only if $M^T\in T(x)$. By Lemma \cite[Lemma 2.3]{Jiang2}, $M\in \mathrm{Rad}T(x)$ if and only if $M^T\in \mathrm{Rad}T(x)$. The algebraic structures of $T(x)$ and $\mathrm{Rad}T(x)$ may depend on the choices of the fixed field $\F$ and $x$ (see \cite[5.1]{Han}). For some advancements on the algebraic structure of $T(x)$, one may refer to \cite{Han} and \cite{Jiang2}. Let $G$ be a finite group and $|G|\geq 3$. From now on, assume that $S$ is the scheme induced from the Cayley table of $G$. Fix $\mathbf{x}\in X_G$. Set $n=|G|$, $T=T(\mathbf{x})$, and $E_j^*=E_j^*(\mathbf{x})$ for any $j\in [0,4]$. Put $T_0=\langle \{E_a^*A_bE_c^*: a, b,c\in [0,4]\}\rangle_\F\subseteq T$. We conclude this section with a lemma.
\begin{lem}\label{L;Tripleproducts}
Assume that $g, h, i, r, s\in [0,4]$ and $\mathbf{y}, \mathbf{z}\in X_G$.
\begin{enumerate}[(i)]
\item [\em (i)] \cite[Lemma 3.2]{Han} $E_g^*A_hE_i^*J=\overline{p_{ih}^g}E_g^*J$ and $JE_g^*A_hE_i^*=\overline{p_{gh}^i}JE_i^*$.
\item [\em (ii)] \cite[Lemma 3.2]{Jiang2} If $E_g^*A_hE_i^*\neq O$ and $\min\{k_g, k_i\}=1$, then $E_g^*A_hE_i^*=E_g^*JE_i^*$.
\item [\em (iii)]\cite[Lemma 2.4 (i)]{Jiang2} $E_g^*A_hE_i^*\neq O$ if and only if $p_{ih}^g\neq 0$.
\item [\em (iv)]\cite[Lemma 2.4 (ii)]{Jiang2} $\{E_a^*A_bE_c^*: a, b,c\in [0,4],\ p_{cb}^a\neq0\}$ is an $\F$-basis of $T_0$.
\item [\em (v)] \cite[Lemma 4.3 (iii)]{Jiang1} $\langle\{E_a^*JE_b^*: a, b\in [0,4],\ p\mid k_ak_b\}\rangle_\F$ is a nilpotent two-sided ideal of $T$. In particular, $\langle\{E_a^*JE_b^*: a, b\in [0,4],\ p\mid k_ak_b\}\rangle_\F\subseteq \mathrm{Rad}T$.
\item [\em (vi)] The $(\mathbf{y},\mathbf{z})$-entry of $E_g^*A_hE_i^*A_rE_s^*$ is nonzero only if $\mathbf{y}\in \mathbf{x}R_g$ and $\mathbf{z}\in \mathbf{x}R_s$. If $\mathbf{y}\in \mathbf{x}R_g$ and $\mathbf{z}\in \mathbf{x}R_s$, then the $(\mathbf{y},\mathbf{z})$-entry of $E_g^*A_hE_i^*A_rE_s^*$ is equal to $\overline{|\mathbf{y}R_h\cap\mathbf{x}R_i\cap \mathbf{z}R_r|}$.
\end{enumerate}
\end{lem}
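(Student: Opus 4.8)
Parts (i)--(v) are quoted verbatim from the cited references, so only part (vi) requires proof. The plan is to prove both of its assertions by a single direct computation of the $(\mathbf{y},\mathbf{z})$-entry of the product, using only the defining expressions $A_h=\sum_{(\mathbf{u},\mathbf{v})\in R_h}E_{\mathbf{u}\mathbf{v}}$, $A_r=\sum_{(\mathbf{u},\mathbf{v})\in R_r}E_{\mathbf{u}\mathbf{v}}$, and $E_j^*=\sum_{\mathbf{w}\in\mathbf{x}R_j}E_{\mathbf{w}\mathbf{w}}$ for $j\in\{g,i,s\}$. The key structural observation is that $E_g^*$, $E_i^*$, and $E_s^*$ are diagonal $\{0,1\}$-matrices whose supports are precisely the diagonal positions indexed by $\mathbf{x}R_g$, $\mathbf{x}R_i$, and $\mathbf{x}R_s$. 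Consequently, left-multiplication by $E_g^*$ annihilates every row not indexed by an element of $\mathbf{x}R_g$, and right-multiplication by $E_s^*$ annihilates every column not indexed by an element of $\mathbf{x}R_s$. This already yields the first assertion: if $\mathbf{y}\notin\mathbf{x}R_g$ or $\mathbf{z}\notin\mathbf{x}R_s$, then the $(\mathbf{y},\mathbf{z})$-entry vanishes.

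First I would reduce to computing the $(\mathbf{y},\mathbf{z})$-entry of $A_hE_i^*A_r$ under the assumptions $\mathbf{y}\in\mathbf{x}R_g$ and $\mathbf{z}\in\mathbf{x}R_s$. Because $E_i^*$ is diagonal and supported on $\mathbf{x}R_i$, the matrix product collapses the two internal summation indices to a single index $\mathbf{v}$ constrained to lie in $\mathbf{x}R_i$, giving
\[(A_hE_i^*A_r)_{\mathbf{y}\mathbf{z}}=\sum_{\mathbf{v}\in\mathbf{x}R_i}(A_h)_{\mathbf{y}\mathbf{v}}(A_r)_{\mathbf{v}\mathbf{z}}.\]
Here $(A_h)_{\mathbf{y}\mathbf{v}}=\overline{1}$ exactly when $(\mathbf{y},\mathbf{v})\in R_h$, that is, when $\mathbf{v}\in\mathbf{y}R_h$, and $(A_r)_{\mathbf{v}\mathbf{z}}=\overline{1}$ exactly when $(\mathbf{v},\mathbf{z})\in R_r$; otherwise these entries are $0$.

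The one point to be careful about is matching $(A_r)_{\mathbf{v}\mathbf{z}}$ to membership in $\mathbf{z}R_r$ as it appears in the statement: by definition $\mathbf{v}\in\mathbf{z}R_r$ means $(\mathbf{z},\mathbf{v})\in R_r$, whereas the entry records $(\mathbf{v},\mathbf{z})\in R_r$. These two conditions coincide precisely because $S$ is symmetric, so that $R_r=R_{r'}$. With this identification in hand, each surviving summand contributes $\overline{1}$, and the number of such $\mathbf{v}$ is exactly $|\mathbf{y}R_h\cap\mathbf{x}R_i\cap\mathbf{z}R_r|$; reducing this integer modulo $p$ gives the asserted value $\overline{|\mathbf{y}R_h\cap\mathbf{x}R_i\cap\mathbf{z}R_r|}$. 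There is no genuine obstacle beyond this bookkeeping and the single appeal to symmetry, since the whole argument is a careful unwinding of the matrix-unit multiplication rule $E_{\mathbf{a}\mathbf{b}}E_{\mathbf{c}\mathbf{d}}=\delta_{\mathbf{b}\mathbf{c}}E_{\mathbf{a}\mathbf{d}}$ together with the diagonal form of the three dual $\F$-idempotents.
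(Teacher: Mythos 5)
Your proposal is correct, and it fills in exactly the routine verification the paper leaves implicit: the paper states part (vi) without proof, parts (i)--(v) being citations, and the intended argument is precisely this direct unwinding of the matrix product using the diagonal supports of the dual $\F$-idempotents. Your explicit appeal to the symmetry of $S$ (so that $(\mathbf{v},\mathbf{z})\in R_r$ is equivalent to $\mathbf{v}\in\mathbf{z}R_r$) is the one point worth making and you make it correctly, since the paper has already fixed $S$ to be the scheme induced from a Cayley table, which is symmetric.
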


\section{Some computational results: Part I}
In this section, we prove many computational results for the elements in $T$. For the case that $G$ is an elementary abelian $2$-group, we will apply these computational results to give an $\F$-linearly independent subset of $T$. We first list a known result.
\begin{thm}\cite[Theorem 4.1]{B, CD}\label{T;Insectionnumbers}
Assume that $g, h\in [0,4]$ and $i\in[1,3]$.
\begin{enumerate}[(i)]
\item [\em (i)]We have  \[p_{gh}^0=\begin{cases} 1,\ &\ \text{if $g=h=0$},\\
n-1,\ &\ \text{if $g,h\in [1,3]$ and $g=h$},\\
n^2-3n+2,\ &\ \text{if $g=h=4$},\\
0,\ &\ \text{otherwise}.\end{cases}\]
\item [\em (ii)]We have \[p_{gh}^i=\begin{cases}
1,\ &\ \text{if $\{g, h\}=\{0,i\}$ or $\{g, h, i\}=[1, 3]$},\\
n-2,\ &\ \text{if $g=h=i$ or $\max\{g, h\}=4$ and $\min\{g, h\}\in [1, 3]\setminus\{i\}$},\\
n^2-5n+6, \ &\ \text{if $g=h=4$},\\
0,\ &\ \text{otherwise}.\end{cases}\]
\item [\em (iii)] We have \[ p_{gh}^4=\begin{cases}
1,\ &\ \text{if $\{g, h\}=\{0, 4\}$ or $g\neq h$ and $g, h\in [1,3]$},\\
n-3, \ &\ \text{if $\max\{g, h\}=4$ and $\min\{g, h\}\in [1,3]$} ,\\
n^2-6n+10, \ &\ \text{if $g=h=4$},\\
0, \ &\ \text{otherwise}. \end{cases}\]
\end{enumerate}
\end{thm}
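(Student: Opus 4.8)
The plan is to reduce everything to elementary counting in $G$, exploiting that the three relations $R_1,R_2,R_3$ record agreement in a single coordinate. First I would record the basic structural fact: if $\mathbf{z}=(z_1,z_2,z_3),\mathbf{w}=(w_1,w_2,w_3)\in X_G$ agree in two coordinates, then $z_1z_2=z_3$ and $w_1w_2=w_3$ force agreement in the third, hence $\mathbf{z}=\mathbf{w}$. Consequently, for distinct elements $R_j$ ($j\in[1,3]$) means ``agree in exactly coordinate $j$'' and $R_4$ means ``agree in no coordinate''. A direct count then gives the valencies $k_0=1$, $k_1=k_2=k_3=n-1$, and $k_4=n^2-3n+2$: e.g.\ the neighbours of $\mathbf{x}$ in $R_1$ are obtained by keeping $x_1$ and varying the second coordinate over $G\setminus\{x_2\}$.

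For part (i), since $S$ is symmetric and $(\mathbf{x},\mathbf{y})\in R_0$ forces $\mathbf{y}=\mathbf{x}$, one has $p_{gh}^0=|\mathbf{x}R_g\cap\mathbf{x}R_h|$, which is $0$ when $g\neq h$ and $k_g$ when $g=h$; substituting the valencies yields the stated table.

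For parts (ii) and (iii), I would use that $p_{gh}^i$ is independent of the chosen pair in $R_i$ (scheme axiom (iii)) to fix the convenient representative $\mathbf{x}=(e,e,e)$, and parametrise an arbitrary $\mathbf{z}\in X_G$ by $(z_1,z_2)\in G\times G$ with $z_3=z_1z_2$. Relative to $\mathbf{x}$, the type $g$ of $(\mathbf{x},\mathbf{z})$ is read off from which of $z_1=e$, $z_2=e$, $z_1z_2=e$ holds; e.g.\ $g=3$ means $z_2=z_1^{-1}$ with $z_1\neq e$, and $g=4$ means $z_1,z_2,z_1z_2$ are all $\neq e$. For part (ii) I would also note that the coordinate-swapping bijections $(x_1,x_2,x_3)\mapsto(x_1^{-1},x_3,x_2)$ and $(x_1,x_2,x_3)\mapsto(x_2^{-1},x_1^{-1},x_3^{-1})$ are automorphisms of $S$ realising the full group $\sym{3}$ on $\{R_1,R_2,R_3\}$, so it suffices to treat $i=1$; I would then take $\mathbf{y}=(e,b,b)$ with $b\neq e$ (resp.\ $\mathbf{y}=(a,b,ab)$ with $a,b,ab\neq e$ for part (iii)), translate $(\mathbf{z},\mathbf{y})\in R_h$ into equations in $z_1,z_2$ exactly as for $\mathbf{x}$, and read off each $p_{gh}^i$ as the number of $(z_1,z_2)$ satisfying both systems. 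Every entry with $\min\{g,h\}\le3$ reduces to counting one of $z_1$ or $z_2$ over $G$ minus an explicit finite exclusion set.

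The only genuinely delicate entry is $g=h=4$, and I would avoid the hardest direct count by invoking the row-sum identity $\sum_{h=0}^{4}p_{4h}^i=k_4$ (a partition of $\mathbf{x}R_4$ according to the $R_h$-class of $(\mathbf{z},\mathbf{y})$), so that $p_{44}^i=k_4-\sum_{h=0}^{3}p_{4h}^i$ follows once the mixed entries $p_{4h}^i$ ($h\le3$) are known. The main obstacle is thus bookkeeping rather than ideas: one must verify that the values excluded for $z_1$ (or $z_2$) in each count are genuinely distinct, and it is precisely here that the hypotheses $b\neq e$ and $ab\neq e$ (equivalently $z_3\neq e$ for the chosen $\mathbf{y}$) are used to separate the $n-2$ entries from the $n-3$ entries and to produce the quadratics $n^2-5n+6$ and $n^2-6n+10$.
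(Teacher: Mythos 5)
The paper offers no proof of this theorem at all: it is imported verbatim from the references (it is cited as Theorem 4.1 of \cite{CD}, with \cite{B} as a secondary source), so there is no internal argument to compare yours against. Your blind proof is a correct, self-contained derivation and is worth having. The load-bearing points all check out: two elements of $X_G$ agreeing in two coordinates must coincide (so $R_1,\dots,R_4$ really are ``agree in exactly coordinate $j$'' versus ``agree nowhere'', and $k_0=1$, $k_1=k_2=k_3=n-1$, $k_4=n^2-3n+2$); part (i) is just $p_{gh}^0=\delta_{gh}k_g$ for a symmetric scheme; the two maps $(x_1,x_2,x_3)\mapsto(x_1^{-1},x_3,x_2)$ and $(x_1,x_2,x_3)\mapsto(x_2^{-1},x_1^{-1},x_3^{-1})$ are genuine bijections of $X_G$ (the defining relation $x_1x_2=x_3$ is preserved) inducing the transpositions $(2\,3)$ and $(1\,2)$ on $\{R_1,R_2,R_3\}$ while fixing $R_0$ and $R_4$, and the stated tables are invariant under the induced relabelling, so the reduction to $i=1$ is legitimate; and the row-sum identity $\sum_{h=0}^4 p_{4h}^i=k_4$ correctly produces $p_{44}^1=(n^2-3n+2)-2(n-2)=n^2-5n+6$ and $p_{44}^4=(n^2-3n+2)-1-3(n-3)=n^2-6n+10$ once the mixed entries are known. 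Spot-checking representative mixed entries ($p_{23}^1=1$, $p_{24}^1=n-2$, $p_{14}^1=0$, $p_{12}^4=1$, $p_{14}^4=n-3$) against your parametrisation $\mathbf{z}=(z_1,z_2,z_1z_2)$ with $\mathbf{x}=(e,e,e)$, $\mathbf{y}=(e,b,b)$ or $(a,b,ab)$ confirms the counts, including the distinctness of the excluded values $e,b,ab$ that separates $n-2$ from $n-3$. The only cosmetic caveat is that your argument is stated for a general finite group $G$ with $|G|\geq 3$, which is exactly the generality the theorem is used in here, so nothing is lost.
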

The following lemma is necessary for us to prove many results in this section.
\begin{lem}\label{L;Lemma1.2}
The finite group $G$ is an elementary abelian $2$-group if and only if, for any $\mathbf{y}\in X_G$, the product of any two components of $\mathbf{y}$ equals its remaining component.
\end{lem}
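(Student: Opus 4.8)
The plan is to prove both implications directly, observing that the forward direction is a one-line computation while the reverse direction rests on feeding the hypothesis a single well-chosen triple in $X_G$.

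For the forward implication, I would assume $G$ is an elementary abelian $2$-group, so that $G$ is abelian and $g^2 = e$ for every $g \in G$ (where $e$ denotes the identity of $G$); in particular $g^{-1} = g$. Taking any $\mathbf{y} = (y_1, y_2, y_3) \in X_G$, we have $y_1 y_2 = y_3$ by the definition of $X_G$, and I would simply compute the two remaining products using $y_1 = y_1^{-1}$ together with commutativity: $y_1 y_3 = y_1(y_1 y_2) = y_1^2 y_2 = y_2$ and $y_2 y_3 = y_2(y_1 y_2) = y_1 y_2^2 = y_1$. Since $G$ is abelian the order of multiplication is irrelevant, so the product of any two components of $\mathbf{y}$ indeed equals the remaining component.

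For the reverse implication, assume the stated condition holds for every $\mathbf{y} \in X_G$. The key idea is to apply it to triples containing the identity. Fix an arbitrary $a \in G$ and consider $\mathbf{y} = (a, a^{-1}, e)$, which lies in $X_G$ since $a a^{-1} = e$. Applying the hypothesis to the pair of components $a$ and $e$, whose remaining component is $a^{-1}$, gives $a^{-1} = a e = a$, hence $a^2 = e$. As $a$ was arbitrary, every element of $G$ squares to the identity.

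I would then invoke the standard fact that a group in which every element squares to the identity is automatically abelian, since for $a, b \in G$ one has $ab = (ab)^{-1} = b^{-1} a^{-1} = ba$; thus $G$ is a finite abelian group of exponent $2$, which is precisely an elementary abelian $2$-group. I expect the only genuine subtlety to be the careful reading of ``the product of any two components,'' ensuring that the chosen triple and the chosen pair of its components pin down exactly the relation $a = a^{-1}$; beyond that the argument is routine group theory requiring no deeper structural input.
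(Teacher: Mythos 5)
Your proof is correct and is exactly the ``direct computation'' that the paper leaves implicit: the forward direction follows from $g^2=e$ and commutativity, and the reverse direction from testing the triple $(a,a^{-1},e)$ to get $a=a^{-1}$ and then the standard fact that exponent $2$ forces commutativity. Nothing to add.
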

\begin{proof}
The desired lemma is from the definition of $X_G$ and direct computation.
\end{proof}
\begin{lem}\label{L;Lemma1.3}
Assume that $G$ is an elementary abelian $2$-group and $\{g, h, i\}=[1,3]$. If $\mathbf{y}\in \mathbf{x}R_g$, $\mathbf{z}\in \mathbf{x}R_4$, and $\mathbf{z}\in \mathbf{y}R_h\cup \mathbf{y}R_i$, the $(\mathbf{y},\mathbf{z})$-entry of $E_g^*A_hE_i^*A_gE_4^*$ is zero.
\end{lem}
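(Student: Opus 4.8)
The plan is to read off the relevant matrix entry from Lemma~\ref{L;Tripleproducts}(vi) and then show that the set whose cardinality controls it is empty. Since $\mathbf{y}\in\mathbf{x}R_g$ and $\mathbf{z}\in\mathbf{x}R_4$, applying part (vi) with the second adjacency index equal to $g$ and the outer dual idempotent index equal to $4$ shows that the $(\mathbf{y},\mathbf{z})$-entry of $E_g^*A_hE_i^*A_gE_4^*$ is exactly $\overline{|\mathbf{y}R_h\cap\mathbf{x}R_i\cap\mathbf{z}R_g|}$. Hence it suffices to prove $\mathbf{y}R_h\cap\mathbf{x}R_i\cap\mathbf{z}R_g=\varnothing$, for then the entry is $\overline{0}=0$.

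Next I would pin down any putative element $\mathbf{w}=(w_1,w_2,w_3)$ of this intersection. Membership forces the three equalities $w_h=y_h$, $w_i=x_i$, and $w_g=z_g$ (the inequality clauses in the definitions of the $R_\ell$ will turn out to be irrelevant). Because $\{g,h,i\}=[1,3]$, these equalities determine all three components of $\mathbf{w}$, so there is at most one candidate. The remaining requirement is that this candidate actually lie in $X_G$; since $G$ is an elementary abelian $2$-group, Lemma~\ref{L;Lemma1.2} makes $\mathbf{w}\in X_G$ equivalent to the single condition that the product $w_1w_2w_3=z_gy_hx_i$ equal the identity $e$ of $G$ (equivalently, that any two of its components multiply to the third).

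The crux is therefore to show $z_gy_hx_i\neq e$, and here I would use the hypothesis $\mathbf{z}\in\mathbf{y}R_h\cup\mathbf{y}R_i$ to split into two cases, in each case rewriting $z_gy_hx_i$ by repeatedly invoking Lemma~\ref{L;Lemma1.2} for the triples $\mathbf{x},\mathbf{y},\mathbf{z}$. If $\mathbf{z}\in\mathbf{y}R_h$, then $y_h=z_h$, so $z_gy_hx_i=z_gz_hx_i=z_ix_i$. If instead $\mathbf{z}\in\mathbf{y}R_i$, then $y_i=z_i$, and combining this with $y_g=x_g$ (from $\mathbf{y}\in\mathbf{x}R_g$) gives $y_h=y_gy_i=x_gz_i$, whence $z_gy_hx_i=(z_gz_i)(x_gx_i)=z_hx_h$. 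In both cases the product collapses to $z_jx_j$ for a single $j\in[1,3]$. Since $\mathbf{z}\in\mathbf{x}R_4$ forces $z_j\neq x_j$ and every element of $G$ equals its own inverse, the equation $z_jx_j=e$ would give $z_j=x_j$, a contradiction; thus $z_gy_hx_i\neq e$. Consequently no candidate $\mathbf{w}$ lies in $X_G$, the intersection is empty, and the entry vanishes.

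The only real obstacle is organizing the component bookkeeping so that each of the two cases reduces to a single factor $z_jx_j$; once that collapse is visible, the $R_4$ hypothesis on $\mathbf{z}$ finishes everything at once. I expect no trouble from the inequality conditions in the definitions of the $R_\ell$, since the candidate $\mathbf{w}$ never even survives the $X_G$-membership test.
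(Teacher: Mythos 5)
Your proposal is correct and follows essentially the same route as the paper: both reduce the entry to $\overline{|\mathbf{y}R_h\cap\mathbf{x}R_i\cap\mathbf{z}R_g|}$ via Lemma \ref{L;Tripleproducts}(vi) and then kill the intersection by a component computation with Lemma \ref{L;Lemma1.2}, splitting on $\mathbf{z}\in\mathbf{y}R_h$ versus $\mathbf{z}\in\mathbf{y}R_i$. The only cosmetic difference is that the paper dispatches the case $\mathbf{z}\in\mathbf{y}R_h$ purely by the intersection number $p_{hg}^{h}=0$ from Theorem \ref{T;Insectionnumbers}(ii), whereas you run the same group-theoretic test $z_gy_hx_i\neq e$ uniformly in both cases; both are valid.
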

\begin{proof}
Assume that $\mathbf{x}=(x_1, x_2, x_3)$, $\mathbf{y}=(y_1, y_2, y_3)$, and $\mathbf{z}=(z_1, z_2, z_3)$. By Lemma \ref{L;Tripleproducts} (vi), the $(\mathbf{y}, \mathbf{z})$-entry of $E_g^*A_hE_i^*A_gE_4^*$ is $\overline{|\mathbf{y}R_h\cap\mathbf{x}R_i\cap \mathbf{z}R_g|}$. Since $\mathbf{y}\in \mathbf{x}R_g$ and $\mathbf{z}\in \mathbf{x}R_4$, notice that $x_g=y_g$ and $x_j\neq z_j$ for any $j\in [1,3]$.

Assume that $\mathbf{z}\in \mathbf{y}R_h$. So $|\mathbf{y}R_h\cap\mathbf{x}R_i\cap \mathbf{z}R_g|\leq |\mathbf{y}R_h\cap\mathbf{z}R_g|=0$ by Theorem \ref{T;Insectionnumbers} (ii). Therefore $\mathbf{y}R_h\cap\mathbf{x}R_i\cap \mathbf{z}R_g=\varnothing$ and the $(\mathbf{y},\mathbf{z})$-entry of $E_g^*A_hE_i^*A_gE_4^*$ is zero. Assume that $\mathbf{z}\in \mathbf{y}R_i$ and $\mathbf{y}R_h\cap\mathbf{x}R_i\cap \mathbf{z}R_g\neq\varnothing$. Pick $\mathbf{u}\!=\!(u_1, u_2, u_3)\in \mathbf{y}R_h\cap\mathbf{x}R_i\cap \mathbf{z}R_g$. Notice that $y_i=z_i$, $u_h=y_h$, $u_i=x_i$, and $u_g=z_g$. As $G$ is an elementary abelian $2$-group and Lemma \ref{L;Lemma1.2} holds, observe that $u_hu_g=u_i$, $x_gx_h=x_i$, $y_gy_i=y_h$, and $z_iz_g=z_h$. So $x_gz_h\!=\!y_gz_h\!=\!y_gz_iz_g\!=\!y_gy_iz_g\!=\!y_hz_g=u_hu_g=u_i=x_i\!=\!x_gx_h$. This is a contradiction as $x_j\neq z_j$ for any $j\in [1,3]$. Therefore $\mathbf{y}R_h\cap\mathbf{x}R_i\cap \mathbf{z}R_g=\varnothing$ and the $(\mathbf{y},\mathbf{z})$-entry of $E_g^*A_hE_i^*A_gE_4^*$ is zero. The desired lemma is proved.
\end{proof}
\begin{lem}\label{L;Lemma1.4}
Assume that $G$ is an elementary abelian $2$-group and $\{g, h, i\}=[1,3]$. If $n>4$, then $E_g^*A_hE_i^*A_gE_4^*, E_4^*A_gE_i^*A_hE_g^*\notin T_0$.
\end{lem}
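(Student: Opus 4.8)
The plan is to exploit the block decomposition of $T_0$ coming from the dual idempotents, reduce membership to a single block supported on $\mathbf{x}R_g\times\mathbf{x}R_4$, and then read off the entries via Lemma \ref{L;Tripleproducts} (vi) to detect non-constancy. Write $P=E_g^*A_hE_i^*A_gE_4^*$. First I would localise: since $P=E_g^*PE_4^*$, applying $E_g^*(-)E_4^*$ to a putative expansion of $P$ in the basis of Lemma \ref{L;Tripleproducts} (iv) and using $E_a^*E_c^*=\delta_{ac}E_a^*$ from \eqref{Eq;2}, one sees that $P\in T_0$ would force $P\in\langle\{E_g^*A_bE_4^*: b\in[0,4],\ p_{4b}^g\neq0\}\rangle_\F$. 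By Theorem \ref{T;Insectionnumbers} and Lemma \ref{L;Tripleproducts} (iii), the nonzero matrices here are exactly those with $b\in\{h,i,4\}$, so $P$ would be an $\F$-linear combination $\lambda_hE_g^*A_hE_4^*+\lambda_iE_g^*A_iE_4^*+\lambda_4E_g^*A_4E_4^*$.

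Next I would use that these three $\{0,1\}$-matrices have pairwise disjoint supports partitioning the block $\mathbf{x}R_g\times\mathbf{x}R_4$: for $\mathbf{y}\in\mathbf{x}R_g$ and $\mathbf{z}\in\mathbf{x}R_4$ there is a unique $b\in[0,4]$ with $\mathbf{z}\in\mathbf{y}R_b$, and one checks $b\neq0$ (as $\mathbf{y}\neq\mathbf{z}$) and $b\neq g$ (as $z_g\neq x_g=y_g$), hence $b\in\{h,i,4\}$. Consequently any element of the above span is constant, equal to $\lambda_b$, on each region $\{(\mathbf{y},\mathbf{z}):\mathbf{z}\in\mathbf{y}R_b\}$. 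On the other hand, Lemma \ref{L;Tripleproducts} (vi) gives the $(\mathbf{y},\mathbf{z})$-entry of $P$ as $\overline{|\mathbf{y}R_h\cap\mathbf{x}R_i\cap\mathbf{z}R_g|}$, and Lemma \ref{L;Lemma1.3} shows this vanishes on the regions $b=h$ and $b=i$. Thus it remains to analyse $P$ on the region $b=4$, where matching against $\lambda_4E_g^*A_4E_4^*$ demands that this entry be the single constant $\lambda_4$.

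The crux is therefore to show the entry is non-constant on the region $b=4$ when $n>4$. I would compute the count directly: any $\mathbf{u}\in\mathbf{y}R_h\cap\mathbf{x}R_i\cap\mathbf{z}R_g$ satisfies $u_g=z_g$, $u_h=y_h$, $u_i=x_i$, and since $\{g,h,i\}=[1,3]$ these determine $\mathbf{u}$ completely; by Lemma \ref{L;Lemma1.2} such a $\mathbf{u}\in X_G$ exists if and only if $z_gy_hx_i$ equals the identity of $G$, in which case $\mathbf{u}$ automatically differs from $\mathbf{x},\mathbf{y},\mathbf{z}$. Hence the entry equals $\overline{1}$ when $z_gy_h=x_i$ and $\overline{0}$ otherwise. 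Finally I would exhibit, for $n>4$, one pair $(\mathbf{y},\mathbf{z})$ in the region $b=4$ with $z_gy_h=x_i$ and another with $z_gy_h\neq x_i$: fixing $y_h\neq x_h$ and prescribing $z_g$, the membership constraints $z_j\neq x_j,y_j$ reduce to forcing the one remaining free coordinate to avoid at most four prescribed values, which is possible precisely because $n=|G|\geq8>4$. As the entry then takes both values $\overline 0$ and $\overline 1$ in $\F$, the matrix $P$ is not a scalar multiple of the $\{0,1\}$-matrix $E_g^*A_4E_4^*$, so $P\notin T_0$.

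For the second matrix I would simply transpose. Since $S$ is symmetric, \eqref{Eq;1} gives $A_j^T=A_j$ and $E_j^{*T}=E_j^*$, so $P^T=E_4^*A_gE_i^*A_hE_g^*$; moreover transposition permutes the spanning set $\{E_a^*A_bE_c^*\}$ of $T_0$ by sending the index $(a,b,c)$ to $(c,b,a)$, whence $T_0^T=T_0$. Therefore $E_4^*A_gE_i^*A_hE_g^*=P^T\notin T_0$. The main obstacle is the final counting step: verifying that both values of the entry are attained is exactly where the hypothesis $n>4$ enters, and pinning down this point is the delicate part (indeed one expects it to fail for $n=4$, so the bound cannot be relaxed).
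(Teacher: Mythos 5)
Your proposal is correct, and its first two thirds coincide with the paper's proof: the same localisation $P=E_g^*PE_4^*$ via Lemma \ref{L;Tripleproducts} (iv) and \eqref{Eq;2}, the same identification of the three possible summands $E_g^*A_hE_4^*$, $E_g^*A_iE_4^*$, $E_g^*A_4E_4^*$ from Theorem \ref{T;Insectionnumbers} (ii), the same use of Lemma \ref{L;Lemma1.3} to reduce to $P=\lambda_4E_g^*A_4E_4^*$, and the same transpose argument via \eqref{Eq;1} for the second matrix. You diverge only in the endgame. The paper first shows $\lambda_4\neq 0$ by computing $E_4^*JP=E_4^*JE_4^*\neq O$ with Lemma \ref{L;Tripleproducts} (i) and \eqref{Eq;4}, and then derives the contradiction by counting nonzero entries in a fixed row: a nonzero $(\mathbf{u},\mathbf{v})$-entry of $P$ forces $\mathbf{v}\in\mathbf{x}R_4\cap\mathbf{w}R_g$ for the single point $\mathbf{w}$ of $\mathbf{u}R_h\cap\mathbf{x}R_i$, so a row of $P$ has at most $n-2$ nonzero entries, whereas a row of $\lambda_4E_g^*A_4E_4^*$ has exactly $n^2-5n+6=(n-2)(n-3)$, impossible for $n>4$. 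You instead pin down the entry on the $R_4$-region exactly, as the indicator of $z_gy_h=x_i$ (using Lemma \ref{L;Lemma1.2} to see that the prescribed coordinates determine at most one $\mathbf{u}$ and that it lies in $X_G$ precisely under that condition), and then exhibit two pairs realising the values $\overline 0$ and $\overline 1$, which kills constancy. Both contradictions genuinely need $n>4$; the paper's inequality $(n-2)(n-3)>n-2$ uses nothing beyond that, while your witness construction leans on the elementary abelian $2$-group hypothesis to upgrade $n>4$ to $n\geq 8$ so that the free coordinate can avoid the (at most four) forbidden values. Your version buys slightly more information (an explicit description of the support of $P$ on the $R_4$-region, close in spirit to what Lemmas \ref{L;Lemma1.7} and \ref{L;Lemma1.8} do for the two-$E_4^*$ products), at the cost of a more delicate existence check; the paper's row count is shorter and requires no construction.
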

\begin{proof}
Assume that $E_g^*A_hE_i^*A_gE_4^*\notin T_0$. By combining \eqref{Eq;2}, Lemma \ref{L;Tripleproducts} (iii), and Theorem \ref{T;Insectionnumbers} (ii), notice that
$E_g^*A_hE_i^*A_gE_4^*=c_1E_g^*A_hE_4^*+c_2E_g^*A_iE_4^*+c_3E_g^*A_4E_4^*$, where $c_1, c_2, c_3\in \F$. If $\mathbf{y}\in \mathbf{x}R_g$, $\mathbf{z}\in \mathbf{x}R_4$, and $\mathbf{z}\in \mathbf{y}R_h\cup \mathbf{y}R_i$, then the $(\mathbf{y},\mathbf{z})$-entry of $E_g^*A_hE_i^*A_gE_4^*$ is either $c_1$ or $c_2$. Hence $E_g^*A_hE_i^*A_gE_4^*=c_3E_g^*A_4E_4^*$ by Lemma \ref{L;Lemma1.3}.

As $E_4^*JE_g^*A_hE_i^*A_gE_4^*=E_4^*JE_4^*\neq O$ by combining Lemma \ref{L;Tripleproducts} (i), Theorem \ref{T;Insectionnumbers} (ii), (iii), and \eqref{Eq;4}, notice that $E_g^*A_hE_i^*A_gE_4^*\neq O$ and $c_3$ is not zero. By Lemma \ref{L;Tripleproducts} (vi), there is $\mathbf{u}\in \mathbf{x}R_g$ such that the $\mathbf{u}$-row of $E_g^*A_hE_i^*A_gE_4^*$ has at least one nonzero entry. Pick $\mathbf{v}\in \mathbf{x}R_4$ and assume that the $(\mathbf{u}, \mathbf{v})$-entry of $E_g^*A_hE_i^*A_gE_4^*$ is nonzero. Then $1\leq |\mathbf{u}R_h\cap\mathbf{x}R_i\cap\mathbf{v}R_g|\leq |\mathbf{u}R_h\cap\mathbf{x}R_i|=1$ by Lemma \ref{L;Tripleproducts} (vi) and Theorem \ref{T;Insectionnumbers} (ii). So  $\mathbf{u}R_h\cap\mathbf{x}R_i\cap\mathbf{v}R_g\!=\!\mathbf{u}R_h\cap\mathbf{x}R_i=\{\mathbf{w}\}$ and $\mathbf{v}\in \mathbf{x}R_4\cap \mathbf{w}R_g$. As $\mathbf{v}\in \mathbf{x}R_4\cap \mathbf{w}R_g$ and Theorem \ref{T;Insectionnumbers} (ii) holds, the $\mathbf{u}$-row of $E_g^*A_hE_i^*A_gE_4^*$ has at most $n-2$ nonzero entries. As $E_g^*A_hE_i^*A_gE_4^*=c_3E_g^*A_4E_4^*$ and the $\mathbf{u}$-row of $c_3E_g^*A_4E_4^*$ has exactly $n^2-5n+6$ nonzero entries by Lemma \ref{L;Tripleproducts} (i) and Theorem \ref{T;Insectionnumbers} (ii), note that $n^2-5n+6\leq n-2$. This is a contradiction as $n>4$. So $E_g^*A_hE_i^*A_gE_4^*\notin T_0$. Notice that $E_4^*A_gE_i^*A_hE_g^*\notin T_0$ by combining the fact $E_g^*A_hE_i^*A_gE_4^*\notin T_0$, taking transposes of matrices, and \eqref{Eq;1}. The desired lemma is proved.
\end{proof}
\begin{lem}\label{L;Lemma1.5}
Assume that $G$ is a Klein four group and $\{g, h,i\}=[1,3]$.
\begin{enumerate}[(i)]
\item [\em (i)] $E_g^*A_hE_i^*A_gE_4^*=E_g^*A_4E_4^*$.
\item [\em (ii)]$E_4^*A_gE_i^*A_hE_g^*=E_4^*A_4E_g^*$.
\end{enumerate}
\end{lem}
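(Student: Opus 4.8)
The plan is to prove both identities in Lemma~\ref{L;Lemma1.5} by directly computing matrix entries via Lemma~\ref{L;Tripleproducts}~(vi), specializing the structure constants of Theorem~\ref{T;Insectionnumbers} to the case $n=4$ (the Klein four group). For part~(i), I would first observe that by \eqref{Eq;2} and Lemma~\ref{L;Tripleproducts}~(iii), the product $E_g^*A_hE_i^*A_gE_4^*$ lies in the span of the nonzero terms $E_g^*A_jE_4^*$ with $j\in[0,4]$ and $p_{4j}^g\neq 0$; by Theorem~\ref{T;Insectionnumbers} the relevant $j$ are $h$, $i$, and $4$, so I can write $E_g^*A_hE_i^*A_gE_4^*=c_1E_g^*A_hE_4^*+c_2E_g^*A_iE_4^*+c_3E_g^*A_4E_4^*$ exactly as in the proof of Lemma~\ref{L;Lemma1.4}. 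The key point is that Lemma~\ref{L;Lemma1.3} already forces the contribution at every entry of type $h$ or $i$ to vanish, so $c_1=c_2=0$ and the product collapses to $c_3E_g^*A_4E_4^*$.

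\textbf{Pinning down the coefficient.}
It remains to show $c_3=\overline{1}$ when $n=4$. Since $n=4$, Theorem~\ref{T;Insectionnumbers}~(ii) gives $p_{i h}^i=p_{g g}^i$-type values, and crucially $n^2-5n+6=16-20+6=2$ and $n-2=2$. So for a fixed pair $(\mathbf{y},\mathbf{z})$ with $\mathbf{y}\in\mathbf{x}R_g$ and $\mathbf{z}\in\mathbf{x}R_4$, I would compute the $(\mathbf{y},\mathbf{z})$-entry $\overline{|\mathbf{y}R_h\cap\mathbf{x}R_i\cap\mathbf{z}R_g|}$ directly. Using Lemma~\ref{L;Lemma1.2} (every coordinate of a point of $X_G$ is the product of the other two, since $G$ is elementary abelian $2$), the three conditions $\mathbf{u}\in\mathbf{y}R_h$, $\mathbf{u}\in\mathbf{x}R_i$, $\mathbf{u}\in\mathbf{z}R_g$ pin down all three coordinates of $\mathbf{u}$, and I expect exactly one such $\mathbf{u}$ to exist when $\mathbf{z}\in\mathbf{y}R_4$ (the only remaining relation, given Lemma~\ref{L;Lemma1.3} has excluded $\mathbf{z}\in\mathbf{y}R_h\cup\mathbf{y}R_i$). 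Comparing with the $(\mathbf{y},\mathbf{z})$-entry of $E_g^*A_4E_4^*$, which Lemma~\ref{L;Tripleproducts}~(vi) shows equals $\overline{1}$ exactly when $\mathbf{z}\in\mathbf{x}R_4\cap\mathbf{y}R_4$, I obtain $c_3=\overline{1}$ and hence part~(i).

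\textbf{Deducing part (ii) and the main obstacle.}
For part~(ii), rather than repeat the computation, I would take the transpose of the identity in~(i) and use \eqref{Eq;1}: since $S$ is symmetric, $A_j^T=A_j$ and $(E_j^*)^T=E_j^*$, so $(E_g^*A_hE_i^*A_gE_4^*)^T=E_4^*A_gE_i^*A_hE_g^*$ and $(E_g^*A_4E_4^*)^T=E_4^*A_4E_g^*$, giving~(ii) immediately from~(i). The main obstacle is the verification in the middle paragraph that the intersection $\mathbf{y}R_h\cap\mathbf{x}R_i\cap\mathbf{z}R_g$ has size exactly $1$ for precisely those $\mathbf{z}$ with $\mathbf{z}\in\mathbf{y}R_4$: I must check both that a solution $\mathbf{u}$ exists (solving the coordinate constraints consistently, which the elementary abelian $2$-group relations guarantee) and that it is unique, while simultaneously confirming via Lemma~\ref{L;Lemma1.3} that no contribution survives when $\mathbf{z}\in\mathbf{y}R_h\cup\mathbf{y}R_i$. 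This is a finite but delicate coordinate bookkeeping argument; the small order $n=4$ keeps it tractable, and the $=_\Delta$ relations plus Lemma~\ref{L;Lemma1.2} are exactly the tools that make every coordinate forced.
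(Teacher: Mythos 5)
Your overall strategy---compute the $(\mathbf{y},\mathbf{z})$-entry via Lemma \ref{L;Tripleproducts} (vi), kill the cases $\mathbf{z}\in\mathbf{y}R_h\cup\mathbf{y}R_i$ with Lemma \ref{L;Lemma1.3}, show the entry is $\overline{1}$ when $\mathbf{z}\in\mathbf{x}R_4\cap\mathbf{y}R_4$, and get (ii) by transposing---is exactly the paper's. But two things need repair. First, your opening reduction is circular: \eqref{Eq;2} controls $A_hA_g$, not $A_hE_i^*A_g$, so it does not place $E_g^*A_hE_i^*A_gE_4^*$ in the span of $\{E_g^*A_jE_4^*\}$; indeed Lemma \ref{L;Lemma1.4} shows that this containment fails for $n>4$, and for $n=4$ it is essentially the statement being proved. (In the proof of Lemma \ref{L;Lemma1.4} the decomposition is a hypothesis for contradiction, not a fact.) This is harmless only because your second paragraph abandons it in favour of the direct entrywise computation.

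Second, and more seriously, the decisive step is left as an expectation rather than proved. The conditions $\mathbf{u}\in\mathbf{y}R_h$, $\mathbf{u}\in\mathbf{x}R_i$, $\mathbf{u}\in\mathbf{z}R_g$ do pin down $u_h=y_h$, $u_i=x_i$, $u_g=z_g$, but such a $\mathbf{u}\in X_G$ exists only if $z_g=y_hx_i$ (Lemma \ref{L;Lemma1.2}), and this is \emph{not} automatic for $\mathbf{z}\in\mathbf{x}R_4\cap\mathbf{y}R_4$: coordinatewise that membership only says $z_j\notin\{x_j,y_j\}$ for each $j$. This is precisely where $n=4$ must enter, and your sketch never uses it. The paper closes the gap by taking $\mathbf{u}$ to be the unique element of $\mathbf{y}R_h\cap\mathbf{x}R_i$ and proving $\mathbf{x}R_4\cap\mathbf{u}R_g=\mathbf{x}R_4\cap\mathbf{y}R_4$: a coordinate computation shows no element of $\mathbf{x}R_4\cap\mathbf{u}R_g$ can lie in $\mathbf{y}R_i$, hence $\mathbf{x}R_4\cap\mathbf{u}R_g\subseteq\mathbf{x}R_4\cap\mathbf{y}R_4$, and both sets have cardinality $n-2=n^2-6n+10=2$ exactly when $n=4$, forcing equality; then every $\mathbf{z}\in\mathbf{x}R_4\cap\mathbf{y}R_4$ satisfies $\mathbf{u}\in\mathbf{z}R_g$ and the intersection is the singleton $\{\mathbf{u}\}$. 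Without some such argument the claim that the entry equals $\overline{1}$ on all of $\mathbf{x}R_4\cap\mathbf{y}R_4$ is unsupported---and for $n>4$ it is false.
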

\begin{proof}
Assume that $\mathbf{x}\!=\!(x_1, x_2, x_3)$ and $\mathbf{y}\!=\!(y_1, y_2, y_3)\in\mathbf{x}R_g$. Then $|\mathbf{y}R_h\cap \mathbf{x}R_i|=1$ by Theorem \ref{T;Insectionnumbers} (ii). So there is $\mathbf{u}=(u_1, u_2, u_3)\in X_G$ such that $\mathbf{y}R_h\cap \mathbf{x}R_i=\{\mathbf{u}\}$. So $u_h=y_h$ and $u_i=x_i$. As $G$ is a Klein four group and Lemma \ref{L;Lemma1.2} holds, note that $u_g\!=\!u_hu_i\!=\!y_hx_i$. As $n\!=\!4$ and Theorem \ref{T;Insectionnumbers} (ii) holds, notice that $|\mathbf{x}R_4\cap\mathbf{u}R_g|=2$.

Pick $\mathbf{v}=(v_1, v_2, v_3)\in \mathbf{x}R_4\cap\mathbf{u}R_g$. As $\mathbf{y}\in \mathbf{u}R_h$ and $\mathbf{v}\in \mathbf{u}R_g$, $\mathbf{v}\in \mathbf{y}R_i\cup\mathbf{y}R_4$ by Theorem \ref{T;Insectionnumbers} (i), (ii), (iii). Assume further that $\mathbf{v}\in \mathbf{y}R_i$. Hence $v_g=u_g=y_hx_i$ and $v_i=y_i$. As $\mathbf{y}\in \mathbf{x}R_g$ and $\mathbf{v}\in\mathbf{x}R_4$, notice that $x_g=y_g$ and $v_j\neq x_j$ for any $j\in [1,3]$. As $G$ is a Klein four group and Lemma \ref{L;Lemma1.2} holds, notice that $v_h=v_iv_g$, $y_g=y_iy_h$, and $x_h=x_gx_i$. Hence $v_h=v_iv_g=y_iy_hx_i=y_gx_i=x_gx_i=x_h$, which contradicts the inequality $v_j\neq x_j$ for any $j\in [1,3]$. Hence $\mathbf{v}\in \mathbf{y}R_4$.
As $\mathbf{v}$ is chosen from $\mathbf{x}R_4\cap\mathbf{u}R_g$ arbitrarily,
$\mathbf{x}R_4\cap\mathbf{u}R_g=\mathbf{x}R_4\cap\mathbf{y}R_4\cap\mathbf{u}R_g\subseteq\mathbf{x}R_4\cap\mathbf{y}R_4$. As $n=4$ and Theorem \ref{T;Insectionnumbers} (ii) holds, note that $|\mathbf{x}R_4\cap\mathbf{u}R_g|=|\mathbf{x}R_4\cap\mathbf{y}R_4|=2$ and $\mathbf{x}R_4\cap\mathbf{u}R_g=\mathbf{x}R_4\cap\mathbf{y}R_4$.

Pick $\mathbf{z}\in \mathbf{x}R_4$. Notice that the $(\mathbf{y},\mathbf{z})$-entry of $E_g^*A_hE_i^*A_gE_4^*$ is $\overline{|\mathbf{y}R_h\cap\mathbf{x}R_i\cap \mathbf{z}R_g|}$ by Lemma \ref{L;Tripleproducts} (vi). Since $\mathbf{y}\in\mathbf{x}R_g$ and $\mathbf{z}\in \mathbf{x}R_4$, $\mathbf{z}\in\mathbf{y}R_h\cup\mathbf{y}R_i\cup\mathbf{y}R_4$ by Theorem \ref{T;Insectionnumbers} (i), (ii), (iii). If $\mathbf{z}\in \mathbf{y}R_h\cup\mathbf{y}R_i$, then the $(\mathbf{y},\mathbf{z})$-entry of $E_g^*A_hE_i^*A_gE_4^*$ is zero by Lemma \ref{L;Lemma1.3}. If $\mathbf{z}\in\mathbf{y}R_4$, then $\mathbf{z}\in \mathbf{x}R_4\cap \mathbf{y}R_4=\mathbf{x}R_4\cap\mathbf{u}R_g$. So $\mathbf{u}\in\mathbf{y}R_h\cap \mathbf{x}R_i\cap \mathbf{z}R_g$, which implies that $\mathbf{y}R_h\cap \mathbf{x}R_i=\{\mathbf{u}\}=\mathbf{y}R_h\cap \mathbf{x}R_i\cap \mathbf{z}R_g$. Hence the $(\mathbf{y},\mathbf{z})$-entry of $E_g^*A_hE_i^*A_gE_4^*$ is equal to one. As $\mathbf{y}, \mathbf{z}$ are chosen from $\mathbf{x}R_g$, $\mathbf{x}R_4$ arbitrarily, notice that $E_g^*A_hE_i^*A_gE_4^*=E_g^*A_4E_4^*$ by Lemma \ref{L;Tripleproducts} (vi) and the above discussion. (i) thus follows. (ii) is proved by combining (i), taking transposes of matrices, and \eqref{Eq;1}. The desired lemma is proved.  \end{proof}
The following computational results study the products of elements with two $E_4^*$'s.
\begin{lem}\label{L;Lemma1.6}
If $\{g, h, i\}=[1,3]$, $\mathbf{y},\mathbf{z}\in X_G$, and the $(\mathbf{y},\mathbf{z})$-entry of $E_4^*A_gE_h^*A_iE_4^*$ is nonzero, then $|\mathbf{y}R_g\cap\mathbf{x}R_h\cap\mathbf{z}R_i|=1$ and the $(\mathbf{y},\mathbf{z})$-entry of $E_4^*A_gE_h^*A_iE_4^*$ is one.
\end{lem}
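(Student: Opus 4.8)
The plan is to reduce the entire statement to Lemma \ref{L;Tripleproducts} (vi) and then make one short, characteristic-free combinatorial observation about the intersection that appears there. First I would invoke part (vi) for the matrix $E_4^*A_gE_h^*A_iE_4^*$: matching it against the template $E_g^*A_hE_i^*A_rE_s^*$ of that lemma, the outer dual idempotents both carry the index $4$, so the hypothesis that the $(\mathbf{y},\mathbf{z})$-entry is nonzero immediately forces $\mathbf{y}\in\mathbf{x}R_4$ and $\mathbf{z}\in\mathbf{x}R_4$, and identifies that entry as $\overline{|\mathbf{y}R_g\cap\mathbf{x}R_h\cap\mathbf{z}R_i|}$. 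Thus the whole lemma becomes a statement about the size of this single intersection.

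Next I would bound the intersection from above by $1$, working with the genuine cardinality rather than its image in $\F$. Suppose $\mathbf{w}=(w_1,w_2,w_3)$ lies in $\mathbf{y}R_g\cap\mathbf{x}R_h\cap\mathbf{z}R_i$. By the definitions $R_g=G_g$, $R_h=G_h$, $R_i=G_i$, membership forces the coordinate equalities $w_g=y_g$, $w_h=x_h$, and $w_i=z_i$. Since $\{g,h,i\}=[1,3]$, these three equalities specify all three coordinates of $\mathbf{w}$, each exactly once, so any two elements of the intersection coincide. Hence $|\mathbf{y}R_g\cap\mathbf{x}R_h\cap\mathbf{z}R_i|\leq 1$, with no appeal to the structure of $G$ beyond the definition of $X_G$.

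Finally I would combine the two ingredients. Because $\overline{0}=0$, the nonvanishing of $\overline{|\mathbf{y}R_g\cap\mathbf{x}R_h\cap\mathbf{z}R_i|}$ rules out the cardinality being $0$; together with the bound $\leq 1$ this forces $|\mathbf{y}R_g\cap\mathbf{x}R_h\cap\mathbf{z}R_i|=1$, and then the entry equals $\overline{1}=1$, as desired.

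I do not expect a serious obstacle here. The genuine content is the uniqueness observation of the second paragraph, and the only points requiring care are the bookkeeping of matching $E_4^*A_gE_h^*A_iE_4^*$ to the indices of Lemma \ref{L;Tripleproducts} (vi), and the recognition that it is precisely the hypothesis $\{g,h,i\}=[1,3]$ that makes the three coordinate conditions determinative: were two of the indices equal, one coordinate would be overdetermined while another was left free, so the bound $\leq 1$ would break down. It is exactly the fact that $g,h,i$ run over all three coordinates that both underlies the uniqueness and makes the bound characteristic-free, which is what lets the nonvanishing hypothesis upgrade the cardinality from $\leq 1$ to $=1$.
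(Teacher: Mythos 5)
Your proposal is correct and follows the same overall route as the paper's proof: both invoke Lemma \ref{L;Tripleproducts} (vi) to force $\mathbf{y},\mathbf{z}\in\mathbf{x}R_4$ and to identify the entry as $\overline{|\mathbf{y}R_g\cap\mathbf{x}R_h\cap\mathbf{z}R_i|}$, and both then squeeze the cardinality between $1$ and $1$. The one place you diverge is the upper bound: the paper bounds the triple intersection by $|\mathbf{y}R_g\cap\mathbf{x}R_h|=p_{gh}^4=1$, citing Theorem \ref{T;Insectionnumbers} (iii), whereas you observe directly that any $\mathbf{w}$ in the intersection has all three coordinates pinned down ($w_g=y_g$, $w_h=x_h$, $w_i=z_i$), so at most one element of $X_G$ can qualify. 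Your version is self-contained, does not even need $\mathbf{y}\in\mathbf{x}R_4$ for the bound, and makes explicit why the hypothesis $\{g,h,i\}=[1,3]$ is what drives the uniqueness; the paper's version is shorter on the page because the coordinate computation is already packaged into the intersection number $p_{gh}^4$. Either justification is fine, and the rest of your argument matches the paper step for step.
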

\begin{proof}
As the $(\mathbf{y},\mathbf{z})$-entry of $E_4^*A_gE_h^*A_iE_4^*$ is nonzero, note that $\mathbf{y},\mathbf{z}\in \mathbf{x}R_4$ and the $(\mathbf{y},\mathbf{z})$-entry of $E_4^*A_gE_h^*A_iE_4^*$ is $\overline{|\mathbf{y}R_g\cap\mathbf{x}R_h\cap\mathbf{z}R_i|}$ by Lemma \ref{L;Tripleproducts} (vi). By Theorem (iii), $1\leq |\mathbf{y}R_g\cap\mathbf{x}R_h\cap\mathbf{z}R_i|\leq |\mathbf{y}R_g\cap\mathbf{x}R_h|=1$. The desired lemma thus follows.
\end{proof}
\begin{lem}\label{L;Lemma1.7}
Assume that $G$ is an elementary abelian $2$-group and $\{g,h,i\}=[1,3]$. Assume that $\mathbf{x}=(x_1, x_2, x_3)$, $\mathbf{y}=(y_1, y_2, y_3)\in \mathbf{x}R_4$, $\mathbf{z}=(z_1, z_2, z_3)\in \mathbf{x}R_4\cap\mathbf{y}R_4$. Then $x_iy_g\notin\{x_h, y_h\}$ and $|\{x_h, y_h, x_iy_g\}|=3$. If the $(\mathbf{y},\mathbf{z})$-entry of $E_4^*A_gE_h^*A_iE_4^*$ is nonzero, then $z_i=x_hy_g$ and $z_h\in G\setminus\{x_h, y_h, x_iy_g\}$.
\end{lem}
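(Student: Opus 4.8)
The plan is to deduce everything from the single multiplicative identity furnished by Lemma \ref{L;Lemma1.2}: since $G$ is an elementary abelian $2$-group, for every point $\mathbf{w}=(w_1,w_2,w_3)\in X_G$ the product of any two of its components equals the third, so in particular $w_gw_i=w_h$, $w_gw_h=w_i$, and $w_hw_i=w_g$, and every element of $G$ is its own inverse. Throughout I will repeatedly combine these relations with the defining conditions $\mathbf{y}\in\mathbf{x}R_4$ and $\mathbf{z}\in\mathbf{x}R_4\cap\mathbf{y}R_4$, which say precisely that $x_j\neq y_j$, $x_j\neq z_j$, and $y_j\neq z_j$ for every $j\in[1,3]$.

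For the first assertion, I would argue by contradiction through cancellation. The inequality $x_h\neq y_h$ is immediate from $\mathbf{y}\in\mathbf{x}R_4$. If $x_iy_g=x_h$, then, writing $x_h=x_gx_i$ via Lemma \ref{L;Lemma1.2} applied to $\mathbf{x}$ and cancelling $x_i$, I get $y_g=x_g$, contradicting $\mathbf{y}\in\mathbf{x}R_4$; if instead $x_iy_g=y_h$, then writing $y_h=y_gy_i$ for $\mathbf{y}$ and cancelling $y_g$ gives $x_i=y_i$, the same kind of contradiction. Hence $x_iy_g\notin\{x_h,y_h\}$, and together with $x_h\neq y_h$ this forces $|\{x_h,y_h,x_iy_g\}|=3$.

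For the second assertion, I would feed the nonvanishing hypothesis into the entry formula of Lemma \ref{L;Tripleproducts} (vi) (equivalently invoke Lemma \ref{L;Lemma1.6}) to obtain a witness $\mathbf{u}=(u_1,u_2,u_3)\in\mathbf{y}R_g\cap\mathbf{x}R_h\cap\mathbf{z}R_i$, so that $u_g=y_g$, $u_h=x_h$, and $u_i=z_i$. Applying the identity $u_gu_i=u_h$ then yields $y_gz_i=x_h$, and cancelling $y_g$ gives $z_i=x_hy_g$, as claimed. For the location of $z_h$: the conditions $z_h\neq x_h$ and $z_h\neq y_h$ are again immediate from $\mathbf{z}\in\mathbf{x}R_4\cap\mathbf{y}R_4$, so it only remains to exclude $z_h=x_iy_g$. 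Here I would use $z_h=z_gz_i=z_g(x_hy_g)$ (Lemma \ref{L;Lemma1.2} for $\mathbf{z}$ together with the value of $z_i$ just found): if this equalled $x_iy_g$, then cancelling $y_g$ and then $x_h$ would give $z_g=x_ix_h=x_g$, contradicting $\mathbf{z}\in\mathbf{x}R_4$. This completes the exclusion and hence the proof.

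The computations here are entirely routine cancellations in the $2$-group, so there is no serious obstacle; the only thing requiring care is the bookkeeping of the index permutation $\{g,h,i\}=[1,3]$---one must consistently read off which component of a point is the product of which two others, and correctly match the three membership conditions defining the witness $\mathbf{u}$ to its components---and making sure the $R_4$-conditions are invoked at exactly the right spots to generate the contradictions.
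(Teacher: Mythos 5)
Your proposal is correct and follows essentially the same route as the paper's proof: both parts rest on Lemma \ref{L;Lemma1.2} cancellations combined with the coordinate inequalities from the $R_4$-conditions, and the witness $\mathbf{u}\in\mathbf{y}R_g\cap\mathbf{x}R_h\cap\mathbf{z}R_i$ obtained via Lemma \ref{L;Lemma1.6} is used exactly as in the paper to derive $z_i=x_hy_g$ and to exclude $z_h=x_iy_g$ by forcing $z_g=x_g$.
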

\begin{proof}
As $G$ is an elementary abelian $2$-group and Lemma \ref{L;Lemma1.2} holds, $x_h=x_ix_g$ and $y_h=y_iy_g$. As $\mathbf{y}\in\mathbf{x}R_4$, notice that $x_j\neq y_j$ for any $j\in [1,3]$.
Hence $x_iy_g\neq x_ix_g=x_h$ and $x_iy_g\neq y_iy_g=y_h$. So $x_iy_g\notin\{x_h, y_h\}$, which implies that $|\{x_h, y_h, x_iy_g\}|=3$.

Assume that the $(\mathbf{y},\mathbf{z})$-entry of $E_4^*A_gE_h^*A_iE_4^*$ is nonzero. So $|\mathbf{y}R_g\cap\mathbf{x}R_h\cap\mathbf{z}R_i|=1$ by Lemma \ref{L;Lemma1.6}. Assume further that $\{\mathbf{u}\}\!=\!\mathbf{y}R_g\cap\mathbf{x}R_h\cap\mathbf{z}R_i$ and $\mathbf{u}\!=\!(u_1, u_2, u_3)$. So $u_g=y_g$, $u_h=x_h$, and $u_i=z_i$. As $G$ is an elementary abelian $2$-group and Lemma \ref{L;Lemma1.2} holds, $z_i=u_i=u_hu_g=x_hy_g$. As $\mathbf{z}\in \mathbf{x}R_4\cap \mathbf{y}R_4$, $z_j\notin\{x_j, y_j\}$ for any $j\in [1,3]$. So $z_h\notin\{x_h, y_h\}$. If $z_h=x_iy_g$, note that $z_g=z_iz_h=x_hy_gx_iy_g=x_hx_iy_gy_g=x_hx_i=x_g$ as $G$ is an elementary abelian $2$-group and Lemma \ref{L;Lemma1.2} holds. This is a contradiction as
$x_j\neq z_j$ for any $j\in [1,3]$. So $z_h\notin \{x_h, y_h, x_iy_g\}$. The desired lemma is proved.
\end{proof}
\begin{lem}\label{L;Lemma1.8}
Assume that $G$ is an elementary abelian $2$-group and $\{g, h,i\}=[1,3]$. Assume that $\mathbf{x}=(x_1, x_2, x_3)$, $\mathbf{y}=(y_1, y_2, y_3)\in \mathbf{x}R_4$, $\mathbf{z}=(z_1, z_2, z_3)\in X_G$. Assume that $z_i=x_hy_g$ and $z_h\in G\setminus\{x_h, y_h, x_iy_g\}$. Then $\mathbf{z}\in \mathbf{x}R_4\cap\mathbf{y}R_4$ and the $(\mathbf{y}, \mathbf{z})$-entry of $E_4^*A_gE_h^*A_iE_4^*$ is one.
\end{lem}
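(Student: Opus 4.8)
The plan is to recognize this statement as essentially the converse of Lemma \ref{L;Lemma1.7}, so the two conclusions should fall out of the same group-theoretic bookkeeping already used there. Throughout I will write $\mathbf{x}=(x_1,x_2,x_3)$, $\mathbf{y}=(y_1,y_2,y_3)$, $\mathbf{z}=(z_1,z_2,z_3)$, and I will repeatedly invoke Lemma \ref{L;Lemma1.2}: since $G$ is an elementary abelian $2$-group, the product of any two components of a point of $X_G$ equals the third, and every element is its own inverse. In particular $x_h=x_gx_i$, $y_h=y_gy_i$, and $z_g=z_hz_i$, while the hypothesis $\mathbf{y}\in\mathbf{x}R_4$ gives $x_j\neq y_j$ for every $j\in[1,3]$.

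First I would establish $\mathbf{z}\in\mathbf{x}R_4\cap\mathbf{y}R_4$. Because two equal components force the third to agree (again Lemma \ref{L;Lemma1.2}), one cannot agree in exactly two coordinates, so membership in $R_4$ is equivalent to disagreement in all three coordinates; it therefore suffices to check $z_j\notin\{x_j,y_j\}$ for each $j$. The coordinate $j=h$ is immediate from the hypothesis $z_h\notin\{x_h,y_h,x_iy_g\}$. For $j=i$, using $z_i=x_hy_g$ together with $x_i=x_gx_h$ and $y_i=y_gy_h$ reduces $z_i=x_i$ to $y_g=x_g$ and $z_i=y_i$ to $x_h=y_h$, both excluded by $\mathbf{y}\in\mathbf{x}R_4$. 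For $j=g$, I would compute $z_g=z_hz_i=z_hx_hy_g$ and note that $z_g=x_g$ forces $z_h=x_gx_hy_g=x_iy_g$ while $z_g=y_g$ forces $z_h=x_h$, each contradicting the hypothesis on $z_h$. This disposes of the first assertion.

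Next I would compute the claimed entry. Having shown $\mathbf{z}\in\mathbf{x}R_4$ and knowing $\mathbf{y}\in\mathbf{x}R_4$, Lemma \ref{L;Tripleproducts} (vi) identifies the $(\mathbf{y},\mathbf{z})$-entry of $E_4^*A_gE_h^*A_iE_4^*$ with $\overline{|\mathbf{y}R_g\cap\mathbf{x}R_h\cap\mathbf{z}R_i|}$. I would exhibit a point in this intersection directly: let $\mathbf{u}=(u_1,u_2,u_3)$ be the point with $u_g=y_g$ and $u_h=x_h$, which by Lemma \ref{L;Lemma1.2} has $u_i=u_gu_h=x_hy_g$ and lies in $X_G$ since $u_gu_hu_i$ is the identity of $G$. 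By construction $u_i=x_hy_g=z_i$, so $\mathbf{u}\in\mathbf{y}R_g\cap\mathbf{x}R_h\cap\mathbf{z}R_i$ and the intersection is nonempty, whence the entry is nonzero. Applying Lemma \ref{L;Lemma1.6}, whose hypotheses now hold, then forces this cardinality to be exactly one, so the entry equals $\overline{1}=1$, as desired.

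I do not anticipate any genuine obstacle; the argument is a bounded computation in an elementary abelian $2$-group. The one place that needs care is the coordinate-by-coordinate verification that $\mathbf{z}$ disagrees with both $\mathbf{x}$ and $\mathbf{y}$ everywhere: each of the six checks must be reduced, via the self-inverse and two-out-of-three properties, to one of the inequalities coming from $\mathbf{y}\in\mathbf{x}R_4$ or to one of the three forbidden values of $z_h$, and it is worth confirming that the $g$- and $h$-coordinate checks consume exactly the values $x_h$, $y_h$, and $x_iy_g$ excluded in the hypothesis.
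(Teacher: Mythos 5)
Your proof follows essentially the same route as the paper's: a coordinate-by-coordinate check via Lemma \ref{L;Lemma1.2} that $z_j\notin\{x_j,y_j\}$ for every $j$, followed by exhibiting a witness in $\mathbf{y}R_g\cap\mathbf{x}R_h\cap\mathbf{z}R_i$. The first half is complete and matches the paper's computation (the paper derives $z_g\neq x_g$ and $z_g\neq y_g$ from the excluded values $x_iy_g$ and $x_h$ of $z_h$ exactly as you do).

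In the second half there is one step to repair. You infer ``the entry is nonzero'' from ``the intersection is nonempty,'' but the entry is $\overline{|\mathbf{y}R_g\cap\mathbf{x}R_h\cap\mathbf{z}R_i|}$ in $\F$ of characteristic $p$, and a nonempty set could a priori have cardinality divisible by $p$; you then invoke Lemma \ref{L;Lemma1.6}, whose hypothesis is precisely that the entry is nonzero, so as written the argument is circular. The fix is the one the paper uses: $|\mathbf{y}R_g\cap\mathbf{x}R_h\cap\mathbf{z}R_i|\le|\mathbf{y}R_g\cap\mathbf{x}R_h|=p_{gh}^4=1$ holds unconditionally by Theorem \ref{T;Insectionnumbers} (iii) (valid because $(\mathbf{y},\mathbf{x})\in R_4$), so nonemptiness gives cardinality exactly one and the entry is $\overline{1}=1$; Lemma \ref{L;Lemma1.6} is not needed. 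Finally, membership of your constructed $\mathbf{u}$ in $\mathbf{y}R_g$, $\mathbf{x}R_h$, $\mathbf{z}R_i$ requires $\mathbf{u}\neq\mathbf{y}$, $\mathbf{u}\neq\mathbf{x}$, $\mathbf{u}\neq\mathbf{z}$ in addition to the coordinate equalities; these follow at once from $u_h=x_h\neq y_h$, $u_g=y_g\neq x_g$, and $u_h=x_h\neq z_h$, but should be recorded.
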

\begin{proof}
Since $G$ is an elementary abelian $2$-group and Lemma \ref{L;Lemma1.2} holds, observe that $x_g=x_ix_h$, $x_i=x_hx_g$, $y_i=y_hy_g$, and $z_g=z_hz_i=z_hx_hy_g$. As $z_h\in G\setminus\{x_h, y_h, x_iy_g\}$ and $G$ is an elementary abelian $2$-group, notice that $z_g=z_hx_hy_g\neq x_iy_gx_hy_g=x_g$, $z_g=z_hx_hy_g\neq x_hx_hy_g=y_g$, $z_h\neq x_h$, and $z_h\neq y_h$. As $\mathbf{y}\in\mathbf{x}R_4$, notice that $x_j\neq y_j$ for any $j\in [1,3]$. So $z_i=x_hy_g\neq x_hx_g=x_i$ and $z_i=x_hy_g\neq y_hy_g=y_i$. Therefore $\mathbf{z}\in \mathbf{x}R_4\cap\mathbf{y}R_4$ as $z_g\notin\{x_g, y_g\}$, $z_h\notin\{x_h, y_h\}$, and $z_i\notin\{x_i, y_i\}$.

As $\mathbf{y}, \mathbf{z}\in \mathbf{x}R_4$, observe that the $(\mathbf{y},\mathbf{z})$-entry of $E_4^*A_gE_h^*A_iE_4^*$ is $\overline{|\mathbf{y}R_g\cap\mathbf{x}R_h\cap\mathbf{z}R_i|}$ by Lemma \ref{L;Tripleproducts} (vi). As $|\mathbf{y}R_g\cap\mathbf{x}R_h\cap\mathbf{z}R_i|\leq |\mathbf{y}R_g\cap\mathbf{x}R_h|=1$ by Theorem \ref{T;Insectionnumbers} (iii), notice that the $(\mathbf{y}, \mathbf{z})$-entry of $E_4^*A_gE_h^*A_iE_4^*$ is one if and only if $\mathbf{y}R_g\cap\mathbf{x}R_h\cap\mathbf{z}R_i\neq \varnothing$. Assume that $\mathbf{u}=(u_1, u_2, u_3)\in \mathbf{y}R_g\cap\mathbf{x}R_h$. Hence $u_g=y_g\neq z_g$ and $u_h=x_h$. As $G$ is an elementary abelian $2$-group and Lemma \ref{L;Lemma1.2} holds, $u_i=u_hu_g=x_hy_g=z_i$. So $\mathbf{u}\in \mathbf{y}R_g\cap\mathbf{x}R_h\cap \mathbf{z}R_i$ as $u_i=z_i$ and $\mathbf{u}\neq \mathbf{z}$. Hence the $(\mathbf{y},\mathbf{z})$-entry of $E_4^*A_gE_h^*A_iE_4^*$ is one.
The desired lemma is proved.
\end{proof}
Lemmas \ref{L;Lemma1.7} and \ref{L;Lemma1.8} can be utilized to deduce the following result.
\begin{lem}\label{L;Lemma1.9}
Assume that $G$ is an elementary abelian $2$-group and $\{g, h,i\}=[1,3]$. Then $E_4^*A_gE_h^*A_iE_4^*\notin T_0$.
\end{lem}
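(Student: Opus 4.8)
The plan is to argue by contradiction, exploiting that $E_4^*A_gE_h^*A_iE_4^*$ is fixed by left and right multiplication by the idempotent $E_4^*$. Suppose $E_4^*A_gE_h^*A_iE_4^*\in T_0$. Since $T_0=\langle\{E_a^*A_bE_c^*:a,b,c\in[0,4]\}\rangle_\F$ and, by \eqref{Eq;2}, $E_4^*E_a^*A_bE_c^*E_4^*=\delta_{4a}\delta_{c4}E_4^*A_bE_4^*$, multiplying any expansion of $E_4^*A_gE_h^*A_iE_4^*$ in this spanning set on both sides by $E_4^*$ collapses it to
$$E_4^*A_gE_h^*A_iE_4^*=\sum_{b=0}^4 c_b\,E_4^*A_bE_4^*$$
for suitable $c_b\in\F$. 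First I would record this normal form; it reduces the problem to examining the entries of a single matrix identity.

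Next I would fix a vertex $\mathbf{y}=(y_1,y_2,y_3)\in\mathbf{x}R_4$ (nonempty as $k_4=n^2-3n+2>0$) and restrict attention to entries indexed by pairs $(\mathbf{y},\mathbf{z})$ with $\mathbf{z}\in\mathbf{x}R_4\cap\mathbf{y}R_4$, that is, $(\mathbf{y},\mathbf{z})\in R_4$. Because the $R_j$ partition $X_G\times X_G$, the $(\mathbf{y},\mathbf{z})$-entry of $E_4^*A_bE_4^*$ equals $\delta_{b4}$ for every such pair, so the right-hand side contributes the constant value $c_4$ on all of them. The whole argument then hinges on showing that the left-hand side takes \emph{both} the value $1$ and the value $0$ on pairs of this form, which would force $c_4=1$ and $c_4=0$ simultaneously, impossible in a field.

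The value $1$ is produced directly by Lemma \ref{L;Lemma1.8}: choosing $z_i=x_hy_g$ and any $z_h\in G\setminus\{x_h,y_h,x_iy_g\}$ --- a nonempty choice, since $|G\setminus\{x_h,y_h,x_iy_g\}|=n-3\geq1$ for the elementary abelian $2$-group $G$ (so $n\geq4$) --- yields $\mathbf{z}\in\mathbf{x}R_4\cap\mathbf{y}R_4$ whose $(\mathbf{y},\mathbf{z})$-entry is $1$. For the value $0$ I would count. By Lemmas \ref{L;Lemma1.7} and \ref{L;Lemma1.8} the pairs with entry $1$ are \emph{exactly} those with $z_i=x_hy_g$ and $z_h\in G\setminus\{x_h,y_h,x_iy_g\}$, hence there are precisely $n-3$ of them, whereas $|\mathbf{x}R_4\cap\mathbf{y}R_4|=p_{44}^4=n^2-6n+10$ by Theorem \ref{T;Insectionnumbers} (iii). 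Since $n^2-6n+10-(n-3)=n^2-7n+13>0$ for every $n$, there is some $\mathbf{z}'\in\mathbf{x}R_4\cap\mathbf{y}R_4$ whose $(\mathbf{y},\mathbf{z}')$-entry is $0$ (equivalently, by the contrapositive of Lemma \ref{L;Lemma1.7}, any such $\mathbf{z}'$ with $z'_i\neq x_hy_g$). Comparing both entries with the constant $c_4$ gives $c_4=1$ and $c_4=0$, the desired contradiction, so $E_4^*A_gE_h^*A_iE_4^*\notin T_0$.

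I expect the reduction to the normal form and the construction of an entry-$1$ pair from Lemma \ref{L;Lemma1.8} to be routine. The one point demanding care --- the main obstacle --- is guaranteeing that an entry-$0$ pair also exists for the \emph{same} $\mathbf{y}$; this is precisely where the intersection number $p_{44}^4=n^2-6n+10$ and the exact count $n-3$ of entry-$1$ pairs are needed, and where the smallest case $n=4$ (the Klein four group, giving exactly one pair of each type) must be verified to lie within the valid range.
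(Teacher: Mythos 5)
Your proposal is correct and follows essentially the same route as the paper: reduce to the normal form $\sum_{j}c_jE_4^*A_jE_4^*$, use Lemma \ref{L;Lemma1.8} to force $c_4=1$, and then compare the exact count $n-3$ of entry-one pairs in $\mathbf{x}R_4\cap\mathbf{y}R_4$ (from Lemmas \ref{L;Lemma1.7} and \ref{L;Lemma1.8}) against $|\mathbf{x}R_4\cap\mathbf{y}R_4|=n^2-6n+10$. The paper phrases the contradiction as the impossible equation $n-3=n^2-6n+10$, while you phrase it as the existence of an entry-zero pair forcing $c_4=0$; these are the same counting argument.
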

\begin{proof}
Assume that $E_4^*A_gE_h^*A_iE_4^*\in T_0$. By combining \eqref{Eq;2}, Lemma \ref{L;Tripleproducts} (iii), and Theorem \ref{T;Insectionnumbers} (iii), $E_4^*A_gE_h^*A_iE_4^*=\sum_{j=0}^4c_jE_4^*A_jE_4^*$, where $c_j\in \F$ for any $j\in [0,4]$. By Lemma \ref{L;Lemma1.8}, there exist $\mathbf{y}, \mathbf{z}\in \mathbf{x}R_4$ such that $\mathbf{z}\in \mathbf{x}R_4\cap\mathbf{y}R_4$ and the $(\mathbf{y}, \mathbf{z})$-entry of $E_4^*A_gE_h^*A_iE_4^*$ is one. Hence $c_4$ is one. Let $n_1$ denote the number of all $\mathbf{u}$'s such that $\mathbf{u}\in \mathbf{x}R_4\cap\mathbf{y}R_4$ and the $(\mathbf{y},\mathbf{u})$-entry of $E_4^*A_gE_h^*A_iE_4^*$ is one. Similarly, let $n_2$ denote the number of all $\mathbf{u}$'s such that $\mathbf{u}\in \mathbf{x}R_4\cap\mathbf{y}R_4$ and the $(\mathbf{y},\mathbf{u})$-entry of $\sum_{j=0}^4c_jE_4^*A_jE_4^*$ is one. As $E_4^*A_gE_h^*A_iE_4^*=\sum_{j=0}^4c_jE_4^*A_jE_4^*$, notice that $n_1=n_2$.

As $G$ is an elementary abelian $2$-group, $n_1=n-3$ by combining Lemmas \ref{L;Lemma1.7} and \ref{L;Lemma1.8}. As $c_4$ is one, notice that $n_2$ is the number of all nonzero entries in the $\mathbf{y}$-row of $E_4^*A_gE_h^*A_iE_4^*$. Therefore $n-3=n_1=n_2=|\mathbf{x}R_4\cap\mathbf{y}R_4|=n^2-6n+10$ by Theorem \ref{T;Insectionnumbers} (iii). This is a contradiction as $n\in\mathbb{N}$. The desired lemma thus follows.
\end{proof}
\begin{lem}\label{L;Lemma1.10}
Assume that $G$ is an elementary abelian $2$-group and $\{g, h,i\}=[1,3]$. Assume that $\mathbf{x}=(x_1, x_2, x_3)$, $\mathbf{y}=(y_1, y_2, y_3)\in \mathbf{x}R_4$, $\mathbf{z}=(z_1, z_2, z_3)\in \mathbf{x}R_4\cap\mathbf{y}R_h$. Assume that the $(\mathbf{y}, \mathbf{z})$-entry of $E_4^*A_gE_h^*A_iE_4^*$ is nonzero. Then $z_g=x_hy_i$, $z_h=y_h$, and $z_i=x_hy_g$.
\end{lem}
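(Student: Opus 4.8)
The plan is to obtain the three equalities one at a time, reading the middle one off directly and deriving the other two from the entry formula of Lemma \ref{L;Tripleproducts} (vi) together with the structural property recorded in Lemma \ref{L;Lemma1.2}. The equality $z_h = y_h$ is immediate: the hypothesis $\mathbf{z} \in \mathbf{x}R_4 \cap \mathbf{y}R_h$ in particular gives $\mathbf{z} \in \mathbf{y}R_h$, hence $(\mathbf{y}, \mathbf{z}) \in R_h = G_h$, and so $y_h = z_h$.

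For the remaining two equalities I would first use that $\mathbf{y}, \mathbf{z} \in \mathbf{x}R_4$, so Lemma \ref{L;Tripleproducts} (vi) identifies the $(\mathbf{y}, \mathbf{z})$-entry of $E_4^* A_g E_h^* A_i E_4^*$ with $\overline{|\mathbf{y}R_g \cap \mathbf{x}R_h \cap \mathbf{z}R_i|}$. Since this entry is assumed nonzero, Lemma \ref{L;Lemma1.6} forces $|\mathbf{y}R_g \cap \mathbf{x}R_h \cap \mathbf{z}R_i| = 1$; I would fix the unique element $\mathbf{u} = (u_1, u_2, u_3)$ of this intersection and read off $u_g = y_g$, $u_h = x_h$, and $u_i = z_i$ from its three membership conditions. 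Applying Lemma \ref{L;Lemma1.2} to $\mathbf{u}$, whose $g$- and $h$-components multiply to its $i$-component since $\{g, h, i\} = [1,3]$, then yields $z_i = u_i = u_g u_h = y_g x_h = x_h y_g$, the third desired equality.

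To finish with $z_g = x_h y_i$, I would apply Lemma \ref{L;Lemma1.2} to $\mathbf{z}$ to get $z_g = z_h z_i$, substitute $z_h = y_h$ and $z_i = x_h y_g$, and then simplify using that $y_h = y_g y_i$ (again Lemma \ref{L;Lemma1.2}, now applied to $\mathbf{y}$) together with $y_g^2$ being the identity in an elementary abelian $2$-group; this collapses $z_g = y_h x_h y_g = y_g y_i x_h y_g$ to $x_h y_i$.

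There is no real obstacle here: the argument is a short chain of applications of the two cited lemmas. The only point requiring attention is the index bookkeeping---matching $g, h, i$ to the correct slots when invoking Lemma \ref{L;Tripleproducts} (vi), and applying the ``product of two components equals the third'' identity in the intended coordinate each time.
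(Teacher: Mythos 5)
Your proposal is correct and follows essentially the same route as the paper: invoke Lemma \ref{L;Lemma1.6} to get a unique $\mathbf{u}\in\mathbf{y}R_g\cap\mathbf{x}R_h\cap\mathbf{z}R_i$, read off $z_i=u_hu_g=x_hy_g$ via Lemma \ref{L;Lemma1.2}, obtain $z_h=y_h$ from $\mathbf{z}\in\mathbf{y}R_h$, and derive $z_g$ from $z_g=z_iz_h$. The only (immaterial) difference is that the paper simplifies $z_g=x_hy_gy_h$ using $y_i=y_gy_h$ directly, whereas you use $y_h=y_gy_i$ and cancel $y_g^2$.
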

\begin{proof}
Since the $(\mathbf{y}, \mathbf{z})$-entry of $E_4^*A_gE_h^*A_iE_4^*$ is nonzero, $|\mathbf{y}R_g\cap\mathbf{x}R_h\cap \mathbf{z}R_i|=1$ by Lemma \ref{L;Lemma1.6}. So there is $\mathbf{u}=(u_1, u_2, u_3)\in X_G$ such that $\mathbf{y}R_g\cap\mathbf{x}R_h\cap \mathbf{z}R_i=\{\mathbf{u}\}$. So $u_g=y_g$, $u_h=x_h$, and $u_i=z_i$. As $G$ is an elementary abelian $2$-group and Lemma \ref{L;Lemma1.2} holds, notice that $z_i=u_i=u_hu_g=x_hy_g$. As $\mathbf{z}\in \mathbf{x}R_4\cap\mathbf{y}R_h$, note that $z_h=y_h$. As $G$ is an elementary abelian $2$-group and Lemma \ref{L;Lemma1.2} holds, notice that $z_g=z_iz_h$ and $y_i=y_gy_h$. Hence $z_g=z_iz_h=x_hy_gy_h=x_hy_i$. The desired lemma is proved.
\end{proof}
\begin{lem}\label{L;Lemma1.11}
Assume that $G$ is an elementary abelian $2$-group and $\{g,h,i\}=[1,3]$. Assume that $\mathbf{x}=(x_1, x_2, x_3)$, $\mathbf{y}=(y_1,y_2,y_3)\in\mathbf{x}R_4$, and $\mathbf{z}=(z_1, z_2, z_3)\in X_G$. If $z_g=x_hy_i$, $z_h=y_h$, and $z_i=x_hy_g$, then $\mathbf{z}\in \mathbf{x}R_4\cap \mathbf{y}R_h$ and the $(\mathbf{y},\mathbf{z})$-entry of $E_4^*A_gE_h^*A_iE_4^*$ is one.
\end{lem}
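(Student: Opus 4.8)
The plan is to prove this as the exact converse of Lemma \ref{L;Lemma1.10}, mirroring the way Lemma \ref{L;Lemma1.8} serves as the converse of Lemma \ref{L;Lemma1.7}. Throughout, I would repeatedly invoke Lemma \ref{L;Lemma1.2}: since $G$ is an elementary abelian $2$-group, each of $\mathbf{x}$, $\mathbf{y}$, $\mathbf{z}$ obeys the rule that the product of any two components equals the third, so in particular $x_g=x_hx_i$, $x_i=x_hx_g$, and $y_g=y_hy_i$. I would also record the standing hypothesis $\mathbf{y}\in\mathbf{x}R_4$, which supplies $x_j\neq y_j$ for every $j\in[1,3]$; these three input inequalities will drive every distinctness claim below.

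The first task is to establish $\mathbf{z}\in\mathbf{x}R_4\cap\mathbf{y}R_h$. For $\mathbf{z}\in\mathbf{x}R_4$ I must check $z_j\neq x_j$ for each $j$. Using $z_g=x_hy_i$ together with $x_g=x_hx_i$, the inequality $z_g\neq x_g$ reduces to $y_i\neq x_i$; using $z_i=x_hy_g$ together with $x_i=x_hx_g$, the inequality $z_i\neq x_i$ reduces to $y_g\neq x_g$; and $z_h=y_h\neq x_h$ is immediate. All three now follow from $\mathbf{y}\in\mathbf{x}R_4$. For $\mathbf{z}\in\mathbf{y}R_h$ the equality $z_h=y_h$ is a hypothesis, so only $\mathbf{z}\neq\mathbf{y}$ remains; comparing $z_g=x_hy_i$ with $y_g=y_hy_i$ shows $z_g\neq y_g$ precisely because $x_h\neq y_h$.

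The second task is to evaluate the matrix entry. Since both $\mathbf{y}$ and (now) $\mathbf{z}$ lie in $\mathbf{x}R_4$, Lemma \ref{L;Tripleproducts} (vi) identifies the $(\mathbf{y},\mathbf{z})$-entry of $E_4^*A_gE_h^*A_iE_4^*$ with $\overline{|\mathbf{y}R_g\cap\mathbf{x}R_h\cap\mathbf{z}R_i|}$, while Theorem \ref{T;Insectionnumbers} (iii) gives $|\mathbf{y}R_g\cap\mathbf{x}R_h|=1$ (as $g\neq h$ with $g,h\in[1,3]$), so this intersection has size at most one. It therefore suffices to exhibit a witness: letting $\{\mathbf{u}\}=\mathbf{y}R_g\cap\mathbf{x}R_h$, so that $u_g=y_g$ and $u_h=x_h$, Lemma \ref{L;Lemma1.2} forces $u_i=u_hu_g=x_hy_g=z_i$, and $u_h=x_h\neq y_h=z_h$ guarantees $\mathbf{u}\neq\mathbf{z}$. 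Hence $\mathbf{u}\in\mathbf{z}R_i$ as well, the triple intersection is nonempty, and the entry equals one.

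The argument is almost entirely bookkeeping, so I do not anticipate a genuine obstacle; the only point demanding care is to apply the elementary abelian $2$-group identities of Lemma \ref{L;Lemma1.2} consistently across all three tuples and to trace each ``$\neq$'' back to a single input inequality coming from $\mathbf{y}\in\mathbf{x}R_4$. The cleanest route keeps the construction and verification of the witness $\mathbf{u}$ logically separate from the two membership claims, exactly as in the proof of Lemma \ref{L;Lemma1.8}.
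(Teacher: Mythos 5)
Your proposal is correct and follows essentially the same route as the paper's proof: verify $\mathbf{z}\in\mathbf{x}R_4\cap\mathbf{y}R_h$ entry by entry using Lemma \ref{L;Lemma1.2} and $x_j\neq y_j$, then bound $|\mathbf{y}R_g\cap\mathbf{x}R_h\cap\mathbf{z}R_i|$ by $|\mathbf{y}R_g\cap\mathbf{x}R_h|=1$ and exhibit the witness $\mathbf{u}$ with $u_i=u_hu_g=x_hy_g=z_i$. The only cosmetic difference is that you certify $\mathbf{u}\neq\mathbf{z}$ via $u_h=x_h\neq y_h=z_h$ while the paper uses $u_g=y_g\neq z_g$; both are valid.
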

\begin{proof}
As $\mathbf{y}\in \mathbf{x}R_4$, observe that $x_j\neq y_j$ for any $j\in [1,3]$. As $G$ is an elementary abelian $2$-group and Lemma \ref{L;Lemma1.2} holds, notice that $x_g=x_hx_i$, $x_i=x_hx_g$, $y_g=y_hy_i$. So $z_g\!=x_hy_i\neq x_hx_i\!=x_g$, $z_h=y_h\neq x_h$, and $z_i=x_hy_g\neq x_hx_g=x_i$. Moreover, note that $z_g=x_hy_i\neq y_hy_i=y_g$ and $z_h=y_h$. Therefore $\mathbf{z}\in \mathbf{x}R_4\cap \mathbf{y}R_h$.

As $\mathbf{y},\mathbf{z}\in \mathbf{x}R_4$, observe that the $(\mathbf{y},\mathbf{z})$-entry of $E_4^*A_gE_h^*A_iE_4^*$ is $\overline{|\mathbf{y}R_g\cap\mathbf{x}R_h\cap\mathbf{z}R_i|}$ by Lemma \ref{L;Tripleproducts} (vi). As $|\mathbf{y}R_g\cap\mathbf{x}R_h\cap\mathbf{z}R_i|\leq |\mathbf{y}R_g\cap\mathbf{x}R_h|=1$ by Theorem \ref{T;Insectionnumbers} (iii), notice that the $(\mathbf{y}, \mathbf{z})$-entry of $E_4^*A_gE_h^*A_iE_4^*$ is one if and only if $\mathbf{y}R_g\cap\mathbf{x}R_h\cap\mathbf{z}R_i\neq \varnothing$. Assume that $\mathbf{u}=(u_1, u_2, u_3)\in \mathbf{y}R_g\cap\mathbf{x}R_h$. Therefore $u_g=y_g\neq z_g$ and $u_h=x_h$. As $G$ is an elementary abelian $2$-group and Lemma \ref{L;Lemma1.2} holds, $u_i=u_hu_g=x_hy_g=z_i$. Therefore $\mathbf{z}\in \mathbf{y}R_g\cap \mathbf{x}R_h\cap\mathbf{z}R_i$ as $u_i=z_i$ and $\mathbf{u}\neq \mathbf{z}$. Therefore the $(\mathbf{y}, \mathbf{z})$-entry of $E_4^*A_gE_h^*A_iE_4^*$ is one. The desired lemma is proved.
\end{proof}
\begin{lem}\label{L;Lemma1.12}
Assume that $G$ is an elementary abelian $2$-group and $\{g, h,i\}=[1,3]$. If $\mathbf{y}, \mathbf{z}\in \mathbf{x}R_4$ and $\mathbf{z}\in \mathbf{y}R_0\cup \mathbf{y}R_g\cup \mathbf{y}R_i$, the $(\mathbf{y},\mathbf{z})$-entry of $E_4^*A_gE_h^*A_iE_4^*$ is zero.
\end{lem}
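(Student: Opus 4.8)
The plan is to reduce everything to showing that a triple intersection of the form $\mathbf{y}R_g\cap\mathbf{x}R_h\cap\mathbf{z}R_i$ is empty, which forces the matrix entry to vanish. Write $\mathbf{x}=(x_1,x_2,x_3)$, $\mathbf{y}=(y_1,y_2,y_3)$, and $\mathbf{z}=(z_1,z_2,z_3)$. Since $\mathbf{y},\mathbf{z}\in\mathbf{x}R_4$, we have $x_j\neq y_j$ and $x_j\neq z_j$ for every $j\in[1,3]$, and Lemma \ref{L;Tripleproducts} (vi) identifies the $(\mathbf{y},\mathbf{z})$-entry of $E_4^*A_gE_h^*A_iE_4^*$ with $\overline{|\mathbf{y}R_g\cap\mathbf{x}R_h\cap\mathbf{z}R_i|}$. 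Hence it suffices to prove $\mathbf{y}R_g\cap\mathbf{x}R_h\cap\mathbf{z}R_i=\varnothing$ under each of the three hypotheses $\mathbf{z}\in\mathbf{y}R_0$, $\mathbf{z}\in\mathbf{y}R_g$, and $\mathbf{z}\in\mathbf{y}R_i$. As in Lemmas \ref{L;Lemma1.3} and \ref{L;Lemma1.10}, I would argue by contradiction, taking $\mathbf{u}=(u_1,u_2,u_3)$ in the intersection, so that $u_g=y_g$, $u_h=x_h$, and $u_i=z_i$, and then exploiting Lemma \ref{L;Lemma1.2}.

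The cases $\mathbf{z}\in\mathbf{y}R_0$ and $\mathbf{z}\in\mathbf{y}R_i$ can be treated together, since both give $z_i=y_i$ (in the former $\mathbf{z}=\mathbf{y}$). Then $u_i=z_i=y_i$, and Lemma \ref{L;Lemma1.2} yields $y_i=u_i=u_gu_h=y_gx_h$ together with $y_i=y_gy_h$; cancelling $y_g$ forces $x_h=y_h$, contradicting $x_h\neq y_h$. For the case $\mathbf{z}\in\mathbf{y}R_g$ I would instead use $z_g=y_g$: Lemma \ref{L;Lemma1.2} gives $z_i=u_i=u_gu_h=y_gx_h$ and also $z_i=z_gz_h=y_gz_h$, and cancelling $y_g$ forces $x_h=z_h$, contradicting $x_h\neq z_h$. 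In every case the intersection is empty, so the entry is zero.

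The whole argument is a short bookkeeping exercise with the single relation of Lemma \ref{L;Lemma1.2}, that in an elementary abelian $2$-group the product of any two coordinates of a point of $X_G$ recovers the third. I do not expect any substantive obstacle; the only point requiring care is to apply the relation in the coordinate that permits cancelling the common factor $y_g$, landing on a comparison of $h$-coordinates so that the contradiction is immediate from $x_h\neq y_h$ and $x_h\neq z_h$.
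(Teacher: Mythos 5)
Your argument is correct, but it takes a genuinely different route from the paper's. The paper's proof is a one-line intersection-number estimate: since $\mathbf{z}\in\mathbf{y}R_0\cup\mathbf{y}R_g\cup\mathbf{y}R_i$, it bounds $|\mathbf{y}R_g\cap\mathbf{x}R_h\cap\mathbf{z}R_i|\leq|\mathbf{y}R_g\cap\mathbf{z}R_i|=p_{gi}^{j}$ with $j\in\{0,g,i\}$, and all three of these intersection numbers vanish by Theorem \ref{T;Insectionnumbers} (i) and (ii); notably this shows the conclusion holds for the Cayley-table scheme of \emph{any} group, without the elementary abelian $2$-group hypothesis or Lemma \ref{L;Lemma1.2}. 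Your proof instead drops the middle factor $\mathbf{x}R_h$ only implicitly and runs a coordinate chase on a hypothetical $\mathbf{u}$ in the full triple intersection, using the relation of Lemma \ref{L;Lemma1.2} to cancel $y_g$ and contradict $x_h\neq y_h$ (cases $R_0$, $R_i$) or $x_h\neq z_h$ (case $R_g$); I checked the identities $u_i=u_gu_h=y_gx_h$, $y_i=y_gy_h$, and $z_i=z_gz_h=y_gz_h$ and they are all valid, so the contradictions are genuine. The paper's approach buys brevity and greater generality; yours is self-contained modulo Lemma \ref{L;Lemma1.2} and matches the style of the neighbouring Lemmas \ref{L;Lemma1.3} and \ref{L;Lemma1.10}, at the cost of invoking the group structure where it is not needed.
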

\begin{proof}
Notice that the $(\mathbf{y}, \mathbf{z})$-entry of $E_4^*A_gE_h^*A_iE_4^*$ is $\overline{|\mathbf{y}R_g\cap\mathbf{x}R_h\cap\mathbf{z}R_i|}$ by Lemma \ref{L;Tripleproducts} (vi).
As $\mathbf{z}\in \mathbf{y}R_0\cup \mathbf{y}R_g\cup \mathbf{y}R_i$, notice that $|\mathbf{y}R_g\cap\mathbf{x}R_h\cap\mathbf{z}R_i|\leq |\mathbf{y}R_g\cap \mathbf{z}R_i|=0$ by Theorem \ref{T;Insectionnumbers} (i) and (ii). The desired lemma thus follows.
\end{proof}
\begin{lem}\label{L;Lemma1.13}
Assume that $G$ is an elementary abelian $2$-group and $\{g, h,i\}=[1,3]$. Assume that $\mathbf{x}=(x_1, x_2, x_3)$, $\mathbf{y}, \mathbf{z}\in \mathbf{x}R_4$, where $\mathbf{y}=(y_1, y_2, y_3)$ and $\mathbf{z}=(z_1, z_2, z_3)$. The $(\mathbf{y}, \mathbf{z})$-entry of $E_4^*A_gE_h^*A_iE_4^*$ and the $(\mathbf{y},\mathbf{z})$-entry of $E_4^*A_iE_h^*A_gE_4^*$ are nonzero if and only if $z_g=x_hy_i$, $z_h=y_h$, and $z_i=x_hy_g$.
\end{lem}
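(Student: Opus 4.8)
The plan is to prove both implications separately, exploiting the fact that the three conditions $z_g=x_hy_i$, $z_h=y_h$, $z_i=x_hy_g$ are symmetric under interchanging the roles of $g$ and $i$. Consequently every statement I establish for $E_4^*A_gE_h^*A_iE_4^*$ has an automatic counterpart for $E_4^*A_iE_h^*A_gE_4^*$, obtained by feeding the triple $(i,h,g)$ into the same lemma, and I can quote the earlier computational lemmas twice with no extra work.

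For the reverse implication I would simply invoke Lemma \ref{L;Lemma1.11} twice. A direct application with the triple $(g,h,i)$ shows that the $(\mathbf{y},\mathbf{z})$-entry of $E_4^*A_gE_h^*A_iE_4^*$ equals one. Applying the same lemma to the triple $(i,h,g)$, whose hypothesis reads $z_g=x_hy_i$, $z_h=y_h$, $z_i=x_hy_g$ and hence coincides verbatim with our assumption, shows that the $(\mathbf{y},\mathbf{z})$-entry of $E_4^*A_iE_h^*A_gE_4^*$ also equals one. Both entries are therefore nonzero.

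For the forward implication I would first locate $\mathbf{z}$ relative to $\mathbf{y}$ inside the partition $X_G=\mathbf{y}R_0\cup\mathbf{y}R_g\cup\mathbf{y}R_h\cup\mathbf{y}R_i\cup\mathbf{y}R_4$. By Lemma \ref{L;Lemma1.12}, applied to $E_4^*A_gE_h^*A_iE_4^*$ and, with $g$ and $i$ interchanged, to $E_4^*A_iE_h^*A_gE_4^*$, if $\mathbf{z}$ lay in $\mathbf{y}R_0\cup\mathbf{y}R_g\cup\mathbf{y}R_i$ then both entries would vanish; since both are assumed nonzero, this forces $\mathbf{z}\in\mathbf{y}R_h\cup\mathbf{y}R_4$. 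Once $\mathbf{z}\in\mathbf{x}R_4\cap\mathbf{y}R_h$ is known, Lemma \ref{L;Lemma1.10} applied to $E_4^*A_gE_h^*A_iE_4^*$ delivers the three desired equalities immediately. Thus the entire forward direction reduces to excluding the possibility $\mathbf{z}\in\mathbf{y}R_4$.

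The main obstacle is precisely this exclusion step, and it is where both hypotheses must be used together. I would argue by contradiction: assume $\mathbf{z}\in\mathbf{x}R_4\cap\mathbf{y}R_4$ with both entries nonzero. Lemma \ref{L;Lemma1.7} applied to $E_4^*A_gE_h^*A_iE_4^*$ gives $z_i=x_hy_g$ together with $z_h\neq y_h$, while the same lemma applied to the triple $(i,h,g)$ gives $z_g=x_hy_i$. Since $G$ is an elementary abelian $2$-group, Lemma \ref{L;Lemma1.2} yields $z_h=z_gz_i$ and $y_h=y_iy_g$; substituting and cancelling $x_hx_h$ then gives $z_h=z_gz_i=x_hy_ix_hy_g=x_hx_hy_iy_g=y_iy_g=y_h$, which contradicts $z_h\neq y_h$. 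This rules out $\mathbf{z}\in\mathbf{y}R_4$, leaving $\mathbf{z}\in\mathbf{x}R_4\cap\mathbf{y}R_h$ and completing the argument via Lemma \ref{L;Lemma1.10} as above.
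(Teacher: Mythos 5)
Your proposal is correct and follows essentially the same route as the paper: Lemma \ref{L;Lemma1.11} (applied to both orderings of the triple) for the reverse implication, and for the forward implication Lemma \ref{L;Lemma1.12} to reduce to $\mathbf{z}\in\mathbf{y}R_h\cup\mathbf{y}R_4$, a double application of Lemma \ref{L;Lemma1.7} plus the identity $z_h=z_gz_i$ to rule out $\mathbf{z}\in\mathbf{y}R_4$, and Lemma \ref{L;Lemma1.10} to conclude. The explicit remark about the $g\leftrightarrow i$ symmetry is a nice clarification but does not change the argument.
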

\begin{proof}
Assume that the $(\mathbf{y},\mathbf{z})$-entries of $E_4^*A_gE_h^*A_iE_4^*$ and $E_4^*A_iE_h^*A_gE_4^*$ are nonzero. Notice that $\mathbf{z}\in \mathbf{y}R_4\cup \mathbf{y}R_h$ by Lemma \ref{L;Lemma1.12}. Assume further that $\mathbf{z}\in \mathbf{y}R_4$. Therefore $y_j\neq z_j$ for any $j\in [1,3]$. Apply Lemma \ref{L;Lemma1.7} to $E_4^*A_gE_h^*A_iE_4^*$, $E_4^*A_iE_h^*A_gE_4^*$ and notice that $z_i=x_hy_g$ and $z_g=x_hy_i$. Since $G$ is an elementary abelian $2$-group and Lemma \ref{L;Lemma1.2} holds, $z_h=z_iz_g=x_hy_gx_hy_i=x_hx_hy_gy_i=y_h$. This is absurd as $y_j\neq z_j$ for any $j\in [1,3]$. So $\mathbf{z}\in\mathbf{y}R_h$. So $z_g=x_hy_i$, $z_h=y_h$, $z_i=x_hy_g$ by Lemma \ref{L;Lemma1.10}.

Assume that $z_g=x_hy_i$, $z_h=y_h$, and $z_i=x_hy_g$. By Lemma \ref{L;Lemma1.11}, observe that the $(\mathbf{y},\mathbf{z})$-entries of $E_4^*A_gE_h^*A_iE_4^*$ and $E_4^*A_iE_h^*A_gE_4^*$ equal one. So the $(\mathbf{y},\mathbf{z})$-entries of $E_4^*A_gE_h^*A_iE_4^*$ and $E_4^*A_iE_h^*A_gE_4^*$ are nonzero. The desired lemma is proved.
\end{proof}
\begin{lem}\label{L;Lemma1.14}
Assume that $G$ is an elementary abelian $2$-group and $\{g,h,i\}=[1,3]$. Assume that $\mathbf{x}=(x_1, x_2, x_3)$, $\mathbf{y}, \mathbf{z}\in \mathbf{x}R_4$, where $\mathbf{y}=(y_1, y_2, y_3)$ and $\mathbf{z}=(z_1, z_2, z_3)$. The $(\mathbf{y}, \mathbf{z})$-entry of $E_4^*A_gE_h^*A_iE_4^*$ and the $(\mathbf{y},\mathbf{z})$-entry of $E_4^*A_hE_g^*A_iE_4^*$ are not all nonzero. The $(\mathbf{y},\mathbf{z})$-entry of $E_4^*A_gE_h^*A_iE_4^*$ and the $(\mathbf{y},\mathbf{z})$-entry of $E_4^*A_gE_i^*A_hE_4^*$ are not all nonzero.
\end{lem}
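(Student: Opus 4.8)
The plan is to prove each of the two non-coexistence statements by contradiction, in both cases funneling $\mathbf{z}$ into $\mathbf{y}R_4$ and then extracting two incompatible coordinate constraints on $\mathbf{z}$. Throughout I will invoke Lemmas \ref{L;Lemma1.12} and \ref{L;Lemma1.7} for each of the differently-ordered products by relabeling the triple $\{g,h,i\}$; this relabeling is the only bookkeeping that needs care. The decisive arithmetic input is the self-inverse property of the elementary abelian $2$-group together with Lemma \ref{L;Lemma1.2}, which says the product of any two components of a point of $X_G$ is its remaining component.

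For the first assertion, suppose both the $(\mathbf{y},\mathbf{z})$-entries of $E_4^*A_gE_h^*A_iE_4^*$ and $E_4^*A_hE_g^*A_iE_4^*$ were nonzero. Applying Lemma \ref{L;Lemma1.12} to $E_4^*A_gE_h^*A_iE_4^*$ excludes $\mathbf{z}\in\mathbf{y}R_0\cup\mathbf{y}R_g\cup\mathbf{y}R_i$, so $\mathbf{z}\in\mathbf{y}R_h\cup\mathbf{y}R_4$; applying the same lemma to $E_4^*A_hE_g^*A_iE_4^*$ (the case with $g$ and $h$ interchanged) excludes $\mathbf{z}\in\mathbf{y}R_0\cup\mathbf{y}R_h\cup\mathbf{y}R_i$, so $\mathbf{z}\in\mathbf{y}R_g\cup\mathbf{y}R_4$. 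Since $R_g$, $R_h$, $R_4$ are pairwise disjoint, intersecting these two possibilities forces $\mathbf{z}\in\mathbf{y}R_4$, so that $\mathbf{z}\in\mathbf{x}R_4\cap\mathbf{y}R_4$ and Lemma \ref{L;Lemma1.7} applies to both products. Reading off the formula for the last coordinate of $\mathbf{z}$ gives $z_i=x_hy_g$ from the first product and $z_i=x_gy_h$ from the second. Hence $x_hy_g=x_gy_h$, which in the elementary abelian $2$-group rearranges to $x_gx_h=y_gy_h$, and therefore $x_i=y_i$ by Lemma \ref{L;Lemma1.2}. This contradicts $\mathbf{y}\in\mathbf{x}R_4$.

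The second assertion is handled in the same way, with a different relabeling producing a constraint on a different coordinate. Assuming both $(\mathbf{y},\mathbf{z})$-entries of $E_4^*A_gE_h^*A_iE_4^*$ and $E_4^*A_gE_i^*A_hE_4^*$ were nonzero, Lemma \ref{L;Lemma1.12} (applied to the two products, the second with $h$ and $i$ interchanged) confines $\mathbf{z}$ to $\mathbf{y}R_h\cup\mathbf{y}R_4$ and to $\mathbf{y}R_i\cup\mathbf{y}R_4$ respectively, so again $\mathbf{z}\in\mathbf{y}R_4$. Lemma \ref{L;Lemma1.7} then yields $z_i=x_hy_g$ from the first product and $z_h=x_iy_g$ from the second. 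Multiplying these two and using Lemma \ref{L;Lemma1.2} for $\mathbf{z}$ and for $\mathbf{x}$ gives $z_g=z_hz_i=x_ix_h=x_g$, contradicting $\mathbf{z}\in\mathbf{x}R_4$.

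I expect the only genuine obstacle to be the careful tracking of the coordinate relabeling when invoking Lemmas \ref{L;Lemma1.12} and \ref{L;Lemma1.7} for the three differently-ordered products; once the correct formulas $z_i=x_hy_g$, $z_i=x_gy_h$, and $z_h=x_iy_g$ are secured, the contradictions follow immediately from the self-inverse property of the group and Lemma \ref{L;Lemma1.2}.
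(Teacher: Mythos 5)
Your proposal is correct and follows essentially the same route as the paper's proof: reduce to $\mathbf{z}\in\mathbf{y}R_4$ via Lemma \ref{L;Lemma1.12}, extract the coordinate formulas $z_i=x_hy_g$, $z_i=x_gy_h$, $z_h=x_iy_g$ from Lemma \ref{L;Lemma1.7} under the appropriate relabelings, and derive the contradictions $x_i=y_i$ and $z_g=x_g$ using Lemma \ref{L;Lemma1.2}. Your intersection argument for forcing $\mathbf{z}\in\mathbf{y}R_4$ is in fact slightly more explicit than the paper's one-line appeal to Lemma \ref{L;Lemma1.12}.
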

\begin{proof}
Assume that the $(\mathbf{y},\mathbf{z})$-entries of $E_4^*A_gE_h^*A_iE_4^*$ and $E_4^*A_hE_g^*A_iE_4^*$ are nonzero. Notice that $\mathbf{z}\in\mathbf{y}R_4$ by Lemma \ref{L;Lemma1.12}.
Apply Lemma \ref{L;Lemma1.7} to $E_4^*A_gE_h^*A_iE_4^*$ and notice that $z_i=x_hy_g$. Apply Lemma \ref{L;Lemma1.7} to $E_4^*A_hE_g^*A_iE_4^*$ and notice that $z_i=x_gy_h$. Hence  $x_gx_h=y_gy_h$. As $\mathbf{y}\in \mathbf{x}R_4$, $x_j\neq y_j$ for any $j\in [1,3]$. As $G$ is an elementary abelian $2$-group and Lemma \ref{L;Lemma1.2} holds, $x_i=x_gx_h$, $y_i=y_gy_h$, and $x_i=x_gx_h=y_gy_h=y_i$. It is impossible as $x_j\neq y_j$ for any $j\in [1,3]$. The first statement thus follows.

Assume that the $(\mathbf{y},\mathbf{z})$-entries of $E_4^*A_gE_h^*A_iE_4^*$ and $E_4^*A_gE_i^*A_hE_4^*$ are nonzero. Notice that $\mathbf{z}\in \mathbf{y}R_4$ by Lemma \ref{L;Lemma1.12}.
Apply Lemma \ref{L;Lemma1.7} to $E_4^*A_gE_h^*A_iE_4^*$ and notice that $z_i=x_hy_g$. Apply Lemma \ref{L;Lemma1.7} to $E_4^*A_gE_i^*A_hE_4^*$ and notice that $z_h=x_iy_g$. Since $\mathbf{z}\in\mathbf{x}R_4$, observe that $x_j\neq z_j$ for any $j\in [1,3]$. Since $G$ is an elementary abelian $2$-group and Lemma \ref{L;Lemma1.2} holds, notice that $z_g=z_iz_h$ and  $x_g=x_hx_i$. Furthermore, $z_g=z_iz_h=x_hy_gx_iy_g=x_hx_iy_gy_g=x_hx_i=x_g$. This is impossible as $x_j\neq z_j$ for any $j\in [1,3]$. So the second statement also follows. The desired lemma is proved.
\end{proof}
\begin{lem}\label{L;Lemma1.15}
Assume that $G$ is an elementary abelian $2$-group and $\{g, h,i\}=[1,3]$. Assume that $\mathbf{x}=(x_1, x_2, x_3)$, $\mathbf{y}, \mathbf{z}\in \mathbf{x}R_4$, where $\mathbf{y}=(y_1, y_2, y_3)$ and $\mathbf{z}=(z_1,z_2,z_3)$. The $(\mathbf{y},\mathbf{z})$-entry of $E_4^*A_gE_h^*A_iE_4^*$ and the $(\mathbf{y},\mathbf{z})$-entry of $E_4^*A_iE_g^*A_hE_4^*$ are nonzero if and only if $z_g=x_iy_h$, $z_h=x_gy_i$, and $z_i=x_hy_g$. The $(\mathbf{y},\mathbf{z})$-entry of $E_4^*A_gE_h^*A_iE_4^*$ and the $(\mathbf{y},\mathbf{z})$-entry of $E_4^*A_hE_i^*A_gE_4^*$ are nonzero if and only if $z_g=x_iy_h$, $z_h=x_gy_i$, and $z_i=x_hy_g$.
\end{lem}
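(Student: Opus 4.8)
The plan is to treat both statements in parallel, since they assert exactly the same coordinate conditions $z_g=x_iy_h$, $z_h=x_gy_i$, $z_i=x_hy_g$ and differ only in which cyclic rearrangement of $(g,h,i)$ accompanies $E_4^*A_gE_h^*A_iE_4^*$: the first statement pairs it with $E_4^*A_iE_g^*A_hE_4^*$ and the second with $E_4^*A_hE_i^*A_gE_4^*$. In each case the forward direction will localize $\mathbf{z}$ to $\mathbf{x}R_4\cap\mathbf{y}R_4$ and then read off coordinate equalities from Lemma~\ref{L;Lemma1.7}, while the backward direction will feed the coordinate data into Lemma~\ref{L;Lemma1.8}.

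For the forward direction of the first statement, I would first apply Lemma~\ref{L;Lemma1.12}. Since that lemma forces the $(\mathbf{y},\mathbf{z})$-entry of a product $E_4^*A_aE_b^*A_cE_4^*$ to vanish whenever $\mathbf{z}$ lies in $\mathbf{y}R_0\cup\mathbf{y}R_a\cup\mathbf{y}R_c$, nonvanishing for $E_4^*A_gE_h^*A_iE_4^*$ confines $\mathbf{z}$ to $\mathbf{y}R_h\cup\mathbf{y}R_4$, and nonvanishing for $E_4^*A_iE_g^*A_hE_4^*$ confines $\mathbf{z}$ to $\mathbf{y}R_g\cup\mathbf{y}R_4$; as $g\neq h$, the two constraints intersect in $\mathbf{y}R_4$, so $\mathbf{z}\in\mathbf{x}R_4\cap\mathbf{y}R_4$. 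This is precisely the hypothesis needed to invoke Lemma~\ref{L;Lemma1.7} on both products: from $E_4^*A_gE_h^*A_iE_4^*$ I get $z_i=x_hy_g$, and from $E_4^*A_iE_g^*A_hE_4^*$ (middle index $g$, outer indices $i$ and $h$) I get $z_h=x_gy_i$. The third equality is then free: since $\mathbf{z}\in X_G$ and $G$ is an elementary abelian $2$-group, Lemma~\ref{L;Lemma1.2} gives $z_g=z_hz_i=(x_gy_i)(x_hy_g)=x_gx_h\,y_iy_g=x_iy_h$, using $x_gx_h=x_i$ and $y_iy_g=y_h$. The second statement is identical after replacing $E_4^*A_iE_g^*A_hE_4^*$ by $E_4^*A_hE_i^*A_gE_4^*$; there the intersection of the Lemma~\ref{L;Lemma1.12} constraints is again $\mathbf{y}R_4$ (now because $h\neq i$), Lemma~\ref{L;Lemma1.7} yields $z_i=x_hy_g$ and $z_g=x_iy_h$, and $z_h=z_gz_i=x_gy_i$ follows the same way.

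For the backward direction I would invoke Lemma~\ref{L;Lemma1.8}, which certifies the relevant entry to be $1$ once the middle coordinate of $\mathbf{z}$ avoids a prescribed three-element set. Concretely, applying it to $E_4^*A_gE_h^*A_iE_4^*$ requires $z_i=x_hy_g$ (given) and $z_h\notin\{x_h,y_h,x_iy_g\}$; substituting $z_h=x_gy_i$ reduces these to $x_gy_i\neq x_h$, $x_gy_i\neq y_h$, and $x_gy_i\neq x_iy_g$, each of which collapses under Lemma~\ref{L;Lemma1.2} and $x_j\neq y_j$ (from $\mathbf{y}\in\mathbf{x}R_4$) to one of $y_i\neq x_i$, $x_g\neq y_g$, or $x_h\neq y_h$. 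The companion product is handled by the same lemma under the matching cyclic relabeling of $(g,h,i)$, producing three analogous inequalities. Hence both entries equal $1$, in particular both are nonzero, which is the required converse for each statement.

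The genuinely delicate part is not any single computation but the consistent bookkeeping of the index relabelings when transporting Lemmas~\ref{L;Lemma1.7}, \ref{L;Lemma1.8}, and~\ref{L;Lemma1.12} to the cyclically shifted products, together with the verification of the exclusion inequalities in the converse. The conceptual engine throughout is the abelian $2$-group closure relation $z_g=z_hz_i$ on $X_G$ from Lemma~\ref{L;Lemma1.2}: it is what promotes the two coordinate equalities supplied by Lemma~\ref{L;Lemma1.7} to the full symmetric triple, and it is what makes the two cyclic variants collapse to the same conclusion.
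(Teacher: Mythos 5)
Your proposal is correct and follows essentially the same route as the paper's proof: Lemma \ref{L;Lemma1.12} to force $\mathbf{z}\in\mathbf{y}R_4$, Lemma \ref{L;Lemma1.7} applied to each factor to extract two coordinate equalities, the relation $z_g=z_hz_i$ from Lemma \ref{L;Lemma1.2} to obtain the third, and Lemma \ref{L;Lemma1.8} with the exclusion checks for the converse. No gaps; your spelled-out intersection argument for the Lemma \ref{L;Lemma1.12} step is just a more explicit version of what the paper leaves implicit.
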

\begin{proof}
Assume that the $(\mathbf{y},\mathbf{z})$-entries of $E_4^*A_gE_h^*A_iE_4^*$ and $E_4^*A_iE_g^*A_hE_4^*$ are nonzero. Notice that $\mathbf{z}\in \mathbf{y}R_4$ by Lemma \ref{L;Lemma1.12}.
Apply Lemma \ref{L;Lemma1.7} to $E_4^*A_gE_h^*A_iE_4^*$ and notice that $z_i=x_hy_g$. Apply \ref{L;Lemma1.7} Lemma to $E_4^*A_iE_g^*A_hE_4^*$ and notice that $z_h=x_gy_i$. As $G$ is an elementary abelian $2$-group and Lemma \ref{L;Lemma1.2} holds, observe that $z_g=z_iz_h$, $y_h=y_gy_i$, and $x_i=x_hx_g$. Therefore $z_g=z_iz_h=x_hy_gx_gy_i=x_hx_gy_gy_i=x_iy_h$.

Assume that the $(\mathbf{y},\mathbf{z})$-entries of $E_4^*A_gE_h^*A_iE_4^*$ and $E_4^*A_hE_i^*A_gE_4^*$ are nonzero. Notice that $\mathbf{z}\in \mathbf{y}R_4$ by Lemma \ref{L;Lemma1.12}.
Apply Lemma \ref{L;Lemma1.7} to $E_4^*A_gE_h^*A_iE_4^*$ and notice that $z_i=x_hy_g$. Apply Lemma \ref{L;Lemma1.7} to $E_4^*A_hE_i^*A_gE_4^*$ and notice that $z_g=x_iy_h$. As $G$ is an elementary abelian $2$-group and Lemma \ref{L;Lemma1.2} holds, observe that $z_h=z_iz_g$, $x_g=x_hx_i$, and $y_i=y_gy_h$. Therefore $z_h=\!z_iz_g=\!x_hy_gx_iy_h=\!x_hx_iy_gy_h=x_gy_i$.

Assume that $z_g=x_iy_h$, $z_h=x_gy_i$, and $z_i=x_hy_g$. Since $\mathbf{y}\in \mathbf{x}R_4$, notice that $x_j\neq y_j$ for any $j\in [1,3]$. Since $G$ is an elementary abelian $2$-group and Lemma \ref{L;Lemma1.2} holds, notice that $x_g=x_hx_i$, $x_h=x_gx_i$, $y_h=y_gy_i$, and $y_i=y_gy_h$. Therefore $z_h=x_gy_i\neq x_gx_i=x_h$, $z_h=x_gy_i\neq y_gy_i=y_h$, and $z_h=x_gy_i=x_hx_iy_gy_h\neq x_iy_g$. So $z_h\in G\setminus\{x_h, y_h, x_iy_g\}$. Similarly, one can mimic the proof listed in the above four lines to show that $z_g\in G\setminus\{x_g, y_g, x_hy_i\}$ and $z_i\in G\setminus\{x_i, y_i,x_gy_h\}$. By Lemma \ref{L;Lemma1.8}, notice that the $(\mathbf{y},\mathbf{z})$-entries of $E_4^*A_gE_h^*A_iE_4^*$, $E_4^*A_iE_g^*A_hE_4^*$, and $E_4^*A_hE_i^*A_gE_4^*$ equal one. The desired lemma is proved.
\end{proof}
The following lemmas study the $\F$-linear combinations of some elements in $T$.
\begin{lem}\label{L;Lemma1.16}
Assume that $G$ is an elementary abelian $2$-group. Let $L$ denote an $\F$-linear combination of
$E_4^*A_1E_2^*A_3E_4^*$, $E_4^*A_1E_3^*A_2E_4^*$, $E_4^*A_2E_1^*A_3E_4^*$, $E_4^*A_2E_3^*A_1E_4^*$, $E_4^*A_3E_1^*A_2E_4^*$, and $E_4^*A_3E_2^*A_1E_4^*$ whose coefficients of elements are not all zeros. If $n>4$ and $L\in T_0$, then $L=cE_4^*A_4E_4^*$ for some nonzero element $c\in \F$.
\end{lem}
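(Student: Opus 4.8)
The plan is to first show that membership of $L$ in $T_0$ forces $L$ to be an $\F$-linear combination of the five matrices $E_4^*A_jE_4^*$ $(j\in[0,4])$, and then to recover every coefficient by inspecting entries indexed by pairs in $\mathbf{x}R_4$. Since each generator appearing in $L$ has the shape $E_4^*A_gE_h^*A_iE_4^*$, the idempotent relation $E_4^*E_4^*=E_4^*$ from \eqref{Eq;2} gives $E_4^*LE_4^*=L$, so $L\in E_4^*T_0E_4^*$. Using the $\F$-basis of $T_0$ from Lemma \ref{L;Tripleproducts} (iv) and $E_4^*E_a^*=\delta_{4a}E_4^*$, one computes $E_4^*T_0E_4^*=\langle\{E_4^*A_bE_4^*:b\in[0,4],\ p_{4b}^4\neq0\}\rangle_\F$; and $p_{4b}^4\neq0$ for every $b\in[0,4]$ by Theorem \ref{T;Insectionnumbers} (iii). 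Hence I may write $L=\sum_{j=0}^4c_jE_4^*A_jE_4^*$ with $c_j\in\F$, and the task becomes to prove $c_0=c_1=c_2=c_3=0$ together with $c_4\neq0$, after identifying each of the six coefficients of $L$ with $c_4$.

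The coefficients $c_0,\dots,c_3$ are killed by looking at the ``wrong'' relations. For the diagonal, each generator has zero $(\mathbf{y},\mathbf{y})$-entry for $\mathbf{y}\in\mathbf{x}R_4$ by Lemma \ref{L;Lemma1.12} (as $\mathbf{y}\in\mathbf{y}R_0$), while $\sum_jc_jE_4^*A_jE_4^*$ has diagonal entry $c_0$; thus $c_0=0$. Fix $t\in[1,3]$ and $\mathbf{y}\in\mathbf{x}R_4$. Among the six generators, the four whose first or last index equals $t$ vanish on $\mathbf{y}R_t$ by Lemma \ref{L;Lemma1.12}, while the two with middle index $t$ are nonzero on $\mathbf{x}R_4\cap\mathbf{y}R_t$ only at the single point described in Lemma \ref{L;Lemma1.10}. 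Since $|\mathbf{x}R_4\cap\mathbf{y}R_t|=n-3>1$ for $n>4$ by Theorem \ref{T;Insectionnumbers} (iii), I can choose $\mathbf{z}\in\mathbf{x}R_4\cap\mathbf{y}R_t$ away from that point; there $L$ has entry $0$, whereas $\sum_jc_jE_4^*A_jE_4^*$ has entry $c_t$. Hence $c_1=c_2=c_3=0$ and $L=c_4E_4^*A_4E_4^*$.

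It remains to identify $c_4$ with each coefficient of $L$, and this is the crux. Fix a generator $E_4^*A_gE_h^*A_iE_4^*$ and $\mathbf{y}\in\mathbf{x}R_4$. By Lemmas \ref{L;Lemma1.7} and \ref{L;Lemma1.8}, its nonzero entries with $\mathbf{z}\in\mathbf{y}R_4$ occur exactly at the $n-3$ points with $z_i=x_hy_g$ and $z_h\in G\setminus\{x_h,y_h,x_iy_g\}$. The key structural input is that on such $R_4$-entries no two generators from different cyclic classes $\{(1,2,3),(2,3,1),(3,1,2)\}$ and $\{(1,3,2),(3,2,1),(2,1,3)\}$ are simultaneously nonzero: each such cross-class pair differs by a transposition of positions (swap of ends, of the first two, or of the last two) and is therefore covered by Lemma \ref{L;Lemma1.13} or Lemma \ref{L;Lemma1.14}, whose coincidence locus lies outside $\mathbf{y}R_4$ or is empty. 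In contrast, two generators within the same cyclic class (related by a cyclic shift) are simultaneously nonzero only at the single common point $z_g=x_iy_h$, $z_h=x_gy_i$, $z_i=x_hy_g$ supplied by Lemma \ref{L;Lemma1.15}.

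Consequently, at any $\mathbf{z}\in\mathbf{y}R_4$ that is not that common point, at most one generator is nonzero. Exactly one of the $n-3$ support points of the fixed generator is the common point, so there remain $n-4\geq1$ points (here $n>4$ is used again) at which the fixed generator is the unique nonzero one; comparing the two expressions for $L$ at such a point gives $a_{ghi}=c_4$, where $a_{ghi}$ is the coefficient of $E_4^*A_gE_h^*A_iE_4^*$ in $L$. Running this over all six generators shows that every coefficient of $L$ equals $c_4$; since these coefficients are not all zero, $c_4\neq0$, and $L=c_4E_4^*A_4E_4^*$ as desired. I expect the main obstacle to be precisely this last entrywise separation of the six generators on the $R_4$-block: making it rigorous requires the full catalogue of pairwise coincidences in Lemmas \ref{L;Lemma1.13}--\ref{L;Lemma1.15} and the counting estimate $n-4>0$, which is exactly why the hypothesis $n>4$ is indispensable.
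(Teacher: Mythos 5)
Your proposal is correct and follows essentially the same route as the paper's proof: reduce $L$ to $\sum_{j}c_jE_4^*A_jE_4^*$, eliminate $c_0,c_1,c_2,c_3$ by inspecting the diagonal and the $\mathbf{y}R_t$-entries via Lemmas \ref{L;Lemma1.10}--\ref{L;Lemma1.13}, and then use the support description of Lemmas \ref{L;Lemma1.7}, \ref{L;Lemma1.8} together with the coincidence catalogue of Lemmas \ref{L;Lemma1.13}--\ref{L;Lemma1.15} and the bound $n>4$ to produce a point of $\mathbf{x}R_4\cap\mathbf{y}R_4$ where a single generator is the only contributor, forcing $c_4\neq 0$. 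The only difference is that you push the last step through all six generators to conclude that every coefficient equals $c_4$, a (correct) strengthening the paper does not need at this stage.
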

\begin{proof}
By combining the assumption $L\in T_0$, \eqref{Eq;2}, Lemma \ref{L;Tripleproducts} (iii), and Theorem \ref{T;Insectionnumbers} (iii), notice that there are $c_0, c_1, c_2, c_3, c_4\in \F$ such that $L=\sum_{j=0}^4c_jE_4^*A_jE_4^*$. By Lemma \ref{L;Lemma1.12} and the definition of $L$, all diagonal entries of $L$ equal zero. Notice that each diagonal entry of $\sum_{j=0}^4c_jE_4^*A_jE_4^*$ is either $c_0$ or zero. As $L=\sum_{j=0}^4c_jE_4^*A_jE_4^*$, notice that $c_0$ is zero. So $L=\sum_{j=1}^4c_jE_4^*A_jE_4^*$.

Pick $\mathbf{y}\in \mathbf{x}R_4$ and $i\in [1,3]$. Assume further that $c_i$ is nonzero. Let $n_1$ denote the number of all $\mathbf{u}$'s such that $\mathbf{u}\in \mathbf{x}R_4\cap \mathbf{y}R_i$ and the $(\mathbf{y},\mathbf{u})$-entry of $L$ is nonzero. Let $n_2$ denote the number of all $\mathbf{u}$'s such that $\mathbf{u}\in\mathbf{x}R_4\cap \mathbf{y}R_i$ and the $(\mathbf{y},\mathbf{u})$-entry of $\sum_{j=1}^4c_jE_4^*A_jE_4^*$ is nonzero. By combining Lemmas \ref{L;Lemma1.10}, \ref{L;Lemma1.11}, \ref{L;Lemma1.12}, \ref{L;Lemma1.13}, and the definition of $L$, notice that $n_1\leq 1$. As $c_i$ is nonzero, notice that $n_2$ is the number of all nonzero entries in the $\mathbf{y}$-row of $E_4^*A_iE_4^*$. Since $n>4$ and Theorem \ref{T;Insectionnumbers} (iii) holds, notice that $n_2=|\mathbf{x}R_4\cap\mathbf{y}R_i|=n-3>1\geq n_1$. As $n_1\neq n_2$, observe that $L\neq\sum_{j=1}^4c_jE_4^*A_jE_4^*$. It is an obvious contradiction. Hence $c_i$ is zero. As $i$ is chosen from $[1,3]$ arbitrarily, notice that $c_1, c_2, c_3$ equal zero. Hence $L=c_4E_4^*A_4E_4^*$. By the definition of $L$, there is no loss to assume that the coefficient of $E_4^*A_1E_2^*A_3E_4^*$ in $L$ is nonzero. Let $n_3$ denote the number of all $\mathbf{u}$'s such that $\mathbf{u}\in\mathbf{x}R_4\cap \mathbf{y}R_4$ and the $(\mathbf{y},\mathbf{u})$-entry of $E_4^*A_1E_2^*A_3E_4^*$ is nonzero. As $n>4$ and Lemmas \ref{L;Lemma1.7}, \ref{L;Lemma1.8} hold, note that $n_3=n-3>1$. By Lemmas \ref{L;Lemma1.13}, \ref{L;Lemma1.14}, \ref{L;Lemma1.15}, and the definition of $L$, there is $\mathbf{v}\in \mathbf{x}R_4\cap\mathbf{y}R_4$ such that the $(\mathbf{y}, \mathbf{v})$-entries of $L$ and $E_4^*A_1E_2^*A_3^*E_4^*$ are nonzero. So $L\neq O$ and $c_4$ is nonzero. The desired lemma is proved by setting $c=c_4$.
\end{proof}
\begin{lem}\label{L;Lemma1.17}
Assume that $G$ is an elementary abelian $2$-group and $n>4$. Let $L$ denote an $\F$-linear combination of $E_4^*A_1E_2^*A_3E_4^*$, $E_4^*A_1E_3^*A_2E_4^*$, $E_4^*A_2E_1^*A_3E_4^*$, $E_4^*A_2E_3^*A_1E_4^*$, $E_4^*A_3E_1^*A_2E_4^*$, and $E_4^*A_3E_2^*A_1E_4^*$ whose coefficients of elements are not all zeros. If the coefficient of $E_4^*A_1E_2^*A_3E_4^*$ in $L$ is zero or $n>8$, then $L\notin T_0$.
\end{lem}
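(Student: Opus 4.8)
The plan is to argue by contradiction. Suppose $L\in T_0$. Since the coefficients of $L$ are not all zero and $n>4$, Lemma \ref{L;Lemma1.16} forces $L=cE_4^*A_4E_4^*$ for some nonzero $c\in\F$. I would then fix $\mathbf{y}\in\mathbf{x}R_4$ and compare the $\mathbf{y}$-row of $L$ with that of $cE_4^*A_4E_4^*$ in the columns indexed by $\mathbf{x}R_4\cap\mathbf{y}R_4$. By Theorem \ref{T;Insectionnumbers} (iii) and Lemma \ref{L;Tripleproducts} (vi), the $\mathbf{y}$-row of $cE_4^*A_4E_4^*$ takes the constant value $c\neq0$ in each of these $|\mathbf{x}R_4\cap\mathbf{y}R_4|=n^2-6n+10$ columns; hence it suffices to exhibit a single column of $\mathbf{x}R_4\cap\mathbf{y}R_4$ at which $L$ vanishes.

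The first step is to record the overlap pattern of the six generators of $L$ restricted to this row. By Lemmas \ref{L;Lemma1.7} and \ref{L;Lemma1.8}, each of the six products $E_4^*A_aE_b^*A_cE_4^*$ has exactly $n-3$ nonzero entries (all equal to one) among the columns of $\mathbf{x}R_4\cap\mathbf{y}R_4$. The ``if and only if'' in Lemma \ref{L;Lemma1.15} shows that the three products $E_4^*A_1E_2^*A_3E_4^*$, $E_4^*A_2E_3^*A_1E_4^*$, $E_4^*A_3E_1^*A_2E_4^*$ are simultaneously nonzero at exactly one common column $\mathbf{w}$, and that every pair among them shares only $\mathbf{w}$; symmetrically the remaining three products share a single common column $\mathbf{w}'$, where a short computation with Lemma \ref{L;Lemma1.2} and $x_1\neq y_1$ gives $\mathbf{w}\neq\mathbf{w}'$. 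Lemmas \ref{L;Lemma1.13} and \ref{L;Lemma1.14} then show that no column of $\mathbf{x}R_4\cap\mathbf{y}R_4$ is shared between a product of the first triple and one of the second. It follows that each of the six generators is the \emph{only} nonzero one among the six at $n-4$ columns of $\mathbf{x}R_4\cap\mathbf{y}R_4$, and that by inclusion--exclusion the six generators are jointly nonzero at exactly $6(n-3)-4=6n-22$ of these columns, the columns $\mathbf{w}$ and $\mathbf{w}'$ each being counted three times among the six.

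With this skeleton the two hypotheses are handled separately. If the coefficient of $E_4^*A_1E_2^*A_3E_4^*$ in $L$ is zero, I would evaluate $L$ at one of the $n-4\geq1$ columns where $E_4^*A_1E_2^*A_3E_4^*$ is the only nonzero generator: there the $\mathbf{y}$-row entry of $L$ equals its coefficient of $E_4^*A_1E_2^*A_3E_4^*$, which is zero, whereas the corresponding entry of $cE_4^*A_4E_4^*$ is $c\neq0$. If instead $n>8$, then $(n-4)(n-8)>0$ gives $6n-22<n^2-6n+10$, so some column of $\mathbf{x}R_4\cap\mathbf{y}R_4$ is covered by none of the six generators, and at that column $L$ vanishes while $cE_4^*A_4E_4^*$ equals $c\neq0$. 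In either case $L\neq cE_4^*A_4E_4^*$, contradicting the conclusion of Lemma \ref{L;Lemma1.16}; hence $L\notin T_0$.

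The overlap bookkeeping is the bulk of the argument but is routine once the earlier lemmas are assembled. The one point that needs genuine care is the disjointness of the two triples inside $\mathbf{x}R_4\cap\mathbf{y}R_4$: Lemma \ref{L;Lemma1.13} does furnish common nonzero entries for the same-middle pairs, so I must verify that those entries lie in $\mathbf{y}R_b$ and therefore fall outside $\mathbf{y}R_4$, which is exactly what keeps the count equal to $6n-22$ and the two triples separated. Everything else is a direct comparison of entries supplied by Lemmas \ref{L;Lemma1.7}, \ref{L;Lemma1.8}, \ref{L;Lemma1.14}, and \ref{L;Lemma1.15}.
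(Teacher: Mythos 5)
Your proposal is correct. The skeleton is the same as the paper's: assume $L\in T_0$, invoke Lemma \ref{L;Lemma1.16} to get $L=cE_4^*A_4E_4^*$ with $c\neq 0$, and then compare the $\mathbf{y}$-row of $L$ with that of $cE_4^*A_4E_4^*$ on the columns in $\mathbf{x}R_4\cap\mathbf{y}R_4$. For the case $n>8$ your argument is essentially the paper's: the union of the six supports in that row has size at most $6(n-3)$ (you sharpen this to $6n-22$), which is less than $|\mathbf{x}R_4\cap\mathbf{y}R_4|=n^2-6n+10$ when $(n-4)(n-8)>0$, so $L$ vanishes somewhere that $cE_4^*A_4E_4^*$ does not.

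Where you genuinely diverge is the case in which the coefficient of $E_4^*A_1E_2^*A_3E_4^*$ vanishes. The paper disposes of it with the crude union bound $n_1\leq 5(n-3)<n^2-6n+10$, which needs only Lemmas \ref{L;Lemma1.7} and \ref{L;Lemma1.8} but is numerically tight at $n=8$ (it is $25<26$). You instead work out the full overlap pattern of the six generators from Lemmas \ref{L;Lemma1.13}, \ref{L;Lemma1.14}, and \ref{L;Lemma1.15} — the two cyclic triples each meeting in a single common column of $\mathbf{x}R_4\cap\mathbf{y}R_4$, with the cross-orbit pairs disjoint there because the Lemma \ref{L;Lemma1.13} coincidences land in $\mathbf{y}R_h$ — and then exhibit one of the $n-4\geq 1$ columns where $E_4^*A_1E_2^*A_3E_4^*$ is the unique supported generator, so that the entry of $L$ there is its (zero) coefficient while $cE_4^*A_4E_4^*$ gives $c\neq 0$. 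Your bookkeeping of the intersections checks out against the statements of those lemmas, and your flagged caveat (that the same-middle coincidences fall outside $\mathbf{y}R_4$) is exactly the right point to verify. The trade-off is that you import the three overlap lemmas where the paper needs none of them, in exchange for an argument that does not hinge on a one-unit numerical margin; both are valid.
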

\begin{proof}
Assume that $L\in T_0$. By Lemma \ref{L;Lemma1.16}, there is $c\in \F$ such that $L=cE_4^*A_4E_4^*$ and $c$ is nonzero. Pick $\mathbf{y}\in \mathbf{x}R_4$. Set $\{g ,h, i\}=[1,3]$. Let $n_{ghi}$ be the number of all $\mathbf{u}$'s such that $\mathbf{u}\in \mathbf{x}R_4\cap\mathbf{y}R_4$ and the $(\mathbf{y},\mathbf{u})$-entry of $E_4^*A_gE_h^*A_iE_4^*$ is nonzero. By combining the assumption that $G$ is an elementary abelian $2$-group, Lemmas \ref{L;Lemma1.7}, and \ref{L;Lemma1.8}, observe that $n_{ghi}=n-3$. Let $n_1$ denote the number of all $\mathbf{u}$'s such that $\mathbf{u}\in \mathbf{x}R_4\cap\mathbf{y}R_4$ and the $(\mathbf{y},\mathbf{u})$-entry of $L$ is nonzero. Let $n_2$ denote the number of all $\mathbf{u}$'s such that $\mathbf{u}\in \mathbf{x}R_4\cap\mathbf{y}R_4$ and the $(\mathbf{y},\mathbf{u})$-entry of $cE_4^*A_4E_4^*$ is nonzero. As $L=cE_4^*A_4E_4^*$ and Theorem \ref{T;Insectionnumbers} (iii) holds, $n_1=n_2=|\mathbf{x}R_4\cap\mathbf{y}R_4|=n^2-6n+10$.

Assume that the coefficient of $E_4^*A_1E_2^*A_3E_4^*$ in $L$ is zero. By the definitions of $L$, $n_1$, and $n_{ghi}$, note that $n_1\leq n_{132}+n_{213}+n_{231}+n_{312}+n_{321}=5(n-3)$. As $G$ is an elementary abelian $2$-group and $n>4$, $n_2=n^2-6n+10>5(n-3)\geq n_1$. This is a contradiction as $n_1=n_2$. So $L\notin T_0$. Assume that $n>8$. By the definitions $L$, $n_1$, and $n_{ghi}$, note that $n_1\leq n_{123}+n_{132}+n_{213}+n_{231}+n_{312}+n_{321}=6(n-3)$. As $n>8$, $n_2=n^2-6n+10>6(n-3)\geq n_1$. It is absurd as $n_1=n_2$. So $L\notin T_0$.
\end{proof}
The following three computational results investigate the cases $n=8$ and $n=4$.
\begin{lem}\label{L;Lemma1.18}
Assume that $G$ is an elementary abelian $2$-group. Let $L$ denote an $\F$-linear combination of
$E_4^*A_1E_2^*A_3E_4^*$, $E_4^*A_1E_3^*A_2E_4^*$, $E_4^*A_2E_1^*A_3E_4^*$, $E_4^*A_2E_3^*A_1E_4^*$, $E_4^*A_3E_1^*A_2E_4^*$, and $E_4^*A_3E_2^*A_1E_4^*$ whose coefficients of elements are not all zeros. If $p\neq2$ and $n>4$, then $L\notin T_0$. In particular, $L\notin T_0$ if $p\neq2$ and $n=8$.
\end{lem}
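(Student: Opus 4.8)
The plan is to assume $L\in T_0$ and extract a contradiction from $p\neq 2$. By Lemma \ref{L;Lemma1.16}, the assumption $L\in T_0$ already forces $L=cE_4^*A_4E_4^*$ for some nonzero $c\in\F$; writing $L=\sum_{(g,h,i)}c_{ghi}E_4^*A_gE_h^*A_iE_4^*$ with the sum over the six orderings $(g,h,i)$ of $[1,3]$, I would fix $\mathbf{y}\in\mathbf{x}R_4$, set $\mathbf{x}=(x_1,x_2,x_3)$ and $\mathbf{y}=(y_1,y_2,y_3)$, and examine the $\mathbf{y}$-row of $L$ on the set $\mathbf{x}R_4\cap\mathbf{y}R_4$. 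On this set every entry of $cE_4^*A_4E_4^*$ equals $c$ (by Lemma \ref{L;Tripleproducts} (vi) and Theorem \ref{T;Insectionnumbers} (iii)), so every such entry of $L$ must be $c$.

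First I would record, via Lemmas \ref{L;Lemma1.7} and \ref{L;Lemma1.8}, that for $\mathbf{z}\in\mathbf{y}R_4$ the term $E_4^*A_gE_h^*A_iE_4^*$ has a nonzero $(\mathbf{y},\mathbf{z})$-entry precisely when $z_i=x_hy_g$ and $z_h\notin\{x_h,y_h,x_iy_g\}$, and that this entry is then $1$; in particular each of the six terms has exactly $n-3$ nonzero entries on $\mathbf{x}R_4\cap\mathbf{y}R_4$. The crucial structural step is to control which terms can be simultaneously nonzero at a single $(\mathbf{y},\mathbf{z})$. Lemma \ref{L;Lemma1.14} rules out two orderings that differ by a transposition of the first two or of the last two indices, while Lemma \ref{L;Lemma1.13} rules out an ordering together with its reversal (this would force $z_h=y_h$, impossible for $\mathbf{z}\in\mathbf{y}R_4$). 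Hence the only way two of the six terms can be simultaneously nonzero is that they are cyclic rotations of one another, and then Lemma \ref{L;Lemma1.15} shows that all three cyclic rotations are simultaneously nonzero, at one uniquely determined point. Thus the six orderings split into the two cyclic classes $\{(1,2,3),(2,3,1),(3,1,2)\}$ and $\{(1,3,2),(3,2,1),(2,1,3)\}$; each class has a single common triple point in $\mathbf{x}R_4\cap\mathbf{y}R_4$, no point carries terms of both classes, and every remaining nonzero entry of a term is a single point carrying that one term alone.

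With this dichotomy the contradiction is immediate. Since $n>4$ gives $n-3\geq 2$ and each term meets its triple point exactly once, each term retains at least one single point; evaluating $L=cE_4^*A_4E_4^*$ there yields $c_{ghi}=c$ for every ordering. Evaluating instead at the triple point of the first cyclic class, where exactly $(1,2,3),(2,3,1),(3,1,2)$ are nonzero and each contributes $1$ (the other three vanishing by Lemmas \ref{L;Lemma1.13} and \ref{L;Lemma1.14}), gives $c_{123}+c_{231}+c_{312}=c$, hence $3c=c$ and $2c=0$. As $p\neq 2$ we have $\overline{2}\neq 0$ in $\F$, so $c=0$, contradicting $c\neq 0$. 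This establishes $L\notin T_0$ for all $n>4$; the case $n=8$ is the instance left open, the case $n>8$ being alternatively immediate from Lemma \ref{L;Lemma1.17}.

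I expect the main obstacle to be the multiplicity analysis of the second paragraph: verifying that no $(\mathbf{y},\mathbf{z})$ carries exactly two terms, that each cyclic class meets in a single well-defined point, and that at the triple point of one class the three orderings of the other class all vanish. This is exactly where Lemmas \ref{L;Lemma1.13}, \ref{L;Lemma1.14}, and \ref{L;Lemma1.15} must be assembled with care; the remaining counting and the final use of $p\neq 2$ are then routine.
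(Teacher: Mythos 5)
Your proof is correct, but it takes a genuinely different route from the paper's. The paper works on the sets $\mathbf{x}R_4\cap\mathbf{y}R_h$ with $h\in[1,3]$, where $cE_4^*A_4E_4^*$ vanishes: by Lemmas \ref{L;Lemma1.10}--\ref{L;Lemma1.13} the only terms of $L$ supported there are $E_4^*A_gE_h^*A_iE_4^*$ and its reversal $E_4^*A_iE_h^*A_gE_4^*$, each contributing $1$ at a single common point, so $c_{ghi}+c_{ihg}=0$; writing $M=c_{123}E_4^*A_1E_2^*A_3E_4^*+c_{231}E_4^*A_2E_3^*A_1E_4^*+c_{312}E_4^*A_3E_1^*A_2E_4^*$ this makes $L=M-M^T$ antisymmetric, while $L=cE_4^*A_4E_4^*$ is symmetric, whence $\overline{2}L=O$ and $p\neq 2$ kills $L$. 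You instead work entirely on the block $\mathbf{x}R_4\cap\mathbf{y}R_4$, where every entry of $cE_4^*A_4E_4^*$ equals $c$, and carry out the multiplicity analysis: your classification of coincidences is sound, since the five orderings other than $(g,h,i)$ split into the two transpositions handled by Lemma \ref{L;Lemma1.14}, the reversal excluded on $\mathbf{y}R_4$ by Lemma \ref{L;Lemma1.13}, and the two cyclic rotations governed by Lemma \ref{L;Lemma1.15}, which indeed forces all three rotations to coincide at one point whenever two of them do. Evaluating at a single point gives $c_{ghi}=c$ for every ordering (which is more information than the paper extracts) and at a triple point gives $\overline{3}c=c$, hence $\overline{2}c=0$ and $c=0$. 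The paper's argument is shorter and sidesteps the coincidence bookkeeping; yours is essentially the combinatorial analysis that the paper defers to Lemma \ref{L;Lemma1.19} for the case $p=2$, $n=8$, repurposed to settle $p\neq 2$, and it is complete as written.
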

\begin{proof}
Assume that $L\in T_0$ and $\{g, h,i\}=[1,3]$. Let $c_{ghi}$ denote the coefficient of $E_4^*A_gE_h^*A_iE_4^*$ in $L$. As $n>4$ and Lemma \ref{L;Lemma1.16} holds, notice that $L=cE_4^*A_4E_4^*$ for some nonzero element $c\in \F$. In particular, $L\neq O$. Pick $\mathbf{y}\in \mathbf{x}R_4$. By combining Lemmas \ref{L;Lemma1.10}, \ref{L;Lemma1.11}, \ref{L;Lemma1.12}, \ref{L;Lemma1.13}, and the definition of $L$, notice that there exists a unique $\mathbf{z}\in\mathbf{x}R_4$ such that $\mathbf{z}\in\mathbf{x}R_4\cap\mathbf{y}R_h$ and $(\mathbf{y},\mathbf{z})$-entry of $L$ is $c_{ghi}+c_{ihg}$. Since $h\neq 4$, the $(\mathbf{y},\mathbf{z})$-entry of $cE_4^*A_4E_4^*$ is zero. So $c_{ghi}+c_{ihg}$ is zero as $L=cE_4^*A_4E_4^*$. So $c_{132}=-c_{231}$, $c_{213}=-c_{312}$, and $c_{321}=-c_{123}$. Denote the sum of $c_{123}E_4^*A_1E_2^*A_3E_4^*$, $c_{231}E_4^*A_2E_3^*A_1E_4^*$, and $c_{312}E_4^*A_3E_1^*A_2E_4^*$ by $M$. Hence $L=M-M^T$ by \eqref{Eq;1}. As $L=cE_4^*A_4E_4^*$, notice that $L^T=L$ by \eqref{Eq;1}. Hence $L=L^T=M^T-M=-L$. So $\overline{2}L=O$. As $p\neq 2$, notice that $L=O$. This is a contradiction as $L\neq O$. Hence $L\notin T_0$. The first statement and the second statement are proved. We are done.
\end{proof}
\begin{lem}\label{L;Lemma1.19}
Assume that $G$ is an elementary abelian $2$-group. If $p=2$ and $n=8$, the sum of the matrices $E_4^*A_1E_2^*A_3E_4^*$, $E_4^*A_1E_3^*A_2E_4^*$, $E_4^*A_2E_1^*A_3E_4^*$, $E_4^*A_2E_3^*A_1E_4^*$, $E_4^*A_3E_1^*A_2E_4^*$, and $E_4^*A_3E_2^*A_1E_4^*$ equals $E_4^*A_4E_4^*$.
\end{lem}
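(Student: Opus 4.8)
The plan is to prove the asserted identity entrywise. Both the sum $L$ on the left and $E_4^*A_4E_4^*$ have all their possibly-nonzero entries in rows and columns indexed by $\mathbf{x}R_4$ (Lemma \ref{L;Tripleproducts} (vi), with $g=s=4$, together with the definition of $E_4^*$), so I would fix $\mathbf{y},\mathbf{z}\in\mathbf{x}R_4$ and compare the $(\mathbf{y},\mathbf{z})$-entries. The $(\mathbf{y},\mathbf{z})$-entry of $E_4^*A_4E_4^*$ is $1$ when $\mathbf{z}\in\mathbf{x}R_4\cap\mathbf{y}R_4$ and $0$ otherwise, while by Lemma \ref{L;Lemma1.6} each of the six summands contributes an entry in $\{0,1\}$. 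Since $p=2$, it therefore suffices to show that the number $N(\mathbf{y},\mathbf{z})$ of the six triples $(g,h,i)$ with $\{g,h,i\}=[1,3]$ whose $(\mathbf{y},\mathbf{z})$-entry equals $1$ is even when $\mathbf{z}\notin\mathbf{y}R_4$ and odd when $\mathbf{z}\in\mathbf{y}R_4$.

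First I would dispose of the case $\mathbf{z}\notin\mathbf{y}R_4$. If $\mathbf{z}=\mathbf{y}$, then every summand vanishes by Lemma \ref{L;Lemma1.12}, so $N(\mathbf{y},\mathbf{z})=0$. If $\mathbf{z}\in\mathbf{y}R_m$ for some $m\in[1,3]$, then Lemma \ref{L;Lemma1.12} forces the entry to be zero unless the middle index of the triple is $m$; exactly two of the six triples, namely $(g,m,i)$ and $(i,m,g)$ with $\{g,i\}=[1,3]\setminus\{m\}$, have middle index $m$, and Lemmas \ref{L;Lemma1.10} and \ref{L;Lemma1.11} show that both of these are nonzero at one and the same $\mathbf{z}$ (the one determined by $z_g=x_my_i$, $z_m=y_m$, $z_i=x_my_g$) and zero at every other $\mathbf{z}\in\mathbf{y}R_m$. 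Hence $N(\mathbf{y},\mathbf{z})\in\{0,2\}$ is even, as required.

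The substance is the case $\mathbf{z}\in\mathbf{x}R_4\cap\mathbf{y}R_4$. Here Lemmas \ref{L;Lemma1.7} and \ref{L;Lemma1.8} together show that, for $\mathbf{z}\in\mathbf{x}R_4\cap\mathbf{y}R_4$ (so that $z_h\neq x_h$ and $z_h\neq y_h$ come for free), the $(\mathbf{y},\mathbf{z})$-entry of $E_4^*A_gE_h^*A_iE_4^*$ equals $1$ precisely when $z_i=x_hy_g$ and $z_h\neq x_iy_g$. I would then reparametrise using the group elements $a_j=x_jy_j$ and $b_j=x_jz_j$ for $j\in[1,3]$. Since $\mathbf{y},\mathbf{z}\in\mathbf{x}R_4$ and $\mathbf{z}\in\mathbf{y}R_4$, while $G$ is an elementary abelian $2$-group, Lemma \ref{L;Lemma1.2} gives that $a_1,a_2,a_3$ are three distinct non-identity elements with product the identity, and likewise for $b_1,b_2,b_3$, with $a_j\neq b_j$ for all $j$. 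A short computation with $x_gx_h=x_i$ then rewrites the two conditions above as $a_g=b_i$ and $a_g\neq b_h$.

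Finally I would count the permutations $(g,h,i)$ of $[1,3]$ satisfying $a_g=b_i$ and $a_g\neq b_h$. The sets $A=\{a_1,a_2,a_3\}$ and $B=\{b_1,b_2,b_3\}$ are the non-identity elements of two order-$4$ subgroups of $G$; as $n=8$, any two such subgroups meet in a subgroup of order $2$, so $|A\cap B|=3$ when $A=B$ and $|A\cap B|=1$ when $A\neq B$. When $A\neq B$, the single common value forces a unique admissible pair $(g,i)$ with $g\neq i$ (using $a_j\neq b_j$), and the remaining index $h$ satisfies $a_g\neq b_h$ automatically, so $N(\mathbf{y},\mathbf{z})=1$. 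When $A=B$, the constraint $a_j\neq b_j$ makes the map sending $j$ to the index with $b_j=a_{\tau(j)}$ a fixed-point-free permutation $\tau$, i.e.\ a $3$-cycle, and one checks that $g=\tau(i)$, $h=\tau^2(i)$ gives an admissible triple for each of the three choices of $i$, so $N(\mathbf{y},\mathbf{z})=3$. In either case $N(\mathbf{y},\mathbf{z})$ is odd, which completes the comparison and the proof. The main obstacle is exactly this last count: the pairwise incompatibility results of Lemmas \ref{L;Lemma1.13}, \ref{L;Lemma1.14}, and \ref{L;Lemma1.15} only bound $N(\mathbf{y},\mathbf{z})$ from above, so the reparametrisation to $a_g=b_i$, $a_g\neq b_h$ and the subgroup-intersection dichotomy are what pin the parity down and, crucially, rule out the value $N(\mathbf{y},\mathbf{z})=0$.
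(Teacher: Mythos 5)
Your proof is correct, but it takes a genuinely different route from the paper's. The paper splits the six summands as $M+M^T$ with $M=E_4^*A_1E_2^*A_3E_4^*+E_4^*A_2E_3^*A_1E_4^*+E_4^*A_3E_1^*A_2E_4^*$, cancels the contributions on $\mathbf{y}R_1\cup\mathbf{y}R_2\cup\mathbf{y}R_3$ exactly as you do, and then finishes with a \emph{global} count: by Lemmas \ref{L;Lemma1.7} and \ref{L;Lemma1.8} each summand has $n-3=5$ ones in the $\mathbf{y}$-row over $\mathbf{x}R_4\cap\mathbf{y}R_4$, and the overlap Lemmas \ref{L;Lemma1.13}--\ref{L;Lemma1.15} show the three summands of $M$ share exactly one common position while $M$ and $M^T$ share none, giving $13+13=26=|\mathbf{x}R_4\cap\mathbf{y}R_4|$ nonzero entries; since all entries are $0$ or $1$ and the support is contained in $\mathbf{x}R_4\cap\mathbf{y}R_4$, the count forces equality with $E_4^*A_4E_4^*$ without ever identifying which summand covers which position. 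You instead argue \emph{locally}: you fix $\mathbf{z}\in\mathbf{x}R_4\cap\mathbf{y}R_4$, rewrite the nonvanishing condition of Lemmas \ref{L;Lemma1.7}/\ref{L;Lemma1.8} as $a_g=b_i$, $a_g\neq b_h$ for the order-$4$ subgroups $\{e\}\cup\{x_jy_j\}$ and $\{e\}\cup\{x_jz_j\}$, and use the fact that in a group of order $8$ two distinct order-$4$ subgroups meet in order $2$ to show the number of contributing triples is $1$ or $3$ --- hence odd, and in particular nonzero. Your correctly identified concern that the paper's incompatibility lemmas only bound the multiplicity from above is resolved in the paper by the exact $26=26$ count rather than by a pointwise argument; your version has the advantage of making transparent exactly where $n=8$ enters (and why the identity fails for $n>8$, where the two subgroups can meet trivially and the multiplicity can be $0$), at the cost of introducing the subgroup reparametrisation, which the paper does not need.
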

\begin{proof}
Set $M=E_4^*A_1E_2^*A_3E_4^*+E_4^*A_2E_3^*A_1E_4^*+E_4^*A_3E_1^*A_2E_4^*$. By \eqref{Eq;1}, it suffices to check that $M+M^T=E_4^*A_4E_4^*$. Pick $\mathbf{y}, \mathbf{z}\in \mathbf{x}R_4$. By combining Lemmas \ref{L;Lemma1.10}, \ref{L;Lemma1.11}, \ref{L;Lemma1.12}, and the definition of $M$, there are unique $\mathbf{z}^{(1)}, \mathbf{z}^{(2)}, \mathbf{z}^{(3)}\in \mathbf{x}R_4$ such that $(\mathbf{y},\mathbf{z}^{(j)})\in R_j$ and the $(\mathbf{y}, \mathbf{z}^{(j)})$-entry of $M$ is one for any $j\in [1,3]$. By combining Lemmas \ref{L;Lemma1.10}, \ref{L;Lemma1.11}, \ref{L;Lemma1.12}, \ref{L;Lemma1.13}, and the definition of $M$, notice that $\mathbf{z}^{(1)}, \mathbf{z}^{(2)}, \mathbf{z}^{(3)}$ are also the unique elements in $\mathbf{x}R_4$ such that $(\mathbf{y},\mathbf{z}^{(j)})\in R_j$ and the $(\mathbf{y}, \mathbf{z}^{(j)})$-entry of $M^T$ is one for any $j\in [1,3]$. As $p=2$ and Lemma \ref{L;Lemma1.12} holds, the $(\mathbf{y},\mathbf{z})$-entry of $M+M^T$ is nonzero only if $\mathbf{z}\in \mathbf{x}R_4\cap \mathbf{y}R_4$. Hence the $(\mathbf{y},\mathbf{z})$-entry of $E_4^*A_4E_4^*$ is nonzero if the $(\mathbf{y},\mathbf{z})$-entry of $M+M^T$ is nonzero.

As the adjacency $\F$-matrices and the dual $\F$-idempotents are all $\{0,1\}$-matrices, the assumption $p=2$ implies that all nonzero entries of $E_4^*A_1E_2^*A_3E_4^*$, $E_4^*A_2E_3^*A_1E_4^*$, $E_4^*A_3E_1^*A_2E_4^*$, and $E_4^*A_4E_4^*$ equal one. So all nonzero entries of $M$, $M^T$, and $M+M^T$ equal one. As $n=8$ and the $(\mathbf{y},\mathbf{z})$-entry of $M+M^T$ is nonzero only if $\mathbf{z}\in \mathbf{x}R_4\cap \mathbf{y}R_4$, notice that the number of all nonzero entries in the $\mathbf{y}$-row of $M+M^T$ is twenty-six by combining Lemmas \ref{L;Lemma1.7}, \ref{L;Lemma1.8}, \ref{L;Lemma1.12}, \ref{L;Lemma1.13}, \ref{L;Lemma1.14}, \ref{L;Lemma1.15}. Observe that the number of all nonzero entries in the $\mathbf{y}$-row of $E_4^*A_4E_4^*$ is $|\mathbf{x}R_4\cap \mathbf{y}R_4|$. It is also twenty-six as $n=8$ and $|\mathbf{x}R_4\cap \mathbf{y}R_4|=26$ by Theorem \ref{T;Insectionnumbers} (iii). Since all nonzero entries of $M+M^T$ and $E_4^*A_4E_4^*$ equal one, $\mathbf{z}$ is chosen from $\mathbf{x}R_4$ arbitrarily, and the $(\mathbf{y}, \mathbf{z})$-entry of $E_4^*A_4E_4^*$ is nonzero if the $(\mathbf{y},\mathbf{z})$-entry of $M+M^T$ is nonzero, notice that the $\mathbf{y}$-row of $E_4^*A_4E_4^*$ equals the $\mathbf{y}$-row of $M+M^T$ as the row vectors of matrices. As $\mathbf{y}$ is chosen from $\mathbf{x}R_4$ arbitrarily and Lemma \ref{L;Tripleproducts} (vi) holds, $M+M^T=E_4^*A_4E_4^*$. We are done.
\end{proof}
\begin{lem}\label{L;Lemma1.20}
Assume that $G$ is a Klein four group.
\begin{enumerate}[(i)]
\item [\em (i)] $E_4^*A_1E_3^*A_2E_4^*+E_4^*A_1E_2^*A_3E_4^*=E_4^*A_2E_4^*+E_4^*A_3E_4^*+E_4^*A_4E_4^*$.
\item [\em (ii)]$E_4^*A_2E_1^*A_3E_4^*+E_4^*A_1E_2^*A_3E_4^*=E_4^*A_1E_4^*+E_4^*A_2E_4^*+E_4^*A_4E_4^*$.
\item [\em (iii)]$E_4^*A_2E_3^*A_1E_4^*-E_4^*A_1E_2^*A_3E_4^*=E_4^*A_3E_4^*-E_4^*A_2E_4^*$.
\item [\em (iv)]$E_4^*A_3E_1^*A_2E_4^*-E_4^*A_1E_2^*A_3E_4^*=E_4^*A_1E_4^*-E_4^*A_2E_4^*$.
\item [\em (v)]$E_4^*A_3E_2^*A_1E_4^*+E_4^*A_1E_2^*A_3E_4^*=\overline{2}E_4^*A_2E_4^*+E_4^*A_4E_4^*$.
\end{enumerate}
\end{lem}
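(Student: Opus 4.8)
The plan is to fix $\mathbf{y}\in\mathbf{x}R_4$ and compare the $\mathbf{y}$-rows of the two sides of each of (i)--(v). Every matrix occurring in the statement is supported on $\mathbf{x}R_4\times\mathbf{x}R_4$ by Lemma~\ref{L;Tripleproducts}~(vi), so two such matrices coincide as soon as their $\mathbf{y}$-rows agree for all $\mathbf{y}\in\mathbf{x}R_4$. Since $G$ is a Klein four group we have $n=4$, and Theorem~\ref{T;Insectionnumbers}~(iii) gives $|\mathbf{x}R_4\cap\mathbf{y}R_j|=p_{4j}^4=n-3=1$ for each $j\in[1,3]$ together with $|\mathbf{x}R_4\cap\mathbf{y}R_4|=p_{44}^4=n^2-6n+10=2$. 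Hence $\mathbf{x}R_4$ is the disjoint union $\{\mathbf{y}\}\cup\{\mathbf{z}^{(1)}\}\cup\{\mathbf{z}^{(2)}\}\cup\{\mathbf{z}^{(3)}\}\cup\{\mathbf{z}^{(4a)},\mathbf{z}^{(4b)}\}$, where $\mathbf{z}^{(j)}$ is the unique element of $\mathbf{x}R_4\cap\mathbf{y}R_j$ and $\mathbf{z}^{(4a)},\mathbf{z}^{(4b)}$ are the two elements of $\mathbf{x}R_4\cap\mathbf{y}R_4$. Working with these six columns reduces each identity to an equality of indicator vectors on a six-element set.

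Next I would read off the $\mathbf{y}$-row of each triple product $E_4^*A_gE_h^*A_iE_4^*$ with $\{g,h,i\}=[1,3]$. By Lemma~\ref{L;Lemma1.12} its entries at $\mathbf{y},\mathbf{z}^{(g)},\mathbf{z}^{(i)}$ vanish; by Lemmas~\ref{L;Lemma1.10} and \ref{L;Lemma1.11} the entry at the relation-$h$ column $\mathbf{z}^{(h)}$ equals one; and by Lemmas~\ref{L;Lemma1.7} and \ref{L;Lemma1.8} exactly one of the two relation-$4$ columns carries a nonzero entry (necessarily one), namely the one fixed by $z_i=x_hy_g$ and $z_h\in G\setminus\{x_h,y_h,x_iy_g\}$, a singleton since $|\{x_h,y_h,x_iy_g\}|=3$. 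Thus each $\mathbf{y}$-row is the indicator of a two-element set $\{\mathbf{z}^{(h)},\mathbf{w}_{ghi}\}$ with $\mathbf{w}_{ghi}\in\{\mathbf{z}^{(4a)},\mathbf{z}^{(4b)}\}$.

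The crux is to pin down $\mathbf{w}_{ghi}$, i.e.\ to decide which relation-$4$ column each product hits, and here Lemmas~\ref{L;Lemma1.14} and \ref{L;Lemma1.15} carry the argument. Lemma~\ref{L;Lemma1.15} shows that the three cyclic rotations $E_4^*A_gE_h^*A_iE_4^*$, $E_4^*A_iE_g^*A_hE_4^*$, $E_4^*A_hE_i^*A_gE_4^*$ share a relation-$4$ column, while Lemma~\ref{L;Lemma1.14} shows that transposing two indices sends that column to the other of the two (they are never simultaneously nonzero at one relation-$4$ column). Relabelling so that $\mathbf{w}_{123}=\mathbf{z}^{(4a)}$, this forces $\mathbf{w}_{123}=\mathbf{w}_{231}=\mathbf{w}_{312}=\mathbf{z}^{(4a)}$ and $\mathbf{w}_{132}=\mathbf{w}_{213}=\mathbf{w}_{321}=\mathbf{z}^{(4b)}$. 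Recording the relation-$h$ columns $\mathbf{z}^{(2)},\mathbf{z}^{(3)},\mathbf{z}^{(1)},\mathbf{z}^{(3)},\mathbf{z}^{(1)},\mathbf{z}^{(2)}$ for the orderings $123,132,213,231,312,321$, the six $\mathbf{y}$-rows become the indicators of $\{\mathbf{z}^{(2)},\mathbf{z}^{(4a)}\}$, $\{\mathbf{z}^{(3)},\mathbf{z}^{(4b)}\}$, $\{\mathbf{z}^{(1)},\mathbf{z}^{(4b)}\}$, $\{\mathbf{z}^{(3)},\mathbf{z}^{(4a)}\}$, $\{\mathbf{z}^{(1)},\mathbf{z}^{(4a)}\}$, $\{\mathbf{z}^{(2)},\mathbf{z}^{(4b)}\}$.

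Finally, the $\mathbf{y}$-row of $E_4^*A_jE_4^*$ is the indicator of $\{\mathbf{z}^{(j)}\}$ for $j\in[1,3]$ and of $\{\mathbf{z}^{(4a)},\mathbf{z}^{(4b)}\}$ for $j=4$. With this support table each of (i)--(v) is a one-line check of indicator vectors on the six columns; for example (iii) reads $\{\mathbf{z}^{(3)},\mathbf{z}^{(4a)}\}-\{\mathbf{z}^{(2)},\mathbf{z}^{(4a)}\}=\{\mathbf{z}^{(3)}\}-\{\mathbf{z}^{(2)}\}$, and (v) reads $\{\mathbf{z}^{(2)},\mathbf{z}^{(4b)}\}+\{\mathbf{z}^{(2)},\mathbf{z}^{(4a)}\}=\overline{2}\{\mathbf{z}^{(2)}\}+\{\mathbf{z}^{(4a)},\mathbf{z}^{(4b)}\}$, the coefficient $\overline{2}$ absorbing the doubled $\mathbf{z}^{(2)}$ correctly even in characteristic two; the remaining identities are entirely analogous. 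As $\mathbf{y}\in\mathbf{x}R_4$ was arbitrary and all matrices vanish off $\mathbf{x}R_4\times\mathbf{x}R_4$, equality of every $\mathbf{y}$-row yields equality of matrices. I expect the only genuine obstacle to be the bookkeeping of the third paragraph: applying Lemmas~\ref{L;Lemma1.14} and \ref{L;Lemma1.15} carefully enough to assign the correct relation-$4$ column to each of the six products, after which the five identities drop out mechanically.
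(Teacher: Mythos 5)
Your proposal is correct and follows essentially the same route as the paper's proof: both rest on Lemmas \ref{L;Lemma1.7}, \ref{L;Lemma1.8}, \ref{L;Lemma1.10}, \ref{L;Lemma1.11}, \ref{L;Lemma1.12}, \ref{L;Lemma1.14}, \ref{L;Lemma1.15} together with the $n=4$ counts $|\mathbf{x}R_4\cap\mathbf{y}R_h|=1$ and $|\mathbf{x}R_4\cap\mathbf{y}R_4|=2$ to locate the unique relation-$h$ column and the unique relation-$4$ column of each triple product, with the cyclic/transposition dichotomy deciding which of the two relation-$4$ columns is hit. The paper merely packages the same bookkeeping by subtracting $E_4^*A_hE_4^*$ to form $M_{ghi}$ and deducing $M_{123}=M_{231}=M_{312}$ and $M_{132}+M_{123}=E_4^*A_4E_4^*$, whereas you tabulate the six supports explicitly; your support table and the resulting verifications of (i)--(v) are all accurate.
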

\begin{proof}
Assume that $\{g, h,i\}=[1,3]$ and $\mathbf{y},\mathbf{z}\in \mathbf{x}R_4$. By Lemma \ref{L;Lemma1.12}, notice that the $(\mathbf{y},\mathbf{z})$-entry of $E_4^*A_gE_h^*A_iE_4^*$ is nonzero only if $\mathbf{z}\in \mathbf{y}R_4\cup \mathbf{y}R_h$. According to Lemmas \ref{L;Lemma1.10} and \ref{L;Lemma1.11}, there is a unique $\mathbf{u}\in X_G$ such that $\mathbf{u}\in \mathbf{x}R_4\cap\mathbf{y}R_h$ and the $(\mathbf{y},\mathbf{u})$-entry of $E_4^*A_gE_h^*A_iE_4^*$ is one. Since $n=4$, $|\mathbf{x}R_4\cap\mathbf{y}R_h|=1$ by Theorem \ref{T;Insectionnumbers} (iii). Therefore $\mathbf{x}R_4\cap\mathbf{y}R_h=\{\mathbf{u}\}$. As the number of all nonzero entries in the $\mathbf{y}$-row of $E_4^*A_hE_4^*$ is $|\mathbf{x}R_4\cap\mathbf{y}R_h|$, observe that the $(\mathbf{y},\mathbf{u})$-entry of $E_4^*A_hE_4^*$ is the unique nonzero entry in the $\mathbf{y}$-row of $E_4^*A_hE_4^*$. Moreover, notice that the $(\mathbf{y},\mathbf{u})$-entry of $E_4^*A_hE_4^*$ is one. So the $(\mathbf{y},\mathbf{z})$-entry of $E_4^*A_gE_h^*A_iE_4^*-E_4^*A_hE_4^*$ is nonzero only if $\mathbf{z}\in \mathbf{y}R_4$. If $\mathbf{z}\in \mathbf{y}R_4$, notice that the $(\mathbf{y},\mathbf{z})$-entry of $E_4^*A_gE_h^*A_iE_4^*-E_4^*A_hE_4^*$ equals the $(\mathbf{y},\mathbf{z})$-entry of $E_4^*A_gE_h^*A_iE_4^*$. For convenience, write $M_{ghi}$ for $E_4^*A_gE_h^*A_iE_4^*-E_4^*A_hE_4^*$.

By combining the assumption $n=4$, Lemmas \ref{L;Lemma1.7}, and \ref{L;Lemma1.8}, notice that there is a unique $\mathbf{v}_{ghi}\in X_G$ such that $\mathbf{v}_{ghi}\in\mathbf{x}R_4\cap\mathbf{y}R_4$ and the $(\mathbf{y}, \mathbf{v}_{ghi})$-entry of $M_{ghi}$ is nonzero. Moreover, the $(\mathbf{y}, \mathbf{v}_{ghi})$-entry of
$M_{ghi}$ is one. As the $(\mathbf{y},\mathbf{z})$-entry of $M_{ghi}$ is nonzero only if $\mathbf{z}\in\mathbf{y}R_4$, note that the $(\mathbf{y},\mathbf{v}_{ghi})$-entry of $M_{ghi}$ is the unique nonzero entry in the $\mathbf{y}$-row of $M_{ghi}$. Moreover, the $(\mathbf{y},\mathbf{v}_{ghi})$-entry of $M_{ghi}$ equals one. In the following paragraphs, these facts shall be applied to $M_{123}$, $M_{132}$, $M_{213}$, $M_{231}$, $M_{312}$, $M_{321}$, $\mathbf{v}_{123}$, $\mathbf{v}_{132}$, $\mathbf{v}_{213}$, $\mathbf{v}_{231}$, $\mathbf{v}_{312}$, and $\mathbf{v}_{321}$.

According to Lemma \ref{L;Lemma1.15}, there is a unique $\mathbf{v}\in \mathbf{x}R_4$ such that the $(\mathbf{y},\mathbf{v})$-entries of $E_4^*A_1E_2^*A_3E_4^*$, $E_4^*A_2E_3^*A_1E_4^*$, and $E_4^*A_3E_1^*A_2E_4^*$ are nonzero. Hence $\mathbf{v}\in\mathbf{y}R_4$ by Lemma \ref{L;Lemma1.12}. Since the $(\mathbf{y},\mathbf{v})$-entry of $M_{ghi}$ equals the $(\mathbf{y},\mathbf{v})$-entry of $E_4^*A_gE_h^*A_iE_4^*$ and the $(\mathbf{y},\mathbf{v})$-entry of $M_{ghi}$ is the unique nonzero entry in the $\mathbf{y}$-row of $M_{ghi}$, note that
$\mathbf{v}_{123}=\mathbf{v}_{231}=\mathbf{v}_{312}=\mathbf{v}$. As the $(\mathbf{y}, \mathbf{v}_{ghi})$-entry of $M_{ghi}$ is one, notice that the $\mathbf{y}$-rows of $M_{123}$, $M_{231}$, $M_{312}$ are pairwise identical as the row vectors of matrices. So $M_{123}=M_{231}=M_{312}$ as $\mathbf{y}$ is chosen from $\mathbf{x}R_4$ arbitrarily and Lemma \ref{L;Tripleproducts} (vi) holds. So (iii) and (iv) are from the equality $M_{123}=M_{231}=M_{312}$.

(i), (ii), (v) are proved if $M_{132}+M_{123}=M_{213}+M_{123}=M_{321}+M_{123}=E_4^*A_4E_4^*$. As $M_{132}=M_{213}=M_{321}$ by the equality $M_{123}=M_{231}=M_{312}$, taking transposes of matrices, and \eqref{Eq;1}, (i), (ii), (v) can be proved if the equality $M_{132}+M_{123}=E_4^*A_4E_4^*$ is checked. Notice that $\mathbf{v}_{132}\neq \mathbf{v}_{123}$ as $\mathbf{v}_{132}, \mathbf{v}_{123}\in \mathbf{y}R_4$, the $(\mathbf{y}, \mathbf{v}_{132})$-entry of $M_{132}$ is nonzero, the $(\mathbf{y}, \mathbf{v}_{123})$-entry of $M_{123}$ is nonzero, the $(\mathbf{y},\mathbf{z})$-entry of $M_{ghi}$ is equal to the $(\mathbf{y},\mathbf{z})$-entry of $E_4^*A_gE_h^*A_iE_4^*$ if $\mathbf{z}\in \mathbf{y}R_4$, and Lemma \ref{L;Lemma1.14} holds. Recall that the $(\mathbf{y},\mathbf{v}_{132})$-entry of $M_{132}$ is the unique nonzero entry in the $\mathbf{y}$-row of $M_{132}$, the $(\mathbf{y},\mathbf{v}_{123})$-entry of $M_{123}$ is the unique nonzero entry in the $\mathbf{y}$-row of $M_{123}$, and these nonzero entries equal one. Notice that the $(\mathbf{y},\mathbf{v}_{132})$-entry and the $(\mathbf{y},\mathbf{v}_{123})$-entry are exactly the nonzero entries in the $\mathbf{y}$-row of $M_{132}+M_{123}$. Moreover, these nonzero entries in the $\mathbf{y}$-row of $M_{132}+M_{123}$ equal one. Notice that $E_4^*A_4E_4^*$ is a $\{0,1\}$-matrix and the number of all nonzero entries in the $\mathbf{y}$-row of $E_4^*A_4E_4^*$ is $|\mathbf{x}R_4\cap \mathbf{y}R_4|$. As $n=4$, $|\mathbf{x}R_4\cap \mathbf{y}R_4|=2$ by Theorem \ref{T;Insectionnumbers} (iii). As $\{\mathbf{v}_{132}, \mathbf{v}_{123}\}=\mathbf{x}R_4\cap \mathbf{y}R_4$, notice that the $(\mathbf{y},\mathbf{v}_{132})$-entry and the $(\mathbf{y}, \mathbf{v}_{123})$-entry are exactly the nonzero entries in the $\mathbf{y}$-row of $E_4^*A_4E_4^*$. So the $\mathbf{y}$-row of $M_{132}+M_{123}$ equals the $\mathbf{y}$-row of $E_4^*A_4E_4^*$ as the row vectors of matrices. Therefore $M_{132}+M_{123}=E_4^*A_4E_4^*$ as $\mathbf{y}$ is chosen from $\mathbf{x}R_4$ arbitrarily and Lemma \ref{L;Tripleproducts} (vi) holds. (i), (ii), (v) thus follow. We are done.
\end{proof}
We end this section by presenting the main result of this section.
\begin{nota}\label{N;Notation1.21}
Set $B_1=\{E_a^*A_bE_c^*: a, c\in [1,4], b\in [0,3], b\neq \max\{a, c\}, p_{cb}^a\neq 0\}$. Put $B_2=\{E_a^*JE_b^*: a, b\in [0,4]\}$. Set $B_3=\{E_4^*A_aE_4^*: a\in [0,3]\}\cup\{E_4^*JE_4^*\}$ and $B_4=\{E_a^*JE_b^*: a, b\in [0,4], p\mid k_ak_b\}$. Notice that $B_3\subseteq B_1\cup B_2$ by Lemma \ref{L;Tripleproducts} (iii) and Theorem \ref{T;Insectionnumbers} (iii). Use $B_5$ to denote the set containing precisely $E_1^*A_2E_3^*A_1E_4^*$, $E_2^*A_1E_3^*A_2E_4^*$, $E_3^*A_1E_2^*A_3E_4^*$, $E_4^*A_1E_3^*A_2E_1^*$, $E_4^*A_2E_3^*A_1E_2^*$, $E_4^*A_3E_2^*A_1E_3^*$. Write $B_6$ for $\{ E_4^*A_aE_b^*A_cE_4^*: \{a,b,c\}=[1,3]\}$. If $n>8$ or $p\neq 2$ and $n=8$, let $B$ denote the set $B_1\cup B_2\cup B_5\cup B_6$. If $p=2$ and $n=8$, set $B=(B_1\cup B_2\cup B_5\cup B_6)\setminus\{E_4^*A_1E_2^*A_3E_4^*\}$. If $n=4$, put $B=B_1\cup B_2\cup \{E_4^*A_1E_2^*A_3E_4^*\}$. Let $T_1=\langle B_1\cup B_2\cup B_5\cup B_6\rangle_\F\subseteq T$.
\end{nota}
\begin{cor}\label{C;Corollary1.22}
If $G$ is an elementary abelian $2$-group, then $B$ is an $\F$-basis of $T_1$.
\end{cor}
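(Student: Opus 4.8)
The plan is to establish separately that $B$ spans $T_1$ and that $B$ is $\F$-linearly independent. The organizing device for both halves is that every generator in $B_1\cup B_2\cup B_5\cup B_6$ has the shape $E_a^*ME_c^*$ for a single pair $(a,c)\in[0,4]^2$, and such a matrix is supported on the rows indexed by $\mathbf{x}R_a$ and the columns indexed by $\mathbf{x}R_c$. Matrices attached to distinct pairs $(a,c)$ thus have disjoint supports, so $T_1$ is the direct sum of its $(a,c)$-blocks and any vanishing $\F$-combination of elements of $B$ must vanish block-by-block. First I would record this reduction and sort $B$ accordingly: the blocks other than $(4,4)$, $(g,4)$, and $(4,g)$ with $g\in[1,3]$ receive only elements of $B_1\cup B_2$; each block $(g,4)$ and $(4,g)$ additionally receives a single element of $B_5$; and the block $(4,4)$ additionally receives all six elements of $B_6$.

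For the spanning half there is nothing to prove when $n>8$, or when $p\neq2$ and $n=8$, since then $B=B_1\cup B_2\cup B_5\cup B_6$ generates $T_1$ by definition (Notation~\ref{N;Notation1.21}). When $p=2$ and $n=8$, the unique generator missing from $B$ is $E_4^*A_1E_2^*A_3E_4^*$, and Lemma~\ref{L;Lemma1.19} writes it as $E_4^*A_4E_4^*$ minus the five remaining elements of $B_6$; since $E_4^*A_4E_4^*\in B_3\subseteq B_1\cup B_2$, this recovers it. When $n=4$, I would use Lemma~\ref{L;Lemma1.5} to rewrite each element of $B_5$ as $E_g^*A_4E_4^*$ or $E_4^*A_4E_g^*$, and Lemma~\ref{L;Lemma1.20} to rewrite each element of $B_6$ through $E_4^*A_1E_2^*A_3E_4^*$ and matrices $E_4^*A_bE_4^*$; expanding $J=\sum_jA_j$ and discarding the zero summands via Lemma~\ref{L;Tripleproducts}(iii) and Theorem~\ref{T;Insectionnumbers} then shows each $E_g^*A_4E_4^*$ and $E_4^*A_4E_g^*$ lies in $\langle B_1\cup B_2\rangle_\F$, so all discarded generators are recovered.

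For independence I would treat the blocks one at a time. In every block the $B_1$- and $B_2$-elements present span the $T_0$-part of the block: writing $E_a^*JE_c^*=\sum_{j=0}^4E_a^*A_jE_c^*$, deleting the summands that vanish by Lemma~\ref{L;Tripleproducts}(iii), and comparing with the intersection numbers of Theorem~\ref{T;Insectionnumbers}, one sees that the $J$-element together with the listed $B_1$-elements of the block is a basis of the block-component of $T_0$ (the point being that $J$ restores the basic triple product with $b=\max\{a,c\}$ or $b=4$ that $B_1$ omits), invoking the $T_0$-basis of Lemma~\ref{L;Tripleproducts}(iv). In each block $(g,4)$ and $(4,g)$ the extra element of $B_5$ is shown to lie outside $T_0$ by Lemma~\ref{L;Lemma1.4} (legitimate since $n>4$; for $n=4$ no element of $B_5$ belongs to $B$), hence it is independent of the $B_1\cup B_2$ matrices in its block.

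The main obstacle is the block $(4,4)$, where the elements of $B_6$ appearing in $B$ must be shown independent modulo the five-dimensional space $\langle E_4^*A_0E_4^*,\dots,E_4^*A_4E_4^*\rangle_\F$. Here I would argue by contradiction: a nontrivial relation produces a nonzero $\F$-combination $L$ of $B_6$-elements lying in $T_0$, which the lemmas of this section forbid. For $n>8$ this is ruled out by Lemma~\ref{L;Lemma1.17}; for $p\neq2$ and $n=8$ by Lemma~\ref{L;Lemma1.18}; for $p=2$ and $n=8$ the combination $L$ has zero coefficient on the excluded generator $E_4^*A_1E_2^*A_3E_4^*$, so the coefficient-zero clause of Lemma~\ref{L;Lemma1.17} applies; and for $n=4$, where only $E_4^*A_1E_2^*A_3E_4^*$ survives in $B$, Lemma~\ref{L;Lemma1.9} shows it is not in $T_0$. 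In each case the $B_6$-part of the relation must vanish, after which linear independence of the $E_4^*A_jE_4^*$ forces the remaining coefficients to vanish. The points needing care are the bookkeeping that the $J$-terms contribute precisely the extra basis vector needed to refill the $T_0$-part of each block, and the exact matching of each sub-case in the definition of $B$ to the computational lemma that disposes of it.
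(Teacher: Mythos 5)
Your proposal is correct and follows essentially the same route as the paper: spanning via Lemmas \ref{L;Lemma1.19}, \ref{L;Lemma1.20}, \ref{L;Lemma1.5} and the expansion $J=\sum_jA_j$, and independence via the $T_0$-basis of Lemma \ref{L;Tripleproducts} (iv) together with Lemmas \ref{L;Lemma1.4}, \ref{L;Lemma1.9}, \ref{L;Lemma1.17}, \ref{L;Lemma1.18} matched to the same sub-cases. Your explicit block-by-block organization by the pair $(a,c)$ is only a presentational refinement of what the paper does implicitly through the linear independence of $B_1\cup B_2$.
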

\begin{proof}
By \eqref{Eq;3} and Lemma \ref{L;Tripleproducts} (iii), note that each element in $B_1\cup B_2$ is an $\F$-linear combination of the elements in $\{E_a^*A_bE_c^*: a, b,c \in [0,4], p_{cb}^a\neq 0\}$. Furthermore, note that each element in $\{E_a^*A_bE_c^*: a, b,c \in [0,4], p_{cb}^a\neq 0\}$ is an $\F$-linear combination of the elements in $B_1\cup B_2$. As $|B_1\cup B_2|=|\{E_a^*A_bE_c^*: a, b,c \in [0,4], p_{cb}^a\neq 0\}|$, notice that $B_1\cup B_2$ is an $\F$-linearly independent subset of $T$ by Lemma \ref{L;Tripleproducts} (iv).

As $G$ is an elementary abelian $2$-group, notice that $T_1=\langle B\rangle_\F$ by combining the definitions of $B$, $T_1$, Lemmas \ref{L;Lemma1.19}, \ref{L;Lemma1.20}, \ref{L;Lemma1.5} (i), (ii), \eqref{Eq;3}, and Lemma \ref{L;Tripleproducts} (iii). So it suffices to check that $B$ is an $\F$-linearly independent subset of $T$. If $n>8$ or $p\neq 2$ and $n=8$, notice that $B$ is an $\F$-linearly independent subset of $T$ by combining \eqref{Eq;2}, Lemmas \ref{L;Lemma1.4}, \ref{L;Lemma1.17}, \ref{L;Lemma1.18}, and the fact that $B_1\cup B_2$ is an $\F$-linearly independent subset of $T$. If $p=2$ and $n=8$, notice that $B$ is an $\F$-linearly independent subset of $T$ by combining \eqref{Eq;2}, Lemmas \ref{L;Lemma1.4}, \ref{L;Lemma1.17}, and the fact that $B_1\cup B_2$ is an $\F$-linearly independent subset of $T$. If $n=4$, notice that $B$ is an $\F$-linearly independent subset of $T$ by combining \eqref{Eq;2}, Lemma \ref{L;Lemma1.9}, and the fact that $B_1\cup B_2$ is an $\F$-linearly independent subset of $T$. The desired corollary thus follows.
\end{proof}
\section{Some computational results: Part II}
In this section, we recall Notation \ref{N;Notation1.21} and list some computational results for the elements in $B$. For the case that $G$ is an elementary abelian $2$-group, we shall apply these computational results to deduce that $T$ has an $\F$-basis $B$. These computational results will also be used in the following sections. We first investigate the products of elements with at most one $E_4^*$.
\begin{lem}\label{L;Lemma2.1}
Assume that $\{g, h, i\}=[1,3]$. Then $E_g^*A_hE_i^*A_hE_g^*=E_g^*$.
\end{lem}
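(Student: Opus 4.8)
The plan is to prove the identity entrywise, comparing $E_g^*A_hE_i^*A_hE_g^*$ against $E_g^*=\sum_{\mathbf{y}\in\mathbf{x}R_g}E_{\mathbf{y}\mathbf{y}}$ directly. By Lemma \ref{L;Tripleproducts} (vi), the $(\mathbf{y},\mathbf{z})$-entry of $E_g^*A_hE_i^*A_hE_g^*$ vanishes unless $\mathbf{y},\mathbf{z}\in\mathbf{x}R_g$, in which case it equals $\overline{|\mathbf{y}R_h\cap\mathbf{x}R_i\cap\mathbf{z}R_h|}$. Since $E_g^*$ is exactly the diagonal matrix supported on $\mathbf{x}R_g$, it therefore suffices to show that for $\mathbf{y},\mathbf{z}\in\mathbf{x}R_g$ this triple-intersection count equals $1$ when $\mathbf{y}=\mathbf{z}$ and $0$ when $\mathbf{y}\neq\mathbf{z}$. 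The argument will use only the intersection numbers from Theorem \ref{T;Insectionnumbers} and the symmetry of $S$, so no special hypothesis on $G$ is needed beyond those already in force.

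First I would handle the diagonal. When $\mathbf{y}=\mathbf{z}$ the count collapses to $|\mathbf{y}R_h\cap\mathbf{x}R_i|$. Because $(\mathbf{x},\mathbf{y})\in R_g$ and, by the symmetry $R_h=R_{h'}$, an element $\mathbf{w}\in\mathbf{y}R_h\cap\mathbf{x}R_i$ is precisely a point with $(\mathbf{x},\mathbf{w})\in R_i$ and $(\mathbf{w},\mathbf{y})\in R_h$, this count is the structure constant $p_{ih}^g$. As $\{g,h,i\}=[1,3]$, Theorem \ref{T;Insectionnumbers} (ii) gives $p_{ih}^g=1$, so every diagonal entry equals $\overline{1}=1$, matching $E_g^*$.

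The substantive step is the off-diagonal vanishing. Suppose $\mathbf{y}\neq\mathbf{z}$ in $\mathbf{x}R_g$ and some $\mathbf{w}$ lies in $\mathbf{y}R_h\cap\mathbf{x}R_i\cap\mathbf{z}R_h$. Then $(\mathbf{x},\mathbf{w})\in R_i$, while, using symmetry again, both $\mathbf{y}$ and $\mathbf{z}$ belong to $\mathbf{x}R_g\cap\mathbf{w}R_h$. The size of $\mathbf{x}R_g\cap\mathbf{w}R_h$ is the number of $\mathbf{u}$ with $(\mathbf{x},\mathbf{u})\in R_g$ and $(\mathbf{u},\mathbf{w})\in R_h$, which is $p_{gh}^i$, and Theorem \ref{T;Insectionnumbers} (ii) once more gives $p_{gh}^i=1$ since $\{g,h,i\}=[1,3]$. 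Hence $\mathbf{y}=\mathbf{z}$, a contradiction, so the triple intersection is empty and the entry is $0$. Combining the two cases yields $E_g^*A_hE_i^*A_hE_g^*=E_g^*$. The only point requiring care is keeping the three \emph{directions} of each relation straight when converting the intersections of $\mathbf{y}R_h$, $\mathbf{x}R_i$, and $\mathbf{z}R_h$ into structure constants; this is where the symmetry of $S$ is used repeatedly, and once that bookkeeping is fixed both counts collapse to $1$ and the conclusion is immediate.
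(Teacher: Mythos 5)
Your proof is correct and follows essentially the same entrywise strategy as the paper: both compute the $(\mathbf{y},\mathbf{z})$-entry as $\overline{|\mathbf{y}R_h\cap\mathbf{x}R_i\cap\mathbf{z}R_h|}$ via Lemma \ref{L;Tripleproducts} (vi) and evaluate the diagonal with $p_{ih}^g=1$. The only (harmless) divergence is in the off-diagonal case, where the paper first notes $\mathbf{z}\in\mathbf{y}R_0\cup\mathbf{y}R_g$ and kills the $R_g$ case with $p_{hh}^g=0$, while you instead observe that any common point $\mathbf{w}$ would place both $\mathbf{y}$ and $\mathbf{z}$ in $\mathbf{x}R_g\cap\mathbf{w}R_h$, a set of size $p_{gh}^i=1$, forcing $\mathbf{y}=\mathbf{z}$; both routes rest on the same entries of Theorem \ref{T;Insectionnumbers} (ii).
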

\begin{proof}
Pick $\mathbf{y}, \mathbf{z}\in \mathbf{x}R_g$. The $(\mathbf{y}, \mathbf{z})$-entry of $E_g^*A_hE_i^*A_hE_g^*$ is $\overline{|\mathbf{y}R_h\cap\mathbf{x}R_i\cap \mathbf{z}R_h|}$ by Lemma \ref{L;Tripleproducts} (vi). Note that $\mathbf{z}\in \mathbf{y}R_g\cup \mathbf{y}R_0$ by Theorem \ref{T;Insectionnumbers} (i), (ii), and (iii). Assume that $\mathbf{z}\in \mathbf{y}R_g$.
Then $|\mathbf{y}R_h\cap\mathbf{x}R_i\cap \mathbf{z}R_h|\leq |\mathbf{y}R_h\cap \mathbf{z}R_h|=0$ by Theorem \ref{T;Insectionnumbers} (ii). So the $(\mathbf{y},\mathbf{z})$-entry of $E_g^*A_hE_i^*A_hE_g^*$ is zero. Assume that $\mathbf{z}\in \mathbf{y}R_0$. As $|\mathbf{y}R_h\cap \mathbf{x}R_i|=1$ by Theorem \ref{T;Insectionnumbers} (ii), notice that $|\mathbf{y}R_h\cap\mathbf{x}R_i\cap \mathbf{z}R_h|=|\mathbf{y}R_h\cap\mathbf{x}R_i|=1$. Hence the $(\mathbf{y}, \mathbf{z})$-entry of $E_g^*A_hE_i^*A_hE_g^*$ is one. As $\mathbf{y}, \mathbf{z}$ are chosen from $\mathbf{x}R_g$ arbitrarily, notice that $E_g^*A_hE_i^*A_hE_g^*=E_g^*$ by Lemma \ref{L;Tripleproducts} (vi). The desired lemma is proved.
\end{proof}
\begin{lem}\label{L;Lemma2.2}
Assume that $G$ is an elementary abelian $2$-group and $\{g ,h,i\}=[1,3]$. Then $E_g^*A_hE_i^*A_gE_h^*=E_g^*A_iE_h^*$.
\end{lem}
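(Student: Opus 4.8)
The plan is to prove the identity entrywise, comparing the $(\mathbf{y},\mathbf{z})$-entries of the two sides. First I would observe that both $E_g^*A_hE_i^*A_gE_h^*$ and $E_g^*A_iE_h^*$ vanish outside the rows indexed by $\mathbf{x}R_g$ and the columns indexed by $\mathbf{x}R_h$: for the left-hand side this is Lemma \ref{L;Tripleproducts} (vi), and for the right-hand side it is immediate from $E_g^*A_iE_h^*=\sum_{\mathbf{y}\in\mathbf{x}R_g,\ \mathbf{z}\in\mathbf{x}R_h}(A_i)_{\mathbf{y}\mathbf{z}}E_{\mathbf{y}\mathbf{z}}$. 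Hence it suffices to fix $\mathbf{y}=(y_1,y_2,y_3)\in\mathbf{x}R_g$ and $\mathbf{z}=(z_1,z_2,z_3)\in\mathbf{x}R_h$ and show the two entries agree; note that $y_g=x_g$, $z_h=x_h$, and $y_h\neq x_h=z_h$, so in particular $\mathbf{y}\neq\mathbf{z}$.

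For the left-hand side, Lemma \ref{L;Tripleproducts} (vi) gives the $(\mathbf{y},\mathbf{z})$-entry as $\overline{|\mathbf{y}R_h\cap\mathbf{x}R_i\cap\mathbf{z}R_g|}$, and the key computation is to pin down this set. Since $(\mathbf{x},\mathbf{y})\in R_g$ and $\{g,h,i\}=[1,3]$, Theorem \ref{T;Insectionnumbers} (ii) yields $|\mathbf{y}R_h\cap\mathbf{x}R_i|=1$; let $\mathbf{u}=(u_1,u_2,u_3)$ be its unique element, so $u_h=y_h$ and $u_i=x_i$, whence $u_g=u_hu_i=y_hx_i$ by Lemma \ref{L;Lemma1.2}. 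Because $u_i=x_i\neq z_i$ (as $\mathbf{z}\in\mathbf{x}R_h$ and $i\neq h$), we have $\mathbf{u}\neq\mathbf{z}$, so $\mathbf{u}\in\mathbf{z}R_g$ exactly when $u_g=z_g$. Therefore $|\mathbf{y}R_h\cap\mathbf{x}R_i\cap\mathbf{z}R_g|$ equals $1$ if $z_g=y_hx_i$ and equals $0$ otherwise.

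On the right-hand side, the $(\mathbf{y},\mathbf{z})$-entry of $E_g^*A_iE_h^*$ is $1$ precisely when $\mathbf{z}\in\mathbf{y}R_i$, i.e.\ when $z_i=y_i$. It thus remains to check that, under our standing hypotheses, the condition $z_g=y_hx_i$ is equivalent to $z_i=y_i$; this is the main (and essentially the only) obstacle, a short coordinate computation in the elementary abelian $2$-group $G$. Using Lemma \ref{L;Lemma1.2} repeatedly together with $y_g=x_g$ and $z_h=x_h$, one rewrites $y_hx_i=y_h(x_gx_h)=(y_gy_h)x_h=y_ix_h$ and $z_g=z_hz_i=x_hz_i$, so that $z_g=y_hx_i$ collapses to $x_hz_i=x_hy_i$, that is, $z_i=y_i$. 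Since the two sides then have identical $(\mathbf{y},\mathbf{z})$-entries for every admissible pair $\mathbf{y},\mathbf{z}$ and both vanish elsewhere, the matrices coincide, giving $E_g^*A_hE_i^*A_gE_h^*=E_g^*A_iE_h^*$.
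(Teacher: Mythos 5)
Your proof is correct and follows essentially the same route as the paper: both compute the $(\mathbf{y},\mathbf{z})$-entry of the left side as $\overline{|\mathbf{y}R_h\cap\mathbf{x}R_i\cap\mathbf{z}R_g|}$ via Lemma \ref{L;Tripleproducts} (vi) and then use Theorem \ref{T;Insectionnumbers} (ii) together with Lemma \ref{L;Lemma1.2} to decide when that intersection is nonempty. The only (cosmetic) difference is that the paper splits into the cases $\mathbf{z}\in\mathbf{y}R_4$ and $\mathbf{z}\in\mathbf{y}R_i$, whereas you pin down the unique $\mathbf{u}\in\mathbf{y}R_h\cap\mathbf{x}R_i$ first and reduce everything to the single coordinate identity $z_g=y_hx_i\iff z_i=y_i$.
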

\begin{proof}
Assume that $\mathbf{x}=(x_1, x_2, x_3)$. Let $\mathbf{y}\in \mathbf{x}R_g$ and $\mathbf{z}\in \mathbf{x}R_h$, where $\mathbf{y}\!=\!(y_1, y_2, y_3)$ and $\mathbf{z}=(z_1, z_2, z_3)$.
Then the $(\mathbf{y}, \mathbf{z})$-entry of $E_g^*A_hE_i^*A_gE_h^*$ is $\overline{|\mathbf{y}R_h\cap\mathbf{x}R_i\cap \mathbf{z}R_g|}$ by Lemma \ref{L;Tripleproducts} (vi). As $\mathbf{y}\in \mathbf{x}R_g$ and $\mathbf{z}\in\mathbf{x}R_h$, notice that $\mathbf{z}\in \mathbf{y}R_4\cup \mathbf{y}R_i$ by Theorem \ref{T;Insectionnumbers} (i), (ii), (iii). Moreover, observe that $x_g=y_g$, $x_h\neq y_h$, and $x_h=z_h$.

Assume that $\mathbf{z}\in \mathbf{y}R_4$. Therefore $y_j\neq z_j$ for any $j\in [1,3]$. Assume further that $\mathbf{y}R_h\cap\mathbf{x}R_i\cap \mathbf{z}R_g\neq \varnothing$. Pick $\mathbf{u}=(u_1, u_2, u_3)\in \mathbf{y}R_h\cap\mathbf{x}R_i\cap \mathbf{z}R_g$ and observe that $u_h=y_h, u_i=x_i, u_g=z_g$. Since $G$ is an elementary abelian $2$-group and Lemma \ref{L;Lemma1.2} holds, observe that $x_gy_i=y_gy_i=y_h=u_h=u_iu_g=x_iz_g=x_iz_hz_i=x_ix_hz_i=x_gz_i$. Therefore $y_i=z_i$, which contradicts the fact that $y_j\neq z_j$ for any $j\in [1,3]$. Therefore $\mathbf{y}R_h\cap\mathbf{x}R_i\cap \mathbf{z}R_g=\varnothing$, which implies that $(\mathbf{y}, \mathbf{z})$-entry of $E_g^*A_hE_i^*A_gE_h^*$ is zero.

Assume that $\mathbf{z}\in \mathbf{y}R_i$. Hence $y_i=z_i$ and $\mathbf{y}\neq \mathbf{z}$. Since $G$ is an elementary abelian $2$-group and Lemma \ref{L;Lemma1.2} holds, note that
$y_hx_i=y_hx_gx_h=y_hy_gz_h=y_iz_h=z_iz_h=z_g$. Define $v_g=z_g$, $v_h=y_h$, and $v_i=x_i$. Since $G$ is an elementary abelian $2$-group and $y_hx_i=z_g$, notice that $\mathbf{v}=(v_1, v_2, v_3)\in X_G$ by the definition of $X_G$ and Lemma \ref{L;Lemma1.2}. As $y_i=z_i$ and $\mathbf{y}\neq \mathbf{z}$, notice that $y_g\neq z_g$ and $y_h\neq z_h$ by the definition of $X_G$. As $v_g=z_g\neq y_g$ and $v_h=y_h$, notice that $\mathbf{v}\in \mathbf{y}R_h$. As $v_h=y_h\neq x_h$ and $v_i=x_i$, notice that $\mathbf{v}\in \mathbf{x}R_i$. As $v_h=y_h\neq z_h$ and $v_g=z_g$, notice that $\mathbf{v}\in \mathbf{y}R_h\cap\mathbf{x}R_i\cap\mathbf{z}R_g$. Since $|\mathbf{y}R_h\cap\mathbf{x}R_i\cap \mathbf{z}R_g|\leq |\mathbf{y}R_h\cap\mathbf{z}R_g|=1$ by Theorem \ref{T;Insectionnumbers} (iii), $|\mathbf{y}R_h\cap\mathbf{x}R_i\cap \mathbf{z}R_g|=1$. Hence the $(\mathbf{y},\mathbf{z})$-entry of
$E_g^*A_hE_i^*A_gE_h^*$ is one. Since $\mathbf{y}, \mathbf{z}$ are chosen from $\mathbf{x}R_g$, $\mathbf{x}R_h$ arbitrarily, notice that $E_g^*A_hE_i^*A_gE_h^*=E_g^*A_iE_h^*$ by Lemma \ref{L;Tripleproducts} (vi) and the above discussion. The desired lemma is proved.
\end{proof}
\begin{lem}\label{L;Lemma2.3}
Assume that $\{g, h, i\}=[1,3]$.
\begin{enumerate}[(i)]
\item [\em (i)] $E_g^*A_hE_i^*A_hE_4^*=E_g^*A_hE_4^*$.
\item [\em (ii)] $E_4^*A_hE_i^*A_hE_g^*=E_4^*A_hE_g^*$.
\end{enumerate}
\end{lem}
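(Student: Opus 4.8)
The plan is to prove (i) by a direct entry-by-entry comparison via Lemma \ref{L;Tripleproducts} (vi), and then to obtain (ii) from (i) for free by taking transposes. Indeed, transposing the identity $E_g^*A_hE_i^*A_hE_4^*=E_g^*A_hE_4^*$ and using \eqref{Eq;1} (each $E_j^*$ is symmetric and $A_h^T=A_h$ because $S$ is symmetric) turns the left-hand side into $E_4^*A_hE_i^*A_hE_g^*$ and the right-hand side into $E_4^*A_hE_g^*$, which is precisely (ii); so no separate computation for (ii) is needed.

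For (i), I would first note that, by Lemma \ref{L;Tripleproducts} (vi), both $E_g^*A_hE_i^*A_hE_4^*$ and $E_g^*A_hE_4^*$ vanish outside the rows indexed by $\mathbf{y}\in\mathbf{x}R_g$ and the columns indexed by $\mathbf{z}\in\mathbf{x}R_4$, so it suffices to match the $(\mathbf{y},\mathbf{z})$-entries for such $\mathbf{y},\mathbf{z}$. The $(\mathbf{y},\mathbf{z})$-entry of the left-hand side is $\overline{|\mathbf{y}R_h\cap\mathbf{x}R_i\cap\mathbf{z}R_h|}$, while that of the right-hand side is $1$ exactly when $\mathbf{z}\in\mathbf{y}R_h$ and $0$ otherwise. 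Since $\{g,h,i\}=[1,3]$, Theorem \ref{T;Insectionnumbers} (ii) gives $|\mathbf{y}R_h\cap\mathbf{x}R_i|=1$, so I can fix the unique $\mathbf{w}$ with $\mathbf{y}R_h\cap\mathbf{x}R_i=\{\mathbf{w}\}$; the triple intersection is then either empty or $\{\mathbf{w}\}$, and the left entry is $1$ precisely when $\mathbf{w}\in\mathbf{z}R_h$.

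The crux is thus to show that the two conditions $\mathbf{w}\in\mathbf{z}R_h$ and $\mathbf{z}\in\mathbf{y}R_h$ on $\mathbf{z}$ are equivalent. Here I would exploit the defining feature of $X_G$: two distinct elements of $X_G$ agree in at most one coordinate, so for distinct $\mathbf{z},\mathbf{z}'\in X_G$ and $h\in[1,3]$ one has $(\mathbf{z},\mathbf{z}')\in R_h$ if and only if their $h$-th coordinates coincide. Recording $w_h=y_h$ and $w_i=x_i$ (from $\mathbf{w}\in\mathbf{y}R_h\cap\mathbf{x}R_i$) and using that $\mathbf{z}\in\mathbf{x}R_4$ forces $z_i\neq x_i=w_i$ and $z_g\neq x_g=y_g$, I obtain $\mathbf{z}\neq\mathbf{w}$ and $\mathbf{z}\neq\mathbf{y}$ for free; hence both $\mathbf{w}\in\mathbf{z}R_h$ and $\mathbf{z}\in\mathbf{y}R_h$ reduce to the single condition $z_h=y_h$, so the two entries agree and (i) follows. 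The only thing to be careful about is the bookkeeping of which coordinates of $\mathbf{x},\mathbf{y},\mathbf{z},\mathbf{w}$ coincide or differ, and in particular justifying the two distinctness facts that license the coordinate dichotomy; the group-theoretic input is minimal, so no elementary abelian hypothesis on $G$ is required.
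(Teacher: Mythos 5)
Your proof is correct and follows essentially the same route as the paper: an entrywise comparison via Lemma \ref{L;Tripleproducts} (vi) using $|\mathbf{y}R_h\cap\mathbf{x}R_i|=1$ from Theorem \ref{T;Insectionnumbers} (ii), with (ii) obtained from (i) by transposing and \eqref{Eq;1}. The only cosmetic difference is that the paper disposes of the cases $\mathbf{z}\in\mathbf{y}R_i\cup\mathbf{y}R_4$ via the vanishing intersection numbers $p_{hh}^i=p_{hh}^4=0$, whereas you argue directly from the coordinate description of the relations on $X_G$; both are valid and, as you note, neither requires $G$ to be elementary abelian.
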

\begin{proof}
Pick $\mathbf{y}=(y_1, y_2, y_3)\in\mathbf{x}R_g$ and $\mathbf{z}=(z_1, z_2, z_3)\in\mathbf{x}R_4$. According to Lemma \ref{L;Tripleproducts} (vi), the $(\mathbf{y}, \mathbf{z})$-entry of
$E_g^*A_hE_i^*A_hE_4^*$ equals $\overline{|\mathbf{y}R_h\cap\mathbf{x}R_i\cap \mathbf{z}R_h|}$. Since $\mathbf{y}\in \mathbf{x}R_g$ and $\mathbf{z}\in \mathbf{x}R_4$, observe that $\mathbf{z}\in \mathbf{y}R_h\cup \mathbf{y}R_i\cup \mathbf{y}R_4$ by Theorem \ref{T;Insectionnumbers} (i), (ii), (iii).

Assume that $\mathbf{z}\in \mathbf{y}R_i\cup\mathbf{y}R_4$. Notice that $|\mathbf{y}R_h\cap\mathbf{x}R_i\cap \mathbf{z}R_h|\leq|\mathbf{y}R_h\cap\mathbf{z}R_h|=0$ by Theorem \ref{T;Insectionnumbers} (ii) and (iii). Hence the $(\mathbf{y},\mathbf{z})$-entry of $E_g^*A_hE_i^*A_hE_4^*$ is zero. Assume that $\mathbf{z}\in \mathbf{y}R_h$. So $y_h=z_h$.
By Theorem \ref{T;Insectionnumbers} (ii), notice that $|\mathbf{y}R_h\cap\mathbf{x}R_i|=1$. Pick $\mathbf{u}=(u_1, u_2, u_3)\in \mathbf{y}R_h\cap\mathbf{x}R_i$.
So $u_h=y_h=z_h$. If $\mathbf{u}=\mathbf{z}$, then $\mathbf{u}\in \mathbf{x}R_4\cap\mathbf{x}R_i$. This is an obvious contradiction as $i\neq 4$. Therefore $\mathbf{u}\neq \mathbf{z}$ and $\mathbf{u}\in \mathbf{y}R_h\cap\mathbf{x}R_i\cap \mathbf{z}R_h$. So $\mathbf{y}R_h\cap\mathbf{x}R_i\cap\mathbf{z}R_h=\mathbf{y}R_h\cap\mathbf{x}R_i$ and $|\mathbf{y}R_h\cap\mathbf{x}R_i\cap \mathbf{z}R_h|=|\mathbf{y}R_h\cap\mathbf{x}R_i|=1$. So the $(\mathbf{y}, \mathbf{z})$-entry of $E_g^*A_hE_i^*A_hE_4^*$ is one.
As $\mathbf{y}, \mathbf{z}$ are chosen from $\mathbf{x}R_g$, $\mathbf{x}R_h$ arbitrarily, $E_g^*A_hE_i^*A_hE_4^*=E_g^*A_hE_4^*$ by Lemma \ref{L;Tripleproducts} (vi) and the above discussion. (i) is proved. (ii) is from combining (i), taking transposes of matrices, and \eqref{Eq;1}. We are done. 
\end{proof}
\begin{lem}\label{L;Lemma2.4}
Assume that $G$ is an elementary abelian $2$-group and $\{g, h,i\}=[1,3]$.
\begin{enumerate}[(i)]
\item [\em (i)] $E_g^*A_hE_i^*A_gE_4^*=E_g^*A_iE_h^*A_gE_4^*$.
\item [\em (ii)]$E_4^*A_gE_i^*A_hE_g^*=E_4^*A_gE_h^*A_iE_g^*$.
\end{enumerate}
\end{lem}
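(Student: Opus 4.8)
The plan is to deduce (ii) from (i) by the transpose trick used repeatedly in the paper, and to prove (i) by an entrywise comparison via Lemma \ref{L;Tripleproducts} (vi). Since $(E_g^*A_hE_i^*A_gE_4^*)^T=E_4^*A_gE_i^*A_hE_g^*$ and $(E_g^*A_iE_h^*A_gE_4^*)^T=E_4^*A_gE_h^*A_iE_g^*$ by \eqref{Eq;1} (recall that $S$ is symmetric, so every $A_j$ and every $E_j^*$ is symmetric), taking transposes of the identity in (i) yields (ii) at once. Thus it remains to establish (i).

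Both $E_g^*A_hE_i^*A_gE_4^*$ and $E_g^*A_iE_h^*A_gE_4^*$ have the form $E_g^*(\cdots)E_4^*$, so by Lemma \ref{L;Tripleproducts} (vi) every entry lying outside the rows indexed by $\mathbf{x}R_g$ and the columns indexed by $\mathbf{x}R_4$ vanishes for both matrices. Hence it suffices to fix $\mathbf{y}=(y_1,y_2,y_3)\in\mathbf{x}R_g$ and $\mathbf{z}=(z_1,z_2,z_3)\in\mathbf{x}R_4$ and to show that the two $(\mathbf{y},\mathbf{z})$-entries agree. By Lemma \ref{L;Tripleproducts} (vi) these entries are $\overline{|\mathbf{y}R_h\cap\mathbf{x}R_i\cap\mathbf{z}R_g|}$ and $\overline{|\mathbf{y}R_i\cap\mathbf{x}R_h\cap\mathbf{z}R_g|}$ respectively, where $\mathbf{x}=(x_1,x_2,x_3)$.

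The key step uses Lemma \ref{L;Lemma1.2}. A point $\mathbf{u}=(u_1,u_2,u_3)$ of the first triple intersection must satisfy $u_h=y_h$, $u_i=x_i$, and $u_g=z_g$; since $G$ is an elementary abelian $2$-group, membership $\mathbf{u}\in X_G$ forces $u_g=u_hu_i$, so such a $\mathbf{u}$ exists, and is then unique, precisely when $z_g=x_iy_h$. A short check, using $x_g=y_g$ (from $\mathbf{y}\in\mathbf{x}R_g$) together with $x_j\neq z_j$ for all $j\in[1,3]$ (from $\mathbf{z}\in\mathbf{x}R_4$), confirms that this candidate is genuinely distinct from $\mathbf{x},\mathbf{y},\mathbf{z}$, so that it really lies in $\mathbf{y}R_h\cap\mathbf{x}R_i\cap\mathbf{z}R_g$. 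Thus the first entry equals $\overline{1}$ if $z_g=x_iy_h$ and $\overline{0}$ otherwise. Running the identical argument for the second intersection shows that its entry equals $\overline{1}$ if $z_g=x_hy_i$ and $\overline{0}$ otherwise.

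It then remains only to see that the two defining equations coincide, that is, $x_iy_h=x_hy_i$. Using Lemma \ref{L;Lemma1.2} we have $x_i=x_gx_h$ and $y_i=y_gy_h$, and since $x_g=y_g$ these give $x_iy_h=x_gx_hy_h=x_hy_gy_h=x_hy_i$, the group having exponent $2$. Hence the two $(\mathbf{y},\mathbf{z})$-entries agree for every admissible $\mathbf{y}$ and $\mathbf{z}$, and (i) follows. I expect the only delicate point to be the bookkeeping that the unique coordinate-candidate $\mathbf{u}$ actually satisfies the strict inequalities built into $R_g,R_h,R_i$, so that it does contribute to the intersection, mirroring the component arguments in Lemma \ref{L;Lemma2.2}; everything else is routine manipulation in an exponent-$2$ group.
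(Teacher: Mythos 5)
Your proposal is correct and follows essentially the same route as the paper: reduce (ii) to (i) by transposing via \eqref{Eq;1}, then compare the $(\mathbf{y},\mathbf{z})$-entries using Lemma \ref{L;Tripleproducts} (vi) and show via Lemma \ref{L;Lemma1.2} that $\mathbf{y}R_h\cap\mathbf{x}R_i\cap\mathbf{z}R_g$ and $\mathbf{y}R_i\cap\mathbf{x}R_h\cap\mathbf{z}R_g$ are nonempty under the same condition on $z_g$. The only cosmetic difference is that you characterize nonemptiness of each intersection by the explicit equation $z_g=x_iy_h$ (resp.\ $z_g=x_hy_i$) and observe these coincide, whereas the paper assumes one intersection is nonempty and constructs an explicit element of the other; both arguments hinge on the same identity $x_iy_h=x_hy_i$ coming from $x_g=y_g$.
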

\begin{proof}
Assume that $\mathbf{x}=(x_1, x_2, x_3)$. Let $\mathbf{y}\in \mathbf{x}R_g$ and $\mathbf{z}\in \mathbf{x}R_4$, where $\mathbf{y}\!=\!(y_1, y_2, y_3)$ and $\mathbf{z}=(z_1, z_2, z_3)$.
According to Lemma \ref{L;Tripleproducts} (vi), the $(\mathbf{y}, \mathbf{z})$-entries of $E_g^*A_hE_i^*A_gE_4^*$ and $E_g^*A_iE_h^*A_gE_4^*$ are $\overline{|\mathbf{y}R_h\cap\mathbf{x}R_i\cap \mathbf{z}R_g|}$ and $\overline{|\mathbf{y}R_i\cap\mathbf{x}R_h\cap \mathbf{z}R_g|}$, respectively. Since $\mathbf{y}\in\mathbf{x}R_g$, notice that $|\mathbf{y}R_h\cap\mathbf{x}R_i\cap \mathbf{z}R_g|\leq|\mathbf{y}R_h\cap\mathbf{x}R_i|=1$ by Theorem \ref{T;Insectionnumbers} (ii). Notice that $|\mathbf{y}R_i\cap\mathbf{x}R_h\cap \mathbf{z}R_g|\leq |\mathbf{y}R_i\cap\mathbf{x}R_h|=1$ by Theorem \ref{T;Insectionnumbers} (ii). Hence the $(\mathbf{y},\mathbf{z})$-entry of  $E_g^*A_hE_i^*A_gE_4^*$ equals the $(\mathbf{y},\mathbf{z})$-entry of $E_g^*A_iE_h^*A_gE_4^*$ if we can prove that $|\mathbf{y}R_h\cap\mathbf{x}R_i\cap \mathbf{z}R_g|\neq 0$ if and only if $|\mathbf{y}R_i\cap\mathbf{x}R_h\cap \mathbf{z}R_g|\neq 0$. As $\mathbf{y}\in \mathbf{x}R_g$, note that $x_g=y_g$ and $\mathbf{x}\neq \mathbf{y}$. Therefore $x_h\neq y_h$ and $x_i\neq y_i$.

Assume that $|\mathbf{y}R_h\cap\mathbf{x}R_i\cap\mathbf{z}R_g|\neq 0$. Pick $\mathbf{u}=(u_1, u_2, u_3)\in\mathbf{y}R_h\cap\mathbf{x}R_i\cap\mathbf{z}R_g$ and note that $u_h=y_h$, $u_i=x_i$, and $u_g=z_g$. As $G$ is an elementary abelian $2$-group and Lemma \ref{L;Lemma1.2} holds, notice that $x_i=x_gx_h$, $y_h=y_gy_i$, and $z_g=u_g=u_hu_i=y_hx_i$. Set $v_g=z_g$, $v_h=x_h$, $v_i=y_i$. Notice that $v_hv_i=x_hy_i=y_gx_gx_hy_i=y_hx_i=z_g=v_g$. So $\mathbf{v}=(v_1, v_2, v_3)\in X_G$ by Lemma \ref{L;Lemma1.2}. Since $v_i=y_i$ and $v_h=x_h\neq y_h$, $\mathbf{v}\in \mathbf{y}R_i$. As $v_h=x_h$ and $v_i=y_i\neq x_i$, $\mathbf{v}\in \mathbf{y}R_i\cap\mathbf{x}R_h\cap\mathbf{z}R_g$. Hence $|\mathbf{y}R_i\cap\mathbf{x}R_h\cap\mathbf{z}R_g|\neq 0$.

Assume that $|\mathbf{y}R_i\cap\mathbf{x}R_h\cap\mathbf{z}R_g|\neq 0$. One may mimic the proof in the second paragraph to check that $|\mathbf{y}R_h\cap\mathbf{x}R_i\cap\mathbf{z}R_g|\neq 0$. So $|\mathbf{y}R_h\cap\mathbf{x}R_i\cap\mathbf{z}R_g|\neq 0$ if and only if $|\mathbf{y}R_i\cap \mathbf{x}R_h\cap \mathbf{z}R_g|\neq 0$. As $\mathbf{y}, \mathbf{z}$ are chosen from $\mathbf{x}R_g, \mathbf{x}R_4$ arbitrarily, notice that $E_g^*A_hE_i^*A_gE_4^*=E_g^*A_iE_h^*A_gE_4^*$ by Lemma \ref{L;Tripleproducts} (vi). (i) is proved. (ii) is proved by combining (i), taking transposes of matrices, and \eqref{Eq;1}. We are done.
\end{proof}
\begin{lem}\label{L;Lemma2.5}
Assume that $G$ is an elementary abelian $2$-group and $\{g, h,i\}=[1,3]$.
\begin{enumerate}[(i)]
\item [\em (i)] $E_g^*A_hE_4^*A_gE_h^*=E_g^*JE_h^*-E_g^*A_iE_h^*$.
\item [\em (ii)] $E_g^*A_hE_4^*A_gE_i^*=E_g^*JE_i^*-E_g^*A_hE_i^*$.
\item [\em (iii)] $E_g^*A_h E_4^* A_h E_g^*=\overline{n-2}E_g^*$.
\item [\em (iv)] $E_g^*A_h E_4^*A_h E_i^*=\overline{n-2}E_g^*A_hE_i^*$.
\item [\em (v)] $E_g^*A_h E_4^*A_i E_g^*=E_g^*JE_g^*-E_g^*$.
\item [\em (vi)] $E_g^*A_h E_4^*A_i E_h^*=E_g^*JE_h^*-E_g^*A_iE_h^*$.
\end{enumerate}
\end{lem}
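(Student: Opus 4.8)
The plan is to prove all six identities uniformly by an algebraic telescoping argument, rather than by the entrywise computations used for Lemmas~\ref{L;Lemma2.1}--\ref{L;Lemma2.4}. Writing $A_\alpha$ and $E_\beta^*$ for the trailing adjacency matrix and idempotent in each part (so $(A_\alpha,E_\beta^*)$ is $(A_g,E_h^*),(A_g,E_i^*),(A_h,E_g^*),(A_h,E_i^*),(A_i,E_g^*),(A_i,E_h^*)$ in parts (i)--(vi)), I insert the resolution of the identity $I=\sum_{j=0}^{4}E_j^*$ from \eqref{Eq;3} between $A_h$ and $A_\alpha$ to obtain
\[
E_g^*A_hA_\alpha E_\beta^*=\sum_{j=0}^{4}E_g^*A_hE_j^*A_\alpha E_\beta^* .
\]
By Lemma~\ref{L;Tripleproducts}~(iii) and Theorem~\ref{T;Insectionnumbers}, the factor $E_g^*A_hE_j^*$ vanishes for $j\in\{0,g,h\}$ (there $p_{jh}^g=0$) and is nonzero only for $j=i$ and $j=4$ (where $p_{ih}^g=1$ and $p_{4h}^g=n-2\neq0$). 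Hence the sum collapses to $E_g^*A_hE_i^*A_\alpha E_\beta^*+E_g^*A_hE_4^*A_\alpha E_\beta^*$, and solving for the second summand isolates precisely the left-hand side of each part. Everything therefore reduces to evaluating $E_g^*A_hA_\alpha E_\beta^*$ and the single $j=i$ term.

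For the products, \eqref{Eq;2} together with Theorem~\ref{T;Insectionnumbers} gives $A_hA_g=A_i+A_4$ (used in (i),(ii)), $A_hA_i=A_g+A_4$ (used in (v),(vi)), and $A_h^2=\overline{n-1}\,A_0+\overline{n-2}\,A_h$ (used in (iii),(iv)). Sandwiching these between $E_g^*$ and $E_\beta^*$ and deleting every adjacency triple $E_g^*A_mE_\beta^*$ that vanishes by Lemma~\ref{L;Tripleproducts}~(iii) — for instance $E_g^*A_iE_i^*$, $E_g^*A_gE_h^*$, $E_g^*A_hE_g^*$, $E_g^*A_4E_g^*=O$ (as $p_{ii}^g=p_{hg}^g=p_{gh}^g=p_{g4}^g=0$) and $E_g^*A_0E_i^*=E_g^*E_i^*=O$ (as $g\neq i$) — expresses $E_g^*A_hA_\alpha E_\beta^*$ as a combination of at most two adjacency triples.

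The $j=i$ term splits into two kinds. In parts (i) and (iii) it is nonzero and is supplied by the earlier lemmas: $E_g^*A_hE_i^*A_gE_h^*=E_g^*A_iE_h^*$ by Lemma~\ref{L;Lemma2.2}, and $E_g^*A_hE_i^*A_hE_g^*=E_g^*$ by Lemma~\ref{L;Lemma2.1}. In parts (ii),(iv),(v),(vi) it vanishes for a uniform reason: regrouping it as $(E_g^*A_h)(E_i^*A_\alpha E_\beta^*)$, the inner factor $E_i^*A_\alpha E_\beta^*$ is $O$ by Lemma~\ref{L;Tripleproducts}~(iii), since the relevant intersection number with superscript $i$ — namely $p_{ig}^i$, $p_{ih}^i$, $p_{gi}^i$, $p_{hi}^i$ respectively — is zero by Theorem~\ref{T;Insectionnumbers}~(ii). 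Subtracting the $j=i$ term from $E_g^*A_hA_\alpha E_\beta^*$ now leaves a single surviving triple: $E_g^*A_4E_h^*$ for (i) and (vi), $E_g^*A_4E_i^*$ for (ii), $\overline{n-2}\,E_g^*$ for (iii) (using $\overline{n-1}-\overline{1}=\overline{n-2}$), $\overline{n-2}\,E_g^*A_hE_i^*$ for (iv), and $E_g^*A_gE_g^*$ for (v).

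Finally I match these survivors to the stated right-hand sides using $J=\sum_{j=0}^{4}A_j$ from \eqref{Eq;3}: the same deletion of vanishing triples yields $E_g^*JE_h^*=E_g^*A_iE_h^*+E_g^*A_4E_h^*$, $E_g^*JE_i^*=E_g^*A_hE_i^*+E_g^*A_4E_i^*$, and $E_g^*JE_g^*=E_g^*+E_g^*A_gE_g^*$, which rewrite $E_g^*A_4E_h^*$, $E_g^*A_4E_i^*$, $E_g^*A_gE_g^*$ as $E_g^*JE_h^*-E_g^*A_iE_h^*$, $E_g^*JE_i^*-E_g^*A_hE_i^*$, $E_g^*JE_g^*-E_g^*$, completing (i),(vi); (ii); and (v), while (iii),(iv) are already in final form. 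No step is deep: the main obstacle is purely the systematic bookkeeping of the intersection numbers of Theorem~\ref{T;Insectionnumbers} — deciding for each triple which of the three superscript cases applies, and hence which summands vanish — together with correctly invoking Lemmas~\ref{L;Lemma2.1} and~\ref{L;Lemma2.2} for the two non-vanishing $j=i$ terms. A direct alternative, parallel to Lemmas~\ref{L;Lemma2.1}--\ref{L;Lemma2.4}, would instead compute each entry via Lemma~\ref{L;Tripleproducts}~(vi) and the component relations of Lemma~\ref{L;Lemma1.2}, but this is longer and less transparent.
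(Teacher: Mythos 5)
Your proof is correct and is essentially the paper's own argument: the identity you derive by inserting $I=\sum_{j}E_j^*$ and discarding the vanishing terms is exactly the decomposition $E_g^*A_hE_4^*=E_g^*A_h-E_g^*A_hE_i^*$ that the paper's proof starts from, and you then invoke the same ingredients (\eqref{Eq;2}, \eqref{Eq;3}, Theorem \ref{T;Insectionnumbers}, Lemma \ref{L;Tripleproducts} (iii), and Lemmas \ref{L;Lemma2.1} and \ref{L;Lemma2.2}). Your write-up simply makes explicit the bookkeeping of intersection numbers that the paper leaves to the reader.
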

\begin{proof}
All listed equalities of the desired lemma are proved by combining the equality $E_g^*A_hE_4^*=E_g^*A_h-E_g^*A_hE_i^*$, \eqref{Eq;2}, \eqref{Eq;3}, Theorem \ref{T;Insectionnumbers} (i), (ii), (iii), Lemmas \ref{L;Tripleproducts} (iii), \ref{L;Lemma2.1}, and \ref{L;Lemma2.2}. The desired lemma is proved.
\end{proof}
\begin{lem}\label{L;Lemma2.6}
Assume that $G$ is an elementary abelian $2$-group and $\{g, h, i\}=[1,3]$.
\begin{enumerate}[(i)]
\item [\em (i)] $E_i^*A_hE_g^*A_hE_i^*A_gE_4^*=E_i^*A_gE_4^*$.
\item [\em (ii)] $E_4^*A_gE_i^*A_hE_g^*A_hE_i^*=E_4^*A_gE_i^*$.
\item [\em (iii)] $E_h^*A_iE_g^*A_hE_i^*A_gE_4^*=E_h^*A_gE_4^*$.
\item [\em (iv)] $E_4^*A_gE_i^*A_hE_g^*A_iE_h^*=E_4^*A_gE_h^*$.
\end{enumerate}
\end{lem}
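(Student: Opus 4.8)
The plan is to reduce each of the four products to a shorter, already-known product by collapsing a leading segment with one of Lemmas \ref{L;Lemma2.1}--\ref{L;Lemma2.3}, and then to obtain (ii) and (iv) from (i) and (iii) by transposition. Throughout, the crucial observation is that $\{g,h,i\}=[1,3]$ is a \emph{set} equality, so any permutation of the triple is again an admissible index triple for the earlier lemmas; every rewriting below is just an instance of a previous lemma read under a suitable relabelling.

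First I would prove (i). By associativity I group the five leading factors as $E_i^*A_hE_g^*A_hE_i^*$; this is the left-hand side of Lemma \ref{L;Lemma2.1} read under $(g,h,i)\mapsto(i,h,g)$, and since $\{i,h,g\}=[1,3]$ it collapses to $E_i^*$. Right-multiplying by $A_gE_4^*$ then gives $E_i^*A_gE_4^*$, which is (i). For (iii) I proceed in two steps. The leading segment $E_h^*A_iE_g^*A_hE_i^*$ is the left-hand side $E_g^*A_hE_i^*A_gE_h^*$ of Lemma \ref{L;Lemma2.2} read under $(g,h,i)\mapsto(h,i,g)$ (the fourth factor repeats the first outer index and the fifth repeats the first adjacency index, exactly as that pattern requires), so it rewrites as $E_h^*A_gE_i^*$; the whole product thereby becomes $E_h^*A_gE_i^*A_gE_4^*$. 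This last expression is the left-hand side of Lemma \ref{L;Lemma2.3}~(i) read under $(g,h,i)\mapsto(h,g,i)$, so it equals $E_h^*A_gE_4^*$, proving (iii).

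Finally, (ii) and (iv) follow formally by taking transposes. Because $S$ is a symmetric scheme we have $A_j^T=A_j$ and ${E_j^*}^T=E_j^*$ for every $j$ by \eqref{Eq;1}, so transposition merely reverses the order of the factors in any product of adjacency matrices and dual idempotents. Transposing the identity in (i) turns its left-hand side into $E_4^*A_gE_i^*A_hE_g^*A_hE_i^*$ and its right-hand side into $E_4^*A_gE_i^*$, which is precisely (ii); transposing the identity in (iii) likewise converts it into $E_4^*A_gE_i^*A_hE_g^*A_iE_h^*=E_4^*A_gE_h^*$, which is (iv).

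I do not expect a genuine obstacle here, since the argument is entirely formal once the correct groupings are spotted. The only point requiring care is the index bookkeeping: at each step one must check that the relabelling is an honest permutation of $[1,3]$, so that the hypotheses of Lemmas \ref{L;Lemma2.1}, \ref{L;Lemma2.2}, and \ref{L;Lemma2.3} hold, and that the two adjacency-matrix indices inside a segment genuinely coincide (for Lemma \ref{L;Lemma2.3}~(i)) or sit in the pattern demanded by Lemma \ref{L;Lemma2.2}. Verifying these small compatibility conditions is the entirety of the work.
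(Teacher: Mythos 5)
Your proof is correct and follows exactly the route the paper takes: (i) from Lemma \ref{L;Lemma2.1} (under the permutation $(g,h,i)\mapsto(i,h,g)$), (iii) from Lemma \ref{L;Lemma2.2} followed by Lemma \ref{L;Lemma2.3}~(i), and (ii), (iv) as the transposes of (i), (iii) via \eqref{Eq;1}. The index bookkeeping you carry out explicitly is all the paper leaves implicit, and it checks out.
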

\begin{proof}
By \eqref{Eq;1}, (ii) and (iv) are the transpose versions of (i) and (iii), respectively. Hence it suffices to check (i) and (iii). (i) follows from Lemma \ref{L;Lemma2.1}. (iii) follows from Lemmas \ref{L;Lemma2.2} and \ref{L;Lemma2.3} (i). The desired lemma is proved.
\end{proof}
\begin{lem}\label{L;Lemma2.7}
Assume that $G$ is an elementary abelian $2$-group and $\{g, h,i\}=[1,3]$.
\begin{enumerate}[(i)]
\item [\em (i)] $E_g^*A_hE_i^*A_gE_4^*A_gE_h^*=\overline{n-2}E_g^*A_iE_h^*$.
\item [\em (ii)] $E_h^*A_gE_4^*A_gE_i^*A_hE_g^*=\overline{n-2}E_h^*A_iE_g^*$.
\item [\em (iii)] $E_g^*A_hE_i^*A_gE_4^*A_gE_i^*=\overline{n-2}E_g^*A_hE_i^*$.
\item [\em (iv)] $E_i^*A_gE_4^*A_gE_i^*A_hE_g^*=\overline{n-2}E_i^*A_hE_g^*$.
\item [\em (v)] $E_g^*A_hE_i^*A_gE_4^*A_hE_g^*=E_g^*JE_g^*-E_g^*$.
\item [\em (vi)] $E_g^*A_hE_4^*A_gE_i^*A_hE_g^*=E_g^*JE_g^*-E_g^*$.
\item [\em (vii)] $E_g^*A_hE_i^*A_gE_4^*A_hE_i^*=E_g^*JE_i^*-E_g^*A_hE_i^*$.
\item [\em (viii)] $E_i^*A_hE_4^*A_gE_i^*A_hE_g^*=E_i^*JE_g^*-E_i^*A_hE_g^*$.
\item [\em (ix)] $E_g^*A_hE_i^*A_gE_4^*A_iE_g^*=E_g^*JE_g^*-E_g^*$.
\item [\em (x)] $E_g^*A_iE_4^*A_gE_i^*A_hE_g^*=E_g^*JE_g^*-E_g^*$.
\item [\em (xi)] $E_g^*A_hE_i^*A_gE_4^*A_iE_h^*=E_g^*JE_h^*-E_g^*A_iE_h^*$.
\item [\em (xii)] $E_h^*A_iE_4^*A_gE_i^*A_hE_g^*=E_h^*JE_g^*-E_h^*A_iE_g^*$.
\end{enumerate}
\end{lem}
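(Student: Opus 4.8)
The plan is to exploit two structural features of the list. First, the twelve identities fall into six transpose pairs: since $S$ is symmetric, $A_j^T=A_j$ and $E_j^{*T}=E_j^*$, so taking the transpose of (i) via \eqref{Eq;1} turns $E_g^*A_hE_i^*A_gE_4^*A_gE_h^*$ into $E_h^*A_gE_4^*A_gE_i^*A_hE_g^*$ and $\overline{n-2}E_g^*A_iE_h^*$ into $\overline{n-2}E_h^*A_iE_g^*$, which is exactly (ii); the same device (together with $J^T=J$) sends (iii), (v), (vii), (ix), (xi) to (iv), (vi), (viii), (x), (xii). So I would prove only the six odd-numbered identities and obtain the even ones for free. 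Second, these six share the common left factor $E_g^*A_hE_i^*A_gE_4^*$, differing only in the trailing factor $A_bE_c^*$, which lets me treat them uniformly.

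For the shared prefix I would first invoke Lemma \ref{L;Lemma2.4}(i) to rewrite $E_g^*A_hE_i^*A_gE_4^*=E_g^*A_iE_h^*A_gE_4^*$, and then, by associativity, regroup each full product as $(E_g^*A_i)\cdot(E_h^*A_gE_4^*A_bE_c^*)$. The bracketed right factor now has precisely the shape handled by Lemma \ref{L;Lemma2.5}, provided one applies that lemma under the index swap $g\leftrightarrow h$ (keeping the third index fixed). Concretely, the trailing factors $A_gE_h^*$, $A_gE_i^*$, $A_hE_g^*$, $A_hE_i^*$, $A_iE_g^*$, $A_iE_h^*$ occurring in (i), (iii), (v), (vii), (ix), (xi) make the bracketed factor coincide with the left-hand sides of parts (iii), (iv), (i), (ii), (vi), (v) of Lemma \ref{L;Lemma2.5}, respectively.

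It then remains to left-multiply each resulting expression by $E_g^*A_i$ and simplify. When Lemma \ref{L;Lemma2.5} returns the value $\overline{n-2}E_h^*$ (cases (i), (iii)), this gives the claimed answer directly, after at most one application of Lemma \ref{L;Lemma2.2} (under the swap of $h$ and $i$) to collapse $E_g^*A_iE_h^*A_gE_i^*$ to $E_g^*A_hE_i^*$. When it returns a difference $E_h^*JE_c^*-E_h^*A_?E_c^*$, I would treat the two summands separately: the $J$-term collapses via $E_g^*A_iE_h^*J=E_g^*J$, a consequence of Lemma \ref{L;Tripleproducts}(i) with $\overline{p_{hi}^g}=\overline 1$ by Theorem \ref{T;Insectionnumbers}(ii), yielding $E_g^*JE_c^*$; the remaining summand collapses either to $E_g^*$ through Lemma \ref{L;Lemma2.1} (cases (v), (ix)) or to $E_g^*A_hE_i^*$ through Lemma \ref{L;Lemma2.2} (cases (vii), (xi)), again under the swap of $h$ and $i$. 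Matching these two pieces against the stated right-hand sides finishes each case.

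The computations are entirely routine once this framework is in place, since the genuine combinatorial content already resides in Lemmas \ref{L;Lemma2.1}, \ref{L;Lemma2.2}, \ref{L;Lemma2.4}, and \ref{L;Lemma2.5}. The only point demanding care — and the most error-prone step — is keeping the permutation of $\{g,h,i\}$ consistent across the successive specializations of those lemmas, each of which is stated for a fixed but arbitrary labelling of $[1,3]$ and must be re-specialized for every factor appearing in the regrouped product.
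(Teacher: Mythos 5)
Your proposal is correct and follows essentially the same route as the paper: reduce to the odd-numbered identities by taking transposes via \eqref{Eq;1}, then evaluate each one by splitting off an instance of Lemma \ref{L;Lemma2.5} (suitably relabelled) and finishing with Lemmas \ref{L;Lemma2.1}, \ref{L;Lemma2.2}, \ref{L;Tripleproducts} (i), and Theorem \ref{T;Insectionnumbers} (ii). The only (harmless) difference is that you first invoke Lemma \ref{L;Lemma2.4} (i) and bracket the product as $(E_g^*A_i)\cdot(E_h^*A_gE_4^*\cdots)$, whereas the paper brackets directly as $(E_g^*A_h)\cdot(E_i^*A_gE_4^*\cdots)$ and applies Lemma \ref{L;Lemma2.5} under a cyclic relabelling instead of your transposition $g\leftrightarrow h$.
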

\begin{proof}
By \eqref{Eq;1}, (ii), (iv), (vi), (viii), (x), (xii) are the transpose versions of (i), (iii), (v), (vii), (ix), (xi), respectively. Hence it suffices to check (i), (iii), (v), (vii), (ix), (xi). (i) is from Lemmas \ref{L;Lemma2.5} (iv) and \ref{L;Lemma2.2}. (iii) is from Lemma \ref{L;Lemma2.5} (iii). (v) is proved by combining Lemmas \ref{L;Lemma2.5} (vi), \ref{L;Tripleproducts} (i), \ref{L;Lemma2.1}, and Theorem \ref{T;Insectionnumbers} (ii). (vii) is proved by combining Lemmas \ref{L;Lemma2.5} (v), \ref{L;Tripleproducts} (i), and Theorem \ref{T;Insectionnumbers} (ii). (ix) is proved by combining Lemmas \ref{L;Lemma2.5} (i), \ref{L;Tripleproducts} (i), \ref{L;Lemma2.1}, and Theorem \ref{T;Insectionnumbers} (ii). (xi) is proved by combining Lemmas \ref{L;Lemma2.5} (ii), \ref{L;Tripleproducts} (i), \ref{L;Lemma2.2}, and Theorem \ref{T;Insectionnumbers} (ii). We are done.
\end{proof}
The following computational results study the products of elements with two $E_4^*$'s.
\begin{lem}\label{L;Lemma2.8}
Assume that $\{g, h, i\}=[1,3]$.
\begin{enumerate}[(i)]
\item [\em (i)] $E_g^*A_hE_4^*A_gE_4^*=E_g^*JE_4^*-E_g^*A_hE_4^*-E_g^*A_hE_i^*A_gE_4^*$.
\item [\em (ii)] $E_4^*A_gE_4^*A_hE_g^*=E_4^*JE_g^*-E_4^*A_hE_g^*-E_4^*A_gE_i^*A_hE_g^*$.
\item [\em (iii)] $E_g^*A_hE_4^*A_hE_4^*=\overline{n-3}E_g^*A_hE_4^*$.
\item [\em (iv)] $E_4^*A_hE_4^*A_hE_g^*=\overline{n-3}E_4^*A_hE_g^*$.
\item [\em (v)] $E_g^*A_hE_4^*A_iE_4^*=E_g^*JE_4^*-E_g^*A_hE_4^*-E_g^*A_iE_4^*$.
\item [\em (vi)] $E_4^*A_iE_4^*A_hE_g^*=E_4^*JE_g^*-E_4^*A_hE_g^*-E_4^*A_iE_g^*$.
\end{enumerate}
\end{lem}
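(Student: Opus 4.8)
The plan is to reduce the six identities to three by transpose symmetry and then to compute each remaining product by peeling off one adjacency matrix at a time.

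First I would observe that by \eqref{Eq;1} the identities (ii), (iv), and (vi) are precisely the transposes of (i), (iii), and (v): taking the transpose of a word fixes each $E_j^*$ and each $A_j$ (as $S$ is symmetric) and reverses the order of the factors, while $J^T=J$. Thus it suffices to establish (i), (iii), and (v).

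The single computational device I would use throughout is the expansion $E_g^*A_hE_4^*=E_g^*A_h-E_g^*A_hE_i^*$. This follows from $I=\sum_{k=0}^4E_k^*$ in \eqref{Eq;3} together with Lemma \ref{L;Tripleproducts} (iii) and Theorem \ref{T;Insectionnumbers}: among the terms $E_g^*A_hE_k^*$ the only nonzero ones are $k=i$ and $k=4$, since $p_{kh}^g\neq0$ forces $k\in\{i,4\}$. I would also record the two structure-constant facts $A_hA_g=A_i+A_4$ and $A_hA_i=A_g+A_4$ (read off from \eqref{Eq;2} and Theorem \ref{T;Insectionnumbers} (ii), (iii), with all nonzero coefficients equal to $1$), the relation $A_h^2=\overline{n-1}\,I+\overline{n-2}\,A_h$, and the decomposition $E_g^*JE_4^*=E_g^*A_hE_4^*+E_g^*A_iE_4^*+E_g^*A_4E_4^*$ (the $A_0$- and $A_g$-parts vanishing because $p_{40}^g=p_{4g}^g=0$, again via Lemma \ref{L;Tripleproducts} (iii)).

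With these in hand, (i) and (v) are immediate. For (i) I would write $E_g^*A_hE_4^*A_gE_4^*=(E_g^*A_h-E_g^*A_hE_i^*)A_gE_4^*=E_g^*A_hA_gE_4^*-E_g^*A_hE_i^*A_gE_4^*$, substitute $A_hA_g=A_i+A_4$, and then rewrite $E_g^*A_iE_4^*+E_g^*A_4E_4^*=E_g^*JE_4^*-E_g^*A_hE_4^*$ via the $J$-decomposition. For (v) the same peeling gives $E_g^*A_hA_iE_4^*-E_g^*A_hE_i^*A_iE_4^*$; here $A_hA_i=A_g+A_4$, and the two unwanted terms $E_g^*A_gE_4^*$ (since $p_{4g}^g=0$) and $E_g^*A_hE_i^*A_iE_4^*$ (since $E_i^*A_iE_4^*=O$, as $p_{4i}^i=0$) both vanish, leaving $E_g^*A_4E_4^*=E_g^*JE_4^*-E_g^*A_hE_4^*-E_g^*A_iE_4^*$. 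For (iii) I would peel off $E_g^*A_hE_4^*A_hE_4^*=E_g^*A_h^2E_4^*-E_g^*A_hE_i^*A_hE_4^*$, use $A_h^2=\overline{n-1}\,I+\overline{n-2}\,A_h$ together with $E_g^*E_4^*=O$ from \eqref{Eq;2} to get $E_g^*A_h^2E_4^*=\overline{n-2}\,E_g^*A_hE_4^*$, and collapse the last term by Lemma \ref{L;Lemma2.3} (i), namely $E_g^*A_hE_i^*A_hE_4^*=E_g^*A_hE_4^*$; the difference is $\overline{n-3}\,E_g^*A_hE_4^*$.

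The work is essentially bookkeeping rather than conceptual, so the main thing to get right is the vanishing pattern of the intersection numbers that controls which middle idempotents $E_k^*$ survive each insertion — in particular the equalities $p_{4g}^g=p_{4i}^i=0$, which are exactly what make the unwanted terms in (v) and the $A_g$-part of $E_g^*JE_4^*$ disappear. This is where I expect the only real chance of an index error, and I would verify it directly against Theorem \ref{T;Insectionnumbers} (ii).
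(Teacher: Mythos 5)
Your proposal is correct and follows essentially the same route as the paper's own proof: reduce (ii), (iv), (vi) to (i), (iii), (v) by transposition via \eqref{Eq;1}, expand $E_g^*A_hE_4^*=E_g^*A_h-E_g^*A_hE_i^*$, and evaluate using \eqref{Eq;2}, \eqref{Eq;3}, Theorem \ref{T;Insectionnumbers}, Lemma \ref{L;Tripleproducts} (iii), and Lemma \ref{L;Lemma2.3} (i). Your write-up simply makes explicit the structure-constant computations that the paper leaves as a citation list, and the vanishing pattern $p_{4g}^g=p_{4i}^i=0$ you flag checks out against Theorem \ref{T;Insectionnumbers} (ii).
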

\begin{proof}
By \eqref{Eq;1}, (ii), (iv), (vi) are the transpose versions of (i), (iii), (v), respectively. Hence it suffices to check (i), (iii), and (v). (i) is proved by combining the equality $E_g^*A_hE_4^*=E_g^*A_h-E_g^*A_hE_i^*$, \eqref{Eq;2}, \eqref{Eq;3}, Theorem \ref{T;Insectionnumbers} (i), (ii), (iii), and Lemma \ref{L;Tripleproducts} (iii). (iii) is proved by combining the equality $E_g^*A_hE_4^*=E_g^*A_h-E_g^*A_hE_i^*$, \eqref{Eq;2}, Theorem \ref{T;Insectionnumbers} (i), (ii), (iii), Lemmas \ref{L;Tripleproducts} (iii), and \ref{L;Lemma2.3} (i). (v) is proved by combining the equality $E_g^*A_hE_4^*=E_g^*A_h-E_g^*A_hE_i^*$, \eqref{Eq;2}, \eqref{Eq;3}, Theorem \ref{T;Insectionnumbers} (i), (ii), (iii), and Lemma \ref{L;Tripleproducts} (iii). The desired lemma is proved.
\end{proof}
\begin{lem}\label{L;Lemma2.9}
If $g, h\in [1,3]$ and $g\neq h$, then $E_4^*A_gE_h^*A_gE_4^*=E_4^*+E_4^*A_gE_4^*$.
\end{lem}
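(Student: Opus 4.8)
The plan is to compute both sides entry by entry, using Lemma \ref{L;Tripleproducts} (vi) together with the intersection numbers of Theorem \ref{T;Insectionnumbers} (iii); no hypothesis on $G$ beyond $n\geq 3$ is needed. Write $\mathbf{x}=(x_1,x_2,x_3)$ and let $\{g,h,i\}=[1,3]$. By Lemma \ref{L;Tripleproducts} (vi), the $(\mathbf{y},\mathbf{z})$-entry of $E_4^*A_gE_h^*A_gE_4^*$ vanishes unless $\mathbf{y},\mathbf{z}\in\mathbf{x}R_4$, in which case it equals $\overline{|\mathbf{y}R_g\cap\mathbf{x}R_h\cap\mathbf{z}R_g|}$. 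So it suffices to determine this cardinality for arbitrary $\mathbf{y}=(y_1,y_2,y_3)$, $\mathbf{z}=(z_1,z_2,z_3)\in\mathbf{x}R_4$, and then to compare with the corresponding entry of the right-hand side.

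For the cardinality, first note that $\mathbf{x}\in\mathbf{y}R_4$ and $g\neq h$ with $g,h\in[1,3]$, so Theorem \ref{T;Insectionnumbers} (iii) gives $|\mathbf{y}R_g\cap\mathbf{x}R_h|=p_{gh}^4=1$; write $\{\mathbf{u}\}=\mathbf{y}R_g\cap\mathbf{x}R_h$, where $\mathbf{u}=(u_1,u_2,u_3)$. Then $|\mathbf{y}R_g\cap\mathbf{x}R_h\cap\mathbf{z}R_g|\leq 1$, with equality exactly when $\mathbf{u}\in\mathbf{z}R_g$. I would record that $u_g=y_g$ and $u_h=x_h$; since $\mathbf{z}\in\mathbf{x}R_4$ forces $z_h\neq x_h=u_h$, we get $\mathbf{u}\neq\mathbf{z}$, so $\mathbf{u}\in\mathbf{z}R_g$ holds if and only if $u_g=z_g$, that is, $y_g=z_g$. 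Hence the $(\mathbf{y},\mathbf{z})$-entry of $E_4^*A_gE_h^*A_gE_4^*$ equals $\overline{1}$ when $\mathbf{y},\mathbf{z}\in\mathbf{x}R_4$ and $y_g=z_g$, and is zero otherwise.

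It then remains to check that $E_4^*+E_4^*A_gE_4^*$ has the same entries. The matrix $E_4^*$ contributes $\overline{1}$ precisely on the diagonal pairs $\mathbf{y}=\mathbf{z}\in\mathbf{x}R_4$, while $E_4^*A_gE_4^*$ contributes $\overline{1}$ precisely on the pairs $\mathbf{y},\mathbf{z}\in\mathbf{x}R_4$ with $\mathbf{z}\in\mathbf{y}R_g$, that is, with $\mathbf{y}\neq\mathbf{z}$ and $y_g=z_g$; these two supports are disjoint, so the sum has entry $\overline{1}$ exactly when $\mathbf{y},\mathbf{z}\in\mathbf{x}R_4$ and $y_g=z_g$. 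This matches the left-hand side, and the lemma follows. The computation is short; the only point needing care is the bookkeeping that the unique element of $\mathbf{y}R_g\cap\mathbf{x}R_h$ is genuinely distinct from $\mathbf{z}$, so that its membership in $\mathbf{z}R_g$ reduces cleanly to the coordinate condition $y_g=z_g$, together with correctly separating the diagonal and off-diagonal contributions of $E_4^*$ and $E_4^*A_gE_4^*$ on the right.
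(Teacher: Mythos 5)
Your proof is correct and follows essentially the same route as the paper: both compute the $(\mathbf{y},\mathbf{z})$-entry via Lemma \ref{L;Tripleproducts} (vi), use $p_{gh}^4=1$ from Theorem \ref{T;Insectionnumbers} (iii) to pin down the unique element of $\mathbf{y}R_g\cap\mathbf{x}R_h$, and check it lies in $\mathbf{z}R_g$ exactly when $y_g=z_g$, i.e.\ when $(\mathbf{y},\mathbf{z})\in R_0\cup R_g$. The only cosmetic difference is that the paper first kills the cases $\mathbf{z}\in\mathbf{y}R_h\cup\mathbf{y}R_i\cup\mathbf{y}R_4$ via $|\mathbf{y}R_g\cap\mathbf{z}R_g|=0$, whereas you derive the membership criterion directly; both arguments are valid and neither uses the elementary abelian hypothesis.
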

\begin{proof}
Assume that $\mathbf{x}=(x_1, x_2, x_3)$, $\mathbf{y}, \mathbf{z}\in \mathbf{x}R_4$, $\mathbf{y}\!=\!(y_1, y_2, y_3)$, and $\mathbf{z}=(z_1, z_2, z_3)$. According to Lemma \ref{L;Tripleproducts} (vi), the $(\mathbf{y}, \mathbf{z})$-entry of $E_4^*A_gE_h^*A_gE_4^*$ is $\overline{|\mathbf{y}R_g\cap\mathbf{x}R_h\cap \mathbf{z}R_g|}$. Assume that $\mathbf{z}\in \mathbf{y}R_h\cup\mathbf{y}R_i\cup\mathbf{y}R_4$. Notice that $|\mathbf{y}R_g\cap\mathbf{x}R_h\cap\mathbf{z}R_g|\leq |\mathbf{y}R_g\cap\mathbf{z}R_g|=0$ by Theorem \ref{T;Insectionnumbers} (ii) and (iii). Hence the $(\mathbf{y},\mathbf{z})$-entry of $E_4^*A_gE_h^*A_gE_4^*$ equals zero. As $\mathbf{y}, \mathbf{z}\in \mathbf{x}R_4$, notice that $x_j\neq z_j$ for any $j\in [1,3]$.

Assume that $(\mathbf{y},\mathbf{z})\in R_0\cup R_g$. Hence $y_g=z_g$. As Theorem \ref{T;Insectionnumbers} (iii) holds, notice that $|\mathbf{y}R_g\cap\mathbf{x}R_h|=1$. Pick $\mathbf{v}=(v_1, v_2, v_3)\in \mathbf{y}R_g\cap\mathbf{x}R_h$. Therefore $v_g=y_g=z_g$ and $v_h=x_h\neq z_h$. Hence $\mathbf{v}\in \mathbf{z}R_g$. Hence $\mathbf{y}R_g\cap\mathbf{x}R_h\cap\mathbf{z}R_g=\mathbf{y}R_g\cap\mathbf{x}R_h=\{\mathbf{v}\}$, which implies that the $(\mathbf{y},\mathbf{z})$-entry of $E_4^*A_gE_h^*A_gE_4^*$ is one.
As $\mathbf{y}, \mathbf{z}$ are chosen from $\mathbf{x}R_4$ arbitrarily, notice that $E_4^*A_gE_h^*A_gE_4^*=E_4^*+E_4^*A_gE_4^*$ by Lemma \ref{L;Tripleproducts} (vi) and the above discussion. The desired lemma is proved.
\end{proof}
\begin{lem}\label{L;Lemma2.10}
Assume that $G$ is an elementary abelian $2$-group and $\{g, h,i\}=[1,3]$.
\begin{enumerate}[(i)]
\item [\em (i)] $E_g^*A_hE_i^*A_gE_4^*A_gE_4^*=\overline{n-3}E_g^*A_hE_i^*A_gE_4^*$.
\item [\em (ii)] $E_4^*A_gE_4^*A_gE_i^*A_hE_g^*=\overline{n-3}E_4^*A_gE_i^*A_hE_g^*$.
\item [\em (iii)] $E_g^*A_hE_i^*A_gE_4^*A_hE_4^*=E_g^*JE_4^*-E_g^*A_hE_4^*-E_g^*A_hE_i^*A_gE_4^*$.
\item [\em (iv)] $E_4^*A_hE_4^*A_gE_i^*A_hE_g^*=E_4^*JE_g^*-E_4^*A_hE_g^*-E_4^*A_gE_i^*A_hE_g^*$.
\item [\em (v)] $E_g^*A_hE_i^*A_gE_4^*A_iE_4^*=E_g^*JE_4^*-E_g^*A_iE_4^*-E_g^*A_hE_i^*A_gE_4^*$.
\item [\em (vi)] $E_4^*A_iE_4^*A_gE_i^*A_hE_g^*=E_4^*JE_g^*-E_4^*A_iE_g^*-E_4^*A_gE_i^*A_hE_g^*$.
\end{enumerate}
\end{lem}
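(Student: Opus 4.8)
The plan is to first cut the workload in half. Since $S$ is a symmetric scheme, \eqref{Eq;1} gives $A_j^T=A_j$ and $E_a^{*T}=E_a^*$, so transposing reverses each of the six products; one checks directly that the transpose of (i), (iii), (v) is precisely (ii), (iv), (vi). For example $(E_g^*A_hE_i^*A_gE_4^*A_gE_4^*)^T=E_4^*A_gE_4^*A_gE_i^*A_hE_g^*$ and $(\overline{n-3}\,E_g^*A_hE_i^*A_gE_4^*)^T=\overline{n-3}\,E_4^*A_gE_i^*A_hE_g^*$, which is exactly (ii). Hence it suffices to prove (i), (iii), and (v).

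The crucial observation is that every left-hand side factors as $E_g^*A_h\cdot(E_i^*A_gE_4^*A_cE_4^*)$ with $c\in\{g,h,i\}$, and the parenthesized length-four product is already handled by Lemma \ref{L;Lemma2.8} after the relabelling $(g,h,i)\mapsto(i,g,h)$, which is legitimate because $\{g,h,i\}=[1,3]$. Under this substitution, parts (iii), (v), (i) of Lemma \ref{L;Lemma2.8} become $E_i^*A_gE_4^*A_gE_4^*=\overline{n-3}\,E_i^*A_gE_4^*$, then $E_i^*A_gE_4^*A_hE_4^*=E_i^*JE_4^*-E_i^*A_gE_4^*-E_i^*A_hE_4^*$, and finally $E_i^*A_gE_4^*A_iE_4^*=E_i^*JE_4^*-E_i^*A_gE_4^*-E_i^*A_gE_h^*A_iE_4^*$, respectively.

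To prove (i) I would left-multiply the first relabelled identity by $E_g^*A_h$, giving $E_g^*A_hE_i^*A_gE_4^*A_gE_4^*=\overline{n-3}\,E_g^*A_hE_i^*A_gE_4^*$, which is (i) verbatim. For (iii) and (v) I would again left-multiply by $E_g^*A_h$ and then simplify the three resulting summands. In both cases the first summand $E_g^*A_hE_i^*JE_4^*$ collapses to $E_g^*JE_4^*$ by Lemma \ref{L;Tripleproducts} (i) combined with $p_{ih}^g=1$ from Theorem \ref{T;Insectionnumbers} (ii), and the second summand is simply $E_g^*A_hE_i^*A_gE_4^*$. For the last summand: in (iii) the term $E_g^*A_hE_i^*A_hE_4^*$ reduces to $E_g^*A_hE_4^*$ by Lemma \ref{L;Lemma2.3} (i); in (v) the term $E_g^*A_hE_i^*A_gE_h^*A_iE_4^*$ is first rewritten as $E_g^*A_iE_h^*A_iE_4^*$ by Lemma \ref{L;Lemma2.2} and then as $E_g^*A_iE_4^*$ by Lemma \ref{L;Lemma2.3} (i). Collecting the three pieces yields $E_g^*JE_4^*-E_g^*A_hE_4^*-E_g^*A_hE_i^*A_gE_4^*$ for (iii) and $E_g^*JE_4^*-E_g^*A_iE_4^*-E_g^*A_hE_i^*A_gE_4^*$ for (v), matching the stated right-hand sides.

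I expect no genuine obstacle here: the whole argument is a reduction to the already-proved Lemma \ref{L;Lemma2.8}, using associativity to peel off the fixed prefix $E_g^*A_h$. The only point that demands care is the consistent bookkeeping of the relabelling of the three distinct indices of $[1,3]$ when invoking Lemmas \ref{L;Lemma2.8}, \ref{L;Lemma2.2}, and \ref{L;Lemma2.3}, so that each auxiliary identity is applied with the correct assignment; once that is organized, the remaining manipulations are routine.
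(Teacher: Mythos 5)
Your proposal is correct and follows essentially the same route as the paper: reduce (ii), (iv), (vi) to (i), (iii), (v) by transposition via \eqref{Eq;1}, then peel off the prefix $E_g^*A_h$ and apply Lemma \ref{L;Lemma2.8} (with the indices relabelled), together with Lemma \ref{L;Tripleproducts} (i), Theorem \ref{T;Insectionnumbers} (ii), and Lemmas \ref{L;Lemma2.2} and \ref{L;Lemma2.3} (i). Your write-up simply makes explicit the relabelling and the simplification of each summand that the paper's citation-style proof leaves implicit.
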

\begin{proof}
By \eqref{Eq;1}, (ii), (iv), (vi) are the transpose versions of (i), (iii), (v), respectively. Hence it suffices to check (i), (iii), and (v). (i) is proved by Lemma \ref{L;Lemma2.8} (iii). (iii) is proved by combining Lemma \ref{L;Lemma2.8} (v), Lemmas \ref{L;Tripleproducts} (i), \ref{L;Lemma2.3} (i), and Theorem \ref{T;Insectionnumbers} (ii). Similarly, (v) is proved by combining Lemma \ref{L;Lemma2.8} (i), Lemmas \ref{L;Tripleproducts} (i), \ref{L;Lemma2.2}, \ref{L;Lemma2.3} (i), and Theorem \ref{T;Insectionnumbers} (ii). The desired lemma is proved.
\end{proof}
\begin{lem}\label{L;Lemma2.11}
Assume that $G$ is an elementary abelian $2$-group and $\{g, h, i\}=[1,3]$.
\begin{enumerate}[(i)]
\item [\em (i)] $E_4^*A_hE_g^*A_hE_i^*A_gE_4^*=E_4^*A_hE_i^*A_gE_4^*$.
\item [\em (ii)] $E_4^*A_gE_i^*A_hE_g^*A_hE_4^*=E_4^*A_gE_i^*A_hE_4^*$.
\item [\em (iii)] $E_4^*A_iE_g^*A_hE_i^*A_gE_4^*=E_4^*A_iE_h^*A_gE_4^*$.
\item [\em (iv)] $E_4^*A_gE_i^*A_hE_g^*A_iE_4^*=E_4^*A_gE_h^*A_iE_4^*$.
\end{enumerate}
\end{lem}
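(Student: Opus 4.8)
The plan is to reduce the four identities to just (i) and (iii), and then to collapse each five-fold product down to a three-fold one by a single application of Lemma \ref{L;Lemma2.3}(ii) (possibly after one rewriting step). First I would observe that (ii) and (iv) are nothing but the transpose versions of (i) and (iii). Indeed, transposing the identity in (i) and using $A_j^T=A_j$ and ${E_j^*}^T=E_j^*$ from \eqref{Eq;1} together with $(MN)^T=N^TM^T$ reverses the order of the factors, turning $E_4^*A_hE_g^*A_hE_i^*A_gE_4^*=E_4^*A_hE_i^*A_gE_4^*$ into $E_4^*A_gE_i^*A_hE_g^*A_hE_4^*=E_4^*A_gE_i^*A_hE_4^*$, which is exactly (ii); the same maneuver sends (iii) to (iv). Hence it suffices to prove (i) and (iii).

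For (i), the crucial remark is that Lemma \ref{L;Lemma2.3}(ii) is symmetric in its two outer indices: since $\{g,h,i\}=[1,3]$, interchanging the roles of $g$ and $i$ in Lemma \ref{L;Lemma2.3}(ii) gives $E_4^*A_hE_g^*A_hE_i^*=E_4^*A_hE_i^*$. Right-multiplying both sides by $A_gE_4^*$ and invoking associativity immediately yields (i).

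For (iii) I would first massage the tail. By Lemma \ref{L;Lemma2.4}(i) we have $E_g^*A_hE_i^*A_gE_4^*=E_g^*A_iE_h^*A_gE_4^*$, so left-multiplying by $E_4^*A_i$ rewrites the left-hand side of (iii) as $E_4^*A_iE_g^*A_iE_h^*A_gE_4^*$. Now Lemma \ref{L;Lemma2.3}(ii), applied with $i$ playing the role of the repeated (middle) index and $g,h$ as the two outer indices, gives $E_4^*A_iE_g^*A_iE_h^*=E_4^*A_iE_h^*$; right-multiplying by $A_gE_4^*$ then produces $E_4^*A_iE_h^*A_gE_4^*$, which is the right-hand side of (iii).

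The computations are genuinely routine once the reductions are set up; the only point demanding care is the bookkeeping of the relabelings of Lemma \ref{L;Lemma2.3}(ii), namely making sure that in each application the two $A$-factors flanking the collapsed idempotent carry the same subscript and that the surviving idempotent is the correct one. I expect this index-tracking, rather than any substantive combinatorics, to be the main (and only mild) obstacle.
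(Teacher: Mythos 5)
Your proposal is correct and follows essentially the same route as the paper: reduce (ii) and (iv) to (i) and (iii) by transposition via \eqref{Eq;1}, prove (i) by a relabelled application of Lemma \ref{L;Lemma2.3}(ii), and prove (iii) by one application of Lemma \ref{L;Lemma2.4} to swap the middle followed by one application of Lemma \ref{L;Lemma2.3} to collapse the repeated factor. The only (immaterial) difference is that for (iii) you rewrite the tail with Lemma \ref{L;Lemma2.4}(i) and collapse on the left with Lemma \ref{L;Lemma2.3}(ii), whereas the paper rewrites the head with Lemma \ref{L;Lemma2.4}(ii) and collapses on the right with Lemma \ref{L;Lemma2.3}(i).
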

\begin{proof}
By \eqref{Eq;1}, (ii) and (iv) are the transpose versions of (i) and (iii), respectively. Hence it suffices to check (i) and (iii). (i) is proved by Lemma \ref{L;Lemma2.3} (ii). By Lemma \ref{L;Lemma2.4} (ii), $E_4^*A_iE_g^*A_hE_i^*=E_4^*A_iE_h^*A_gE_i^*$. So $E_4^*A_iE_g^*A_hE_i^*A_gE_4^*=E_4^*A_iE_h^*A_gE_i^*A_gE_4^*$ and $E_4^*A_iE_h^*A_gE_i^*A_gE_4^*=E_4^*A_iE_h^*A_gE_4^*$ by Lemma \ref{L;Lemma2.3} (i). Hence (iii) is proved. The desired lemma is proved.
\end{proof}
\begin{lem}\label{L;Lemma2.12}
Assume that $G$ is an elementary abelian $2$-group and $\{g, h, i\}=[1,3]$.
\begin{enumerate}[(i)]
\item [\em (i)] $E_h^*A_gE_4^*A_gE_h^*A_iE_4^*=\overline{n-2}E_h^*A_iE_4^*$.
\item [\em (ii)] $E_4^*A_iE_h^*A_gE_4^*A_gE_h^*=\overline{n-2}E_4^*A_iE_h^*$.
\item [\em (iii)] $E_i^*A_gE_4^*A_gE_h^*A_iE_4^*=\overline{n-2}E_i^*A_gE_h^*A_iE_4^*$.
\item [\em (iv)] $E_4^*A_iE_h^*A_gE_4^*A_gE_i^*=\overline{n-2}E_4^*A_iE_h^*A_gE_i^*$.
\item [\em (v)]$E_g^*A_hE_4^*A_gE_h^*A_iE_4^*=E_g^*JE_4^*-E_g^*A_iE_4^*$.
\item [\em (vi)] $E_4^*A_iE_h^*A_gE_4^*A_hE_g^*=E_4^*JE_g^*-E_4^*A_iE_g^*$.
\item [\em (vii)] $E_i^*A_hE_4^*A_gE_h^*A_iE_4^*=E_i^*JE_4^*-E_i^*A_gE_h^*A_iE_4^*$.
\item [\em (viii)] $E_4^*A_iE_h^*A_gE_4^*A_hE_i^*=E_4^*JE_i^*-E_4^*A_iE_h^*A_gE_i^*$.
\item [\em (ix)] $E_g^*A_iE_4^*A_gE_h^*A_iE_4^*=E_g^*JE_4^*-E_g^*A_iE_4^*$.
\item [\em (x)] $E_4^*A_iE_h^*A_gE_4^*A_iE_g^*=E_4^*JE_g^*-E_4^*A_iE_g^*$.
\item [\em (xi)] $E_h^*A_iE_4^*A_gE_h^*A_iE_4^*=E_h^*JE_4^*-E_h^*A_iE_4^*$.
\item [\em (xii)] $E_4^*A_iE_h^*A_gE_4^*A_iE_h^*=E_4^*JE_h^*-E_4^*A_iE_h^*$.
\end{enumerate}
\end{lem}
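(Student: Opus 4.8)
The plan is to exploit the symmetry of $S$ together with the four‑factor reductions already established in Lemma \ref{L;Lemma2.5}, reading each identity as a cited block multiplied by a trailing $A_iE_4^*$. Since $S$ is a symmetric scheme, \eqref{Eq;1} gives $A_j^T=A_j$ and $E_j^{*T}=E_j^*$, so transposing a product merely reverses the order of its factors. A direct check then shows that items (ii), (iv), (vi), (viii), (x), (xii) are precisely the transposes of (i), (iii), (v), (vii), (ix), (xi): for instance, transposing the left-hand side of (i) yields $E_4^*A_iE_h^*A_gE_4^*A_gE_h^*$ and transposing its right-hand side yields $\overline{n-2}E_4^*A_iE_h^*$, which is exactly (ii). Hence it suffices to establish the six odd-numbered identities, and I would state this transpose reduction at the outset, citing \eqref{Eq;1}.

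For each odd item I would first collapse the initial block $E_a^*A_bE_4^*A_cE_d^*$ by the appropriate part of Lemma \ref{L;Lemma2.5}, after matching the relabelling of $\{g,h,i\}$ to the statement there. Concretely, (i) reduces through Lemma \ref{L;Lemma2.5} (iii), (iii) through Lemma \ref{L;Lemma2.5} (iv), (v) through Lemma \ref{L;Lemma2.5} (i), (vii) through Lemma \ref{L;Lemma2.5} (vi), (ix) through Lemma \ref{L;Lemma2.5} (ii), and (xi) through Lemma \ref{L;Lemma2.5} (v). In (i) and (iii) the cited block reduces to $\overline{n-2}E_h^*$ and $\overline{n-2}E_i^*A_gE_h^*$ respectively, so by associativity appending the residual factor $A_iE_4^*$ produces the stated scalar multiples $\overline{n-2}E_h^*A_iE_4^*$ and $\overline{n-2}E_i^*A_gE_h^*A_iE_4^*$ at once.

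The remaining cases (v), (vii), (ix), (xi) each yield, after the first reduction, a difference of two terms that must still be multiplied on the right by $A_iE_4^*$, and I would treat the two resulting summands separately. The summand containing $J$ always has the shape $E_?^*JE_h^*A_iE_4^*$; by Lemma \ref{L;Tripleproducts} (i) one has $JE_h^*A_iE_4^*=\overline{p_{hi}^4}JE_4^*$, and Theorem \ref{T;Insectionnumbers} (iii) gives $p_{hi}^4=1$ because $h\neq i$ lie in $[1,3]$, so this collapses cleanly to $E_?^*JE_4^*$. The other summand is $E_g^*A_iE_h^*A_iE_4^*$ in (v) and (ix), which collapses to $E_g^*A_iE_4^*$ by Lemma \ref{L;Lemma2.3} (i); in (vii) and (xi) the surviving summand is already one of the elements $E_i^*A_gE_h^*A_iE_4^*$ or $E_h^*A_iE_4^*$ and requires no further manipulation. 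Assembling the two summands gives the right-hand sides in each case.

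The main obstacle is bookkeeping rather than any genuine difficulty: one must keep the cyclic relabellings of $\{g,h,i\}$ consistent between the target identity and the cited part of Lemma \ref{L;Lemma2.5}, and confirm in (v), (vii), (ix), (xi) that the intersection number $p_{hi}^4$ indeed equals $1$ so that the $J$-terms simplify to $E_?^*JE_4^*$. Once the index matching is pinned down, every step is a single substitution, and the transpose principle halves the verification.
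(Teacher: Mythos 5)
Your proposal is correct and matches the paper's proof essentially line for line: the paper likewise disposes of (ii), (iv), (vi), (viii), (x), (xii) as transposes via \eqref{Eq;1}, and proves each odd-numbered item by collapsing the leading block with exactly the parts of Lemma \ref{L;Lemma2.5} you cite, then simplifying the $J$-summand with Lemma \ref{L;Tripleproducts} (i) and Theorem \ref{T;Insectionnumbers} (iii) and the other summand with Lemma \ref{L;Lemma2.3} (i) where needed. Your index-matching and the observation that $p_{hi}^4=1$ are both accurate, so nothing is missing.
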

\begin{proof}
By \eqref{Eq;1}, (ii), (iv), (vi), (viii), (x), (xii) are the transpose versions of (i), (iii), (v), (vii), (ix), (xi), respectively. Hence it suffices to check (i), (iii), (v), (vii), (ix), (xi). (i) is proved by Lemma \ref{L;Lemma2.5} (iii). (iii) is proved by Lemma \ref{L;Lemma2.5} (iv). (v) can be proved by Lemmas \ref{L;Lemma2.5} (i), \ref{L;Tripleproducts} (i),
\ref{L;Lemma2.3} (i), and Theorem \ref{T;Insectionnumbers} (iii). (vii) is proved by combining Lemmas \ref{L;Lemma2.5} (vi), \ref{L;Tripleproducts} (i), and Theorem \ref{T;Insectionnumbers} (iii). (ix) can be proved by combining Lemmas \ref{L;Lemma2.5} (ii), \ref{L;Tripleproducts} (i), \ref{L;Lemma2.3} (i), and Theorem \ref{T;Insectionnumbers} (iii). (xi) is proved by combining Lemmas \ref{L;Lemma2.5} (v), \ref{L;Tripleproducts} (i), and Theorem \ref{T;Insectionnumbers} (iii). We are done.
\end{proof}
The leaving computational results study the products of elements with three $E_4^*$'s.
\begin{lem}\label{L;Lemma2.13}
Assume that $\{g, h, i\}=[1,3]$.
\begin{enumerate}[(i)]
\item [\em (i)] $E_4^*A_gE_4^*A_gE_4^*=\overline{n-3}E_4^*+\overline{n-4}E_4^*A_gE_4^*$.
\item [\em (ii)] $E_4^*A_gE_4^*A_hE_4^*=E_4^*JE_4^*-E_4^*-E_4^*A_gE_4^*-E_4^*A_hE_4^*-E_4^*A_gE_i^*A_hE_4^*$.
\end{enumerate}
\end{lem}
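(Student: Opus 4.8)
The plan is to collapse each triple-$E_4^*$ product into double-$E_4^*$ products by substituting for the \emph{middle} occurrence of $E_4^*$. By \eqref{Eq;3} we have $I=\sum_{j=0}^4E_j^*$, hence $E_4^*=I-\sum_{j=0}^3E_j^*$. Inserting this between the two adjacency matrices gives, for (i) and (ii) respectively,
\[
E_4^*A_gE_4^*A_gE_4^* = E_4^*A_g^2E_4^* - \sum_{j=0}^3 E_4^*A_gE_j^*A_gE_4^*, \qquad E_4^*A_gE_4^*A_hE_4^* = E_4^*A_gA_hE_4^* - \sum_{j=0}^3 E_4^*A_gE_j^*A_hE_4^*.
\]
The first summand on each right-hand side is evaluated by \eqref{Eq;2} together with Theorem \ref{T;Insectionnumbers}. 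For (i), the relevant numbers are $p_{gg}^0=n-1$, $p_{gg}^g=n-2$, and all others zero, so $A_g^2=\overline{n-1}\,I+\overline{n-2}\,A_g$ and therefore $E_4^*A_g^2E_4^*=\overline{n-1}\,E_4^*+\overline{n-2}\,E_4^*A_gE_4^*$ (using $A_0=I$). For (ii), since $g,h,i$ are distinct we have $p_{gh}^i=1$ and $p_{gh}^4=1$ with the remaining numbers zero, whence $A_gA_h=A_i+A_4$ and $E_4^*A_gA_hE_4^*=E_4^*A_iE_4^*+E_4^*A_4E_4^*$.

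The core of the argument is to show that almost every middle term $E_4^*A_gE_j^*A_?E_4^*$ vanishes. The $j=0$ and $j=g$ terms die in both parts because $E_4^*A_gE_0^*=O$ and $E_4^*A_gE_g^*=O$, which follow from Lemma \ref{L;Tripleproducts}(iii) since $p_{0g}^4=0$ and $p_{gg}^4=0$ by Theorem \ref{T;Insectionnumbers}. In part (i) the two remaining terms $j\in\{h,i\}$ each equal $E_4^*+E_4^*A_gE_4^*$ by Lemma \ref{L;Lemma2.9} (applicable as $g\neq h$ and $g\neq i$), so their sum is $\overline{2}\,E_4^*+\overline{2}\,E_4^*A_gE_4^*$; subtracting this from $E_4^*A_g^2E_4^*$ yields $\overline{n-3}\,E_4^*+\overline{n-4}\,E_4^*A_gE_4^*$, which is (i). In part (ii) the $j=h$ term also vanishes, this time because $E_h^*A_hE_4^*=O$ (again Lemma \ref{L;Tripleproducts}(iii), as $p_{4h}^h=0$), so only the $j=i$ term $E_4^*A_gE_i^*A_hE_4^*$ survives. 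This gives the intermediate identity $E_4^*A_gE_4^*A_hE_4^*=E_4^*A_iE_4^*+E_4^*A_4E_4^*-E_4^*A_gE_i^*A_hE_4^*$.

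To finish (ii) I would convert this intermediate identity into the stated form by expanding $E_4^*JE_4^*=\sum_{k=0}^4E_4^*A_kE_4^*$ via \eqref{Eq;3}, which (again with $A_0=I$) identifies $E_4^*A_iE_4^*+E_4^*A_4E_4^*$ with $E_4^*JE_4^*-E_4^*-E_4^*A_gE_4^*-E_4^*A_hE_4^*$. The only genuinely delicate points are reading off the right intersection numbers from Theorem \ref{T;Insectionnumbers}; in particular the vanishing $p_{gg}^4=0$ and $p_{4h}^h=0$ is exactly what forces the problematic middle terms to disappear, and all the rest is bookkeeping.
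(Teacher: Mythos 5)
Your proof is correct and takes essentially the same route as the paper: the identity the paper cites, $E_4^*A_gE_4^*=E_4^*A_g-E_4^*A_gE_h^*-E_4^*A_gE_i^*$, is exactly your substitution $E_4^*=I-\sum_{j=0}^{3}E_j^*$ for the middle idempotent once the vanishing terms $E_4^*A_gE_0^*$ and $E_4^*A_gE_g^*$ are discarded. The remaining steps --- evaluating $A_g^2$ resp.\ $A_gA_h$ via Theorem \ref{T;Insectionnumbers}, invoking Lemma \ref{L;Lemma2.9} for (i), and rewriting through $J=\sum_{j}A_j$ for (ii) --- coincide with the paper's cited ingredients.
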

\begin{proof}
(i) is proved by combining the equality $E_4^*A_gE_4^*=E_4^*A_g-E_4^*A_gE_h^*-E_4^*A_gE_i^*$, \eqref{Eq;2}, Theorem \ref{T;Insectionnumbers} (i), (ii), (iii), and Lemma \ref{L;Lemma2.9}. (ii) can be proved by combining the equality $E_4^*A_gE_4^*=E_4^*A_g-E_4^*A_gE_h^*-E_4^*A_gE_i^*$, \eqref{Eq;2}, \eqref{Eq;3}, Theorem \ref{T;Insectionnumbers} (i), (ii), (iii), and Lemma \ref{L;Tripleproducts} (iii). The desired lemma is proved.
\end{proof}
\begin{lem}\label{L;Lemma2.14}
Assume that $\{g, h, i\}=[1,3]$.
\begin{enumerate}[(i)]
\item [\em (i)] $E_4^*A_gE_4^*A_gE_h^*A_iE_4^*=\overline{n-3}E_4^*A_gE_h^*A_iE_4^*$.
\item [\em (ii)] $E_4^*A_iE_h^*A_gE_4^*A_gE_4^*=\overline{n-3}E_4^*A_iE_h^*A_gE_4^*$.
\item [\em (iii)] $E_4^*A_hE_4^*A_gE_h^*A_iE_4^*=E_4^*JE_4^*-E_4^*A_gE_h^*A_iE_4^*-E_4^*A_hE_g^*A_iE_4^*$.
\item [\em (iv)] $E_4^*A_iE_h^*A_gE_4^*A_hE_4^*=E_4^*JE_4^*-E_4^*A_iE_h^*A_gE_4^*-E_4^*A_iE_g^*A_hE_4^*$.
\item [\em (v)] $E_4^*A_iE_4^*A_gE_h^*A_iE_4^*=E_4^*JE_4^*-E_4^*A_gE_h^*A_iE_4^*-E_4^*-E_4^*A_iE_4^*$.
\item [\em (vi)] $E_4^*A_iE_h^*A_gE_4^*A_iE_4^*=E_4^*JE_4^*-E_4^*A_iE_h^*A_gE_4^*-E_4^*-E_4^*A_iE_4^*$.
\end{enumerate}
\end{lem}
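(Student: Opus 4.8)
The plan is to halve the work by transposition. Since $S$ is symmetric, \eqref{Eq;1} gives $A_j^T=A_j$ and ${E_j^*}^T=E_j^*$, so transposing a product reverses the order of its factors while fixing each one; as $J^T=J$ as well, the transposes of the identities (i), (iii), (v) are precisely (ii), (iv), (vi). (For instance the transpose of (iii)'s left side $E_4^*A_hE_4^*A_gE_h^*A_iE_4^*$ is $E_4^*A_iE_h^*A_gE_4^*A_hE_4^*$, which is the left side of (iv), and the three summands on the right match likewise.) Hence it suffices to prove (i), (iii), and (v).

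For (i) I would isolate the left sub-product $E_4^*A_gE_4^*A_gE_h^*$, which has exactly the shape treated by Lemma \ref{L;Lemma2.8} (iii) after transposition and relabelling: the repeated adjacency index is $g$ and the trailing dual idempotent is $E_h^*$ with $g\neq h$ in $[1,3]$, so $E_4^*A_gE_4^*A_gE_h^*=\overline{n-3}\,E_4^*A_gE_h^*$. Right-multiplying by $A_iE_4^*$ yields (i) immediately; this is the short case, mirroring how Lemma \ref{L;Lemma2.10} (i) was deduced from Lemma \ref{L;Lemma2.8}.

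For (iii) and (v) I would expand the prefix via $I=\sum_{j}E_j^*$, writing $E_4^*A_hE_4^*=E_4^*A_h-E_4^*A_hE_g^*-E_4^*A_hE_i^*$ for (iii) and $E_4^*A_iE_4^*=E_4^*A_i-E_4^*A_iE_g^*-E_4^*A_iE_h^*$ for (v), the absent dual idempotents dropping out because the corresponding intersection numbers vanish by Theorem \ref{T;Insectionnumbers}. In the leading term the product $A_hA_g$ (resp. $A_iA_g$) equals $A_i+A_4$ (resp. $A_h+A_4$) by \eqref{Eq;2} and Theorem \ref{T;Insectionnumbers}; I would then rewrite $A_4$ through $J$ using \eqref{Eq;3}, discard the vanishing pieces $E_4^*E_h^*=O$ and $E_4^*A_hE_h^*=O$ (Lemma \ref{L;Tripleproducts} (iii) with Theorem \ref{T;Insectionnumbers}), evaluate $E_4^*JE_h^*A_iE_4^*=E_4^*JE_4^*$ by Lemma \ref{L;Tripleproducts} (i), and collapse the repeated-adjacency block via $E_4^*A_iE_h^*A_iE_4^*=E_4^*+E_4^*A_iE_4^*$ (Lemma \ref{L;Lemma2.9}). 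The two cross terms arising from $E_4^*A_hE_g^*,\,E_4^*A_hE_i^*$ (resp. $E_4^*A_iE_g^*,\,E_4^*A_iE_h^*$) are controlled by their junction dual idempotent, since $E_g^*A_gE_h^*=O$ and $E_h^*A_gE_h^*=O$ by Lemma \ref{L;Tripleproducts} (iii) and Theorem \ref{T;Insectionnumbers}; in (v) both cross terms therefore vanish and the leading term alone already equals the asserted right-hand side.

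The one genuinely surviving cross term, and the main obstacle, occurs in (iii): the factor $E_4^*A_hE_i^*\cdot A_gE_h^*A_iE_4^*$ does not die because its junction $E_i^*A_gE_h^*$ is nonzero ($p_{hg}^i=1$). The key move is to recognize $E_4^*A_hE_i^*A_gE_h^*A_iE_4^*$ as an instance of Lemma \ref{L;Lemma2.11} (iii): read in the permutation-free form $E_4^*A_aE_b^*A_cE_a^*A_bE_4^*=E_4^*A_aE_c^*A_bE_4^*$ with $\{a,b,c\}=[1,3]$ and specialized to $(a,b,c)=(h,i,g)$, it gives $E_4^*A_hE_i^*A_gE_h^*A_iE_4^*=E_4^*A_hE_g^*A_iE_4^*$. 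Combining the leading term $E_4^*JE_4^*-E_4^*A_gE_h^*A_iE_4^*$ with this cross term $-E_4^*A_hE_g^*A_iE_4^*$ produces (iii). Throughout, the only delicate point is the bookkeeping of which junction products vanish and the correct matching of the relabelled instances of Lemmas \ref{L;Lemma2.8}, \ref{L;Lemma2.9}, and \ref{L;Lemma2.11}.
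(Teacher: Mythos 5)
Your proposal is correct and follows essentially the same route as the paper: reduce (ii), (iv), (vi) to (i), (iii), (v) by transposition via \eqref{Eq;1}, get (i) from Lemma \ref{L;Lemma2.8} (iv), and obtain (iii) and (v) from the expansion of the prefix $E_4^*A_\bullet E_4^*$ together with Lemma \ref{L;Tripleproducts} (i), Theorem \ref{T;Insectionnumbers} (iii), and Lemmas \ref{L;Lemma2.11} (iii) and \ref{L;Lemma2.9}, respectively. The only cosmetic difference is that you unfold $E_4^*A_hE_4^*=E_4^*A_h-E_4^*A_hE_g^*-E_4^*A_hE_i^*$ by hand, thereby re-deriving Lemma \ref{L;Lemma2.8} (ii) and (vi), which the paper simply cites.
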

\begin{proof}
By \eqref{Eq;1}, (ii), (iv), (vi) are the transpose versions of (i), (iii), (v), respectively. Hence it suffices to check (i), (iii), and (v). (i) is proved by Lemma \ref{L;Lemma2.8} (iv).
(iii) is proved by combining Lemmas \ref{L;Lemma2.8} (ii), \ref{L;Tripleproducts} (i), Theorem \ref{T;Insectionnumbers} (iii), and Lemma \ref{L;Lemma2.11} (iii). (v) can be proved by combining Lemmas \ref{L;Lemma2.8} (vi), \ref{L;Tripleproducts} (i), Theorem \ref{T;Insectionnumbers} (iii), and Lemma \ref{L;Lemma2.9}. The desired lemma is proved.
\end{proof}
The following lemma shall be repeatedly utilized in the following sections.
\begin{lem}\label{L;Lemma2.15}
Assume that $G$ is an elementary abelian $2$-group and $\{g, h,i\}=[1,3]$.
\begin{enumerate}[(i)]
\item [\em (i)] $E_4^*A_gE_h^*A_iE_4^*A_gE_h^*A_iE_4^*=E_4^*JE_4^*-E_4^*A_gE_h^*A_iE_4^*$.
\item [\em (ii)] $E_4^*A_gE_h^*A_iE_4^*A_gE_i^*A_hE_4^*=E_4^*JE_4^*-E_4^*A_gE_i^*A_hE_4^*$.
\item [\em (iii)] $E_4^*A_gE_h^*A_iE_4^*A_hE_g^*A_iE_4^*=E_4^*JE_4^*-E_4^*A_gE_h^*A_iE_4^*$.
\item [\em (iv)] $E_4^*A_gE_h^*A_iE_4^*A_hE_i^*A_gE_4^*=E_4^*JE_4^*-E_4^*-E_4^*A_gE_4^*$.
\item [\em (v)] $E_4^*A_gE_h^*A_iE_4^*A_iE_g^*A_hE_4^*=\overline{n-2}E_4^*A_gE_i^*A_hE_4^*$.
\item [\em (vi)] $E_4^*A_gE_h^*A_iE_4^*A_iE_h^*A_gE_4^*=\overline{n-2}E_4^*+\overline{n-2}E_4^*A_gE_4^*$.
\end{enumerate}
\end{lem}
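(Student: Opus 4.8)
The plan is to reduce all six identities to a single entry-counting computation on the block $\mathbf{x}R_4\times\mathbf{x}R_4$, powered by one clean description of the nonzero entries of a triple product. Since every product in the statement carries an outer $E_4^*$ on both sides, both sides of each equality are supported on $\mathbf{x}R_4\times\mathbf{x}R_4$, so it suffices to compare $(\mathbf{y},\mathbf{z})$-entries for $\mathbf{y},\mathbf{z}\in\mathbf{x}R_4$.

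First I would establish the following characterization: writing $\mathbf{x}=(x_1,x_2,x_3)$, for $\{a,b,c\}=[1,3]$ and $\mathbf{y},\mathbf{w}\in\mathbf{x}R_4$, the $(\mathbf{y},\mathbf{w})$-entry of $E_4^*A_aE_b^*A_cE_4^*$ is $1$ if $w_c=x_by_a$ and is $0$ otherwise. This follows by writing the entry as $\overline{|\mathbf{y}R_a\cap\mathbf{x}R_b\cap\mathbf{w}R_c|}$ via Lemma \ref{L;Tripleproducts} (vi): any $\mathbf{u}$ in this intersection has $u_a=y_a$, $u_b=x_b$, $u_c=w_c$, and Lemma \ref{L;Lemma1.2} forces $u_c=u_au_b=x_by_a$, so the intersection is empty unless $w_c=x_by_a$; when $w_c=x_by_a$, membership $\mathbf{y},\mathbf{w}\in\mathbf{x}R_4$ clears all required distinctness conditions (in particular $w_a\neq y_a$, since $w_c=x_by_a$ together with $w_a=y_a$ would force $w_b=x_b$, contradicting $\mathbf{w}\in\mathbf{x}R_4$), and Lemma \ref{L;Lemma1.6} bounds the count by one. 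This repackages Lemmas \ref{L;Lemma1.7}, \ref{L;Lemma1.8}, \ref{L;Lemma1.10}, \ref{L;Lemma1.11}, and \ref{L;Lemma1.12} into one usable form.

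With $M=E_4^*A_gE_h^*A_iE_4^*$, so that $M_{\mathbf{y},\mathbf{w}}=1\iff w_i=x_hy_g$, each product $M\cdot E_4^*A_aE_b^*A_cE_4^*$ has $(\mathbf{y},\mathbf{z})$-entry equal, modulo $p$, to the number of $\mathbf{w}\in\mathbf{x}R_4$ meeting both $w_i=x_hy_g$ and the column condition of the second factor; the admissible such $\mathbf{w}$ are indexed by $w_g\in G\setminus\{x_g,y_g\}$ ($n-2$ values, with $w_i=x_hy_g$ fixed and $w_h=w_gw_i$). For parts (i)--(iv) the second factor's condition pins $w_g$ to a single value (namely $x_hz_i$, $x_iz_h$, $x_iy_gz_i$, $x_gy_gz_g$, respectively), and a short computation using Lemma \ref{L;Lemma1.2} together with $x_gx_h=x_i$, $x_gx_i=x_h$, $x_hx_i=x_g$ and each element of $G$ squaring to the identity shows this value fails to lie in $G\setminus\{x_g,y_g\}$ exactly when the subtracted term of the proposed right-hand side is nonzero at $(\mathbf{y},\mathbf{z})$; hence the count equals $(E_4^*JE_4^*-M)_{\mathbf{y},\mathbf{z}}$, $(E_4^*JE_4^*-E_4^*A_gE_i^*A_hE_4^*)_{\mathbf{y},\mathbf{z}}$, $(E_4^*JE_4^*-M)_{\mathbf{y},\mathbf{z}}$, and $(E_4^*JE_4^*-E_4^*-E_4^*A_gE_4^*)_{\mathbf{y},\mathbf{z}}$, as required. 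For parts (v) and (vi) the decisive collapse is that, because $w_i=x_hy_g$ is already fixed by $M$, the second factor's condition ($z_h=x_gw_i$ and $z_g=x_hw_i$, respectively) becomes independent of $w_g$: it reduces to $z_h=x_iy_g$ (resp.\ $z_g=y_g$), a condition on $\mathbf{z}$ alone. Thus the count is either $0$ or all of $n-2$, and recognizing $z_h=x_iy_g$ as the nonzero condition for $E_4^*A_gE_i^*A_hE_4^*$ and $z_g=y_g$ as defining $R_0\cup R_g$ gives $\overline{n-2}\,E_4^*A_gE_i^*A_hE_4^*$ and $\overline{n-2}(E_4^*+E_4^*A_gE_4^*)$.

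The hard part is the clean characterization itself: one must verify that the single arithmetic condition $w_c=x_by_a$ is not merely necessary but sufficient for the entry to be $1$, i.e.\ that all the distinctness requirements hidden in $\mathbf{u}\in\mathbf{y}R_a\cap\mathbf{x}R_b\cap\mathbf{w}R_c$ are automatic for $\mathbf{y},\mathbf{w}\in\mathbf{x}R_4$ in an elementary abelian $2$-group. Once this is in place, each of the six parts is a one-line count, and the repeated involutive and component-product identities are precisely what force the ``single forbidden value'' versus ``independent of $w_g$'' dichotomy separating (i)--(iv) from (v)--(vi). As a consistency check, one can note $M^T=E_4^*A_iE_h^*A_gE_4^*$ and $(MN)^T=N^TM^T$, which makes the parts with symmetric right-hand sides (such as (vi), whose left-hand side is $MM^T$) manifestly self-transpose.
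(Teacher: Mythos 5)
Your proof is correct, but it takes a genuinely different route from the paper's. The paper obtains all six identities by pure algebraic reduction to previously proven product formulas: each five-fold product is regrouped so that one of the identities of Lemma \ref{L;Lemma2.12} (together with Lemma \ref{L;Lemma2.9} or Lemma \ref{L;Lemma2.11} (iii) where needed) evaluates an initial segment, and the trailing factor $A_iE_4^*$ is then absorbed via Lemma \ref{L;Tripleproducts} (i) and Theorem \ref{T;Insectionnumbers} (iii), which turn terms such as $E_4^*JE_h^*A_iE_4^*$ into $E_4^*JE_4^*$. You instead recompute everything at the level of matrix entries, resting on the unified characterization that for $\mathbf{y},\mathbf{w}\in\mathbf{x}R_4$ the $(\mathbf{y},\mathbf{w})$-entry of $E_4^*A_aE_b^*A_cE_4^*$ equals $1$ exactly when $w_c=x_by_a$. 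That characterization is valid and is indeed a tidier packaging of Lemmas \ref{L;Lemma1.7}--\ref{L;Lemma1.12}: the hidden distinctness requirements are automatic, since the unique $\mathbf{u}\in\mathbf{y}R_a\cap\mathbf{x}R_b$ has $u_b=x_b$, which differs from $y_b$, $w_b$, and (in coordinate $a$) from $x_a$, so $\mathbf{u}\neq\mathbf{y},\mathbf{w},\mathbf{x}$. Your subsequent counts also check out: the admissible $\mathbf{w}$ form an $(n-2)$-element family parametrized by $w_g\in G\setminus\{x_g,y_g\}$, the second factor pins $w_g$ to one value in parts (i)--(iv) (and that value is forbidden precisely when the subtracted term on the right-hand side is nonzero), while in parts (v)--(vi) the condition degenerates to one on $\mathbf{z}$ alone, producing the factor $\overline{n-2}$. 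What your approach buys is a self-contained and conceptually uniform treatment that explains the dichotomy between (i)--(iv) and (v)--(vi); what the paper's approach buys is reuse of already-established identities with no further combinatorial counting. Both are sound.
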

\begin{proof}
(i) is proved by combining Lemmas \ref{L;Lemma2.12} (xii), \ref{L;Tripleproducts} (i), and Theorem \ref{T;Insectionnumbers} (iii). (ii) is proved by combining Lemmas \ref{L;Lemma2.12} (x),
\ref{L;Tripleproducts} (i), and Theorem \ref{T;Insectionnumbers} (iii). (iii) is proved by combining Lemmas \ref{L;Lemma2.12} (viii), \ref{L;Tripleproducts} (i), and Theorem \ref{T;Insectionnumbers} (iii). (iv) can be proved by combining Lemmas \ref{L;Lemma2.12} (vi), \ref{L;Tripleproducts} (i), \ref{L;Lemma2.9}, and Theorem \ref{T;Insectionnumbers} (iii). (v) can be proved by Lemmas \ref{L;Lemma2.12} (iv) and \ref{L;Lemma2.11} (iii). (vi) can be proved by Lemmas \ref{L;Lemma2.12} (ii) and \ref{L;Lemma2.9}. The desired lemma is proved.
\end{proof}
We are now ready to deduce the main result of this section.
\begin{cor}\label{C;Corollary2.16}
Assume that $G$ is an elementary abelian $2$-group. If $M\in B_1\cup B_2$ and $N\in B_1\cup B_2\cup B_5\cup B_6$, then $MN\in T_1$. In paricular, $B$ is an $\F$-basis of $T$.
\end{cor}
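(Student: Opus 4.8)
The statement has two parts, and the second (``In particular'') is a short formal consequence of the first, so the bulk of the work is the multiplication claim $MN\in T_1$. Write $V=\langle B_1\cup B_2\rangle_\F$. From the proof of Corollary \ref{C;Corollary1.22}, together with \eqref{Eq;3} and Lemma \ref{L;Tripleproducts}(iii),(iv), one has $V=T_0$, the $\F$-span of all the generators $E_a^*A_bE_c^*$ of $T$; in particular $I=\sum_{j=0}^4E_j^*\in V$ by \eqref{Eq;3}. Thus $B_1\cup B_2$ is an $\F$-basis of $V$, while $B_1\cup B_2\cup B_5\cup B_6$ spans $T_1$.

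Granting the multiplication claim, I would deduce $T=T_1$ as follows. Reading the claim over the basis $B_1\cup B_2$ of $V$ and the spanning set $B_1\cup B_2\cup B_5\cup B_6$ of $T_1$, and extending bilinearly, gives $V\cdot T_1\subseteq T_1$. Since $T$ is the unital $\F$-subalgebra generated by its generators, and these span $V\ni I$, every element of $T$ is an $\F$-combination of products $v_1v_2\cdots v_k$ with each $v_j\in B_1\cup B_2$. I would show $v_1\cdots v_k\in T_1$ by induction on $k$, peeling the leftmost factor: the case $k=1$ is $v_1\in V\subseteq T_1$, and for $k\geq 2$ the induction hypothesis gives $v_2\cdots v_k\in T_1$, whence $v_1(v_2\cdots v_k)\in V\cdot T_1\subseteq T_1$. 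Hence $T\subseteq T_1$; as $T_1\subseteq T$ trivially, $T=T_1$, and since $B$ is an $\F$-basis of $T_1$ by Corollary \ref{C;Corollary1.22}, $B$ is an $\F$-basis of $T$.

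For the multiplication claim I would argue by cases on $M$ and $N$, using throughout that adjacent dual idempotents collapse, $E_c^*E_{c'}^*=\delta_{cc'}E_c^*$ by \eqref{Eq;2}; thus $MN=O\in T_1$ unless the right index of $M$ equals the left index of $N$. If $M\in B_2$, or if $M\in B_1$ and $N\in B_2$, then $MN$ carries a factor $J$, and I collapse it by repeated use of the absorption rules $JE_g^*A_hE_i^*=\overline{p_{gh}^i}JE_i^*$ and $E_g^*A_hE_i^*J=\overline{p_{ih}^g}E_g^*J$ of Lemma \ref{L;Tripleproducts}(i) to a scalar multiple of an element of $B_2\subseteq T_1$. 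In the remaining cases $M\in B_1$, and $MN$ is a word with two adjacency matrices when $N\in B_1$ and three when $N\in B_5\cup B_6$. If $M,N\in B_1$, then $MN=E_a^*A_bE_c^*A_rE_s^*$, which I rewrite by the identities of Section 3 and Lemmas \ref{L;Lemma2.1}--\ref{L;Lemma2.5}, \ref{L;Lemma2.8}, \ref{L;Lemma2.9}, \ref{L;Lemma2.13}; depending on the indices this lands in $V=T_0$, or, via Lemma \ref{L;Lemma2.4} (which identifies the two admissible orderings), produces one of the genuinely new elements of $B_5$ or $B_6$. If $M\in B_1$ and $N\in B_5\cup B_6$, the resulting three-adjacency word is reduced by Lemmas \ref{L;Lemma2.6}, \ref{L;Lemma2.11}, \ref{L;Lemma2.12}, and \ref{L;Lemma2.14} to an $\F$-combination of elements of $B_1\cup B_2\cup B_5\cup B_6$. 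In every case $MN\in T_1$.

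The substance of the proof, and its main obstacle, is the exhaustiveness of this case analysis: for each admissible index pattern one must select the applicable computational identity and confirm that its right-hand side is an $\F$-combination of the elements of $B_1\cup B_2\cup B_5\cup B_6$, so that the product genuinely stays inside $T_1$ rather than escaping into a word not yet reduced. Every one of these reductions ultimately rests on the entry formula of Lemma \ref{L;Tripleproducts}(vi) and the structural relations \eqref{Eq;2} and \eqref{Eq;3}, and I expect the most demanding cases to be those whose product leaves $T_0$, landing in the span of $B_5$ or $B_6$, since there the correct target element must be pinned down precisely. I note that it suffices here to reach the span of $B_1\cup B_2\cup B_5\cup B_6$, so the delicate Klein-four and characteristic-two collapses (Lemmas \ref{L;Lemma1.5}, \ref{L;Lemma1.19}, \ref{L;Lemma1.20}) used in Corollary \ref{C;Corollary1.22} to trim $B$ are not needed for this step.
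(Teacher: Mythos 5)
Your proposal is correct and takes essentially the same route as the paper: the same case split (any factor in $B_2$ collapses by \eqref{Eq;2} and Lemma \ref{L;Tripleproducts}(i); the remaining products are two- or three-adjacency words reduced by the computational lemmas of Sections 3--4), and the same derivation of $T=T_1$ from closure of $T_1$ under left multiplication by the spanning set $B_1\cup B_2$ of $T_0$. The only slip is that your lemma list for the case $N\in B_5$ should also include Lemmas \ref{L;Lemma2.7} and \ref{L;Lemma2.10}, which handle the products $E_a^*A_bE_4^*\cdot E_4^*A_gE_i^*A_hE_g^*$; this does not change the strategy.
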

\begin{proof}
As $T_1=\langle B_1\cup B_2\cup B_5\cup B_6\rangle_\F$, the first statement follows if $MN$ is an $\F$-linear combination of the elements in $B_1\cup B_2\cup B_5\cup B_6$. Assume that one of the elements $M$ and $N$ is in $B_2$. According to \eqref{Eq;2} and Lemma \ref{L;Tripleproducts} (i), observe that $MN$ is an $\F$-linear combination of the elements in $B_2$. Notice that $E_1^*, E_2^*, E_3^*, E_4^*\in B_1$ by \eqref{Eq;2} and \eqref{Eq;3}. Assume that one of the elements $M$ and $N$ is in $\{E_1^*, E_2^*, E_3^*, E_4^*\}$. Then $MN$ is an $\F$-linear combination of the elements in $B_1\cup B_2\cup B_5\cup B_6$. Assume further that both $M$ and $N$ are not in $B_2\cup\{E_1^*, E_2^*, E_3^*, E_4^*\}$. By combining the equalities in Lemmas \ref{L;Lemma2.1}, \ref{L;Lemma2.2}, \ref{L;Lemma2.3}, \ref{L;Lemma2.5}, \ref{L;Lemma2.6}, \ref{L;Lemma2.7}, \ref{L;Lemma2.8}, \ref{L;Lemma2.9}, \ref{L;Lemma2.10}, \ref{L;Lemma2.11}, \ref{L;Lemma2.12}, \ref{L;Lemma2.13}, \ref{L;Lemma2.14}, \eqref{Eq;2}, and the definitions of $B_1$, $B_2$, $B_5$, $B_6$, notice that $MN$ is an $\F$-linear combination of the elements in $B_1\cup B_2\cup B_5\cup B_6$. The first statement is proved.

Notice that $\{E_a^*A_bE_c^*: a,b,c\in [0,4]\}\subseteq T_1$ by combining \eqref{Eq;3}, Lemma \ref{L;Tripleproducts} (iii), and Theorem \ref{T;Insectionnumbers} (i), (ii), (iii). According to the definitions of $B_1, B_2, B_5, B_6$, \eqref{Eq;3}, and the first statement, notice that $T_1$ is a unital $\F$-subalgebra of $T$. Hence $T=T_1$ by the definition of $T$. The second statement thus follows from Corollary \ref{C;Corollary1.22}. The proof of the desired corollary is now complete.
\end{proof}
Our final corollary focuses on $E_4^*TE_4^*$ whose definition is given in Subsection \ref{S;Subsection3}.
\begin{cor}\label{C;Corollary2.17}
Assume that $G$ is an elementary abelian $2$-group. If $n>8$ or $p\neq 2$ and $n=8$, then $B_3\cup B_6$ is an $\F$-basis of the $\F$-subalgebra $E_4^*TE_4^*$ of $T$. If $p=2$ and $n=8$,
then $(B_3\cup B_6)\setminus\{E_4^*A_1E_2^*A_3E_4^*\}$ is an $\F$-basis of the $\F$-subalgebra $E_4^*TE_4^*$ of $T$. If $n=4$, then $B_3\cup\{E_4^*A_1E_2^*A_3E_4^*\}$ is an $\F$-basis of the $\F$-subalgebra $E_4^*TE_4^*$ of $T$.
\end{cor}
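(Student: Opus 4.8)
The plan is to exploit Corollary \ref{C;Corollary2.16}, which supplies the explicit $\F$-basis $B$ of $T$, together with the orthogonality relation $E_g^*E_h^*=\delta_{gh}E_h^*$ from \eqref{Eq;2}. Since $E_4^*E_4^*=E_4^*$, the set $E_4^*TE_4^*$ is an $\F$-subalgebra of $T$ with identity $E_4^*$ (as recalled in Subsection \ref{S;Subsection3}), and because $B$ spans $T$ one immediately gets $E_4^*TE_4^*=\langle\{E_4^*NE_4^*:N\in B\}\rangle_\F$. Thus the entire corollary reduces to computing the sandwiched element $E_4^*NE_4^*$ for each $N$ in the relevant version of $B$ and recording which of these are nonzero.

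First I would run through the four types of basis elements. If $N=E_a^*A_bE_c^*\in B_1$, then $E_4^*NE_4^*=\delta_{4a}\delta_{c4}E_4^*A_bE_4^*$, so it vanishes unless $a=c=4$; as $b$ ranges over $[0,3]$ with $p_{4b}^4\neq 0$, which holds for every such $b$ since $p_{40}^4=1$ and $p_{4b}^4=n-3>0$ for $b\in[1,3]$ by Theorem \ref{T;Insectionnumbers} (iii) and $n\geq 4$, the nonzero images are exactly $\{E_4^*A_bE_4^*:b\in[0,3]\}$. If $N=E_a^*JE_b^*\in B_2$, then $E_4^*NE_4^*=\delta_{4a}\delta_{b4}E_4^*JE_4^*$, contributing only $E_4^*JE_4^*$. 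Hence the nonzero elements of $E_4^*(B_1\cup B_2)E_4^*$ are precisely $B_3$. Every element of $B_5$ carries some $E_a^*$ with $a\in[1,3]$ at one of its two ends, so $E_4^*NE_4^*=O$ for $N\in B_5$. Finally, each $N\in B_6$ already begins and ends with $E_4^*$, whence $E_4^*NE_4^*=N$.

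Combining these computations, the nonzero elements of $\{E_4^*NE_4^*:N\in B\}$ form exactly $B_3\cup(B\cap B_6)$, and matching against the definition of $B$ in Notation \ref{N;Notation1.21} yields, case by case, precisely the three sets in the statement: $B_3\cup B_6$ when $n>8$ or ($p\neq2$ and $n=8$); $(B_3\cup B_6)\setminus\{E_4^*A_1E_2^*A_3E_4^*\}$ when $p=2$ and $n=8$; and $B_3\cup\{E_4^*A_1E_2^*A_3E_4^*\}$ when $n=4$. This gives spanning in every case. For linear independence I would simply observe that each of these sets is a subset of $B$: indeed $B_3\subseteq B_1\cup B_2\subseteq B$ as recorded in Notation \ref{N;Notation1.21}, while the relevant elements of $B_6$ all lie in $B$ by its very construction. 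Being a subset of the $\F$-basis $B$ from Corollary \ref{C;Corollary2.16}, each set is $\F$-linearly independent, hence an $\F$-basis of $E_4^*TE_4^*$.

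I do not expect a serious obstacle here once Corollary \ref{C;Corollary2.16} is available; the work is essentially bookkeeping across the four cases. The one point requiring care is the case $p=2$, $n=8$, in which $E_4^*A_1E_2^*A_3E_4^*$ is deliberately omitted from $B$: I must confirm that deleting it does not shrink the span. In the present approach this is automatic, since that element is never produced by $E_4^*BE_4^*$; it is also consistent with Lemma \ref{L;Lemma1.19}, which expresses the omitted product as an $\F$-combination of $E_4^*A_4E_4^*$, itself lying in $\langle B_3\rangle_\F$ via $E_4^*JE_4^*=\sum_{j=0}^4 E_4^*A_jE_4^*$ from \eqref{Eq;3}, and the remaining five elements of $B_6$.
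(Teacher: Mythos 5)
Your proposal is correct and is essentially the paper's own argument: the paper's proof is a one-liner invoking Corollary \ref{C;Corollary2.16}, the definition of $B$, and the orthogonality relation \eqref{Eq;2}, which is exactly the sandwiching computation $E_4^*NE_4^*$ that you carry out explicitly. Your write-up just makes the bookkeeping visible; no gap.
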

\begin{proof}
As $B_3\subseteq B_1\cup B_2$, notice that the desired corollary is proved by combining Corollary \ref{C;Corollary2.16}, the definition of $B$, and \eqref{Eq;2}.
\end{proof}
\section{Algebraic structure of $T$: Case I}
From now on, $G$ is assumed further to be an elementary abelian $2$-group. The aim of this section is to get the algebraic structure of $T$ for the case $n\equiv 1\pmod p$. For our purpose, recall Notation \ref{N;Notation1.21} and the definition of $E_4^*TE_4^*$ in Subsection \ref{S;Subsection3}.

We first introduce the following notation and present some related properties.
\begin{nota}\label{N;Notation3.1}
Let $C_1$ denote the set containing precisely
\begin{align*}
&E_4^*JE_4^*,\  E_4^*A_1E_4^*-E_4^*A_2E_4^*,\ E_4^*A_1E_4^*-E_4^*A_3E_4^*,\\
&E_4^*A_1E_2^*A_3E_4^*-E_4^*-E_4^*A_1E_4^*,\ E_4^*A_1E_2^*A_3E_4^*-E_4^*A_1E_3^*A_2E_4^*,\\
&E_4^*A_1E_2^*A_3E_4^*-E_4^*A_2E_1^*A_3E_4^*,\ E_4^*A_1E_2^*A_3E_4^*-E_4^*A_2E_3^*A_1E_4^*,\\
&E_4^*A_1E_2^*A_3E_4^*-E_4^*A_3E_1^*A_2E_4^*,\ E_4^*A_1E_2^*A_3E_4^*-E_4^*A_3E_2^*A_1E_4^*.
\end{align*}
The set containing exactly $E_1^*A_2E_4^*\!-\!E_1^*A_3E_4^*$, $E_2^*A_1E_4^*\!-\!E_2^*A_3E_4^*$, $E_3^*A_1E_4^*\!-\!E_3^*A_2E_4^*$, $E_1^*A_2E_3^*A_1E_4^*-E_1^*A_3E_4^*$,
$E_2^*A_1E_3^*A_2E_4^*-E_2^*A_3E_4^*$, $E_3^*A_1E_2^*A_3E_4^*\!-\!E_3^*A_2E_4^*$, and their transposes is denoted by $D_1$. Set $U_1=\langle C_1\rangle_\F\subseteq T$. Put $V_1=\langle B_4\cup C_1\cup D_1\rangle_\F\subseteq T$.
\end{nota}
\begin{lem}\label{L;Lemma3.2}
If $n>8$ or $p\neq 2$ and $n=8$, $C_1\cup\{\overline{2}E_4^*+E_4^*A_1E_4^*, -E_4^*A_1E_2^*A_3E_4^*\}$ is an $\F$-basis of the $\F$-subalgebra $E_4^*TE_4^*$ of $T$. In particular, $C_1$ is an $\F$-basis of $U_1$.
\end{lem}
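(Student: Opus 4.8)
The plan is to reduce the statement to Corollary \ref{C;Corollary2.17} together with a single small determinant. Under the hypothesis ``$n>8$, or $p\neq2$ and $n=8$'', that corollary asserts that $B_3\cup B_6$ is an $\F$-basis of $E_4^*TE_4^*$. Since $A_0=I$, the set $B_3$ consists of the five matrices $E_4^*$, $E_4^*A_1E_4^*$, $E_4^*A_2E_4^*$, $E_4^*A_3E_4^*$, $E_4^*JE_4^*$, while $B_6$ consists of the six matrices $E_4^*A_aE_b^*A_cE_4^*$ with $\{a,b,c\}=[1,3]$; hence $\dim_\F E_4^*TE_4^*=11$. The proposed set $C_1\cup\{\overline{2}E_4^*+E_4^*A_1E_4^*,\,-E_4^*A_1E_2^*A_3E_4^*\}$ has exactly $9+2=11$ elements, and by the very definition in Notation \ref{N;Notation3.1} each of them is an explicit $\F$-linear combination of $B_3\cup B_6$, so all eleven lie in $E_4^*TE_4^*$. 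It therefore suffices to prove that these eleven matrices are $\F$-linearly independent; a dimension count then forces them to be an $\F$-basis of $E_4^*TE_4^*$.

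To prove independence I would record the $11\times11$ transition matrix $P$ expressing the new eleven elements in the ordered basis $B_3\cup B_6$ and show $\det P\neq0$. The useful observation is that eight of the eleven new elements each isolate a distinct basis vector occurring in no other row: namely $E_4^*JE_4^*$ isolates $E_4^*JE_4^*$, the two differences $E_4^*A_1E_4^*-E_4^*A_jE_4^*$ ($j=2,3$) isolate $E_4^*A_2E_4^*$ and $E_4^*A_3E_4^*$, and the five differences $E_4^*A_1E_2^*A_3E_4^*-E_4^*A_aE_b^*A_cE_4^*$ (for the five triples other than $(1,2,3)$) isolate the five remaining triple products. Iterated cofactor expansion along these eight columns, each of which has a single nonzero entry $\pm1$ lying in a distinct row, contributes an overall factor $\pm1$ and leaves the $3\times3$ determinant on the rows $E_4^*A_1E_2^*A_3E_4^*-E_4^*-E_4^*A_1E_4^*$, $\overline{2}E_4^*+E_4^*A_1E_4^*$, $-E_4^*A_1E_2^*A_3E_4^*$ against the columns $E_4^*$, $E_4^*A_1E_4^*$, $E_4^*A_1E_2^*A_3E_4^*$.

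This residual determinant is
\[
\det\begin{pmatrix}-1 & -1 & 1\\ \overline{2} & 1 & 0\\ 0 & 0 & -1\end{pmatrix}=-\bigl(\overline{2}-\overline{1}\bigr)=-\overline{1}\neq0 .
\]
The point worth emphasizing, and the only place any care is needed, is that the coefficient $\overline{2}$ cancels, so this determinant equals $-\overline{1}$ regardless of $\Char\F$; this is exactly why $p=2$ is permitted in the hypothesis when $n>8$. Consequently $\det P=\pm1\neq0$, which establishes the claimed $\F$-basis of $E_4^*TE_4^*$, proving the first assertion. For the ``in particular'' claim, note that $U_1=\langle C_1\rangle_\F$ by definition, so $C_1$ spans $U_1$; and as a nine-element subset of the eleven-element basis just produced, $C_1$ is $\F$-linearly independent, hence an $\F$-basis of $U_1$.

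I expect the only genuine obstacle to be bookkeeping rather than mathematics: one must read off the $11\times11$ coefficient matrix correctly from Notation \ref{N;Notation3.1} and verify that the eight distinguished columns really are singletons, so that the reduction to the $3\times3$ block is legitimate. Once that is in place the determinant is immediate, and, as noted, it is visibly independent of the characteristic, so the argument is uniform across the cases allowed by the hypothesis.
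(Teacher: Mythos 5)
Your proposal is correct and follows essentially the same route as the paper: both reduce to Corollary \ref{C;Corollary2.17} (that $B_3\cup B_6$ is an $\F$-basis of $E_4^*TE_4^*$) and then verify that $C_1\cup\{\overline{2}E_4^*+E_4^*A_1E_4^*,\,-E_4^*A_1E_2^*A_3E_4^*\}$ is obtained from it by an invertible transition. The paper checks invertibility by noting mutual expressibility of the two eleven-element sets, while you make it explicit with the $11\times 11$ determinant reducing to the $3\times 3$ block of value $-\overline{1}$; both computations are correct and characteristic-independent.
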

\begin{proof}
Let $C=C_1\cup\{\overline{2}E_4^*+E_4^*A_1E_4^*, -E_4^*A_1E_2^*A_3E_4^*\}$. By \eqref{Eq;2} and \eqref{Eq;3}, notice that each element in $C$ is an $\F$-linear combination of the elements in $B_3\cup B_6$. By \eqref{Eq;2} and \eqref{Eq;3}, notice that each element in $B_3\cup B_6$ is also an $\F$-linear combination of the elements in $C$. As $|C|=|B_3\cup B_6|$ and Corollary \ref{C;Corollary2.17} holds, the first statement follows. As $U_1=\langle C_1\rangle_\F$, the second statement thus follows from the first one. The desired lemma is proved.
\end{proof}
\begin{lem}\label{L;Lemma3.3}
If $n=4$, the set containing precisely $\overline{2}E_4^*+E_4^*A_1E_4^*$, $-E_4^*A_1E_2^*A_3E_4^*$, $E_4^*JE_4^*$, $E_4^*A_1E_4^*-E_4^*A_2E_4^*$, $E_4^*A_1E_4^*-E_4^*A_3E_4^*$, $E_4^*A_1E_2^*A_3E_4^*-E_4^*-E_4^*A_1E_4^*$ is an $\F$-basis of the $\F$-subalgebra $E_4^*TE_4^*$ of $T$. If $p=3$ and $n=4$, the set containing exactly $E_4^*JE_4^*$, $E_4^*A_1E_4^*-E_4^*A_2E_4^*$, $E_4^*A_1E_4^*-E_4^*A_3E_4^*$, $E_4^*A_1E_2^*A_3E_4^*-E_4^*-E_4^*A_1E_4^*$ is an $\F$-basis of $U_1$.
\end{lem}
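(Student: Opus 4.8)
The plan is to handle the two statements in turn, each time matching the proposed set against a basis already in hand and closing with a dimension count, in the style of the proof of Lemma~\ref{L;Lemma3.2}.

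For the first statement, I would start from Corollary~\ref{C;Corollary2.17}: when $n=4$ the set $B_3\cup\{E_4^*A_1E_2^*A_3E_4^*\}$ is an $\F$-basis of $E_4^*TE_4^*$, and since $A_0=I$ this reads $\{E_4^*,E_4^*A_1E_4^*,E_4^*A_2E_4^*,E_4^*A_3E_4^*,E_4^*JE_4^*,E_4^*A_1E_2^*A_3E_4^*\}$, so $\dim_\F E_4^*TE_4^*=6$. Each of the six proposed vectors is patently an $\F$-linear combination of these six, so it only remains to recover the six basis vectors from the proposed ones. This is a quick back-substitution: $E_4^*JE_4^*$ and $E_4^*A_1E_2^*A_3E_4^*$ appear among the proposed vectors (the latter up to sign), then $E_4^*$ and $E_4^*A_1E_4^*$ are obtained by combining $\overline{2}E_4^*+E_4^*A_1E_4^*$, $-E_4^*A_1E_2^*A_3E_4^*$, and $E_4^*A_1E_2^*A_3E_4^*-E_4^*-E_4^*A_1E_4^*$, and finally $E_4^*A_2E_4^*$ and $E_4^*A_3E_4^*$ fall out of the two difference vectors. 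All coefficients involved are integers, so no division is needed and the conclusion holds over every $\F$; six spanning vectors in a six-dimensional space form a basis.

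For the second statement, I would first observe that the four proposed vectors are exactly the first four members of $C_1$ in Notation~\ref{N;Notation3.1}, hence lie in $U_1=\langle C_1\rangle_\F$; their $\F$-linear independence is then automatic, as they are four of the six basis vectors produced in the first statement. It therefore suffices to show that the remaining five members of $C_1$, the differences $E_4^*A_1E_2^*A_3E_4^*-E_4^*A_gE_h^*A_iE_4^*$, lie in the span of the four proposed vectors when $p=3$. The tool is Lemma~\ref{L;Lemma1.20} (applicable because $n=4$ makes $G$ a Klein four group): each $E_4^*A_gE_h^*A_iE_4^*$ is expressed there through $E_4^*A_1E_2^*A_3E_4^*$ and the single products $E_4^*A_1E_4^*,E_4^*A_2E_4^*,E_4^*A_3E_4^*,E_4^*A_4E_4^*$, and $E_4^*A_4E_4^*$ is eliminated via $E_4^*JE_4^*=\sum_{j=0}^4E_4^*A_jE_4^*$ from \eqref{Eq;3}. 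The two differences coming from parts (iii) and (iv) reduce directly to $\F$-combinations of $E_4^*A_1E_4^*-E_4^*A_2E_4^*$ and $E_4^*A_1E_4^*-E_4^*A_3E_4^*$, hence lie in the span over any field.

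The characteristic hypothesis is needed only for the three remaining differences, from parts (i), (ii), (v), and this is where I expect the one genuine obstacle. A representative case is the difference from part (v), for which one verifies the identity
\[
\begin{aligned}
E_4^*A_1E_2^*A_3E_4^*-E_4^*A_3E_2^*A_1E_4^*
&=\overline{2}\bigl(E_4^*A_1E_2^*A_3E_4^*-E_4^*-E_4^*A_1E_4^*\bigr)-E_4^*JE_4^*\\
&\quad+\overline{3}E_4^*+\overline{3}E_4^*A_1E_4^*-E_4^*A_2E_4^*+E_4^*A_3E_4^*,
\end{aligned}
\]
valid over any $\F$; the differences from parts (i) and (ii) differ from this one only by $\F$-combinations of the two vectors $E_4^*A_1E_4^*-E_4^*A_2E_4^*$ and $E_4^*A_1E_4^*-E_4^*A_3E_4^*$. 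The terms $\overline{3}E_4^*$ and $\overline{3}E_4^*A_1E_4^*$ vanish precisely when $p=3$, after which $-E_4^*A_2E_4^*+E_4^*A_3E_4^*$ is rewritten as $(E_4^*A_1E_4^*-E_4^*A_2E_4^*)-(E_4^*A_1E_4^*-E_4^*A_3E_4^*)$ and absorbed, leaving the difference inside the four-dimensional span. For $p\neq 3$ those two terms survive and the difference escapes the span, so $U_1$ is genuinely larger there, consistently with $C_1$ being the full basis in the generic case of Lemma~\ref{L;Lemma3.2}. Once all five differences are shown to be $\F$-combinations of the four proposed vectors, $U_1$ equals their span, and by the independence already noted they form an $\F$-basis of $U_1$.
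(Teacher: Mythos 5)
Your proposal is correct and follows essentially the same route as the paper: a change-of-basis from $B_3\cup\{E_4^*A_1E_2^*A_3E_4^*\}$ (via Corollary \ref{C;Corollary2.17}) plus a cardinality count for the first statement, then Lemma \ref{L;Lemma1.20} together with \eqref{Eq;3} and $p=3$ to express the remaining elements of $C_1$ in the span of the four proposed vectors. Your explicit identity for the part (v) difference checks out over any field, and the $\overline{3}$-terms vanishing at $p=3$ is exactly where the paper's (unwritten) computation also relies on the characteristic assumption.
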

\begin{proof}
The set containing precisely $E_4^*JE_4^*$, $E_4^*A_1E_4^*-E_4^*A_2E_4^*$, $E_4^*A_1E_4^*-E_4^*A_3E_4^*$, $E_4^*A_1E_2^*A_3E_4^*-E_4^*-E_4^*A_1E_4^*$ is denoted by $C$. According to \eqref{Eq;2} and \eqref{Eq;3}, notice that each element in $C\cup \{\overline{2}E_4^*+E_4^*A_1E_4^*, -E_4^*A_1E_2^*A_3E_4^*\}$ is an $\F$-linear combination of the elements in $B_3\cup\{E_4^*A_1E_2^*A_3E_4^*\}$. According to \eqref{Eq;2} and \ref{Eq;3} again, notice that each element in $B_3\cup \{E_4^*A_1E_2^*A_3E_4^*\}$ is an $\F$-linear combination of the elements in $C\cup \{\overline{2}E_4^*+E_4^*A_1E_4^*, -E_4^*A_1E_2^*A_3E_4^*\}$. The first statement follows as Corollary \ref{C;Corollary2.17} holds and $|C\cup\{\overline{2}E_4^*+E_4^*A_1E_4^*$, $-E_4^*A_1E_2^*A_3E_4^*\}|=|B_3\cup\{E_4^*A_1E_2^*A_3E_4^*\}|$. So $C$ is an $\F$-linearly independent subset of $T$. By combining \eqref{Eq;2}, \eqref{Eq;3}, the equalities in Lemma \ref{L;Lemma1.20}, and the assumption $p=3$, notice that each element in $C_1$ is an $\F$-linear combination of the elements in $C$. As $C\subseteq C_1$, notice that $C$ is an $\F$-basis of $U_1$. The second statement thus follows from the first one. The proof of the desired lemma is now complete. 
\end{proof}
The following results describe $E_4^*TE_4^*$ for the case $n\equiv 1\pmod p$.
\begin{lem}\label{L;Lemma3.4}
If $n\equiv 1\pmod p$, $U_1$ is a two-sided ideal of the $\F$-subalgebra $E_4^*TE_4^*$ of $T$.
\end{lem}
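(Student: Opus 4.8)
The plan is to realize $U_1$ as the kernel of an explicit $\F$-algebra homomorphism out of $E_4^*TE_4^*$; the kernel of an algebra homomorphism is automatically a two-sided ideal, so this settles the lemma at once. Before doing so I record two reductions. Since $G$ is an elementary abelian $2$-group, $n=|G|$ is even, so the hypothesis $n\equiv 1\pmod p$ forces $p\neq 2$; consequently the only cases to treat are $n\geq 8$, where an $\F$-basis of $E_4^*TE_4^*$ is given by the first statement of Lemma \ref{L;Lemma3.2}, and $n=4$, where necessarily $p=3$ and the relevant bases are those of the first statement of Lemma \ref{L;Lemma3.3}. Recall also that $E_4^*$ is the identity element of the $\F$-algebra $E_4^*TE_4^*$.

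Next I would introduce the $2$-dimensional commutative unital $\F$-algebra $A=\F[x]/(x^2-\overline{n-4}\,x-\overline{n-3})$, with $\F$-basis $\{e,u\}$, where $e$ is the identity and $u$ is the image of $x$, so that $u^2=\overline{n-3}\,e+\overline{n-4}\,u$; under $n\equiv 1\pmod p$ this becomes $u^2=-\overline{2}\,e-\overline{3}\,u$. I define an $\F$-linear map $\phi\colon E_4^*TE_4^*\to A$ by prescribing its values on $B_3\cup B_6$:
\begin{align*}
\phi(E_4^*)&=e, & \phi(E_4^*A_aE_4^*)&=u\ (a\in[1,3]),\\
\phi(E_4^*JE_4^*)&=0, & \phi(E_4^*A_gE_h^*A_iE_4^*)&=e+u\ (\{g,h,i\}=[1,3]).
\end{align*}
For $n\geq 8$ the set $B_3\cup B_6$ is an $\F$-basis of $E_4^*TE_4^*$ by Corollary \ref{C;Corollary2.17}, so $\phi$ is well defined; for $n=4$ I would instead define $\phi$ on the genuine basis $B_3\cup\{E_4^*A_1E_2^*A_3E_4^*\}$ and then check, using Lemma \ref{L;Lemma1.20}, the identity $E_4^*A_4E_4^*=E_4^*JE_4^*-E_4^*-\sum_{a=1}^{3}E_4^*A_aE_4^*$ coming from \eqref{Eq;3}, and $p=3$, that every remaining element of $B_6$ also has image $e+u$. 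In both cases $\phi$ is surjective, since $\phi(E_4^*)=e$ and $\phi(E_4^*A_1E_4^*)=u$.

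The heart of the proof is verifying $\phi(MN)=\phi(M)\phi(N)$ for all $M,N\in B_3\cup B_6$. Products in which $M$ or $N$ equals $E_4^*$ are trivial, and every product having $E_4^*JE_4^*$ as a factor is a scalar multiple of $E_4^*JE_4^*$ by Lemma \ref{L;Tripleproducts} (i) and \eqref{Eq;4}, hence is annihilated by $\phi$, matching $\phi(E_4^*JE_4^*)=0$. The remaining products split into three families, each already evaluated in Section 4: the products $E_4^*A_gE_4^*\cdot E_4^*A_hE_4^*$ by Lemma \ref{L;Lemma2.13}; the mixed products of a single $E_4^*A_aE_4^*$ with a triple term by Lemma \ref{L;Lemma2.14}; and the products of two triple terms by Lemma \ref{L;Lemma2.15}. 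For each one substitutes the right-hand side of the cited lemma, applies $\phi$ termwise, and compares with the product computed in $A$; the transpose symmetry of \eqref{Eq;1} halves this bookkeeping. I expect this comparison to be the main obstacle, not in depth but in sheer volume, and it is exactly where the hypothesis is indispensable: for example Lemma \ref{L;Lemma2.13} (ii) gives $\phi(E_4^*A_gE_4^*\cdot E_4^*A_hE_4^*)=-\overline{2}\,e-\overline{3}\,u$, which equals $u^2$ precisely because $\overline{n-1}=0$, and the same reductions $\overline{n-3}=-\overline{2}$ and $\overline{n-2}=-\overline{1}$ reconcile Lemmas \ref{L;Lemma2.14} and \ref{L;Lemma2.15} with the multiplication of $A$.

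Granting this multiplicativity, $\phi$ is an $\F$-algebra homomorphism by bilinearity of the product, and I would conclude as follows. Inspecting the defining formulas for $\phi$ shows that every one of the nine generators listed in Notation \ref{N;Notation3.1} lies in $\ker\phi$, so $U_1=\langle C_1\rangle_\F\subseteq\ker\phi$. As $\phi$ is surjective onto the $2$-dimensional algebra $A$, we get $\dim_\F\ker\phi=\dim_\F E_4^*TE_4^*-2$, and this equals $\dim_\F U_1$ by the dimension counts recorded in Lemma \ref{L;Lemma3.2} (giving $11-2=9$ when $n\geq 8$) and in Lemma \ref{L;Lemma3.3} (giving $6-2=4$ when $n=4$, $p=3$). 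Therefore $U_1=\ker\phi$, and being the kernel of an $\F$-algebra homomorphism it is a two-sided ideal of $E_4^*TE_4^*$.
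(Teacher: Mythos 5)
Your proposal is correct, and it takes a genuinely different route from the paper. The paper proves the lemma head-on: using Corollary \ref{C;Corollary2.17} and the transpose symmetry \eqref{Eq;1}, it reduces to showing $MN\in U_1$ for every $M\in B_3\cup B_6$ and every generator $N\in C_1$, and then verifies each such product by substituting the formulas of Lemmas \ref{L;Lemma2.13}, \ref{L;Lemma2.14}, \ref{L;Lemma2.15} and reducing modulo $n\equiv 1\pmod p$. You instead exhibit $U_1$ as the kernel of a surjective $\F$-algebra homomorphism $\phi$ onto the two-dimensional algebra $A=\F[x]/(x^2-\overline{n-4}\,x-\overline{n-3})$, with the ideal property then automatic; I checked your assignment of values on $B_3\cup B_6$ against Lemmas \ref{L;Lemma2.13}--\ref{L;Lemma2.15} (all six permutations of the triple products are covered by Lemma \ref{L;Lemma2.15}, the mixed products by Lemma \ref{L;Lemma2.14}, and every product involving $E_4^*JE_4^*$ is a scalar multiple of $E_4^*JE_4^*$ by Lemma \ref{L;Tripleproducts} (i) and \eqref{Eq;4}), and the dimension counts from Lemmas \ref{L;Lemma3.2} and \ref{L;Lemma3.3} do give $\dim_\F\ker\phi=\dim_\F U_1$ in both the $n\geq 8$ and the $n=4$, $p=3$ cases. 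The computational core is identical in the two arguments -- both live or die by the same catalogue of products and the same reductions $\overline{n-3}=-\overline{2}$, $\overline{n-2}=-\overline{1}$ -- so neither is cheaper; but your version buys more, since $x^2+\overline{3}x+\overline{2}=(x+\overline{1})(x+\overline{2})$ splits with distinct roots over any field, so $A\cong M_1(\F)\oplus M_1(\F)$ and your homomorphism delivers the isomorphism $E_4^*TE_4^*/U_1\cong M_1(\F)\oplus M_1(\F)$ of Corollary \ref{C;Corollary3.6} essentially for free, whereas the paper must construct the two orthogonal idempotents separately there. The one thing your route does not give by itself is the nilpotence of $U_1$ (Lemma \ref{L;Lemma3.5}), which still has to be checked to conclude $U_1=\mathrm{Rad}\,E_4^*TE_4^*$.
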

\begin{proof}
Notice that $U_1\subseteq E_4^*TE_4^*$. If $M\in B_3\cup B_6$, then $M^T\in B_3\cup B_6$ by \eqref{Eq;1}. If $N\in U_1$, notice that $N^T\in U_1$ as $U_1=\langle C_1\rangle_\F$ and \eqref{Eq;1} holds. By Corollary \ref{C;Corollary2.17}, the desired lemma thus follows if $MN\in U_1$ for any $M\in B_3\cup B_6$ and $N\in C_1$. If one of the elements $M$ and $N$ is $E_4^*JE_4^*$, notice that $MN\in U_1$ by \eqref{Eq;2} and Lemma \ref{L;Tripleproducts} (i). Assume that $M\in (B_3\cup B_6)\setminus\{E_4^*JE_4^*\}$ and $N\in C_1\setminus\{E_4^*JE_4^*\}$. By combining \eqref{Eq;2}, \eqref{Eq;3}, the equalities in Lemmas \ref{L;Lemma2.13}, \ref{L;Lemma2.14}, \ref{L;Lemma2.15}, and the assumption $n\equiv 1\pmod p$, notice that $MN\in U_1$. The desired lemma thus follows.
\end{proof}
\begin{lem}\label{L;Lemma3.5}
If $n\equiv 1\pmod p$, the two-sided ideal $U_1$ of the $\F$-subalgebra $E_4^*TE_4^*$ of $T$ is nilpotent. In particular, $U_1\subseteq \mathrm{Rad}E_4^*TE_4^*$.
\end{lem}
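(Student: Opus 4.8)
The plan is to prove that $U_1$ is nilpotent; the ``in particular'' clause is then automatic, since by definition $\mathrm{Rad}(E_4^*TE_4^*)$ is the nilpotent two-sided ideal containing all nilpotent two-sided ideals, and $U_1$ is a two-sided ideal of $E_4^*TE_4^*$ by Lemma \ref{L;Lemma3.4}. So everything reduces to exhibiting some $m\in\NN$ with $U_1^m=O$. Throughout I write $A=E_4^*TE_4^*$ and use that $C_1$ is an $\F$-basis of $U_1$ (Lemma \ref{L;Lemma3.2}, or Lemma \ref{L;Lemma3.3} when $n=4$, which for an elementary abelian $2$-group forces the subcase $p=3$). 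Note also that $n=2^k\equiv 1\pmod p$ forces $p\mid 2^k-1$, so $p$ is odd throughout.

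The first step is to peel off the summand $F:=E_4^*JE_4^*$. Since $n\equiv 1\pmod p$ we have $p\mid k_4=(n-1)(n-2)$, so $F\in B_4\subseteq\mathrm{Rad}T$ by Lemma \ref{L;Tripleproducts} (v); with Lemma \ref{L;Radical} this gives $F=E_4^*FE_4^*\in\mathrm{Rad}A$. Moreover $\langle\{F\}\rangle_\F$ is a two-sided ideal of $A$: by Lemma \ref{L;Tripleproducts} (i) each generator of the form $E_4^*A_bE_c^*$ multiplies $F$ on either side to a scalar multiple of $F$, and $F^2=\overline{k_4}F=O$. Hence $\langle\{F\}\rangle_\F$ is a nilpotent two-sided ideal, and it suffices to prove that the image $\overline{U_1}$ of $U_1$ in the quotient algebra $\overline{A}=A/\langle\{F\}\rangle_\F$ is nilpotent: indeed $\overline{U_1}^{\,m}=\overline O$ forces $U_1^m\subseteq\langle\{F\}\rangle_\F$, whence $U_1^{2m}\subseteq\langle\{F\}\rangle_\F^2=O$.

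Working in $\overline{A}$ removes every $F$-term from the multiplication formulas. The second step is to assemble the product table of the images of the $C_1$-generators, using Lemmas \ref{L;Lemma2.13}, \ref{L;Lemma2.14}, and \ref{L;Lemma2.15} for the products of elements carrying two or three $E_4^*$'s, and reading off the structure constants from Theorem \ref{T;Insectionnumbers} under the collapses $\overline{n-1}=\overline0$, $\overline{n-2}=-\overline1$, $\overline{n-3}=-\overline2$, $\overline{n-4}=-\overline3$ afforded by $n\equiv 1\pmod p$. The hypothesis $n\equiv1\pmod p$ is precisely what kills the coefficient $\overline{n-1}$ and makes the resulting products close up into a proper descending chain; for instance one checks directly that $\big(E_4^*A_1E_2^*A_3E_4^*-E_4^*-E_4^*A_1E_4^*\big)^2\equiv O$ modulo $\langle\{F\}\rangle_\F$, and analogous computations show that iterated products of the generators land in successively smaller subspaces. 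Carrying this out yields $\overline{U_1}^{\,m}=\overline O$ for a suitable $m$, completing the proof.

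The main obstacle is not conceptual but the bookkeeping of the full $|C_1|\times|C_1|$ product table and the verification that the induced filtration on $\overline{U_1}$ actually terminates at $\overline O$; the reduction modulo $\langle\{F\}\rangle_\F$ and the coefficient collapses from $n\equiv1\pmod p$ are exactly what make this finite check tractable. As a useful cross-check (and a fact presumably exploited later to identify $U_1$ with the whole radical), one may note that the two-dimensional quotient $A/U_1$ is generated by the image $\overline t$ of $E_4^*A_1E_4^*$ subject to $\overline t^{\,2}+\overline3\,\overline t+\overline2=\overline0$ (by Lemma \ref{L;Lemma2.13} (i) together with the defining relations of $C_1$), and that $\overline t^{\,2}+\overline3\,\overline t+\overline2=(\overline t+\overline1)(\overline t+\overline2)$ has distinct roots in every characteristic; thus $A/U_1\cong\F\times\F$ is semisimple, giving $\mathrm{Rad}A\subseteq U_1$, the reverse inclusion being exactly the nilpotency established above.
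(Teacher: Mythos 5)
Your proposal is correct and follows essentially the same route as the paper: both reduce nilpotency of $U_1$ to a finite product computation on the basis $C_1$ using the equalities in Lemmas \ref{L;Lemma2.13}, \ref{L;Lemma2.14}, \ref{L;Lemma2.15} together with the coefficient collapse forced by $n\equiv 1\pmod p$, and both isolate $E_4^*JE_4^*$ via Lemma \ref{L;Tripleproducts} (i) and the fact that $p\mid k_4=(n-1)(n-2)$. The only differences are organizational --- the paper shows directly that the product of any three elements of $C_1$ vanishes (so the nilpotency index is $3$), whereas you factor out the square-zero ideal $\langle\{E_4^*JE_4^*\}\rangle_\F$ and leave the index in the quotient unspecified --- and your closing observation that $E_4^*TE_4^*/U_1\cong\F\times\F$ is not needed for this lemma but correctly anticipates Corollary \ref{C;Corollary3.6}.
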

\begin{proof}
As $U_1=\langle C_1\rangle_\F$ and Lemma \ref{L;Lemma3.4} holds, the first statement follows if the product of any three elements in $C_1$ is the zero matrix. Pick $M_1, M_2, M_3\in C_1$. Claim that $E_4^*JE_4^*M=ME_4^*JE_4^*=O$ for any $M\in C_1$. It is proved by combining \eqref{Eq;2}, \eqref{Eq;4}, Lemma \ref{L;Tripleproducts} (i), Theorem \ref{T;Insectionnumbers} (i), (ii), (iii), and the assumption $n\equiv 1\pmod p$. So $M_1M_2M_3=O$ if one of the three elements $M_1, M_2, M_3$ is $E_4^*JE_4^*$. Assume that $M_1, M_2, M_3\in C_1\setminus\{E_4^*JE_4^*\}$. Then the equality $M_1M_2M_3=O$ can be proved by combining \eqref{Eq;2}, the equalities in Lemmas \ref{L;Lemma2.13}, \ref{L;Lemma2.14}, \ref{L;Lemma2.15}, the proven claim, and the assumption $n\equiv 1\pmod p$. The first statement thus follows. The second statement follows from the first one. The desired lemma is proved.
\end{proof}
We now determine the algebraic structure of $E_4^*TE_4^*$ for the case $n\equiv 1\pmod p$.
\begin{cor}\label{C;Corollary3.6}
If $n\equiv 1\pmod p$, then $E_4^*TE_4^*/\mathrm{Rad}E_4^*TE_4^*\cong M_1(\F)\oplus M_1(\F)$ as $\F$-algebras and $\mathrm{Rad}E_4^*TE_4^*=U_1$.
\end{cor}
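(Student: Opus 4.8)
The plan is to produce the two orthogonal idempotents of the quotient $E_4^*TE_4^*/U_1$ explicitly and then read off both the isomorphism type and the identification of the radical. Throughout write $e=\overline{2}E_4^*+E_4^*A_1E_4^*$ and $f=-E_4^*A_1E_2^*A_3E_4^*$, the two ``extra'' basis vectors appearing in Lemmas \ref{L;Lemma3.2} and \ref{L;Lemma3.3}. Since $G$ is an elementary abelian $2$-group with $|G|\geq 3$ we have $n=2^k\geq 4$, and the congruence $n\equiv 1\pmod p$ forces $p=3$ when $n=4$ and forces $p\neq 2$ when $n=8$ (as $8\equiv 1\pmod 2$ is false). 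Hence either $n=4$, where Lemma \ref{L;Lemma3.3} applies, or $n\geq 8$ with $p\neq 2$ in the case $n=8$, where Lemma \ref{L;Lemma3.2} applies; in either situation $U_1=\langle C_1\rangle_\F$ and the full algebra $E_4^*TE_4^*$ has $U_1$ together with $\{e,f\}$ as an $\F$-basis. Consequently the images $\overline{e},\overline{f}$ form an $\F$-basis of the two-dimensional quotient $E_4^*TE_4^*/U_1$.

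Next I would verify that $\overline{e}$ and $\overline{f}$ are orthogonal idempotents summing to the identity $\overline{E_4^*}$. By Lemma \ref{L;Lemma2.15} (i) applied with $\{g,h,i\}=\{1,2,3\}$ one has $(E_4^*A_1E_2^*A_3E_4^*)^2=E_4^*JE_4^*-E_4^*A_1E_2^*A_3E_4^*$, and since $E_4^*JE_4^*\in C_1\subseteq U_1$ this yields $\overline{f}^{\,2}=\overline{f}$ in the quotient. Using the $C_1$-element $c=E_4^*A_1E_2^*A_3E_4^*-E_4^*-E_4^*A_1E_4^*$ together with $\overline{2}E_4^*-E_4^*=E_4^*$, a one-line rearrangement gives $E_4^*=e+f+c$, so that $\overline{E_4^*}=\overline{e}+\overline{f}$ because $c\in U_1$. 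As $\overline{E_4^*}$ is the identity of $E_4^*TE_4^*/U_1$ and $\overline{f}$ is idempotent, the complementary element $\overline{e}=\overline{E_4^*}-\overline{f}$ is automatically idempotent, with $\overline{e}\,\overline{f}=\overline{f}\,\overline{e}=0$; no further computation is needed for this step.

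It then follows that $E_4^*TE_4^*/U_1=\F\overline{e}\oplus\F\overline{f}$ is a direct sum of two one-dimensional ideals, each isomorphic to $\F=M_1(\F)$, and the map $a\overline{e}+b\overline{f}\mapsto(a,b)$ is the desired $\F$-algebra isomorphism onto $M_1(\F)\oplus M_1(\F)$; in particular the quotient is semisimple. Finally I would invoke Lemma \ref{L;Lemma3.5}: $U_1$ is a nilpotent two-sided ideal, hence $U_1\subseteq\mathrm{Rad}E_4^*TE_4^*$. Conversely $\mathrm{Rad}E_4^*TE_4^*$ is itself nilpotent, so its image under $E_4^*TE_4^*\to E_4^*TE_4^*/U_1$ is a nilpotent ideal of the semisimple quotient and therefore zero, giving $\mathrm{Rad}E_4^*TE_4^*\subseteq U_1$. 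The two inclusions force $\mathrm{Rad}E_4^*TE_4^*=U_1$, completing both assertions.

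I expect the only delicate points to be bookkeeping rather than conceptual: confirming that the hypothesis $n\equiv 1\pmod p$ selects exactly the basis-providing branches of Lemmas \ref{L;Lemma3.2} and \ref{L;Lemma3.3} (and excludes the $p=2$, $n=8$ branch, which is incompatible with the congruence), and keeping the signs correct when rewriting $E_4^*=e+f+c$. The essential mechanism — that $\overline{f}$ is idempotent by Lemma \ref{L;Lemma2.15} (i) precisely because $E_4^*JE_4^*$ vanishes modulo $U_1$ — is immediate, so there is no genuine obstacle beyond these verifications.
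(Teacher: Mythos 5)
Your proposal is correct and follows essentially the same route as the paper: both use Lemmas \ref{L;Lemma3.2} and \ref{L;Lemma3.3} to see that the images of $\overline{2}E_4^*+E_4^*A_1E_4^*$ and $-E_4^*A_1E_2^*A_3E_4^*$ form a basis of the two-dimensional quotient $E_4^*TE_4^*/U_1$, exhibit them as orthogonal idempotents, and then combine the semisimplicity of the quotient with Lemma \ref{L;Lemma3.5} to identify $\mathrm{Rad}E_4^*TE_4^*$ with $U_1$. The only difference is a small streamlining: where the paper verifies $M^2=M$, $N^2=N$, $MN=NM=O+U_1$ by direct computation from Lemmas \ref{L;Lemma2.13}, \ref{L;Lemma2.14}, \ref{L;Lemma2.15}, you check only $\overline{f}^{\,2}=\overline{f}$ via Lemma \ref{L;Lemma2.15} (i) and the identity $\overline{e}+\overline{f}=\overline{E_4^*}$, deducing the remaining relations formally — a valid shortcut.
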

\begin{proof}
By Lemma \ref{L;Lemma3.5}, it is enough to check that $E_4^*TE_4^*/U_1\cong M_1(\F)\oplus M_1(\F)$ as $\F$-algebras. As $n\equiv 1\pmod p$, the inequality $n>4$ yields that $n>8$ or $p\neq 2$ and $n=8$. By Lemmas \ref{L;Lemma3.2} and \ref{L;Lemma3.3}, $\{\overline{2}E_4^*+E_4^*A_1E_4^*+U_1, -E_4^*A_1E_2^*A_3E_4^*+U_1\}$ is an $\F$-basis of $E_4^*TE_4^*/U_1$. Set $M=\overline{2}E_4^*+E_4^*A_1E_4^*+U_1$ and $N=-E_4^*A_1E_2^*A_3E_4^*+U_1$. By combining \eqref{Eq;2}, the equalities in Lemmas \ref{L;Lemma2.13}, \ref{L;Lemma2.14}, \ref{L;Lemma2.15}, and the assumption $n\equiv 1\pmod p$, notice that $M^2=M$, $N^2=N$, and $MN=NM=O+U_1$. Hence $\langle \{M\}\rangle_\F$ and $\langle \{N\}\rangle_\F$ are distinct two-sided ideals of $E_4^*TE_4^*/U_1$. This fact implies that $E_4^*TE_4^*/U_1=\langle \{M\}\rangle_\F\oplus\langle \{N\}\rangle_\F\cong M_1(\F)\oplus M_1(\F)$ as $\F$-algebras. The proof of the desired corollary is now complete.
\end{proof}
We next introduce the following notation and present some related properties.
\begin{nota}\label{N;Notation3.7}
The union of $\{E_a^*A_bE_c^*: a, c\in[1,3], b\in [0,3], b\!\neq\! \max\{a, c\}, p_{cb}^a\neq 0\}$ and $\{E_1^*A_2E_4^*, E_2^*A_1E_4^*, E_3^*A_1E_4^*, -E_4^*A_1E_2^*, -E_4^*A_1E_3^*, -E_4^*A_2E_1^*, -E_4^*A_1E_2^*A_3E_4^*\}$ is denoted by $H_1$. Let $K_1$ be the set containing exactly $E_4^*A_1E_2^*A_3E_4^*-E_4^*-E_4^*A_1E_4^*$, $E_1^*A_2E_4^*-E_1^*A_3E_4^*$, $E_2^*A_1E_4^*-E_2^*A_3E_4^*$, $E_3^*A_1E_4^*-E_3^*A_2E_4^*$, $E_4^*A_1E_2^*-E_4^*A_3E_2^*$, $E_4^*A_1E_3^*-E_4^*A_2E_3^*$, $E_4^*A_1E_4^*-E_4^*A_2E_4^*$, $E_4^*A_1E_4^*-E_4^*A_3E_4^*$, $E_4^*A_2E_1^*-E_4^*A_3E_1^*$, and the elements in $B_4$. Since $k_j=p_{jj}^0$ for any $j\in[0,4]$ and Theorem \ref{T;Insectionnumbers} (i) holds, notice that $B_4=\{E_a^*JE_b^*: a,b\in [0,4]\}\setminus\{E_0^*JE_0^*\}$ for the case $n\equiv 1\pmod p$.
\end{nota}
\begin{lem}\label{L;Lemma3.8}
Assume that $n\equiv 1\pmod p$. If $n>8$ or $p=7$ and $n=8$, then $B_4\cup C_1\cup D_1\cup H_1\cup\{E_0^*, E_4^*+E_4^*A_1E_2^*A_3E_4^*\}$ is an $\F$-basis of $T$. In particular, $V_1$ has an $\F$-basis $B_4\cup C_1\cup D_1$.
\end{lem}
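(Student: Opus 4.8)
The plan is to write $\mathcal{B}$ for the candidate set $B_4\cup C_1\cup D_1\cup H_1\cup\{E_0^*, E_4^*+E_4^*A_1E_2^*A_3E_4^*\}$ and to prove it is a basis by showing that it both spans $T$ and has exactly $|B|$ distinct elements, where $B=B_1\cup B_2\cup B_5\cup B_6$; since $B$ is a basis, this forces $\mathcal{B}$ to be one as well. First I would check the hypotheses match Corollary \ref{C;Corollary2.16}: as $n=|G|$ is a power of $2$ with $n\equiv 1\pmod p$ we automatically have $p\neq 2$, and $n\geq 8$ (indeed $n=8$ forces $p\mid n-1=7$, so $p=7$), so we are in the first case of Notation \ref{N;Notation1.21} and $B$ is an $\F$-basis of $T$. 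The forward containment $\langle\mathcal{B}\rangle_\F\subseteq T$ is immediate: $B_4\subseteq B_2$; every element of $C_1$ is a combination of $E_4^*JE_4^*\in B_2$, the $E_4^*A_jE_4^*\in B_3\subseteq B_1\cup B_2$ and the $E_4^*A_aE_b^*A_cE_4^*\in B_6$; every element of $D_1$ and $H_1$ is a combination of members of $B_1$, of $B_5$, and of $E_4^*A_1E_2^*A_3E_4^*\in B_6$; and the two extra generators lie in $B_2$ and in $B_1\cup B_2\cup B_6$.

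The substantive step is the reverse containment $B\subseteq\langle\mathcal{B}\rangle_\F$, which I would establish by peeling off the elements of $B$ in a fixed order, each step being a mere rearrangement of the defining expressions in Notations \ref{N;Notation3.1} and \ref{N;Notation3.7} (no product identity from Sections 3--4 beyond the definitions is needed, since the genuinely hard computations were already absorbed into Corollary \ref{C;Corollary2.16}). The order is: (1) all of $B_2$ is obtained at once from $B_4\cup\{E_0^*\}$; (2) $E_4^*A_1E_2^*A_3E_4^*$ comes from its negative in $H_1$, and then $E_4^*$ from the extra element $E_4^*+E_4^*A_1E_2^*A_3E_4^*$; (3) the remaining members of $B_3\cup B_6$ then follow from the nine relations of $C_1$; (4) the elements of $B_1$ indexed by $a,c\in[1,3]$ are exactly the first block of $H_1$, while the six mixed elements $E_i^*A_jE_4^*$ and the six $E_4^*A_jE_i^*$ are recovered from the members of $H_1$ of these shapes together with the first three relations of $D_1$ and their transposes; (5) finally the elements of $B_5$ are extracted from the last three relations of $D_1$ and their transposes, using the $E_i^*A_jE_4^*$ (resp. $E_4^*A_jE_i^*$) already produced in step (4). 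Carrying out (4) and (5) cleanly---in particular tracking the transposed relations of $D_1$ and the order in which the six $E_4^*A_jE_i^*$ become available---is the main bookkeeping obstacle of the proof.

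It then remains to compare cardinalities. Summing the sizes of the defining lists gives $|B_4|+|C_1|+|D_1|+|H_1|+2=24+9+12+16+2=63$; on the other hand a direct count from Theorem \ref{T;Insectionnumbers} together with Lemma \ref{L;Tripleproducts}(iv) yields $|B_1\cup B_2|=\dim T_0=50$, so $|B|=50+|B_5|+|B_6|=62$. The discrepancy is explained by a single coincidence: since $n\equiv 1\pmod p$ forces $p\mid k_4$, the matrix $E_4^*JE_4^*$ lies in $B_4$ and also appears as the first listed element of $C_1$, so $\mathcal{B}$ has at most $62$ distinct elements. Combined with $B\subseteq\langle\mathcal{B}\rangle_\F$, which already gives $\langle\mathcal{B}\rangle_\F=T$ and hence $|\mathcal{B}|\geq\dim T=62$, we conclude $|\mathcal{B}|=62$ and that $\mathcal{B}$ is an $\F$-basis of $T$. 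For the final assertion, $B_4\cup C_1\cup D_1$ is a subset of the basis $\mathcal{B}$, hence $\F$-linearly independent, and it spans $V_1$ by definition, so it is an $\F$-basis of $V_1$.
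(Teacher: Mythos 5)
Your proof is correct and follows essentially the same route as the paper's: show that each of $B=B_1\cup B_2\cup B_5\cup B_6$ and the candidate set spans the other, then conclude by comparing cardinalities with the basis $B$ from Corollary \ref{C;Corollary2.16}. Your explicit accounting of the single overlap $E_4^*JE_4^*\in B_4\cap C_1$ (without which the naive count would give $63$ rather than $62$) is a detail the paper leaves implicit, and your count $|B|=62$ matches.
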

\begin{proof}
By Corollary \ref{C;Corollary2.16}, each element in $B_4\cup C_1\cup D_1\cup H_1\cup\{E_0^*, E_4^*+E_4^*A_1E_2^*A_3E_4^*\}$ is an $\F$-linear combination of the elements in $B$. As $n>8$ or $p=7$ and $n=8$, note that $B=B_1\cup B_2\cup B_5\cup B_6$. Notice that $B_4\!=\!\{E_a^*JE_b^*: a, b\in [0,4]\}\setminus\{E_0^*JE_0^*\}$. By combining \eqref{Eq;2} and \eqref{Eq;3}, each element in $B$ is also an $\F$-linear combination of the elements in $B_4\cup C_1\cup D_1\cup H_1\cup\{E_0^*, E_4^*+E_4^*A_1E_2^*A_3E_4^*\}$. Since Corollary \ref{C;Corollary2.16} holds and $|B|=|B_4\cup C_1\cup D_1\cup H_1\cup\{E_0^*, E_4^*+E_4^*A_1E_2^*A_3E_4^*\}|$, the first statement thus follows. The second statement follows from the first one. We are done.
\end{proof}
\begin{lem}\label{L;Lemma3.9}
Assume that $p=3$ and $n=4$. Then $K_1\cup H_1\cup\{E_0^*, E_4^*+E_4^*A_1E_2^*A_3E_4^*\}$ is an $\F$-basis of $T$. In particular, $K_1$ is an $\F$-basis of $V_1$.
\end{lem}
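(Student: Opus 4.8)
The plan is to establish the first statement by comparing the proposed set
$P:=K_1\cup H_1\cup\{E_0^*,\,E_4^*+E_4^*A_1E_2^*A_3E_4^*\}$
with the $\F$-basis of $T$ supplied by Corollary \ref{C;Corollary2.16}. Since $n=4$, that basis is $B=B_1\cup B_2\cup\{E_4^*A_1E_2^*A_3E_4^*\}$ by Notation \ref{N;Notation1.21}. As $B$ is a basis, it suffices to prove that $P$ spans $T$ and that $|P|=|B|$, because a spanning subset of $T$ whose size equals $\dim_\F T$ is automatically an $\F$-basis. The inclusion $\langle P\rangle_\F\subseteq T$ is clear, since each listed element of $P$ is an $\F$-linear combination of the elements of $B$ after applying \eqref{Eq;2} and \eqref{Eq;3} (note $B_4\subseteq B_2$, $E_0^*=E_0^*JE_0^*\in B_2$, and that the only member of $B_6$ occurring in $P$ is $E_4^*A_1E_2^*A_3E_4^*\in B$).

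For the reverse inclusion $B\subseteq\langle P\rangle_\F$ I would proceed block by block. First, $B_2\subseteq\langle P\rangle_\F$: under $p=3$ and $n=4$, Notation \ref{N;Notation3.7} records $B_4=\{E_a^*JE_b^*:a,b\in[0,4]\}\setminus\{E_0^*JE_0^*\}$, and $B_4\subseteq K_1\subseteq P$ together with $E_0^*\in P$ yields all of $B_2$. Next, the members of $B_1$ with $a,c\in[1,3]$ already lie in $H_1$; those involving the index $4$ are recovered by pairing an anchor of $H_1$ (such as $E_1^*A_2E_4^*$ or $-E_4^*A_1E_2^*$) with the matching difference in $K_1$ (such as $E_1^*A_2E_4^*-E_1^*A_3E_4^*$) to isolate each term. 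Finally, the six elements of $P$ that lie in $E_4^*TE_4^*$ can be solved for the block generators $\{E_4^*A_bE_4^*:b\in[0,3]\}\cup\{E_4^*JE_4^*,\,E_4^*A_1E_2^*A_3E_4^*\}$ by a short linear-algebra computation analogous to the inversion in Lemma \ref{L;Lemma3.3}; in particular $E_4^*A_1E_2^*A_3E_4^*\in\langle P\rangle_\F$. A direct count of $|K_1|$ and $|H_1|$ then confirms $|P|=|B|$, completing the first statement.

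For the ``in particular'' statement I would prove $\langle K_1\rangle_\F=V_1$. The inclusion $K_1\subseteq V_1$ is checked elementwise: $B_4\subseteq V_1$ by definition; the three elements $E_4^*A_1E_2^*A_3E_4^*-E_4^*-E_4^*A_1E_4^*$, $E_4^*A_1E_4^*-E_4^*A_2E_4^*$, $E_4^*A_1E_4^*-E_4^*A_3E_4^*$ belong to $C_1$; and the remaining six differences in $K_1$ belong to $D_1$ or are transposes of members of $D_1$. For the reverse inclusion $V_1\subseteq\langle K_1\rangle_\F$, note first that $C_1\subseteq\langle K_1\rangle_\F$, because $U_1=\langle C_1\rangle_\F$ and Lemma \ref{L;Lemma3.3} exhibits a four-element $\F$-basis of $U_1$ every member of which lies in $K_1$. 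For $D_1$, the difference-type members already lie in $K_1$; for the members carrying a factor from $B_5$ I would rewrite, via Lemma \ref{L;Lemma1.5} (i), for instance $E_1^*A_2E_3^*A_1E_4^*=E_1^*A_4E_4^*$, expand $E_1^*A_4E_4^*$ by \eqref{Eq;3} (observing $E_1^*A_0E_4^*=E_1^*A_1E_4^*=O$ by \eqref{Eq;2}, Lemma \ref{L;Tripleproducts} (iii), and Theorem \ref{T;Insectionnumbers}), and then use $p=3$, so that $\overline{-2}=\overline{1}$ and the expression collapses to $E_1^*JE_4^*-(E_1^*A_2E_4^*-E_1^*A_3E_4^*)\in\langle K_1\rangle_\F$. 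The transposed members then follow, because both $V_1$ and $\langle K_1\rangle_\F$ are closed under transpose by \eqref{Eq;1}. Since $K_1\subseteq P$ is $\F$-linearly independent by the first statement, $K_1$ is an $\F$-basis of $V_1$.

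The step I expect to be the main obstacle is the reverse spanning in both statements, above all the reduction of the $B_5$-products inside $D_1$. The hypothesis $p=3$ is essential there: it is precisely the identity $\overline{-2}=\overline{1}$ (equivalently, $n\equiv1\pmod p$ when $n=4$) that forces the expanded products of Lemma \ref{L;Lemma1.5} back into $\langle K_1\rangle_\F$, and it is also what makes $B_4=B_2\setminus\{E_0^*JE_0^*\}$, so that $B_2$ is recoverable in the first statement. Over a field of different characteristic neither collapse occurs and $K_1$ would fail to span $V_1$. The cardinality verification $|P|=|B|$ is routine bookkeeping but must be tracked carefully, since for $n=4$ the basis $B$ omits all of $B_5$ and every element of $B_6$ except $E_4^*A_1E_2^*A_3E_4^*$.
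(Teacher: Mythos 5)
Your proposal is correct and follows essentially the same route as the paper: both arguments compare the proposed set with the basis $B=B_1\cup B_2\cup\{E_4^*A_1E_2^*A_3E_4^*\}$ from Corollary \ref{C;Corollary2.16} via mutual spanning plus a cardinality count, and both obtain the second statement by showing every element of $B_4\cup C_1\cup D_1$ lies in $\langle K_1\rangle_\F$ using the $n=4$ collapse identities of Lemmas \ref{L;Lemma1.5} and \ref{L;Lemma1.20} together with $p=3$. Your detour through Lemma \ref{L;Lemma3.3} for the $C_1$ part and your explicit reduction of the $B_5$-type members of $D_1$ are just more detailed versions of what the paper compresses into a single citation list.
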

\begin{proof}
By Corollary \ref{C;Corollary2.16}, each element in $K_1\cup H_1\cup\{E_0^*, E_4^*+E_4^*A_1E_2^*A_3E_4^*\}$ is an $\F$-linear combination of the elements in $B$. Note that $B=B_1\cup B_2\cup\{E_4^*A_1E_2^*A_3E_4^*\}$. Notice that $B_4=\{E_a^*JE_b^*: a, b\in [0,4]\}\setminus\{E_0^*JE_0^*\}$. By combining \eqref{Eq;2} and \eqref{Eq;3}, we observe that each element in $B$ is also an $\F$-linear combination of the elements in $K_1\cup H_1\cup\{E_0^*, E_4^*+E_4^*A_1E_2^*A_3E_4^*\}$. Since $|B|=|K_1\cup H_1\cup\{E_0^*, E_4^*A_1E_2^*A_3E_4^*\}|$ and Corollary \ref{C;Corollary2.16} holds, the first statement thus follows. Hence $K_1$ is an $\F$-linearly independent subset of $T$. By combining \eqref{Eq;2}, \eqref{Eq;3}, the equalities in Lemmas \ref{L;Lemma1.5}, \ref{L;Lemma1.20}, Lemma \ref{L;Tripleproducts} (iii), Theorem \ref{T;Insectionnumbers} (ii), and the assumption $p=3$, each element in $B_4\cup C_1\cup D_1$ is an $\F$-linear combination of the elements in $K_1$. The second statement thus follows as $K_1\subseteq B_4\cup C_1\cup D_1$ and $V_1=\langle B_4\cup C_1\cup D_1\rangle_\F$. We are done.
\end{proof}
The following lemma shall be repeatedly utilized in the following sections.
\begin{lem}\label{L;Lemma3.10}
The unital $\F$-subalgebra $T$ of $M_X(\F)$ is generated by $B_1\cup B_2$.
\end{lem}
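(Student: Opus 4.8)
The plan is to exploit that an explicit $\F$-basis of $T$ is already in hand. Let $A$ denote the unital $\F$-subalgebra of $M_X(\F)$ generated by $B_1\cup B_2$. Every member of $B_1\cup B_2$ lies in $T$: the elements of $B_1$ are among the defining generators of $T$, and each $E_a^*JE_b^*=\sum_{j=0}^4 E_a^*A_jE_b^*$ lies in $T_0\subseteq T$ by \eqref{Eq;3}. Hence $A\subseteq T$. For the reverse inclusion, Corollary \ref{C;Corollary2.16} gives $T=T_1=\langle B_1\cup B_2\cup B_5\cup B_6\rangle_\F$, so it suffices to prove $B_5\subseteq A$ and $B_6\subseteq A$; then $A$ contains an $\F$-spanning set of $T$, which forces $A=T$.

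The key observation is that every member of $B_5\cup B_6$ factors as a product of two members of $B_1$, once the middle dual idempotent is split using $E_j^*E_j^*=E_j^*$ from \eqref{Eq;2}. For a typical element of $B_6$, namely $E_4^*A_gE_h^*A_iE_4^*$ with $\{g,h,i\}=[1,3]$, one writes $E_4^*A_gE_h^*A_iE_4^*=(E_4^*A_gE_h^*)(E_h^*A_iE_4^*)$. For a typical element of $B_5$ such as $E_1^*A_2E_3^*A_1E_4^*$, one writes $E_1^*A_2E_3^*A_1E_4^*=(E_1^*A_2E_3^*)(E_3^*A_1E_4^*)$, and its transpose $E_4^*A_1E_3^*A_2E_1^*$ (also a member of $B_5$) factors as $(E_4^*A_1E_3^*)(E_3^*A_2E_1^*)$ via \eqref{Eq;1}. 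In each case the two factors already have the shape $E_a^*A_bE_c^*$ required for membership in $B_1$.

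What remains is to confirm that each factor genuinely satisfies the defining conditions of $B_1$, namely $a,c\in[1,4]$, $b\in[0,3]$, $b\neq\max\{a,c\}$, and $p_{cb}^a\neq0$. The first three conditions are immediate from the explicit forms of the factors, while the nonvanishing of the relevant intersection number follows from Theorem \ref{T;Insectionnumbers} together with Lemma \ref{L;Tripleproducts} (iii): for the factor $E_4^*A_gE_h^*$ one has $p_{hg}^4=1$ since $g\neq h$ both lie in $[1,3]$, and for the factor $E_h^*A_iE_4^*$ one has $p_{4i}^h=n-2\neq0$ since $\max\{4,i\}=4$ and $\min\{4,i\}=i\in[1,3]\setminus\{h\}$; the factors arising from $B_5$ and its transposes are checked identically, landing in the cases of Theorem \ref{T;Insectionnumbers} (ii), (iii). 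The main, and essentially only, obstacle is therefore purely bookkeeping: one runs through the six elements of $B_5$ with their six transposes and the six elements of $B_6$, verifying the intersection-number condition for both factors in each splitting. Since every such check is an instance of the two patterns just illustrated, no genuine difficulty arises, and the containments $B_5\subseteq A$ and $B_6\subseteq A$ follow, completing the proof that $T=A$.
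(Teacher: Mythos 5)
Your argument is correct, but it takes a genuinely different route from the paper's. The paper proves this lemma by pure linear algebra on the generating set: combining \eqref{Eq;2}, \eqref{Eq;3}, Lemma \ref{L;Tripleproducts} (iii) and Theorem \ref{T;Insectionnumbers}, every defining generator $E_a^*A_bE_c^*$ of $T$ with $a,b,c\in[0,4]$ is already an $\F$-\emph{linear combination} of elements of $B_1\cup B_2$ (one eliminates $b=4$ via $E_a^*A_4E_c^*=E_a^*JE_c^*-\sum_{j=0}^3E_a^*A_jE_c^*$, handles the index $0$ and the excluded value $b=\max\{a,c\}$ the same way using the vanishing patterns of the $p_{cb}^a$, and discards the zero triples), so the subalgebra generated by $B_1\cup B_2$ contains all generators of $T$ and the claim is immediate from the definition of $T$; no products are computed and the hypothesis that $G$ is an elementary abelian $2$-group is never used. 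You instead invoke Corollary \ref{C;Corollary2.16} to get the spanning set $B_1\cup B_2\cup B_5\cup B_6$ of $T$ and then absorb $B_5\cup B_6$ multiplicatively via the factorizations $E_4^*A_gE_h^*A_iE_4^*=(E_4^*A_gE_h^*)(E_h^*A_iE_4^*)$ and the like; your intersection-number checks do go through (every factor lands in the cases $p_{cb}^a\in\{1,\,n-2\}$ of Theorem \ref{T;Insectionnumbers} (ii), (iii)), and there is no circularity since Corollary \ref{C;Corollary2.16} is established earlier and independently of this lemma. The trade-off is that your proof leans on much heavier machinery — Corollary \ref{C;Corollary2.16} encapsulates essentially the whole multiplication table of Section 4 — and it only applies under the standing assumption that $G$ is an elementary abelian $2$-group (which is in force where the lemma is stated, so this is harmless here), whereas the paper's one-line argument is valid for the Cayley-table scheme of an arbitrary group. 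A small bookkeeping remark: $B_5$ is already closed under transposition (its six elements form three transpose pairs), so "the six elements of $B_5$ with their six transposes" double-counts, but this does not affect the argument.
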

\begin{proof}
By combining \eqref{Eq;2}, \eqref{Eq;3}, Lemma \ref{L;Tripleproducts} (iii), Theorem \ref{T;Insectionnumbers} (i), (ii), and (iii), notice that each element in $\{E_a^*A_bE_c^*: a, b, c\in [0,4]\}$ is an $\F$-linear combination of the elements in $B_1\cup B_2$. Hence the desired lemma is from the definition of $T$.
\end{proof}
The following results describe $T$ for the case $n\equiv 1\pmod p$.
\begin{lem}\label{L;Lemma3.11}
Assume that $n\equiv 1\pmod p$. Then $V_1$ is a two-sided ideal of $T$.
\end{lem}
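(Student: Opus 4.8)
The plan is to exploit that $T$ is generated as a unital $\F$-algebra by $B_1\cup B_2$ (Lemma \ref{L;Lemma3.10}); consequently $V_1$ is a two-sided ideal of $T$ as soon as it is closed under left and right multiplication by each of these generators. First I would record that $V_1$ is transpose-invariant: $B_4$ is visibly transpose-closed, $D_1$ is transpose-closed by its very definition, and $\langle C_1\rangle_\F=U_1$ is transpose-closed because transposition permutes the defining differences among themselves (using $A_j^T=A_j$, $(E_4^*A_1E_2^*A_3E_4^*)^T=E_4^*A_3E_2^*A_1E_4^*$, and the fact that $E_4^*A_1E_2^*A_3E_4^*-E_4^*A_3E_2^*A_1E_4^*\in C_1$). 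Since $B_1\cup B_2$ is also transpose-closed by \eqref{Eq;1} and Lemma \ref{L;Tripleproducts}(iii), the anti-automorphism $M\mapsto M^T$ carries $V_1T$ onto $TV_1$ while fixing both $T$ and $V_1$ setwise. Hence right-closure follows from left-closure, and it suffices to prove the one-sided statement $MN\in V_1$ for every $M\in B_1\cup B_2$ and every $N$ in the spanning set $B_4\cup C_1\cup D_1$.

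Two of the three types of $N$ are disposed of immediately. If $N\in B_4$, then $MN\in\langle B_4\rangle_\F\subseteq V_1$ because $\langle B_4\rangle_\F$ is already a two-sided ideal of $T$ by Lemma \ref{L;Tripleproducts}(v). If $M=E_a^*JE_b^*\in B_2$ and $N\in C_1\cup D_1$, I would collapse the adjacent dual idempotents by \eqref{Eq;2} and then telescope the factor $J$ through the interior of each monomial of $N$ using the second identity of Lemma \ref{L;Tripleproducts}(i); each monomial thereby reduces to a scalar multiple of a single $E_a^*JE_\ell^*$, where $\ell\ge 1$ is the terminal dual-idempotent index. Here $n\equiv 1\pmod p$ enters decisively: by Theorem \ref{T;Insectionnumbers}(i) the valencies are $k_1=k_2=k_3=n-1$ and $k_4=(n-1)(n-2)$, all $\equiv 0\pmod p$, so $p\mid k_ak_\ell$ for every $\ell\in\{1,2,3,4\}$ and therefore $E_a^*JE_\ell^*\in B_4\subseteq V_1$.

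The substance is the remaining case $M=E_a^*A_bE_c^*\in B_1$ and $N\in C_1\cup D_1$. Collapsing adjacent idempotents shows that a monomial product vanishes unless the leading index of the monomial of $N$ equals $c$; the surviving products are exactly the triple, quadruple, and quintuple products of adjacency matrices and dual idempotents computed in Lemmas \ref{L;Lemma2.1}--\ref{L;Lemma2.15}. I would expand each by the pertinent identity and then substitute the reductions $\overline{n-1}=0$, $\overline{n-2}=\overline{-1}$, $\overline{n-3}=\overline{-2}$ (together with those of $\overline{n^2-5n+6}$ and $\overline{n^2-6n+10}$) forced by $n\equiv 1\pmod p$, reading off that the outcome is an $\F$-linear combination of elements of $B_4\cup C_1\cup D_1$ lying in the same block $E_a^*TE_c^*$ as $MN$. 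The organizing principle is that $C_1$ spans the controlled part of the corner $E_4^*TE_4^*$ while $D_1$ spans the controlled parts of the off-diagonal blocks $E_j^*TE_4^*$ and $E_4^*TE_j^*$, so a generator either preserves a block already governed by $V_1$ or maps the corner into an off-diagonal block where the $D_1$-differences absorb the image. (A representative check: $E_1^*A_2E_3^*\bigl(E_3^*A_1E_2^*A_3E_4^*-E_3^*A_2E_4^*\bigr)=E_1^*A_3E_4^*-E_1^*A_2E_4^*\in D_1$ by Lemmas \ref{L;Lemma2.6}(iii) and \ref{L;Lemma2.3}(i).)

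The hard part will be the bookkeeping of this last case: there are many index assignments $\{g,h,i\}=[1,3]$ and many generator shapes, and for each I must confirm that the non-$B_4$ remainder collapses precisely onto the prescribed differences constituting $C_1$ and $D_1$ rather than onto a strictly larger span. The congruence $n\equiv 1\pmod p$ is exactly what annihilates the would-be obstructing terms (those carrying a factor $\overline{n-1}$, or symmetrizations that would otherwise escape the span), so the genuine risk is an arithmetic slip in one of these cancellations; each step is mechanical given Lemmas \ref{L;Lemma2.1}--\ref{L;Lemma2.15}, but collectively they are the real labor of the proof.
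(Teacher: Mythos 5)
Your proposal is correct and follows essentially the same route as the paper: reduce to generators via Lemma \ref{L;Lemma3.10}, use transpose-invariance of $B_1\cup B_2$ and of $V_1$ to get right-closure from left-closure, dispose of the $M\in B_2$ and $N\in B_4$ cases by Lemma \ref{L;Tripleproducts} (i), (v) together with the fact that $n\equiv 1\pmod p$ forces $p\mid k_\ell$ for $\ell\in[1,4]$, and settle the remaining case $M\in B_1$, $N\in C_1\cup D_1$ by the product formulas of Lemmas \ref{L;Lemma2.1}--\ref{L;Lemma2.15} reduced modulo the congruence. The only cosmetic slip is that your representative product lies in $\langle D_1\rangle_\F$ (it is the negative of a listed element) rather than in $D_1$ itself, which does not affect the argument.
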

\begin{proof}
Notice that $V_1\subseteq T$. According to the definition of $T$, notice that $M^T\in T$ if $M\in T$. As $n\equiv 1\pmod p$, notice that $B_4=\{E_a^*JE_b^*: a, b\in [0,4]\}\setminus\{E_0^*JE_0^*\}$ and $B_2=B_4\cup\{E_0^*JE_0^*\}$. If $N\in V_1$, then $N^T\in V_1$ as $V_1=\langle B_4\cup C_1\cup D_1\rangle_\F$ and \eqref{Eq;1} holds. By Lemma \ref{L;Lemma3.10}, notice that the desired lemma follows if $MN\in V_1$ for any $M\in B_1\cup B_2$ and $N\in B_4\cup C_1\cup D_1$. If $M\in B_2$ or $N\in B_4$, notice that $MN\in V_1$ by combining \eqref{Eq;2}, \eqref{Eq;3}, and Lemma \ref{L;Tripleproducts} (i).

If $M\in B_1$ and $N\in C_1\setminus\{E_4^*JE_4^*\}$, notice that $MN\in V_1$ by combining \eqref{Eq;2}, the equalities in Lemmas \ref{L;Lemma2.8}, \ref{L;Lemma2.12}, \ref{L;Lemma2.13}, \ref{L;Lemma2.14}, and the assumption $n\equiv 1\pmod p$. If $M\in B_1$ and $N\in D_1$, notice that $MN\in V_1$ by combining \eqref{Eq;2}, the equalities in Lemmas \ref{L;Lemma2.3}, \ref{L;Lemma2.5}, \ref{L;Lemma2.6}, \ref{L;Lemma2.7}, \ref{L;Lemma2.8}, \ref{L;Lemma2.9}, \ref{L;Lemma2.10}, \ref{L;Lemma2.11}, and the assumption $n\equiv 1\pmod p$. Therefore $MN\in V_1$ for any $M\in B_1\cup B_2$ and $N\in B_4\cup C_1\cup D_1$. We are done.
\end{proof}
\begin{lem}\label{L;Lemma3.12}
Assume that $n\equiv 1\pmod p$. Then $V_1\subseteq \mathrm{Rad}T$. In particular, $V_1$ is a nilpotent two-sided ideal of $T$.
\end{lem}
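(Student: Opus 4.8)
The plan is to prove directly that $V_1$ is a nilpotent two-sided ideal; since $\mathrm{Rad}T$ is the two-sided ideal of $T$ containing all nilpotent two-sided ideals, this yields $V_1\subseteq\mathrm{Rad}T$ at once, and the ``in particular'' clause is then exactly what has been shown. By Lemma~\ref{L;Lemma3.11}, $V_1$ is already a two-sided ideal of $T$, so only nilpotency remains. Write $W=\langle B_4\rangle_\F$, which by Lemma~\ref{L;Tripleproducts}~(v) is a nilpotent two-sided ideal of $T$, say $W^{m}=O$. Since $B_4\subseteq V_1$ and $W$ is a two-sided ideal of $T$, it suffices to show that $V_1/W$ is nilpotent in $T/W$: if $(V_1/W)^{k}=0$ then $V_1^{k}\subseteq W$ and hence $V_1^{km}=O$.

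First I would pin down the block structure of $V_1$ with respect to the orthogonal idempotents $E_0^*,\dots,E_4^*$. Multiplying the spanning set $B_4\cup C_1\cup D_1$ on the left and on the right by these idempotents and using \eqref{Eq;2}, one reads off that $E_4^*V_1E_4^*=U_1$, that each block $E_g^*V_1E_4^*$ and $E_4^*V_1E_g^*$ (for $g\in[1,3]$) is spanned by $E_g^*JE_4^*$, respectively $E_4^*JE_g^*$, together with the $(g,4)$- and $(4,g)$-type generators of $D_1$, and that every remaining block lies inside $W$. Here the hypothesis $n\equiv1\pmod p$ is essential and enters through Notation~\ref{N;Notation3.7}: it forces $B_4=\{E_a^*JE_b^*:a,b\in[0,4]\}\setminus\{E_0^*JE_0^*\}$, so that every $E_a^*JE_b^*$ other than $E_0^*JE_0^*$ lies in $W$; in particular each block $E_g^*V_1E_{g'}^*$ with $g,g'\in[1,3]$, together with all blocks involving the index $0$, collapses into $W$. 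Consequently, modulo $W$ the space $V_1/W$ is concentrated in the three block-types $(4,4)$, $(g,4)$ and $(4,g)$, with the $(4,4)$-part equal to $\overline{U_1}$ (the image of $U_1$), which is nilpotent because $U_1^{3}=O$ by Lemma~\ref{L;Lemma3.5}.

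Because $V_1$ is a two-sided ideal, the product of any two of its elements again lies in $V_1$, hence its block components lie in the block-spaces just identified; combining this with the orthogonality $E_b^*E_c^*=\delta_{bc}E_b^*$ gives the full multiplication table modulo $W$ with no further computation. Writing $\Sigma$, $\Theta$ for the images of the $(g,4)$- and $(4,g)$-parts, one obtains $\Sigma\Sigma=\Sigma\Theta=\Theta\Theta=\overline{U_1}\,\Sigma=\Theta\,\overline{U_1}=0$, while $\Theta\Sigma\subseteq\overline{U_1}$, $\overline{U_1}\,\Theta\subseteq\Theta$, $\Sigma\,\overline{U_1}\subseteq\Sigma$ and $\overline{U_1}\,\overline{U_1}\subseteq\overline{U_1}$. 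A short bookkeeping argument then forces nilpotency: a nonzero product of generators either lies in the ``dead-end'' $\Sigma\,\overline{U_1}^{\,j}$, which vanishes as soon as $j\ge3$ since $\overline{U_1}^{\,3}=0$, or else oscillates between the $(4,4)$- and $(4,g)$-blocks, and each return to the $(4,4)$-block consumes one factor of $\overline{U_1}$ (a genuine $\overline{U_1}$-factor, or a completed pair $\Theta\Sigma\subseteq\overline{U_1}$); as $\overline{U_1}^{\,3}=0$, at most two such returns can occur, so every product of six generators is zero. Thus $(V_1/W)^{6}=0$, whence $V_1^{6}\subseteq W$ and $V_1^{6m}=O$, as required.

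I expect the main obstacle to be establishing the block-multiplication table correctly, and in particular verifying the two key containments $E_4^*V_1E_4^*=U_1$ — so that every ``return'' product $\Theta\Sigma$ lands in the nilpotent corner $U_1=\mathrm{Rad}E_4^*TE_4^*$ rather than producing one of the semisimple idempotents of $E_4^*TE_4^*$ found in Corollary~\ref{C;Corollary3.6} — and the collapse of all off-type blocks into $W$. This is precisely the step where $n\equiv1\pmod p$, Lemma~\ref{L;Lemma3.11}, Lemma~\ref{L;Lemma3.5} and Lemma~\ref{L;Tripleproducts}~(v) are used in concert. One may alternatively bypass the block bookkeeping and verify the same products $MN$, for $M,N$ ranging over $B_4\cup C_1\cup D_1$, by direct substitution into the explicit formulas of Lemmas~\ref{L;Lemma2.13}, \ref{L;Lemma2.14} and~\ref{L;Lemma2.15} with coefficients reduced via $n\equiv1\pmod p$, exactly in the style of the proof of Lemma~\ref{L;Lemma3.5}; the ideal-plus-idempotent viewpoint is preferable because it renders the nilpotency count transparent.
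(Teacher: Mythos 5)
Your proof is correct, but it runs in the opposite logical direction from the paper's. The paper proves the containment $V_1\subseteq\mathrm{Rad}T$ generator by generator: $B_4\subseteq\mathrm{Rad}T$ by Lemma~\ref{L;Tripleproducts}~(v); $C_1\subseteq U_1\subseteq\mathrm{Rad}E_4^*TE_4^*\subseteq\mathrm{Rad}T$ by Lemmas~\ref{L;Lemma3.5} and~\ref{L;Radical}; and $D_1\subseteq\mathrm{Rad}T$ by exhibiting each element of $D_1$ (up to transpose) as an explicit product $E_a^*A_bE_4^*\cdot M_i$ with $M_i\in C_1$, using \eqref{Eq;2} and the identities of Lemma~\ref{L;Lemma2.12}; nilpotency of $V_1$ is then deduced from nilpotency of $\mathrm{Rad}T$ together with the ideal property. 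You instead prove nilpotency of $V_1$ directly by a Peirce-type block analysis modulo $W=\langle B_4\rangle_\F$ and only then conclude $V_1\subseteq\mathrm{Rad}T$ from the maximality property of the radical. Your route trades the explicit factorizations of the $D_1$-generators for structural inputs you already have: Lemma~\ref{L;Lemma3.11} (so that products of $V_1$-elements stay in $V_1$ and land in the identified blocks), $U_1^3=O$, and the observation that for $n\equiv1\pmod p$ every block of $V_1$ other than $(4,4)$, $(g,4)$, $(4,g)$ collapses into $W$. I checked the block table and the final bookkeeping: although your closing count is stated loosely, every admissible word in $\overline{U_1}$, $\Sigma$, $\Theta$ of length at least five already vanishes (a word starting in $\Sigma$ and ending in $\Theta$ dies in a collapsed block, and otherwise the length is at most $2r+1$ where $r$ is the number of accumulated $\overline{U_1}$-factors), so $(V_1/W)^6=0$ is safe. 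What the paper's argument buys is explicit radical elements and reuse of the machinery already assembled for Lemma~\ref{L;Lemma3.11}; what yours buys is an explicit nilpotency degree for $V_1$ and independence from the particular product identities of Lemma~\ref{L;Lemma2.12}. One small citation point: Lemma~\ref{L;Lemma3.5} as stated only asserts that $U_1$ is nilpotent; the bound $U_1^3=O$ sits in its proof rather than its statement, though any nilpotency degree would do for your count.
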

\begin{proof}
As $V_1=\langle B_4\cup C_1\cup D_1\rangle_\F$, the first statement follows if $B_4\cup C_1\cup D_1\subseteq \mathrm{Rad}T$. Note that $B_4\subseteq\mathrm{Rad}T$ by Lemma \ref{L;Tripleproducts} (v). According to Lemmas \ref{L;Lemma3.5} and \ref{L;Radical}, note that $C_1\subseteq U_1\subseteq\mathrm{Rad}E_4^*TE_4^*\subseteq \mathrm{Rad}T$. It is enough to check that $D_1\subseteq \mathrm{Rad}T$. Define $M_1\!=\!E_4^*A_1E_2^*A_3E_4^*-E_4^*A_1E_3^*A_2E_4^*\!\in\! C_1$, $M_2\!=\!E_4^*A_1E_2^*A_3E_4^*-E_4^*A_2E_3^*A_1E_4^*\!\in\! C_1$, and $M_3=E_4^*A_1E_2^*A_3E_4^*-E_4^*A_3E_2^*A_1E_4^*\in C_1$. So $E_1^*A_2E_4^*\!-\!E_1^*A_3E_4^*\!=\!E_1^*A_2E_4^*M_1$, $E_2^*A_1E_3^*A_2E_4^*-E_2^*A_3E_4^*=E_2^*A_3E_4^*M_1$, $E_3^*A_1E_2^*A_3E_4^*-E_3^*A_2E_4^*=-E_3^*A_2E_4^*M_1$, $E_2^*A_1E_4^*-E_2^*A_3E_4^*=E_2^*A_3E_4^*M_2$, $E_1^*A_2E_3^*A_1E_4^*-E_1^*A_3E_4^*=E_1^*A_3E_4^*M_2$, and $E_3^*A_1E_4^*-E_3^*A_1E_2^*A_3E_4^*=E_3^*A_2E_4^*M_3$ by \eqref{Eq;2} and the equalities in Lemma \ref{L;Lemma2.12}. As $M_1, M_2, M_3\in \mathrm{Rad}T$, the above equalities hold, and $M\in \mathrm{Rad}T$ if $M^T\in \mathrm{Rad}T$, all elements in $D_1$ are contained in $\mathrm{Rad}T$. The first statement follows. The second statement is from Lemma \ref{L;Lemma3.10} and the first one. The desired lemma is proved.
\end{proof}
\begin{lem}\label{L;Lemma3.13}
Assume that $n\equiv 1\pmod p$. Then the $\F$-algebra $T/V_1$ has an $\F$-basis $\{M+V_1: M\in H_1\cup\{E_0^*, E_4^*+E_4^*A_1E_2^*A_3E_4^*\}\}$. The $\F$-algebra $T/V_1$ has two-sided ideals $\langle\{E_0^*+V_1\}\rangle_\F$, $\langle\{E_4^*+E_4^*A_1E_2^*A_3E_4^*+V_1\}\rangle_\F$, and $\langle\{M+V_1: M\in H_1\}\rangle_\F$.
\end{lem}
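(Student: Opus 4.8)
The plan is to obtain the first statement directly from the explicit $\F$-bases of $T$ in Lemmas \ref{L;Lemma3.8} and \ref{L;Lemma3.9}, and to prove the second statement through a central-idempotent decomposition of $T/V_1$. For the first statement, observe that $n=|G|$ is a power of $2$ with $n\geq 4$, so $n\equiv 1\pmod p$ forces exactly one of: $n=4$ and $p=3$; $n=8$ and $p=7$; or $n>8$. In the first case Lemma \ref{L;Lemma3.9} produces an $\F$-basis of $T$ restricting to the $\F$-basis $K_1$ of $V_1$, and in the remaining two cases Lemma \ref{L;Lemma3.8} produces an $\F$-basis of $T$ restricting to the $\F$-basis $B_4\cup C_1\cup D_1$ of $V_1$. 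In each case the $\F$-basis of $T$ is the disjoint union of an $\F$-basis of $V_1$ with $H_1\cup\{E_0^*, E_4^*+E_4^*A_1E_2^*A_3E_4^*\}$, so the cosets of the latter set form an $\F$-basis of $T/V_1$, proving the first statement.

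Let $\pi\colon T\to T/V_1$ be the canonical projection. For the second statement I would first record that a linear subspace $\mathcal I$ of $T/V_1$ is a two-sided ideal as soon as $\pi(M)\mathcal I\subseteq\mathcal I$ and $\mathcal I\pi(M)\subseteq\mathcal I$ for all $M$ in the generating set $B_1\cup B_2$ of $T$ (Lemma \ref{L;Lemma3.10}), because the set of $a$ with $a\mathcal I\subseteq\mathcal I$ is a unital subalgebra and hence equals $T/V_1$. Setting $f_0=\pi(E_0^*)$, I would then show $f_0$ is a central idempotent: we have $E_0^{*2}=E_0^*$, and for each generator $M$ the relation $E_c^*E_0^*=\delta_{c0}E_0^*$ together with Lemma \ref{L;Tripleproducts} (ii) (valid since $k_0=1$) reduces $ME_0^*$ to one of $O$, $E_0^*$, or $E_a^*JE_0^*$ with $a\in[1,4]$; as $n\equiv 1\pmod p$ makes $k_ak_0$ divisible by $p$, the last of these lies in $B_4\subseteq V_1$ by Lemma \ref{L;Tripleproducts} (v). A transposed computation handles $E_0^*M$, and a short case check yields $ME_0^*\equiv E_0^*M\pmod{V_1}$. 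Thus $\langle\{f_0\}\rangle_\F$ is a two-sided ideal.

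Next set $e=E_4^*+E_4^*A_1E_2^*A_3E_4^*$ and $f_e=\pi(e)$. Since the element $E_4^*A_1E_2^*A_3E_4^*-E_4^*-E_4^*A_1E_4^*$ of $C_1$ lies in $U_1\subseteq V_1$, one has $e\equiv\overline 2 E_4^*+E_4^*A_1E_4^*\pmod{U_1}$, which is one of the primitive idempotents exhibited in the proof of Corollary \ref{C;Corollary3.6}; hence $f_e^2=f_e$, and $f_ef_0=f_0f_e=0$ because $E_0^*E_4^*=E_4^*E_0^*=O$. To see $f_e$ is central I would use $E_a^*e=\delta_{a4}e$ and $eE_c^*=\delta_{c4}e$ to split by Peirce position: when $a=c=4$ the identity $Me\equiv eM\pmod{V_1}$ follows because $E_4^*TE_4^*/U_1\cong M_1(\F)\oplus M_1(\F)$ is commutative by Corollary \ref{C;Corollary3.6}; when exactly one outer index is $4$ it remains to show $eM\equiv O$ and $Me\equiv O\pmod{V_1}$, which follows by expanding $E_4^*A_1E_2^*A_3E_4^*A_bE_c^*$ and its transpose through Lemmas \ref{L;Lemma2.11}, \ref{L;Lemma2.12}, and \ref{L;Lemma2.15}, the hypothesis $n\equiv 1\pmod p$ being used to kill the $J$-terms and the coefficients $\overline{n-2}$ and $\overline{n-3}$. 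This shows $\langle\{f_e\}\rangle_\F$ is a two-sided ideal.

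Finally, because $f_0$ and $f_e$ are orthogonal central idempotents, $f_3=\pi(I)-f_0-f_e$ is a central idempotent, so $f_3(T/V_1)$ is a two-sided ideal of dimension $\dim(T/V_1)-2=|H_1|$. I would identify this ideal with $\langle\{M+V_1:M\in H_1\}\rangle_\F$ as follows: every $M\in H_1$ begins with some $E_a^*$ with $a\geq 1$, so $E_0^*M=O$ gives $f_0\pi(M)=0$, while $eM\in V_1$ (trivially when the leading index is not $4$, and by the off-diagonal computations of the previous paragraph otherwise) gives $f_e\pi(M)=0$; thus $\pi(M)\in f_3(T/V_1)$. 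This yields $\langle\{M+V_1:M\in H_1\}\rangle_\F\subseteq f_3(T/V_1)$, and equality follows since both spaces have dimension $|H_1|$. The main obstacle is exactly the collection of off-diagonal products in the third paragraph, namely verifying that $E_4^*A_1E_2^*A_3E_4^*$ annihilates, modulo $V_1$, every generator and every element of $H_1$ lying off the $(4,4)$-Peirce block; this is where the length-five word computations of Section 4 must be combined carefully with the congruence $n\equiv 1\pmod p$.
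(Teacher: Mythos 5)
Your proof is correct, and for the second statement it takes a genuinely different route from the paper's. The first statement is handled the same way in both: read off the basis of $T/V_1$ from Lemmas \ref{L;Lemma3.8} and \ref{L;Lemma3.9}, and your case split ($n=4,p=3$; $n=8,p=7$; $n>8$) is exactly what makes those two lemmas exhaustive under $n\equiv 1\pmod p$. For the second statement the paper argues by brute force on the basis $C$ of $T/V_1$: it checks that $M_1M_2$ and $M_2M_1$ lie in $\langle\{M+V_1:M\in H_1\}\rangle_\F$ for every basis element $M_1$ and every $H_1$-class $M_2$, and separately that the two rank-one classes are idempotents annihilating each other and the $H_1$-classes (citing Lemmas \ref{L;Lemma2.9}, \ref{L;Lemma2.12}, \ref{L;Lemma2.15}). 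You instead promote $\pi(E_0^*)$ and $\pi(E_4^*+E_4^*A_1E_2^*A_3E_4^*)$ to orthogonal \emph{central} idempotents, checking commutation only against the generating set $B_1\cup B_2$ of Lemma \ref{L;Lemma3.10}, and recover the third ideal as the Peirce component of the complementary central idempotent via a dimension count. This buys a conceptual explanation of why exactly three summands appear and spares you the $H_1\times H_1$ products (which the paper in effect redoes in Corollary \ref{C;Corollary3.14} anyway), at the cost of the centrality verification, whose computational core --- the products $eM$ and $Me$ modulo $V_1$ for generators meeting $E_4^*$ --- is the same family of length-five identities from Lemmas \ref{L;Lemma2.11}, \ref{L;Lemma2.12}, \ref{L;Lemma2.15} that the paper invokes. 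Two points you should make explicit: the dimension count needs $\dim f_0(T/V_1)=\dim f_e(T/V_1)=1$, which follows from $E_0^*TE_0^*=\langle\{E_0^*\}\rangle_\F$ and from $\bar e\,(E_4^*TE_4^*/U_1)\,\bar e=\langle\{\bar e\}\rangle_\F$ by Corollary \ref{C;Corollary3.6}; and the element $-E_4^*A_1E_2^*A_3E_4^*\in H_1$ sits in the $(4,4)$-Peirce block, so $e\cdot(-E_4^*A_1E_2^*A_3E_4^*)\in V_1$ comes from the orthogonality of the two idempotents of $E_4^*TE_4^*/U_1$ in Corollary \ref{C;Corollary3.6}, not from your off-diagonal computations.
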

\begin{proof}
The $\F$-algebra $T/V_1$ is defined by Lemma \ref{L;Lemma3.11}. As $n\equiv 1\pmod p$, the first statement follows from Lemmas \ref{L;Lemma3.8} and \ref{L;Lemma3.9}. As explained in Notation \ref{N;Notation3.7}, note that $B_4=\{E_a^*JE_b^*: a, b\in [0,4]\}\setminus\{E_0^*JE_0^*\}$. Let $C$ be the $\F$-basis of $T/V_1$ in the first statement. Pick $M_1\in C$ and $M_2\in\{M+V_1: M\in H_1\}$. Define $M_3=E_0^*+V_1$ and $M_4=E_4^*+E_4^*A_1E_2^*A_3E_4^*+V_1$. Notice that $M_1M_2, M_2M_1\in \langle\{M+V_1: M\in H_1\}\rangle_\F$, $M_3^2=M_3$, $M_4^2=M_4$, $M_2M_3\!=\!M_3M_2\!=\!M_2M_4\!=\!M_4M_2\!=\!M_3M_4\!=\!M_4M_3\!=\!O+V_1$ by combining \eqref{Eq;2}, the equalities in Lemmas \ref{L;Lemma2.9}, \ref{L;Lemma2.12}, \ref{L;Lemma2.15}. As $M_1$ and $M_2$ are chosen arbitrarily and the first statement holds, the second statement thus follows.
\end{proof}
If $a\in \mathbb{N}$ and $b,c\in [1,a]$, recall the definitions of $E_{bc}(a)$ and $\delta_{bc}$ in Subsection \ref{S;Subsection4}.

We are now ready to conclude this section with the main result of this section.
\begin{cor}\label{C;Corollary3.14}
If $n\equiv 1\pmod p$, then $T/\mathrm{Rad}T\cong M_4(\F)\oplus M_1(\F)\oplus M_1(\F)$ as $\F$-algebras and $\mathrm{Rad}T=V_1$.
\end{cor}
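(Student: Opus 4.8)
The plan is to reduce everything to understanding the quotient algebra $T/V_1$. Since Lemma \ref{L;Lemma3.12} gives that $V_1$ is a nilpotent two-sided ideal with $V_1\subseteq\mathrm{Rad}T$, the identity $\mathrm{Rad}T=V_1$ will follow once I show that $T/V_1$ is semisimple: indeed $\mathrm{Rad}T$ is a nilpotent two-sided ideal of $T$ (as recalled in Subsection \ref{S;Subsection3}), so its image $\mathrm{Rad}T/V_1$ is a nilpotent two-sided ideal of $T/V_1$, and a semisimple algebra has no nonzero nilpotent two-sided ideals; hence $\mathrm{Rad}T\subseteq V_1$, forcing equality. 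Moreover $T/\mathrm{Rad}T=T/V_1$, so it suffices to prove $T/V_1\cong M_4(\F)\oplus M_1(\F)\oplus M_1(\F)$, and semisimplicity then comes for free from this description.

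To analyze $T/V_1$, I would invoke Lemma \ref{L;Lemma3.13}, which supplies the explicit $\F$-basis $\{M+V_1: M\in H_1\cup\{E_0^*, E_4^*+E_4^*A_1E_2^*A_3E_4^*\}\}$ of $T/V_1$ and identifies the three two-sided ideals $\langle\{E_0^*+V_1\}\rangle_\F$, $\langle\{E_4^*+E_4^*A_1E_2^*A_3E_4^*+V_1\}\rangle_\F$, and $\langle\{M+V_1: M\in H_1\}\rangle_\F$. Because these three ideals have mutually disjoint spanning sets drawn from the fixed basis, they meet pairwise in the zero space and their vector-space direct sum is all of $T/V_1$; since each is a two-sided ideal, the product of two distinct ones lies in their intersection and is therefore zero, so the decomposition is a direct sum of $\F$-algebras. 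The first two summands are one-dimensional and generated by the idempotents $E_0^*+V_1$ and $E_4^*+E_4^*A_1E_2^*A_3E_4^*+V_1$ (idempotency of $E_0^*$ is \eqref{Eq;2}, and of the latter is recorded in Lemma \ref{L;Lemma3.13}), so each is isomorphic to $M_1(\F)$.

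The crux is to identify the sixteen-dimensional ideal $W:=\langle\{M+V_1: M\in H_1\}\rangle_\F$ with $M_4(\F)$, and the route I would take is to exhibit a complete system of matrix units inside $W$. The diagonal units are $e_{11}=E_1^*+V_1$, $e_{22}=E_2^*+V_1$, $e_{33}=E_3^*+V_1$, and $e_{44}=-E_4^*A_1E_2^*A_3E_4^*+V_1$; note $e_{44}$ is idempotent modulo $V_1$ because Lemma \ref{L;Lemma2.15}(i) gives $(E_4^*A_1E_2^*A_3E_4^*)^2=E_4^*JE_4^*-E_4^*A_1E_2^*A_3E_4^*$ with $E_4^*JE_4^*\in B_4\subseteq V_1$ (here $p\mid k_4$ since $k_4=(n-1)(n-2)\equiv 0\pmod p$). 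The off-diagonal units among the indices $\{1,2,3\}$ are the elements $E_i^*A_kE_j^*$ of $H_1$ (with $k$ the third index), whose matrix-unit relations hold on the nose via Lemma \ref{L;Lemma2.2}, while the units linking $\{1,2,3\}$ to the index $4$ are the remaining members $E_1^*A_2E_4^*,E_2^*A_1E_4^*,E_3^*A_1E_4^*$ and $-E_4^*A_1E_2^*,-E_4^*A_1E_3^*,-E_4^*A_2E_1^*$ of $H_1$. I would then verify $e_{ij}e_{kl}=\delta_{jk}e_{il}$ in $T/V_1$; once this is in place, the standard fact that an algebra possessing a complete set of $4^2$ matrix units is isomorphic to $M_4(\F)$ finishes the identification, and assembling the three summands yields $T/V_1\cong M_4(\F)\oplus M_1(\F)\oplus M_1(\F)$.

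The main obstacle is exactly this last verification of the matrix-unit relations involving the index $4$. These do not hold in $T$ but only modulo $V_1$, and they consume the hypothesis $n\equiv 1\pmod p$ in an essential way: it forces $\overline{n-2}=-1$ and $\overline{n-3}=-2$ in $\F$, collapses the products computed in Lemmas \ref{L;Lemma2.5}, \ref{L;Lemma2.9}, \ref{L;Lemma2.12}, and \ref{L;Lemma2.15}, and—together with the defining relations of $C_1$ and $D_1$—identifies, for instance, $E_1^*A_2E_4^*$ with $E_1^*A_3E_4^*$ modulo $V_1$ so that the cross terms close up. As a representative check, $e_{14}e_{41}=-\,\overline{n-2}\,E_1^*=E_1^*=e_{11}$ by Lemma \ref{L;Lemma2.5}(iii), whereas $e_{41}e_{14}=-(E_4^*+E_4^*A_2E_4^*)\equiv e_{44}\pmod{V_1}$ by Lemma \ref{L;Lemma2.9} combined with the relations $E_4^*A_1E_2^*A_3E_4^*-E_4^*-E_4^*A_1E_4^*$ and $E_4^*A_1E_4^*-E_4^*A_2E_4^*$ of $C_1$. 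Carrying out this bookkeeping carefully for all sixteen units is routine but delicate, and it is the place where the assumption $n\equiv 1\pmod p$ is genuinely used.
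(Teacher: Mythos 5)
Your proposal is correct and follows essentially the same route as the paper: reduce to $T/V_1$ via Lemma \ref{L;Lemma3.12}, use the decomposition of Lemma \ref{L;Lemma3.13} into the two one-dimensional ideals and the sixteen-dimensional ideal spanned by $H_1$, and exhibit the same sixteen matrix units (your $e_{ij}$ are exactly the paper's $M_{ij}$, verified with the same computational lemmas and the congruence $n\equiv 1\pmod p$). The only cosmetic difference is that you spell out why semisimplicity of $T/V_1$ forces $\mathrm{Rad}T=V_1$, which the paper leaves implicit.
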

\begin{proof}
By Lemma \ref{L;Lemma3.12}, it suffices to check that $T/V_1\cong M_4(\F)\oplus M_1(\F)\oplus M_1(\F)$ as $\F$-algebras. Notice that $B_4=\{E_a^*JE_b^*: a,b\in [0,4]\}\setminus\{E_0^*JE_0^*\}\subseteq V_1$. By Lemma \ref{L;Lemma3.13}, $T/V_1$ is a direct sum of two-sided ideals $\langle\{M+V_1: M\in H_1\}\rangle_\F$, $\langle\{E_0^*+V_1\}\rangle_\F$, $\langle\{E_4^*+E_4^*A_1E_2^*A_3E_4^*+V_1\}\rangle_\F$. According to \eqref{Eq;2} and Lemma \ref{L;Lemma2.15} (i), observe that $\langle\{E_0^*+V_1\}\rangle_\F\cong \langle\{E_4^*+E_4^*A_1E_2^*A_3E_4^*+V_1\}\rangle_\F\cong M_1(\F)$ as $\F$-algebras. Let us define
$M_{11}=E_1^*+V_1$, $M_{12}=E_1^*A_3E_2^*+V_1$, $M_{13}=E_1^*A_2E_3^*+V_1$, $M_{14}\!=\!E_1^*A_2E_4^*\!+\!V_1$. Set $M_{21}=E_2^*A_3E_1^*+V_1$, $M_{22}=E_2^*+V_1$,
$M_{23}=E_2^*A_1E_3^*+V_1$, $M_{24}\!=\!E_2^*A_1E_4^*\!+\!V_1$. Put $M_{31}=E_3^*A_2E_1^*+V_1$, $M_{32}=E_3^*A_1E_2^*+V_1$, $M_{33}\!=\!E_3^*\!+\!V_1$, $M_{34}\!=\!E_3^*A_1E_4^*\!+\!V_1$. Define $M_{41}=-E_4^*A_2E_1^*+V_1$, $M_{42}=-E_4^*A_1E_2^*+V_1$, $M_{43}=-E_4^*A_1E_3^*+V_1$. Also define $M_{44}=-E_4^*A_1E_2^*A_3E_4^*+V_1$. By Lemma \ref{L;Lemma3.13}, notice that $\{M_{ab}: a,b\in [1,4]\}$ is an $\F$-basis of $\langle\{M+V_1: M\in H_1\}\rangle_\F$. By combining \eqref{Eq;2}, the equalities in Lemmas \ref{L;Lemma2.1}, \ref{L;Lemma2.2}, \ref{L;Lemma2.4}, \ref{L;Lemma2.3}, \ref{L;Lemma2.5}, \ref{L;Lemma2.9}, \ref{L;Lemma2.12}, \ref{L;Lemma2.15} (i), $M_{gh}M_{rs}\!=\!\delta_{hr}M_{gs}$ for any $g, h, r,s\in [1,4]$. Hence there is an $\F$-algebra isomorphism from $\langle\{M+V_1: M\in H_1\}\rangle_\F$ to $M_4(\F)$ that sends $M_{gh}$ to $E_{gh}(4)$ for any $g, h\in [1,4]$. The desired corollary thus follows.
\end{proof}
\section{Algebraic structure of $T$: Case II}
Recall that $G$ is assumed further to be an elementary abelian $2$-group. The aim of this section is to get the algebraic structure of $T$ for the case $n\equiv 2\pmod p$. For our purpose, recall Notation \ref{N;Notation1.21} and the definition of $E_4^*TE_4^*$ in Subsection \ref{S;Subsection3}.

We first introduce the required notation and present some related properties.
\begin{nota}\label{N;Notation4.1}
If $p\neq 2$, set $C_2\!=\!\{E_4^*JE_4^*\}$. If $p=2$, $C_2$ is the set containing exactly
\begin{align*}
&E_4^*JE_4^*, E_4^*+E_4^*A_1E_4^*+E_4^*A_2E_3^*A_1E_4^*+E_4^*A_3E_2^*A_1E_4^*,\\
&E_4^*+E_4^*A_2E_4^*+E_4^*A_1E_3^*A_2E_4^*+E_4^*A_3E_1^*A_2E_4^*,\\
&E_4^*+E_4^*A_3E_4^*+E_4^*A_1E_2^*A_3E_4^*+E_4^*A_2E_1^*A_3E_4^*,\\
&E_4^*A_1E_2^*A_3E_4^*+E_4^*A_2E_1^*A_3E_4^*+E_4^*A_3E_1^*A_2E_4^*+E_4^*A_3E_2^*A_1E_4^*,\\
&E_4^*A_2E_3^*A_1E_4^*+E_4^*A_3E_2^*A_1E_4^*+E_4^*A_1E_2^*A_3E_4^*+E_4^*A_1E_3^*A_2E_4^*.
\end{align*}
Use $D_2$ to denote the set containing precisely $E_1^*A_2E_4^*+E_1^*A_3E_4^*+E_1^*A_2E_3^*A_1E_4^*$, $E_2^*A_1E_4^*+E_2^*A_3E_4^*+E_2^*A_1E_3^*A_2E_4^*$, $E_3^*A_1E_4^*+E_3^*A_2E_4^*+E_3^*A_1E_2^*A_3E_4^*$, and their transposes. Define $U_2=\langle C_2\rangle_\F\subseteq T$. If $p\neq 2$, set $V_2=\langle B_4\rangle_\F\subseteq T$. If $p=2$, put $V_2\!=\!\langle B_4\cup C_2\cup D_2\rangle_\F\subseteq T$.
\end{nota}
\begin{lem}\label{L;Lemma4.2}
If $p\neq 2$ and $n\equiv 2\pmod p$, $B_6\cup C_2\cup\{E_4^*, E_4^*A_1E_4^*, E_4^*A_2E_4^*, E_4^*A_3E_4^*\}$ is an $\F$-basis of the $\F$-subalgebra $E_4^*TE_4^*$ of $T$. In particular, $C_2$ is an $\F$-basis of $U_2$.
\end{lem}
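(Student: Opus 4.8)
The plan is to recognize that the proposed spanning set is literally $B_3\cup B_6$ and then to invoke Corollary \ref{C;Corollary2.17}. First I would rewrite the four matrices $E_4^*, E_4^*A_1E_4^*, E_4^*A_2E_4^*, E_4^*A_3E_4^*$: since $A_0=I$ by \eqref{Eq;3}, we have $E_4^*=E_4^*A_0E_4^*$, so this set equals $\{E_4^*A_aE_4^*: a\in[0,3]\}$. Because $p\neq 2$, the definition in Notation \ref{N;Notation4.1} gives $C_2=\{E_4^*JE_4^*\}$. Hence, using the definition of $B_3$ from Notation \ref{N;Notation1.21},
\[
B_6\cup C_2\cup\{E_4^*, E_4^*A_1E_4^*, E_4^*A_2E_4^*, E_4^*A_3E_4^*\}=B_6\cup\{E_4^*JE_4^*\}\cup\{E_4^*A_aE_4^*: a\in[0,3]\}=B_3\cup B_6.
\]

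Next I would rule out the small case. Since $G$ is an elementary abelian $2$-group with $n=|G|\geq 3$, the integer $n$ is a power of $2$, so $n\in\{4,8,16,\ldots\}$. If $n=4$, then $n\equiv 2\pmod p$ forces $p\mid(n-2)=2$, contradicting $p\neq 2$; hence $n\geq 8$. Therefore either $n>8$, or $n=8$ with $p\neq 2$. In both situations Corollary \ref{C;Corollary2.17} asserts that $B_3\cup B_6$ is an $\F$-basis of the $\F$-subalgebra $E_4^*TE_4^*$ of $T$. Combined with the set identity above, this proves the first statement.

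For the ``in particular'' clause, note that $C_2=\{E_4^*JE_4^*\}\subseteq B_3\cup B_6$, so $C_2$ is a subset of a set just shown to be $\F$-linearly independent; hence $C_2$ is itself $\F$-linearly independent and, by the definition $U_2=\langle C_2\rangle_\F$, forms an $\F$-basis of $U_2$.

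Since the entire argument is a set-theoretic identification followed by a direct appeal to Corollary \ref{C;Corollary2.17}, I do not expect any substantial obstacle; the only point requiring care is the elementary observation that $p\neq 2$ together with $n\equiv 2\pmod p$ excludes $n=4$ and thereby places us squarely in a regime already covered by that corollary.
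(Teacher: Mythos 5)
Your proposal is correct and follows essentially the same route as the paper, which likewise reduces the claim to the identity $E_4^*=E_4^*A_0E_4^*$ (via \eqref{Eq;2}, \eqref{Eq;3}) and an appeal to Corollary \ref{C;Corollary2.17}. Your explicit verification that $p\neq 2$ and $n\equiv 2\pmod p$ exclude $n=4$ (so that the relevant case of that corollary applies) is a detail the paper leaves implicit, and it is handled correctly.
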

\begin{proof}
The first statement is proved by combining \eqref{Eq;2}, \eqref{Eq;3}, and Corollary \ref{C;Corollary2.17}. The second statement follows from the first one. The desired lemma is proved.
\end{proof}
\begin{lem}\label{L;Lemma4.3}
If $p=2$ and $n\!>\!8$, the union of $C_2$ and the set containing exactly $E_4^*+E_4^*A_2E_1^*A_3E_4^*+E_4^*A_3E_1^*A_2E_4^*$, $E_4^*A_1E_2^*A_3E_4^*+E_4^*A_2E_1^*A_3E_4^*$, $E_4^*A_2E_1^*A_3E_4^*$, $E_4^*A_1E_3^*A_2E_4^*+E_4^*A_3E_1^*A_2E_4^*$, $E_4^*A_3E_1^*A_2E_4^*$ is an $\F$-basis of the $\F$-subalgebra $E_4^*TE_4^*$ of $T$. In particular, $C_2$ is an $\F$-basis of $U_2$.
\end{lem}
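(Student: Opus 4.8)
The plan is to run the standard change-of-basis argument, exactly as in the proofs of Lemmas \ref{L;Lemma3.2} and \ref{L;Lemma3.3}. Since $p=2$ and $n>8$, we are in the first case of Corollary \ref{C;Corollary2.17}, so $B_3\cup B_6$ is already an $\F$-basis of $E_4^*TE_4^*$; as $|B_3|=5$ and $|B_6|=6$, this gives $\dim_\F E_4^*TE_4^*=11$. The proposed spanning set consists of the six elements of $C_2$ together with the five displayed matrices, again eleven in total. Hence it will suffice to prove that this set and $B_3\cup B_6$ span the same subspace, after which the equality of cardinalities forces the proposed set to be an $\F$-basis.

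The easy inclusion is that each of the eleven proposed matrices lies in $\langle B_3\cup B_6\rangle_\F$: by \eqref{Eq;3} we have $E_4^*=E_4^*A_0E_4^*\in B_3$ and $E_4^*JE_4^*\in B_3$, while every summand of the form $E_4^*A_aE_b^*A_cE_4^*$ with $\{a,b,c\}=[1,3]$ lies in $B_6$, so each proposed matrix is visibly an $\F$-linear combination of the elements of $B_3\cup B_6$. The substantive step is the reverse inclusion, namely recovering each element of $B_3\cup B_6$ from the proposed set. Working in characteristic two, where $\overline{2}=0$, I would solve the resulting system by back-substitution: the matrices $E_4^*A_2E_1^*A_3E_4^*$ and $E_4^*A_3E_1^*A_2E_4^*$ appear verbatim among the proposed generators; the two-term combinations then yield $E_4^*A_1E_2^*A_3E_4^*$ and $E_4^*A_1E_3^*A_2E_4^*$; the three-term combination recovers $E_4^*$; and finally the four-term elements of $C_2$ return $E_4^*A_2E_3^*A_1E_4^*$, $E_4^*A_3E_2^*A_1E_4^*$, and the three matrices $E_4^*A_aE_4^*$ with $a\in[1,3]$, all of the characteristic-two cancellations being routine. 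Since $E_4^*JE_4^*$ is already present, this exhibits every element of $B_3\cup B_6$ as an $\F$-linear combination of the proposed set, completing the proof that the proposed set is an $\F$-basis of $E_4^*TE_4^*$.

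The final assertion is then immediate: $C_2$ is a subset of the $\F$-basis just established, hence $\F$-linearly independent, and since $U_2=\langle C_2\rangle_\F$ by definition, $C_2$ is an $\F$-basis of $U_2$. The only thing that could go wrong is the bookkeeping in the back-substitution, but the presence of the two single-term generators $E_4^*A_2E_1^*A_3E_4^*$ and $E_4^*A_3E_1^*A_2E_4^*$ makes the change-of-basis matrix essentially triangular once the $\overline{2}=0$ cancellations are applied, so no real obstacle arises.
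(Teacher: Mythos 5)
Your proposal is correct and follows essentially the same route as the paper: both arguments invoke Corollary \ref{C;Corollary2.17} to get the $\F$-basis $B_3\cup B_6$ of $E_4^*TE_4^*$, verify mutual spanning between that basis and the eleven proposed matrices, and conclude by equality of cardinalities, with the statement about $C_2$ and $U_2$ following immediately. The only difference is that you make the characteristic-two back-substitution explicit, which the paper leaves as a routine verification.
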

\begin{proof}
Let $C$ denote the union in the first statement. According to \eqref{Eq;2} and \eqref{Eq;3}, notice that each element in $C$ is an $\F$-linear combination of the elements in $B_3\cup B_6$. According to \eqref{Eq;2} and \eqref{Eq;3}, notice that each element in $B_3\cup B_6$ is also an $\F$-linear combination of the elements in $C$. As $|C|=|B_3\cup B_6|$ and Corollary \ref{C;Corollary2.17} holds, the first statement follows. The second statement is from the first one. We are done.
\end{proof}
\begin{lem}\label{L;Lemma4.4}
If $p=2$ and $n=8$, the union of the set $C_2\setminus\{E_4^*JE_4^*\}$ and the set containing exactly $E_4^*+E_4^*A_2E_1^*A_3E_4^*+E_4^*A_3E_1^*A_2E_4^*$, $E_4^*A_1E_2^*A_3E_4^*+E_4^*A_2E_1^*A_3E_4^*$, $E_4^*A_2E_1^*A_3E_4^*$, $E_4^*A_1E_3^*A_2E_4^*+E_4^*A_3E_1^*A_2E_4^*$, $E_4^*A_3E_1^*A_2E_4^*$ forms an $\F$-basis of the $\F$-subalgebra $E_4^*TE_4^*$ of $T$. In particular, $C_2\setminus\{E_4^*JE_4^*\}$ is an $\F$-basis of $U_2$.
\end{lem}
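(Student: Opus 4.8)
The plan is to follow the same strategy as in Lemmas \ref{L;Lemma4.2} and \ref{L;Lemma4.3}: I would let $C$ denote the proposed set, compare it with the $\F$-basis $(B_3\cup B_6)\setminus\{E_4^*A_1E_2^*A_3E_4^*\}$ of $E_4^*TE_4^*$ furnished by Corollary \ref{C;Corollary2.17} in the case $p=2$ and $n=8$, and then invoke a dimension count, since both sets contain exactly ten elements. In particular I would identify $C_2\setminus\{E_4^*JE_4^*\}$ (five elements) together with the five listed extra matrices, and check $|C|=10$.

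The single new ingredient distinguishing this case from Lemma \ref{L;Lemma4.3} is the characteristic-$2$ relation from Lemma \ref{L;Lemma1.19}, which asserts that the sum of the six elements of $B_6$ equals $E_4^*A_4E_4^*$; combined with \eqref{Eq;3}, this writes $E_4^*A_1E_2^*A_3E_4^*$ as an $\F$-linear combination of the ten elements of $(B_3\cup B_6)\setminus\{E_4^*A_1E_2^*A_3E_4^*\}$. First I would use \eqref{Eq;2}, \eqref{Eq;3}, and this relation (to eliminate $E_4^*A_1E_2^*A_3E_4^*$ wherever it occurs) to check that each element of $C$ is an $\F$-linear combination of the elements of $(B_3\cup B_6)\setminus\{E_4^*A_1E_2^*A_3E_4^*\}$. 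Conversely, working modulo $2$, I would solve for the ten basis elements in terms of $C$: the singletons $E_4^*A_2E_1^*A_3E_4^*$ and $E_4^*A_3E_1^*A_2E_4^*$ already lie in $C$, and from these together with the remaining members of $C$ one back-substitutes in turn for $E_4^*A_1E_3^*A_2E_4^*$, for $E_4^*$, for $E_4^*A_3E_2^*A_1E_4^*$ and $E_4^*A_2E_3^*A_1E_4^*$, for $E_4^*A_1E_4^*$, $E_4^*A_2E_4^*$, $E_4^*A_3E_4^*$, and finally for $E_4^*JE_4^*$. These two containments, with $|C|=|(B_3\cup B_6)\setminus\{E_4^*A_1E_2^*A_3E_4^*\}|$ and Corollary \ref{C;Corollary2.17}, give the first statement.

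For the final claim I would note that $C_2\setminus\{E_4^*JE_4^*\}\subseteq C$, so this set is $\F$-linearly independent once $C$ is a basis. Since $U_2=\langle C_2\rangle_\F$ by Notation \ref{N;Notation4.1}, it remains to see that $E_4^*JE_4^*$ is redundant, i.e. lies in $\langle C_2\setminus\{E_4^*JE_4^*\}\rangle_\F$. Adding the three elements $E_4^*+E_4^*A_1E_4^*+E_4^*A_2E_3^*A_1E_4^*+E_4^*A_3E_2^*A_1E_4^*$, $E_4^*+E_4^*A_2E_4^*+E_4^*A_1E_3^*A_2E_4^*+E_4^*A_3E_1^*A_2E_4^*$, and $E_4^*+E_4^*A_3E_4^*+E_4^*A_1E_2^*A_3E_4^*+E_4^*A_2E_1^*A_3E_4^*$ of $C_2\setminus\{E_4^*JE_4^*\}$ and reducing modulo $2$, the three copies of $E_4^*$ collapse to one, the six $B_6$-terms assemble via Lemma \ref{L;Lemma1.19} into $E_4^*A_4E_4^*$, and \eqref{Eq;3} identifies the total with $E_4^*JE_4^*$. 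Hence $C_2\setminus\{E_4^*JE_4^*\}$ spans $U_2$ and is an $\F$-basis of it.

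I expect the main obstacle to be purely the characteristic-$2$ bookkeeping: one must deploy the single relation of Lemma \ref{L;Lemma1.19} coherently, using it to remove $E_4^*A_1E_2^*A_3E_4^*$ in one direction and to certify the redundancy of $E_4^*JE_4^*$ in the other, all while tracking the many twofold and fourfold sums of $B_6$-elements so that the back-substitution genuinely recovers all ten basis vectors rather than a proper subspace.
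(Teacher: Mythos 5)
Your proposal is correct and follows essentially the same route as the paper: compare $C$ with the $\F$-basis $(B_3\cup B_6)\setminus\{E_4^*A_1E_2^*A_3E_4^*\}$ of $E_4^*TE_4^*$ from Corollary \ref{C;Corollary2.17}, establish the two spanning containments via \eqref{Eq;2}, \eqref{Eq;3}, and the characteristic-$2$ relation of Lemma \ref{L;Lemma1.19}, and conclude by the count $|C|=10$. Your explicit back-substitution and the three-term sum certifying the redundancy of $E_4^*JE_4^*$ are exactly the computations the paper leaves implicit.
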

\begin{proof}
Let $C$ denote the union in the first statement. By combining \eqref{Eq;2}, \eqref{Eq;3}, and Lemma \ref{L;Lemma1.19}, notice that each element in $C$ is an $\F$-linear combination of the elements in $(B_3\cup B_6)\setminus\{E_4^*A_1E_2^*A_3E_4^*\}$. Furthermore, notice that each element in $(B_3\cup B_6)\setminus\{E_4^*A_1E_2^*A_3E_4^*\}$ is also an $\F$-linear combination of the elements in $C$. As $|C|=|(B_3\cup B_6)\setminus\{E_4^*A_1E_2^*A_3E_4^*\}|$ and Corollary \ref{C;Corollary2.17} holds, the first statement follows. The second statement is from the first one and Lemma \ref{L;Lemma1.19}. The proof of the desired lemma is now complete.
\end{proof}
\begin{lem}\label{L;Lemma4.5}
If $p\!=\!2$ and $n\!=\!4$, the union of $\{E_4^*JE_4^*\}$ and the set containing exactly $E_4^*+E_4^*A_2E_1^*A_3E_4^*+E_4^*A_3E_1^*A_2E_4^*$, $E_4^*A_1E_2^*A_3E_4^*+E_4^*A_2E_1^*A_3E_4^*$, $E_4^*A_2E_1^*A_3E_4^*$, $E_4^*A_1E_3^*A_2E_4^*+E_4^*A_3E_1^*A_2E_4^*$, $E_4^*A_3E_1^*A_2E_4^*$ forms an $\F$-basis of the $\F$-subalgebra $E_4^*TE_4^*$ of $T$. In particular, $\{E_4^*JE_4^*\}$ is an $\F$-basis of $U_2$.
\end{lem}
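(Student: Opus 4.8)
The plan is to mirror the proofs of Lemmas \ref{L;Lemma4.3} and \ref{L;Lemma4.4}: I would compare the proposed six-element set $C$ (the displayed union in the statement) against the $\F$-basis $B_3\cup\{E_4^*A_1E_2^*A_3E_4^*\}$ of $E_4^*TE_4^*$ furnished by the $n=4$ case of Corollary \ref{C;Corollary2.17}, and show the two sets have the same $\F$-span. Since $p=2$ and $n=4$ force $G$ to be a Klein four group, all the relations of Lemma \ref{L;Lemma1.20} are available, and in characteristic $2$ they simplify because $\overline{2}=0$ and signs are immaterial. Writing $A_4=J-A_0-A_1-A_2-A_3$ and using \eqref{Eq;2}, \eqref{Eq;3} lets me replace each occurrence of $E_4^*A_4E_4^*$ in Lemma \ref{L;Lemma1.20} by an $\F$-linear combination of $B_3$.

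First I would derive, from Lemma \ref{L;Lemma1.20}, an explicit expression for each of the five triple products $E_4^*A_aE_b^*A_cE_4^*$ with $\{a,b,c\}=[1,3]$ and $(a,b,c)\neq(1,2,3)$ as an $\F$-linear combination of the elements of $B_3\cup\{E_4^*A_1E_2^*A_3E_4^*\}$; for instance parts (iii) and (v) give $E_4^*A_2E_3^*A_1E_4^*$ and $E_4^*A_3E_2^*A_1E_4^*$ directly, and the remaining three follow the same way. Substituting these into the six entries of $C$ then writes every element of $C$ inside $\langle B_3\cup\{E_4^*A_1E_2^*A_3E_4^*\}\rangle_\F$. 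For the reverse inclusion I would exhibit each element of $B_3\cup\{E_4^*A_1E_2^*A_3E_4^*\}$ as an $\F$-combination of the six elements of $C$; since the resulting change-of-coordinate matrix between the two six-element sets is invertible over $\F_2$ (a routine $6\times6$ row reduction), this is immediate once the forward direction is in place. As $|C|=6=|B_3\cup\{E_4^*A_1E_2^*A_3E_4^*\}|$ and Corollary \ref{C;Corollary2.17} names the latter as an $\F$-basis of $E_4^*TE_4^*$, the first statement follows.

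For the ``in particular'' clause I would recall from Notation \ref{N;Notation4.1} that $U_2=\langle C_2\rangle_\F$, where $C_2$ consists of $E_4^*JE_4^*$ together with five further elements. Applying the same characteristic-$2$ Klein four relations of Lemma \ref{L;Lemma1.20} to those five elements, I expect each to collapse either to $E_4^*JE_4^*$ or to the zero matrix: the three elements of the form $E_4^*+E_4^*A_jE_4^*+\cdots$ should reduce to $E_4^*JE_4^*$, while the last two (sums of four triple products) should vanish. Hence $U_2=\langle\{E_4^*JE_4^*\}\rangle_\F$, and since $E_4^*JE_4^*\neq O$ by \eqref{Eq;4}, the singleton $\{E_4^*JE_4^*\}$ is an $\F$-basis of $U_2$.

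There is no conceptual obstacle here; the argument is entirely computational and parallel to the preceding lemmas. The only point requiring care is the bookkeeping of the cancellations in Lemma \ref{L;Lemma1.20} over $\F_2$ — in particular correctly carrying the substitution $E_4^*A_4E_4^*=E_4^*JE_4^*-E_4^*-E_4^*A_1E_4^*-E_4^*A_2E_4^*-E_4^*A_3E_4^*$ and tracking which even-multiplicity terms drop out — so that the collapse of the $C_2$-elements to $E_4^*JE_4^*$ or $O$ is recorded without error.
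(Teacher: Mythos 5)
Your proposal is correct and follows essentially the same route as the paper's proof: both compare the six-element set against the basis $B_3\cup\{E_4^*A_1E_2^*A_3E_4^*\}$ of $E_4^*TE_4^*$ from Corollary \ref{C;Corollary2.17}, use the Klein-four relations of Lemma \ref{L;Lemma1.20} together with \eqref{Eq;2}, \eqref{Eq;3} in characteristic $2$ to establish mutual spanning, and conclude by the cardinality count; the collapse of $C_2$ to $\langle\{E_4^*JE_4^*\}\rangle_\F$ is likewise exactly how the paper obtains the second statement. Your predicted reductions (the three elements of the form $E_4^*+E_4^*A_jE_4^*+\cdots$ becoming $E_4^*JE_4^*$ and the two four-term sums vanishing) are indeed what the computation yields.
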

\begin{proof}
Let $C$ denote the union in the first statement. By combining \eqref{Eq;2}, \eqref{Eq;3}, and the equalities in Lemma \ref{L;Lemma1.20}, each element in $C$ is an $\F$-linear combination of the elements in $B_3\cup\{E_4^*A_1E_2^*A_3E_4^*\}$. By combining \eqref{Eq;2}, \eqref{Eq;3}, the assumption $p=2$, and the equalities in Lemma \ref{L;Lemma1.20}, notice that each element in $B_3\cup\{E_4^*A_1E_2^*A_3E_4^*\}$ is also an $\F$-linear combination of the elements in $C$. As $|C|=|B_3\cup\{E_4^*A_1E_2^*A_3E_4^*\}|$ and Corollary \ref{C;Corollary2.17} holds, the first statement follows. As $U_2=\langle\{E_4^*JE_4^*\}\rangle_\F$ by the assumption $p=2$ and the equalities in Lemma \ref{L;Lemma1.20}, the second statement is proved by the first one. The desired lemma is proved.
\end{proof}
The following results describe $E_4^*TE_4^*$ for the case $n\equiv 2\pmod p$.
\begin{lem}\label{L;Lemma4.6}
If $n\equiv 2\pmod p$, $U_2$ is a two-sided ideal of the $\F$-subalgebra $E_4^*TE_4^*$ of $T$.
\end{lem}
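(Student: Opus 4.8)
The plan is to mimic the proof of Lemma~\ref{L;Lemma3.4}, reducing the assertion to a finite list of products and evaluating them with the triple-$E_4^*$ identities of this section together with the hypothesis $n\equiv 2\pmod p$. First I would record the two structural facts that drive the reduction. Every element of $C_2$ has the form $E_4^*(\cdots)E_4^*$, so $U_2\subseteq E_4^*TE_4^*$; and $U_2$ is closed under transposition. For $p\neq2$ the latter is immediate since $C_2=\{E_4^*JE_4^*\}$ and $(E_4^*JE_4^*)^T=E_4^*JE_4^*$ by \eqref{Eq;1}. For $p=2$ I would check it on the five nontrivial generators using \eqref{Eq;1} and the relation $(E_4^*A_aE_b^*A_cE_4^*)^T=E_4^*A_cE_b^*A_aE_4^*$; for instance, in characteristic $2$ the transpose of $E_4^*+E_4^*A_1E_4^*+E_4^*A_2E_3^*A_1E_4^*+E_4^*A_3E_2^*A_1E_4^*$ is the sum of this element with the fifth listed generator $E_4^*A_2E_3^*A_1E_4^*+E_4^*A_3E_2^*A_1E_4^*+E_4^*A_1E_2^*A_3E_4^*+E_4^*A_1E_3^*A_2E_4^*$, hence lies in $U_2$, and the remaining generators are handled likewise.

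By Corollary~\ref{C;Corollary2.17} the set $B_3\cup B_6$ spans $E_4^*TE_4^*$, and each of its members is transpose-stable up to membership by \eqref{Eq;1}. Consequently it suffices to establish only the one-sided inclusion $MN\in U_2$ for every $M\in B_3\cup B_6$ and every $N\in C_2$: the right-ideal property $NM\in U_2$ then follows from $NM=(M^TN^T)^T$, since $M^T\in E_4^*TE_4^*$, $N^T\in U_2$, and $U_2$ is transpose-closed. The verification of $MN\in U_2$ splits into two cases. If $M=E_4^*JE_4^*$ or $N=E_4^*JE_4^*$, repeated use of Lemma~\ref{L;Tripleproducts}(i) together with \eqref{Eq;2} collapses $MN$ to a scalar multiple of $E_4^*JE_4^*\in U_2$; this already disposes of the entire case $p\neq2$, where $C_2$ has only this one generator. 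In the remaining case I would expand $N$ as an $\F$-combination of $E_4^*$, $E_4^*A_1E_4^*$, $E_4^*A_2E_4^*$, $E_4^*A_3E_4^*$ and the six triple products $E_4^*A_aE_b^*A_cE_4^*$, and compute each product $M\cdot(\text{summand})$ by Lemmas~\ref{L;Lemma2.13}, \ref{L;Lemma2.14}, and \ref{L;Lemma2.15}. Here the hypothesis $n\equiv2\pmod p$ is decisive: it forces $\overline{n-2}=0$, so parts (v) and (vi) of Lemma~\ref{L;Lemma2.15} vanish and parts (i)--(iv) reduce every triple-times-triple product to a combination of $E_4^*JE_4^*$, $E_4^*$, and the $E_4^*A_jE_4^*$; collecting terms and comparing with the definition of $C_2$ shows the result lies in $U_2$.

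The hard part will be the bookkeeping in the case $p=2$, where $C_2$ carries six generators rather than one. There one must verify, for each of the eleven spanning elements $M$ of $E_4^*TE_4^*$ in $B_3\cup B_6$ and each of the five nontrivial generators $N$, that the linear combination produced by Lemma~\ref{L;Lemma2.15} re-expresses exactly back into $\langle C_2\rangle_\F$. This is finite and essentially mechanical, but delicate: the surviving terms must cancel in the precise pattern encoded by the five characteristic-two relations defining $C_2$, and it is exactly the vanishing $\overline{n-2}=0$ together with the earlier identity of Lemma~\ref{L;Lemma1.19} (already absorbed into the shape of $C_2$) that guarantee this cancellation. I would organize this step by fixing $\{g,h,i\}=[1,3]$ and tabulating the six products $E_4^*A_gE_h^*A_iE_4^*\cdot E_4^*A_{?}E_{?}^*A_{?}E_4^*$ once and for all from Lemma~\ref{L;Lemma2.15}, then reading off the images of $c_1,\dots,c_5$ directly from that table.
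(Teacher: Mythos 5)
Your proposal is correct and follows essentially the same route as the paper: reduce the two-sided condition to one-sided products $MN$ with $M\in B_3\cup B_6$ and $N\in C_2$ via transpose-closure of $U_2$ and Corollary \ref{C;Corollary2.17}, dispose of the $E_4^*JE_4^*$ cases by Lemma \ref{L;Tripleproducts}(i) and \eqref{Eq;2}, and settle the remaining $p=2$ products by the identities of Lemmas \ref{L;Lemma2.13}, \ref{L;Lemma2.14}, \ref{L;Lemma2.15} together with $\overline{n-2}=0$. The only cosmetic difference is that for $p\neq 2$ the paper shortcuts by identifying $U_2=\langle B_4\rangle_\F\cap E_4^*TE_4^*$ and citing Lemma \ref{L;Tripleproducts}(v), whereas you compute the $E_4^*JE_4^*$ products directly — a computation the paper performs anyway inside its $p=2$ branch.
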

\begin{proof}
Note that $U_2\subseteq E_4^*TE_4^*$. Assume that $p\neq 2$. As $n\equiv 2\pmod p$ and $k_4=p_{44}^0$, notice that $U_2=\langle \{E_4^*JE_4^*\}\rangle_\F=\langle B_4\rangle_\F\cap E_4^*TE_4^*$ by combining Theorem \ref{T;Insectionnumbers} (i), \eqref{Eq;2}, and \eqref{Eq;3}. By Lemma \ref{L;Tripleproducts} (v), $U_2$ is a two-sided ideal of the $\F$-subalgebra
$E_4^*TE_4^*$ of $T$. Assume that $p=2$. If $M\in B_3\cup B_6$, observe that $M^T\in B_3\cup B_6$ by \eqref{Eq;1}. If $N\in U_2$, notice that $N^T\in U_2$ as $p=2$, $U_2=\langle C_2\rangle_\F$, and \eqref{Eq;1} holds. According to Corollary \ref{C;Corollary2.17}, the desired lemma thus follows if $MN\in U_2$ for any $M\in B_3\cup B_6$ and $N\in C_2$. If one of the elements $M$ and $N$ is $E_4^*JE_4^*$, then $MN\in U_2$ by \eqref{Eq;2} and Lemma \ref{L;Tripleproducts} (i). By combining \eqref{Eq;2}, \ref{Eq;3}, the equalities in Lemmas \ref{L;Lemma2.13}, \ref{L;Lemma2.14}, \ref{L;Lemma2.15}, the assumptions $p=2$, and $2\mid n$, notice that $MN\in U_2$ for any $M\in B_3\cup B_6$ and $N\in C_2\setminus\{E_4^*JE_4^*\}$. The desired lemma is proved.
\end{proof}
\begin{lem}\label{L;Lemma4.7}
If $n\equiv 2\pmod p$, the two-sided ideal $U_2$ of the $\F$-subalgebra $E_4^*TE_4^*$ of $T$ is nilpotent. In particular, $U_2\subseteq \mathrm{Rad}E_4^*TE_4^*$.
\end{lem}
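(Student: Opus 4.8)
The plan is to follow the template of the proof of Lemma \ref{L;Lemma3.5}. Since $U_2=\langle C_2\rangle_\F$ and $U_2$ is a two-sided ideal of the $\F$-subalgebra $E_4^*TE_4^*$ of $T$ by Lemma \ref{L;Lemma4.6}, I would reduce the first statement to showing that the product of any three elements of $C_2$ is the zero matrix. The second statement is then immediate, because $\mathrm{Rad}E_4^*TE_4^*$ contains every nilpotent two-sided ideal of $E_4^*TE_4^*$ (see Subsection \ref{S;Subsection3}).

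First I would record the auxiliary claim that $(E_4^*JE_4^*)^2=O$ and that $E_4^*JE_4^*M=ME_4^*JE_4^*=O$ for every $M\in C_2$. For the square, \eqref{Eq;4} gives $(E_4^*JE_4^*)^2=E_4^*(JE_4^*J)E_4^*=\overline{k_4}\,E_4^*JE_4^*$, and $k_4=p_{44}^0=(n-1)(n-2)$ by Theorem \ref{T;Insectionnumbers} (i), so $\overline{k_4}=0$ since $n\equiv 2\pmod p$. For the one-sided products I would reduce $E_4^*JE_4^*\cdot E_4^*A_gE_h^*A_iE_4^*$ by two applications of Lemma \ref{L;Tripleproducts} (i) to the scalar multiple $\overline{p_{4g}^h\,p_{hi}^4}\,E_4^*JE_4^*$, and treat the terms $E_4^*A_gE_4^*$ and $E_4^*$ the same way; here $p_{4g}^h=n-2$ and the other factors $\equiv n-2$ up to the relevant cancellation, so by $n\equiv 2\pmod p$ the $\F$-linear combinations defining the members of $C_2$ are built precisely so that every such product vanishes. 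The identity $ME_4^*JE_4^*=O$ then follows by transposing, since $\langle C_2\rangle_\F$ is closed under transposition by \eqref{Eq;1} and $(E_4^*JE_4^*)^T=E_4^*JE_4^*$. This claim kills every triple product in which some factor equals $E_4^*JE_4^*$, and it settles the case $p\neq 2$ outright, where $C_2=\{E_4^*JE_4^*\}$ and hence already $U_2^2=O$.

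It then remains to treat, for $p=2$, a triple product $M_1M_2M_3$ with $M_1,M_2,M_3\in C_2\setminus\{E_4^*JE_4^*\}$. The strategy is to show $M_1M_2\in\langle E_4^*JE_4^*\rangle_\F$, whence $M_1M_2M_3=O$ by the claim above (multiplying the resulting multiple of $E_4^*JE_4^*$ on the right by $M_3\in C_2$). Each element of $C_2\setminus\{E_4^*JE_4^*\}$ is an $\F$-linear combination of $E_4^*$, the matrices $E_4^*A_gE_4^*$, and the six matrices $E_4^*A_gE_h^*A_iE_4^*$ with $\{g,h,i\}=[1,3]$, so $M_1M_2$ can be expanded by \eqref{Eq;2} together with the product formulas of Lemmas \ref{L;Lemma2.13}, \ref{L;Lemma2.14}, and \ref{L;Lemma2.15}. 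Using that $n$ is even (so $\overline{n-2}=0$, $\overline{n-3}=1$, $\overline{n-4}=0$), the characteristic-two cancellations collapse each expansion to a scalar multiple of $E_4^*JE_4^*$.

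I expect this last, finite bilinear verification over the spanning set $\{E_4^*,\ E_4^*A_gE_4^*,\ E_4^*A_gE_h^*A_iE_4^*\}$ to be the only genuinely computational part, and hence the main obstacle; the vanishing clauses of Lemma \ref{L;Lemma2.15} (parts (v) and (vi), which give $O$ once $\overline{n-2}=0$) are what make the collapse work, while everything else is formal. Combining the two cases, the product of any three elements of $C_2$ is the zero matrix, so $U_2$ is nilpotent and therefore $U_2\subseteq\mathrm{Rad}E_4^*TE_4^*$.
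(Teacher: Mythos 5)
Your proposal is correct and follows essentially the same route as the paper: reduce via Lemma \ref{L;Lemma4.6} to showing triple products of elements of $C_2$ vanish, establish the claim that $(E_4^*JE_4^*)^2=O$ and $ME_4^*JE_4^*=E_4^*JE_4^*M=O$ for $M\in C_2$ using \eqref{Eq;2}, \eqref{Eq;4}, Lemma \ref{L;Tripleproducts} (i) and Theorem \ref{T;Insectionnumbers}, dispose of $p\neq 2$ immediately, and settle $p=2$ by the computational expansion through Lemmas \ref{L;Lemma2.13}, \ref{L;Lemma2.14}, \ref{L;Lemma2.15} under $\overline{n-2}=0$. Your intermediate observation that $M_1M_2$ collapses into $\langle E_4^*JE_4^*\rangle_\F$ is a reasonable way to organize the same calculation the paper performs directly on the triple product.
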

\begin{proof}
As $U_2=\langle C_2\rangle_\F$ and Lemma \ref{L;Lemma4.6} holds, the first statement follows if the product of any three elements in $C_2$ is the zero matrix. For any $M\in C_2$, we first claim that $ME_4^*JE_4^*=E_4^*JE_4^*M\!=\!O$. As $n\equiv2\pmod p$ and $k_4=p_{44}^0$, it is obvious to see that $(E_4^*JE_4^*)^2=O$ by combining \eqref{Eq;2}, \eqref{Eq;4}, and Theorem \ref{T;Insectionnumbers} (i). By combining \eqref{Eq;2}, Lemma \ref{L;Tripleproducts} (i), Theorem \ref{T;Insectionnumbers} (ii), (iii), and the assumption $n\equiv 2\pmod p$, notice that $ME_4^*JE_4^*=E_4^*JE_4^*M=O$ for any $M\in C_2\setminus\{E_4^*JE_4^*\}$. The desired claim is proved. If $p\neq 2$, notice that $C_2=\{E_4^*JE_4^*\}$ and $(E_4^*JE_4^*)^3=O$ by the proven claim. So the case $p\neq 2$ is solved. Assume further that $p=2$ and $M_1, M_2, M_3\in C_2$. If one of the elements $M_1, M_2, M_3$ is $E_4^*JE_4^*$, then $M_1M_2M_3=O$ by the proven claim. If $M_1, M_2, M_3\in C_2\setminus\{E_4^*JE_4^*\}$, then $M_1M_2M_3=O$ by combining \eqref{Eq;2}, the equalities in Lemmas \ref{L;Lemma2.13}, \ref{L;Lemma2.14}, \ref{L;Lemma2.15}, the proven claim, the assumptions $p=2$, and $2\mid n$. So the first statement follows. The second statement is from the first one. The desired lemma is proved.
\end{proof}
If $a\in \mathbb{N}$ and $b,c\in [1,a]$, recall the definitions of $E_{bc}(a)$ and $\delta_{bc}$ in Subsection \ref{S;Subsection4}.
We now determine the algebraic structure of $E_4^*TE_4^*$ for the case $n\equiv 2\pmod p$.
\begin{cor}\label{C;Corollary4.8}
If $p\neq 2$ and $n\equiv 2\pmod p$, $E_4^*TE_4^*/\mathrm{Rad}E_4^*TE_4^*\cong M_3(\F)\oplus M_1(\F)$ as $\F$-algebras and $\mathrm{Rad}E_4^*TE_4^*=U_2$.
\end{cor}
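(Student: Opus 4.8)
The plan is to imitate the proof of Corollary~\ref{C;Corollary3.6}. By Lemma~\ref{L;Lemma4.7} the ideal $U_2$ is nilpotent and contained in $\mathrm{Rad}E_4^*TE_4^*$, so it suffices to prove that the quotient $\mathcal{A}:=E_4^*TE_4^*/U_2$ is isomorphic to $M_3(\F)\oplus M_1(\F)$; the right-hand side being semisimple, the containment $U_2\subseteq\mathrm{Rad}E_4^*TE_4^*$ then forces $\mathrm{Rad}E_4^*TE_4^*=U_2$. Since $p\neq 2$ we have $C_2=\{E_4^*JE_4^*\}$, so Lemma~\ref{L;Lemma4.2} gives $\dim\mathcal{A}=10$, with basis the images of $E_4^*$, of $E_4^*A_gE_4^*$ ($g\in[1,3]$), and of the six triple products $E_4^*A_aE_b^*A_cE_4^*$ ($\{a,b,c\}=[1,3]$); note $E_4^*$ is the identity of $E_4^*TE_4^*$ and $E_4^*JE_4^*\equiv O$ in $\mathcal{A}$, so that $\dim(M_3(\F)\oplus M_1(\F))=10$ already matches.

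First I would record the full multiplication table of $\mathcal{A}$ by specialising Lemmas~\ref{L;Lemma2.13}, \ref{L;Lemma2.14}, and \ref{L;Lemma2.15} to $n\equiv 2\pmod p$; the decisive simplifications are $\overline{n-2}=\overline 0$ (which kills the right-hand sides of Lemma~\ref{L;Lemma2.15}(v),(vi)) together with $\overline{n-3}=-\overline1$ and $\overline{n-4}=-\overline 2$. Abbreviating $N_g:=E_4^*+E_4^*A_gE_4^*$, Lemma~\ref{L;Lemma2.13}(i) yields $N_g^2\equiv O$ and Lemma~\ref{L;Lemma2.13}(ii) yields $N_gN_h\equiv -E_4^*A_gE_i^*A_hE_4^*$ for $\{g,h,i\}=[1,3]$, so all six triple products are quadratic words in the $N_g$ modulo $U_2$.

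Next I would split off the one-dimensional block. Put $W:=\sum_{g=1}^3E_4^*A_gE_4^*-\sum_{\{a,b,c\}=[1,3]}E_4^*A_aE_b^*A_cE_4^*$. A direct check from the table shows that $W$ is central in $\mathcal{A}$ and satisfies $(W+E_4^*)(W+\overline 3E_4^*)\equiv O$; because $p\neq 2$ the roots $-\overline1,-\overline3$ are distinct, so $z_0:=-\overline2^{-1}(W+E_4^*)$ is a central idempotent with $z_0\cdot E_4^*A_gE_4^*\equiv -z_0$ and $z_0\cdot E_4^*A_aE_b^*A_cE_4^*\equiv O$, whence $z_0\mathcal{A}=\F z_0\cong M_1(\F)$. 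For the complementary block I would set, for $\{g,h,k\}=[1,3]$,
\[
\tilde F_{gg}:=\overline2^{-1}\big(N_g-E_4^*A_gE_h^*A_kE_4^*-E_4^*A_gE_k^*A_hE_4^*\big),
\]
and define off-diagonal units $\tilde F_{gh}$ ($g\neq h$) as suitably signed multiples of $N_g\tilde F_{hh}$. Using the table one verifies $\tilde F_{gg}^2=\tilde F_{gg}$, $\tilde F_{gg}\tilde F_{hh}=O$ for $g\neq h$, $\sum_{g}\tilde F_{gg}=E_4^*-z_0$, and the full matrix-unit relations $\tilde F_{ab}\tilde F_{cd}=\delta_{bc}\tilde F_{ad}$, together with $z_0\tilde F_{ab}=O$. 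The ten elements $\{\tilde F_{ab}:a,b\in[1,3]\}\cup\{z_0\}$ are then linearly independent (matrix units are independent and $z_0$ is orthogonal to them), hence an $\F$-basis of $\mathcal{A}$ realising the structure constants of $M_3(\F)\oplus M_1(\F)$; this gives the isomorphism and therefore $\mathrm{Rad}E_4^*TE_4^*=U_2$.

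The main obstacle is the bookkeeping behind the matrix-unit relations, especially $\tilde F_{ab}\tilde F_{cd}=\delta_{bc}\tilde F_{ad}$ for off-diagonal indices: each such product must be reduced through Lemmas~\ref{L;Lemma2.13}--\ref{L;Lemma2.15}, keeping careful track of the three distinct roles played by the indices of a triple product $E_4^*A_aE_b^*A_cE_4^*$, and of the signs needed so that the products $N_g\tilde F_{hh}$ close up into a genuine (untwisted) system of matrix units. The one conceptual point, rather than purely computational, is the verification that $W$ has two distinct eigenvalues and that it is the idempotent $z_0=-\overline2^{-1}(W+E_4^*)$, not its complement, that cuts out the one-dimensional summand; this is exactly where the hypothesis $p\neq 2$ enters beyond guaranteeing that $\overline2$ is invertible.
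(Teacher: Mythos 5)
Your proposal is correct and follows essentially the same route as the paper: reduce to the quotient $E_4^*TE_4^*/U_2$ via Lemma \ref{L;Lemma4.7}, use the basis from Lemma \ref{L;Lemma4.2} together with the product formulas of Lemmas \ref{L;Lemma2.13}--\ref{L;Lemma2.15} specialised to $n\equiv 2\pmod p$, and exhibit a $3\times 3$ system of matrix units plus a complementary central idempotent. Your diagonal idempotents $\tilde F_{gg}$ are just the transposes of the paper's $M_{gg}$ (so orthogonality and idempotency transfer via the anti-automorphism $M\mapsto M^T$), and generating the off-diagonal units from $N_g\tilde F_{hh}$ and isolating the rank-one block through the central element $W$ are presentational variants of the paper's explicit formulas for $M_{gh}$ and $N=E_4^*-\sum_j M_{jj}$.
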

\begin{proof}
By Lemma \ref{L;Lemma4.7}, we just check that $E_4^*TE_4^*/U_2\cong M_3(\F)\oplus M_1(\F)$ as $\F$-algebras. Let $C=\!=\!\{M+U_2: M\!\in\!B_6\cup\{E_4^*, E_4^*A_1E_4^*, E_4^*A_2E_4^*, E_4^*A_3E_4^*\}\}$. As $p\neq 2$, define
\begin{align*}
& M_{11}=\overline{2}^{-1}(E_4^*+E_4^*A_1E_4^*-E_4^*A_2E_3^*A_1E_4^*-E_4^*A_3E_2^*A_1E_4^*)+U_2,\\
& M_{12}=\overline{2}^{-1}(E_4^*A_1E_3^*A_2E_4^*-E_4^*A_3E_1^*A_2E_4^*-E_4^*-E_4^*A_2E_4^*)+U_2,\\
& M_{13}=\overline{2}^{-1}(E_4^*A_1E_2^*A_3E_4^*-E_4^*A_2E_1^*A_3E_4^*-E_4^*-E_4^*A_3E_4^*)+U_2,\\
& M_{21}=\overline{2}^{-1}(E_4^*A_2E_3^*A_1E_4^*-E_4^*A_3E_2^*A_1E_4^*-E_4^*-E_4^*A_1E_4^*)+U_2,\\
& M_{22}=\overline{2}^{-1}(E_4^*+E_4^*A_2E_4^*-E_4^*A_1E_3^*A_2E_4^*-E_4^*A_3E_1^*A_2E_4^*)+U_2,\\
& M_{23}=\overline{2}^{-1}(E_4^*A_2E_1^*A_3E_4^*-E_4^*A_1E_2^*A_3E_4^*-E_4^*-E_4^*A_3E_4^*)+U_2,\\
& M_{31}=\overline{2}^{-1}(E_4^*A_3E_2^*A_1E_4^*-E_4^*A_2E_3^*A_1E_4^*-E_4^*-E_4^*A_1E_4^*)+U_2,\\
& M_{32}=\overline{2}^{-1}(E_4^*A_3E_1^*A_2E_4^*-E_4^*A_1E_3^*A_2E_4^*-E_4^*-E_4^*A_2E_4^*)+U_2,\\
& M_{33}=\overline{2}^{-1}(E_4^*+E_4^*A_3E_4^*-E_4^*A_1E_2^*A_3E_4^*-E_4^*A_2E_1^*A_3E_4^*)+U_2.
\end{align*}
Put $N=E_4^*-\sum_{j=1}^3M_{jj}$ and $D=\{M_{ab}: a, b\in [1,3]\}\cup\{N\}$. Then each element in $D$ is an $\F$-linear combination of the elements in $C$. Furthermore, each element in $C$ is an $\F$-linear combination of the elements in $D$. As $|C|=|D|$ and Lemma \ref{L;Lemma4.2} holds, $D$ is an $\F$-basis of $E_4^*TE_4^*/U_2$. Hence $E_4^*TE_4^*/U_2=\langle\{M_{ab}: a, b\in [1,3]\}\rangle_\F\oplus\langle\{N\}\rangle_\F$. By combining \eqref{Eq;2}, the equalities in Lemmas \ref{L;Lemma2.13}, \ref{L;Lemma2.14}, \ref{L;Lemma2.15}, and the assumption $n\equiv 2\pmod p$, notice that $M_{gh}M_{rs}=\delta_{hr}M_{gs}$, $M_{gh}N=NM_{gh}=O+U_2$, $N^2=N$ for any $g, h, r, s\in [1,3]$. So both $\langle\{M_{ab}: a, b\in [1,3]\}\rangle_\F$ and $\langle\{N\}\rangle_\F$ are two-sided ideals of $E_4^*TE_4^*/U_2$. Observe that $\langle\{N\}\rangle_\F\cong M_1(\F)$ as $\F$-algebras. Moreover, there is an $\F$-algebra isomorphism from $\langle\{M_{ab}: a, b\in [1,3]\}\rangle_\F$ to $M_3(\F)$ that sends $M_{gh}$ to $E_{gh}(3)$ for any $g, h\in [1,3]$. The desired corollary thus follows.
\end{proof}
\begin{cor}\label{C;Corollary4.9}
If $p=2$, $E_4^*TE_4^*/\mathrm{Rad}E_4^*TE_4^*\cong M_2(\F)\oplus M_1(\F)$ as $\F$-algebras and $\mathrm{Rad}E_4^*TE_4^*=U_2$.
\end{cor}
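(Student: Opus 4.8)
The plan is to follow the template of Corollary \ref{C;Corollary4.8}: since Lemma \ref{L;Lemma4.7} shows that $U_2\subseteq\mathrm{Rad}E_4^*TE_4^*$ and that $U_2$ is nilpotent, it suffices to produce an $\F$-algebra isomorphism $E_4^*TE_4^*/U_2\cong M_2(\F)\oplus M_1(\F)$; once the quotient is known to be semisimple, the inclusion $U_2\subseteq\mathrm{Rad}E_4^*TE_4^*$ together with the nilpotency of $U_2$ forces $\mathrm{Rad}E_4^*TE_4^*=U_2$. Note first that the hypothesis $p=2$ already gives $2\mid n$ (as $G$ is an elementary abelian $2$-group with $n\geq 4$), so only the three subcases $n>8$, $n=8$, $n=4$ occur, governed by Lemmas \ref{L;Lemma4.3}, \ref{L;Lemma4.4}, \ref{L;Lemma4.5} respectively. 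A convenient observation is that in all three subcases the quotient $E_4^*TE_4^*/U_2$ is $5$-dimensional and is spanned by the images of the very same five elements $E_4^*A_3E_1^*A_2E_4^*$, $E_4^*A_2E_1^*A_3E_4^*$, $E_4^*A_1E_2^*A_3E_4^*+E_4^*A_2E_1^*A_3E_4^*$, $E_4^*A_1E_3^*A_2E_4^*+E_4^*A_3E_1^*A_2E_4^*$, and $E_4^*+E_4^*A_2E_1^*A_3E_4^*+E_4^*A_3E_1^*A_2E_4^*$; hence the three cases can be treated uniformly.

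Next I would set $M_{11}=E_4^*A_3E_1^*A_2E_4^*+U_2$, $M_{22}=E_4^*A_2E_1^*A_3E_4^*+U_2$, $M_{12}=E_4^*A_1E_2^*A_3E_4^*+E_4^*A_2E_1^*A_3E_4^*+U_2$, $M_{21}=E_4^*A_1E_3^*A_2E_4^*+E_4^*A_3E_1^*A_2E_4^*+U_2$, and $N=E_4^*+E_4^*A_2E_1^*A_3E_4^*+E_4^*A_3E_1^*A_2E_4^*+U_2$, and aim to show that $\{M_{ab}:a,b\in[1,2]\}$ together with $N$ realizes matrix units. The goal is the relations $M_{gh}M_{rs}=\delta_{hr}M_{gs}$ for all $g,h,r,s\in[1,2]$, together with $M_{gh}N=NM_{gh}=O+U_2$ and $N^2=N$. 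These identities would exhibit $E_4^*TE_4^*/U_2=\langle\{M_{ab}:a,b\in[1,2]\}\rangle_\F\oplus\langle\{N\}\rangle_\F$, and the map $M_{gh}\mapsto E_{gh}(2)$ would give the isomorphism onto $M_2(\F)\oplus M_1(\F)$.

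To verify these relations I would expand every product of two of the generators into products of the six elements $E_4^*A_gE_h^*A_iE_4^*$ with $\{g,h,i\}=[1,3]$ and evaluate each such product by the six identities of Lemma \ref{L;Lemma2.15} (with Lemmas \ref{L;Lemma2.13} and \ref{L;Lemma2.14} for the auxiliary terms). Three characteristic-$2$ simplifications do the work: $E_4^*JE_4^*\equiv O\pmod{U_2}$ (it lies in $C_2$, or is a combination of the other elements of $C_2$ when $n=8$, or spans $U_2$ when $n=4$), $\overline{n-2}=\overline{n-4}=\overline{0}$, and $\overline{n-3}=\overline{1}$. Thus the terms $E_4^*JE_4^*$ and $\overline{n-2}\,E_4^*A_gE_4^*$ appearing in Lemma \ref{L;Lemma2.15} vanish in the quotient, while the defining relations of $C_2$ (from Notation \ref{N;Notation4.1}, together with Lemma \ref{L;Lemma1.19} for $n=8$ and Lemma \ref{L;Lemma1.20} for $n=4$) let me rewrite each $E_4^*A_gE_4^*$ and each remaining element of $B_6$ in terms of the five basis elements. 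One checks that these reductions come out identical in all three subcases, so a single multiplication table suffices.

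The main obstacle is locating the right matrix units in characteristic $2$. In the $p\neq2$ computation (Corollary \ref{C;Corollary4.8}) one could divide by $\overline{2}$ to (anti)symmetrize the six elements of $B_6$ and read off a full $3\times 3$ block; with $\overline{2}=\overline{0}$ this device is unavailable and the block must collapse to a $2\times 2$ one. The delicate point is to track precisely how the vanishing of $E_4^*JE_4^*$ and of the $\overline{n-2}\,E_4^*A_gE_4^*$ terms modulo $U_2$ kills exactly the products that would otherwise obstruct the matrix-unit identities, forcing for instance $M_{12}M_{12}=O+U_2$ and $M_{12}M_{21}=M_{11}$, and to confirm that the asymmetric choices of basis elements (such as $E_4^*A_2E_1^*A_3E_4^*$ playing the role of the idempotent $M_{22}$) are genuinely orthogonal to $N$ after reduction. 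Once the multiplication table is pinned down, the direct-sum decomposition and the remaining bookkeeping are routine.
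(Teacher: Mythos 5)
Your proposal is correct and follows essentially the same route as the paper's proof: reduce via Lemma \ref{L;Lemma4.7} to the quotient $E_4^*TE_4^*/U_2$, take its five-element basis from Lemmas \ref{L;Lemma4.3}, \ref{L;Lemma4.4}, \ref{L;Lemma4.5}, and verify the matrix-unit and orthogonality relations using Lemma \ref{L;Lemma2.15} together with the characteristic-$2$ reductions. Your choice of $M_{11},M_{12},M_{21},M_{22}$ is exactly the paper's set of representatives with the indices $1$ and $2$ interchanged, which is immaterial, and your $N$ coincides with the paper's.
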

\begin{proof}
By Lemma \ref{L;Lemma4.7}, it is enough to check that $E_4^*TE_4^*/U_2\cong M_2(\F)\oplus M_1(\F)$ as $\F$-algebras. Set $M_{11}\!=\!E_4^*A_2E_1^*A_3E_4^*+U_2$, $M_{12}\!=\!E_4^*A_1E_3^*A_2E_4^*+E_4^*A_3E_1^*A_2E_4^*+U_2$, $M_{21}\!=\!E_4^*A_1E_2^*A_3E_4^*+E_4^*A_2E_1^*A_3E_4^*+U_2$, $M_{22}\!=\!E_4^*A_3E_1^*A_2E_4^*+U_2$. Moreover, put $N=E_4^*+M_{11}+M_{22}$. By combining Lemmas \ref{L;Lemma4.3}, \ref{L;Lemma4.4}, \ref{L;Lemma4.5}, $\{M_{ab}: a,b\in [1,2]\}\cup\{N\}$ is an $\F$-basis of $E_4^*TE_4^*/U_2$. Hence $E_4^*TE_4^*/U_2=\langle\{M_{ab}: a,b \in [1,2]\}\rangle_\F\oplus\langle\{N\}\rangle_\F$. By combining \eqref{Eq;2}, the equalities in Lemma \ref{L;Lemma2.15}, the assumptions $p=2$, and $2\mid n$, notice that $M_{gh}M_{rs}=\delta_{hr}M_{gs}$, $M_{gh}N=NM_{gh}=O+U_2$, $N^2=N$ for any $g, h, r,s\in [1,2]$. So both $\langle\{M_{ab}: a,b \in [1,2]\}\rangle_\F$ and $\langle \{N\}\rangle_\F$ are two-sided ideals of $E_4^*TE_4^*/U_2$. Moreover, $\langle \{N\}\rangle_\F\cong M_1(\F)$ as $\F$-algebras and there exists an $\F$-algebra isomorphism from $\langle\{M_{ab}: a,b\in [1,2]\}\rangle_\F$ to $M_2(\F)$ that sends $M_{gh}$ to $E_{gh}(2)$ for any $g, h\in [1,2]$. The desired corollary thus follows.
\end{proof}
We next present the following notation and some related properties.
\begin{nota}\label{N;Notation4.10}
Let $H_{2,1}=\{E_a^*JE_b^*:a,b\in [0,3]\}$. Let $H_{2,2}$ be the set containing exactly $E_2^*A_1E_3^*A_2E_4^*$, $E_3^*A_1E_2^*A_3E_4^*$, $E_4^*A_2E_1^*A_3E_4^*$, $E_4^*A_2E_3^*A_1E_2^*$, $E_4^*A_3E_1^*A_2E_4^*$, $E_4^*A_3E_2^*A_1E_3^*$, $E_4^*A_1E_2^*A_3E_4^*\!+\!E_4^*A_2E_1^*A_3E_4^*$, $E_4^*A_1E_3^*A_2E_4^*+E_4^*A_3E_1^*A_2E_4^*$, $E_1^*A_2E_4^*$, $E_1^*A_3E_4^*$,
$E_2^*A_3E_4^*$, $E_3^*A_2E_4^*$, $E_4^*A_2E_1^*$, $E_4^*A_2E_3^*$, $E_4^*A_3E_1^*$, $E_4^*A_3E_2^*$, $E_1^*+E_1^*JE_1^*$, $E_1^*A_2E_3^*\!+\!E_1^*JE_3^*$, $E_1^*A_3E_2^*+E_1^*JE_2^*$,
$E_2^*+E_2^*JE_2^*$, $E_2^*A_1E_3^*\!+\!E_2^*JE_3^*$, $E_2^*A_3E_1^*\!+\!E_2^*JE_1^*$, $E_3^*\!+\!E_3^*JE_3^*$, $E_3^*A_1E_2^*\!+\!E_3^*JE_2^*$, $E_3^*A_2E_1^*+E_3^*JE_1^*$. Set $K_2=(B_4\cup C_2\cup D_2)\setminus\{E_4^*JE_4^*\}$. If $n\equiv2\pmod p$, notice that $B_4=\{E_a^*JE_4^*:a\in [0,4]\}\cup\{E_4^*JE_a^*:a\in [0,3]\}$ as $k_j=p_{jj}^0$ for any $j\in [0,4]$ and Theorem \ref{T;Insectionnumbers} (i) holds.
\end{nota}
\begin{lem}\label{L;Lemma4.11}
Assume that $p=2$ and $n>8$. Then the set containing precisely $E_4^*+E_4^*A_2E_1^*A_3E_4^*+E_4^*A_3E_1^*A_2E_4^*$ and the elements in $B_4\cup C_2\cup D_2\cup H_{2,1}\cup H_{2,2}$ is an $\F$-basis of $T$. In particular, $B_4\cup C_2\cup D_2$ is an $\F$-basis of $V_2$.
\end{lem}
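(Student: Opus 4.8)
The plan is to follow the template of the proof of Lemma~\ref{L;Lemma3.8}: I would show that the proposed set and the known basis $B$ of $T$ are mutually $\F$-spanning, and then combine a cardinality count with Corollary~\ref{C;Corollary2.16} to promote the spanning statement to a basis statement.

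First I would record that, since $n>8$, Notation~\ref{N;Notation1.21} gives $B=B_1\cup B_2\cup B_5\cup B_6$, and that $n\equiv 2\pmod p$ holds automatically (here $p=2$ and $n$ is a power of $2$), so by Notation~\ref{N;Notation4.10} one has $B_4=\{E_a^*JE_4^*: a\in[0,4]\}\cup\{E_4^*JE_a^*: a\in[0,3]\}$ and hence $B_2=B_4\cup H_{2,1}$. One inclusion is then immediate: every element of the proposed set lies in $T=\langle B\rangle_\F$ by Corollary~\ref{C;Corollary2.16}, so it is an $\F$-linear combination of the elements of $B$.

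The substantive direction is to write each element of $B$ as an $\F$-linear combination of the proposed set, using only \eqref{Eq;2} and \eqref{Eq;3}. The part $B_2$ is already present, since $B_2=B_4\cup H_{2,1}$. For the elements $E_a^*A_bE_c^*$ of $B_1$ with $a,c\in[1,3]$, I would subtract the appropriate $J$-term of $H_{2,1}$ from the corrected generators $E_a^*+E_a^*JE_a^*$ and $E_a^*A_bE_c^*+E_a^*JE_c^*$ of $H_{2,2}$. The mixed $B_1$ elements and the members of $B_5$ (each having one or two indices equal to $4$) are then recovered from the singletons of $H_{2,2}$ together with the three-term generators of $D_2$, four of the six members of $B_5$ already appearing verbatim in $H_{2,2}$. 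Finally, each member of $B_6$ and each $E_4^*A_aE_4^*$ is reconstructed by solving the finite linear system over $\F$ furnished by the designated singleton $E_4^*+E_4^*A_2E_1^*A_3E_4^*+E_4^*A_3E_1^*A_2E_4^*$, the two-term sums and singletons of $H_{2,2}$, and the four-term elements of $C_2$, all of which are by construction $\F$-combinations of $B_6$ and $E_4^*$.

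The hardest part will be this last reconstruction together with the exact cardinality count. Because $C_2$, $D_2$, and several entries of $H_{2,2}$ package sums of $B_5$ and $B_6$ matrices rather than individual ones, I would need to check both that these packaged combinations remain $\F$-linearly independent and that the proposed set has cardinality exactly $|B|$; the pervasive use of $p=2$, which collapses all sign distinctions, is what makes the inversion go through cleanly. Once both spanning directions and this equality are in hand, Corollary~\ref{C;Corollary2.16} forces the proposed set to be an $\F$-basis of $T$, giving the first statement. For the ``in particular'' claim, $B_4\cup C_2\cup D_2$ is a subset of this basis and hence $\F$-linearly independent, and it spans $V_2$ by the definition $V_2=\langle B_4\cup C_2\cup D_2\rangle_\F$; therefore it is an $\F$-basis of $V_2$.
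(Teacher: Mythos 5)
Your proposal is correct and follows essentially the same route as the paper's proof: both directions of mutual $\F$-spanning between the proposed set and $B=B_1\cup B_2\cup B_5\cup B_6$, a cardinality count, and Corollary \ref{C;Corollary2.16} to conclude, with the ``in particular'' claim following since $B_4\cup C_2\cup D_2$ is a subset of the resulting basis that spans $V_2$ by definition. The extra detail you supply on reconstructing the elements of $B_1$, $B_5$, and $B_6$ from $H_{2,1}$, $H_{2,2}$, $D_2$, and $C_2$ is accurate and merely makes explicit what the paper leaves to the reader.
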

\begin{proof}
Let $C$ be the set containing precisely $E_4^*+E_4^*A_2E_1^*A_3E_4^*+E_4^*A_3E_1^*A_2E_4^*$ and the elements in $B_4\cup C_2\cup D_2\cup H_{2,1}\cup H_{2,2}$. By Corollary \ref{C;Corollary2.16}, each element in $C$ is an $\F$-linear combination of the elements in $B$. As $p=2$ and $n>8$, notice that $B=B_1\cup B_2\cup B_5\cup B_6$.
By combining \eqref{Eq;2}, \eqref{Eq;3}, Lemma \ref{L;Tripleproducts} (iii), Theorem \ref{T;Insectionnumbers} (i), (ii), and (iii), each element in $B$ is also an $\F$-linear combination of the elements in $C$. As $|B|=|C|$ and Corollary \ref{C;Corollary2.16} holds, the first statement thus follows. As $V_2=\langle B_4\cup C_2\cup D_2\rangle_\F$, the second statement is from the first one. We are done.
\end{proof}
\begin{lem}\label{L;Lemma4.12}
Assume that $p=2$ and $n=8$. Then the set containing precisely $E_4^*+E_4^*A_2E_1^*A_3E_4^*+E_4^*A_3E_1^*A_2E_4^*$ and the elements in $K_2\cup H_{2,1}\cup H_{2,2}$ is an $\F$-basis of $T$. In particular, $K_2$ is an $\F$-basis of $V_2$.
\end{lem}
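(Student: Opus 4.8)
The plan is to follow the proof of Lemma \ref{L;Lemma4.11} almost verbatim, changing only the two places where the hypotheses $p=2$ and $n=8$ alter the relevant spanning sets. Write $C$ for the set containing precisely $E_4^*+E_4^*A_2E_1^*A_3E_4^*+E_4^*A_3E_1^*A_2E_4^*$ together with the elements of $K_2\cup H_{2,1}\cup H_{2,2}$. Since $C\subseteq T$, Corollary \ref{C;Corollary2.16} immediately gives that every element of $C$ is an $\F$-linear combination of the elements of $B$. The first point of departure from Lemma \ref{L;Lemma4.11} is that here, by Notation \ref{N;Notation1.21}, $B=(B_1\cup B_2\cup B_5\cup B_6)\setminus\{E_4^*A_1E_2^*A_3E_4^*\}$ has one fewer element than in the case $n>8$; correspondingly $C$ is assembled from $K_2=(B_4\cup C_2\cup D_2)\setminus\{E_4^*JE_4^*\}$ rather than from $B_4\cup C_2\cup D_2$, so the cardinality bookkeeping $|B|=|C|$ is preserved.

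For the reverse containment I would show that every element of $B$ is an $\F$-linear combination of the elements of $C$ by combining \eqref{Eq;2}, \eqref{Eq;3}, Lemma \ref{L;Tripleproducts} (iii), and Theorem \ref{T;Insectionnumbers}, exactly as in Lemma \ref{L;Lemma4.11}. The one genuinely new ingredient is that $B_2\subseteq B$ contains $E_4^*JE_4^*$, which is no longer a member of the generating set $K_2\cup H_{2,1}$ underlying $C$. To recover it I would use $J=\sum_{j=0}^4A_j$ and the assumption $p=2$ to write $E_4^*JE_4^*=E_4^*+E_4^*A_1E_4^*+E_4^*A_2E_4^*+E_4^*A_3E_4^*+E_4^*A_4E_4^*$, and then observe that the sum of the three elements of $C_2\setminus\{E_4^*JE_4^*\}$ of the form $E_4^*+E_4^*A_jE_4^*+\cdots$ reproduces this expression: in characteristic $2$ the three copies of $E_4^*$ collapse to one, while the six residual triple products $E_4^*A_gE_h^*A_iE_4^*$ with $\{g,h,i\}=[1,3]$ sum to $E_4^*A_4E_4^*$ by Lemma \ref{L;Lemma1.19}. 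Hence $E_4^*JE_4^*\in\langle K_2\rangle_\F\subseteq\langle C\rangle_\F$, and all remaining elements of $B$ (note that the dropped generator $E_4^*A_1E_2^*A_3E_4^*$ no longer needs to be expressed) are handled as before. With both containments in hand and $|B|=|C|$, the fact that $B$ is an $\F$-basis of $T$ (Corollary \ref{C;Corollary2.16}) forces $C$ to be an $\F$-basis of $T$, which is the first statement.

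For the ``in particular'' clause, the first statement shows $C$ is $\F$-linearly independent, so its subset $K_2$ is too. The computation just described shows $E_4^*JE_4^*\in\langle K_2\rangle_\F$, whence $\langle K_2\rangle_\F=\langle B_4\cup C_2\cup D_2\rangle_\F=V_2$; thus $K_2$ is a linearly independent spanning set of $V_2$, i.e.\ an $\F$-basis of $V_2$.

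I expect the main obstacle to be the consistency check at the heart of the reverse containment: one must verify that dropping $E_4^*A_1E_2^*A_3E_4^*$ from $B$ and dropping $E_4^*JE_4^*$ from the generators of $C$ are compatible, i.e.\ that $E_4^*JE_4^*$ can be recovered from the retained elements precisely because the six $E_4^*$-sandwiched triple products are no longer $\F$-linearly independent. Everything here rests on the characteristic-$2$, $n=8$ identity of Lemma \ref{L;Lemma1.19}; once that identity is in force, the rest is the same routine linear-algebra reduction already completed in Lemma \ref{L;Lemma4.11}.
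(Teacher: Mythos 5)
Your proposal is correct and follows essentially the same route as the paper's proof: both establish the two spanning containments between $C$ and $B$, invoke Lemma \ref{L;Lemma1.19} to recover $E_4^*JE_4^*$ from $K_2$, and conclude from the cardinality count $|B|=|C|$ together with Corollary \ref{C;Corollary2.16}. Your explicit computation expressing $E_4^*JE_4^*$ as the sum of the three elements of $C_2\setminus\{E_4^*JE_4^*\}$ of the form $E_4^*+E_4^*A_jE_4^*+\cdots$ is exactly the content the paper leaves implicit in its citation of Lemma \ref{L;Lemma1.19}.
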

\begin{proof}
Let $C$ be the set containing precisely $E_4^*+E_4^*A_2E_1^*A_3E_4^*+E_4^*A_3E_1^*A_2E_4^*$ and the elements in $K_2\cup H_{2,1}\cup H_{2,2}$. By Corollary \ref{C;Corollary2.16}, notice that each element in $C$ is an $\F$-linear combination of the elements in $B$. As $p=2$ and $n=8$, notice that $B=(B_1\cup B_2\cup B_5\cup B_6)\setminus\{E_4^*A_1E_2^*A_3E_4^*\}$. By combining \eqref{Eq;2}, \eqref{Eq;3}, Lemmas \ref{L;Lemma1.19}, \ref{L;Tripleproducts} (iii), Theorem \ref{T;Insectionnumbers} (i), (ii), and (iii), each element in $B$ is also an $\F$-linear combination of the elements in $C$. As $|B|=|C|$ and Corollary \ref{C;Corollary2.16} holds, the first statement thus follows. So $K_2$ is an $\F$-linearly independent subset of $T$. By Lemma \ref{L;Lemma1.19}, each element in $B_4\cup C_2\cup D_2$ is an $\F$-linear combination of the elements in $K_2$. The second statement follows as $K_2\subseteq\langle B_4\cup C_2\cup D_2\rangle_\F=V_2$. We are done.
\end{proof}
\begin{lem}\label{L;Lemma4.13}
Assume that $p=2$ and $n=4$. Then the set containing precisely $E_4^*+E_4^*A_2E_1^*A_3E_4^*+E_4^*A_3E_1^*A_2E_4^*$ and the elements in $B_4\cup H_{2,1}\cup H_{2,2}$ is an $\F$-basis of $T$. In particular, $B_4$ is an $\F$-basis of $V_2$.
\end{lem}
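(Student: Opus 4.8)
The plan is to reuse the change-of-basis template of Lemmas \ref{L;Lemma4.11} and \ref{L;Lemma4.12}, but now measured against the $\F$-basis of $T$ that is valid for $n=4$. Write $C$ for the set in the statement, namely $\{E_4^*+E_4^*A_2E_1^*A_3E_4^*+E_4^*A_3E_1^*A_2E_4^*\}\cup B_4\cup H_{2,1}\cup H_{2,2}$. By Notation \ref{N;Notation1.21}, for $n=4$ we have $B=B_1\cup B_2\cup\{E_4^*A_1E_2^*A_3E_4^*\}$, and by Corollary \ref{C;Corollary2.16} this $B$ is an $\F$-basis of $T$. Since $C\subseteq T$, every element of $C$ is automatically an $\F$-linear combination of the elements of $B$, so the substantive task is the reverse expansion: each element of $B$ is an $\F$-linear combination of the elements of $C$. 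Combined with the count $|B|=|C|$, mutual containment of spans then forces $\langle C\rangle_\F=\langle B\rangle_\F=T$, and a spanning set of cardinality $\dim T$ is a basis.

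For the reverse expansion I would proceed block by block. The all-ones matrices $B_2=\{E_a^*JE_b^*\}$ split as $H_{2,1}\cup B_4$: indeed $H_{2,1}=\{E_a^*JE_b^*:a,b\in[0,3]\}$, while by Notation \ref{N;Notation4.10} (using $k_0,k_1,k_2,k_3$ odd and $k_4$ even, so that $p\mid k_ak_b$ exactly when $4\in\{a,b\}$) the set $B_4$ collects precisely the blocks with an index equal to $4$; hence $B_2\subseteq C$. The single matrices of $B_1$ are recovered from the pairs $E_a^*A_bE_c^*+E_a^*JE_c^*$ and $E_a^*+E_a^*JE_a^*$ occurring in $H_{2,2}$ by subtracting the relevant member of $H_{2,1}$ or $B_4$, and the distinguished vector $E_4^*A_1E_2^*A_3E_4^*$ is recovered in characteristic $2$ from the two $H_{2,2}$-entries $E_4^*A_1E_2^*A_3E_4^*+E_4^*A_2E_1^*A_3E_4^*$ and $E_4^*A_2E_1^*A_3E_4^*$. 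Throughout, the $B_5$- and $B_6$-type members of $H_{2,2}$ (which do \emph{not} lie in $B$ when $n=4$) are converted using the Klein-four identities of Lemmas \ref{L;Lemma1.5} and \ref{L;Lemma1.20}, supplemented by \eqref{Eq;2}, \eqref{Eq;3}, Lemma \ref{L;Tripleproducts} (iii), and Theorem \ref{T;Insectionnumbers}. This yields the first statement.

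For the \emph{in particular} claim, note first that $B_4\subseteq C$, so the $\F$-linear independence of $B_4$ follows from that of the basis $C$; it then remains to show $\langle B_4\rangle_\F=V_2=\langle B_4\cup C_2\cup D_2\rangle_\F$, i.e. that $C_2,D_2\subseteq\langle B_4\rangle_\F$. This is precisely where characteristic $2$ does the work. Applying Lemma \ref{L;Lemma1.5} to a $D_2$ generator collapses, for instance, $E_1^*A_2E_4^*+E_1^*A_3E_4^*+E_1^*A_2E_3^*A_1E_4^*$ to $E_1^*(A_2+A_3+A_4)E_4^*=E_1^*JE_4^*\in B_4$, after using $E_1^*A_1E_4^*=O$ and $E_1^*E_4^*=O$ from Lemma \ref{L;Tripleproducts} (iii), Theorem \ref{T;Insectionnumbers}, and \eqref{Eq;2}. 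Applying the parts of Lemma \ref{L;Lemma1.20} to a $C_2$ generator, together with $\overline{2}=0$ and the pairwise cancellation of repeated summands forced by $p=2$, collapses each one to $E_4^*JE_4^*\in B_4$ (one checks, e.g., that $E_4^*A_2E_3^*A_1E_4^*+E_4^*A_3E_2^*A_1E_4^*=E_4^*JE_4^*+E_4^*+E_4^*A_1E_4^*$, whence the corresponding $C_2$-vector equals $E_4^*JE_4^*$). Hence $B_4$ spans $V_2$, and the proof is complete.

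The obstacle I anticipate is organizational rather than conceptual: there is no single difficult estimate, but one must carefully track the numerous Klein-four rewrites for the $B_5$/$B_6$-type entries of $H_{2,2}$ and confirm that every $\overline{2}$-coefficient and every duplicated summand genuinely vanishes in characteristic $2$, so that both the expansion of $B$ in terms of $C$ and the collapses of $C_2,D_2$ land \emph{exactly} inside $\langle B_4\rangle_\F$. The genuine content, however, is already isolated in the previously proven Lemmas \ref{L;Lemma1.5} and \ref{L;Lemma1.20}, so the remaining work is bookkeeping plus the cardinality check $|B|=|C|$.
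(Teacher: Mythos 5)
Your proposal is correct and follows essentially the same route as the paper: mutual expansion between the proposed set $C$ and the known basis $B=B_1\cup B_2\cup\{E_4^*A_1E_2^*A_3E_4^*\}$ via the Klein-four identities of Lemmas \ref{L;Lemma1.5} and \ref{L;Lemma1.20}, the count $|B|=|C|$, and the characteristic-$2$ collapse of $C_2\cup D_2$ into $\langle B_4\rangle_\F$ (which the paper records as the set equality $B_4\cup C_2\cup D_2=B_4\cup\{O\}$). One small correction: the last two generators of $C_2$ collapse to $O$ rather than to $E_4^*JE_4^*$, but since $O\in\langle B_4\rangle_\F$ this does not affect your conclusion.
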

\begin{proof}
Let $C$ be the set containing precisely $E_4^*+E_4^*A_2E_1^*A_3E_4^*+E_4^*A_3E_1^*A_2E_4^*$ and the elements in $B_4\cup H_{2,1}\cup H_{2,2}$. By Corollary \ref{C;Corollary2.16}, each element in $C$ is an $\F$-linear combination of the elements in $B$. Notice that $B=B_1\cup B_2\cup\{E_4^*A_1E_2^*A_3E_4^*\}$ as $p=2$ and $n=4$. By combining \eqref{Eq;2}, \eqref{Eq;3}, the assumption $p=2$, the equalities in Lemmas \ref{L;Lemma1.5}, \ref{L;Lemma1.20}, Lemma \ref{L;Tripleproducts} (iii), each element in $B$ is also an $\F$-linear combination of the elements in $C$. Since $|B|=|C|$ and Corollary \ref{C;Corollary2.16} holds, the first statement follows. Notice that $B_4\cup C_2\cup D_2=B_4\cup\{O\}$ as $p=2$ and the equalities in \eqref{Eq;3}, Lemmas \ref{L;Lemma1.5}, \ref{L;Lemma1.20}  hold. The second statement is from the first one. We are done.
\end{proof}
The following results describe $T$ for the case $n\equiv 2\pmod p$.
\begin{lem}\label{L;Lemma4.14}
Assume that $n\equiv 2\pmod p$. Then $V_2$ is a two-sided ideal of $T$.
\end{lem}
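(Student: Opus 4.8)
The plan is to follow the scheme of the proof of Lemma~\ref{L;Lemma3.11}. First I would record that $V_2\subseteq T$ and that $T$ is stable under transposition, so $M^T\in T$ whenever $M\in T$. Next I would verify that $V_2$ is itself transpose-stable: by \eqref{Eq;1} the set $B_4$ is stable under $M\mapsto M^T$ because the defining condition $p\mid k_ak_b$ is symmetric in $a,b$; the set $D_2$ is transpose-stable by construction; and in the case $p=2$ one checks with \eqref{Eq;1} that $\langle C_2\rangle_\F$ is transpose-stable (three of the generators are self-transpose, and for each of the remaining three the transpose differs from the original by an $\F$-combination of generators of $C_2$). Granting this, it suffices to prove $MN\in V_2$ for every $M\in B_1\cup B_2$ and every $N$ in the chosen spanning set of $V_2$: the set $\{t\in T: tV_2\subseteq V_2\}$ is then a unital $\F$-subalgebra of $T$ containing $B_1\cup B_2$, hence equal to $T$ by Lemma~\ref{L;Lemma3.10}, and the inclusion $V_2T\subseteq V_2$ follows from $TV_2\subseteq V_2$ by taking transposes.

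I would then dispose of the factors coming from $B_4$ and $B_2$. Because $n\equiv 2\pmod p$ forces $\overline{n-2}=0$ and $p\mid k_4$ (while $k_0,k_1,k_2,k_3$ remain prime to $p$), every product $MN$ with $M\in B_1\cup B_2$ and $N\in B_4$ collapses, via \eqref{Eq;2}, \eqref{Eq;4}, and Lemma~\ref{L;Tripleproducts}(i), to an $\F$-multiple of some $E_a^*JE_e^*$; the vanishing of the intersection numbers $\overline{p_{4b}^a}$ for $a\in[1,3]$ together with $p\mid k_4$ guarantees that each surviving term carries a $4$-index and therefore lies in $B_4\subseteq V_2$. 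For $p\neq 2$ this already finishes the argument, since $V_2=\langle B_4\rangle_\F$.

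For the remaining case $p=2$, in which $V_2=\langle B_4\cup C_2\cup D_2\rangle_\F$, I would treat $M\in B_1$ against the generators of $C_2$ and of $D_2$ separately. Since every generator of $C_2$ is sandwiched between two copies of $E_4^*$, the product $MN$ is nonzero only when $M$ ends in $E_4^*$, and I would expand it with \eqref{Eq;2} and the multiple-$E_4^*$ product identities of Lemmas~\ref{L;Lemma2.13}, \ref{L;Lemma2.14}, and \ref{L;Lemma2.15}, using $2\mid n$ to annihilate the coefficients $\overline{n-2}$ and to simplify the surviving $\overline{n-3}$ and $\overline{n-4}$, so that the result regroups into $C_2$ together with $B_4$. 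For $N\in D_2$ I would instead invoke \eqref{Eq;2} and the single- and double-$E_4^*$ identities of Lemmas~\ref{L;Lemma2.3} and \ref{L;Lemma2.5}--\ref{L;Lemma2.12}, again exploiting $\overline{n-2}=0$ to land back in $V_2$.

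The step I expect to be the main obstacle is precisely this last characteristic-two bookkeeping: the products $MN$ with $N\in C_2\cup D_2$ unfold into long $\F$-combinations of terms $E_4^*A_aE_b^*A_cE_4^*$, $E_a^*A_bE_4^*$, and $E_a^*JE_b^*$, and one must confirm that, after the cancellations forced by $2\mid n$, they reassemble exactly into the prescribed generators of $V_2$ rather than leaking into $B_5\cup B_6$. Organising the verification according to the number and position of the $E_4^*$'s occurring in $M$, as is done implicitly in Lemma~\ref{L;Lemma3.11}, should keep it manageable.
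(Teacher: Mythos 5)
Your proposal is correct and follows essentially the same route as the paper's proof: reduce, via Lemma~\ref{L;Lemma3.10} and transpose-stability of $V_2$, to checking $MN\in V_2$ for $M\in B_1\cup B_2$ and $N$ a generator, settle the $p\neq 2$ case (and the $B_2$/$B_4$ factors in general) using Lemma~\ref{L;Tripleproducts}, and for $p=2$ expand the products against $C_2$ and $D_2$ with the Part-II identities, using $2\mid n$ to kill the $\overline{n-2}$ coefficients. The only small slip is in your list of identities for the $C_2$ case: since $M\in B_1$ is a triple product $E_a^*A_bE_4^*$, the relevant lemmas are \ref{L;Lemma2.8} and \ref{L;Lemma2.12} (for $a\in[1,3]$) together with \ref{L;Lemma2.13} and \ref{L;Lemma2.14} (for $a=4$), rather than Lemma~\ref{L;Lemma2.15}, which governs products of two elements of $B_6$ and is not needed here.
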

\begin{proof}
Notice that $V_2\subseteq T$. If $p\neq 2$, then $V_2=\langle B_4\rangle_\F$ and the desired lemma follows from Lemma \ref{L;Tripleproducts} (v). Assume that $p=2$ in this proof.
By the definition of $T$, notice that $M^T\in T$ if $M\in T$. Note that $B_4=\{E_a^*JE_4^*:a\in [0,4]\}\cup\{E_4^*JE_a^*:a\in [0,3]\}$. If $N\in V_2$, then $N^T\in V_2$ as $V_2=\langle B_4\cup C_2\cup D_2\rangle_\F$ and \eqref{Eq;1} holds. By Lemma \ref{L;Lemma3.10}, the desired lemma follows if $MN\in V_2$ for any $M\in B_1\cup B_2$ and $N\in B_4\cup C_2\cup D_2$. If $M\!\in\! B_2$ or $N\!\in\! B_4$, then $MN\!\in \!V_2$ by combining \eqref{Eq;2}, \eqref{Eq;3}, and Lemma \ref{L;Tripleproducts} (i).

If $M\in B_1$ and $N\in C_2\setminus\{E_4^*JE_4^*\}$, notice that $MN\in V_2$ by combining \eqref{Eq;2}, the equalities in Lemmas \ref{L;Lemma2.8}, \ref{L;Lemma2.12}, \ref{L;Lemma2.13},
\ref{L;Lemma2.14}, the assumptions $p=2$, and $2\mid n$. If $M\!\in\! B_1$ and $N\!\in\! D_2$, notice that $MN\in V_2$ by combining \eqref{Eq;2}, the equalities in Lemmas \ref{L;Lemma2.3}, \ref{L;Lemma2.5}, \ref{L;Lemma2.6}, \ref{L;Lemma2.7}, \ref{L;Lemma2.8}, \ref{L;Lemma2.9}, \ref{L;Lemma2.10}, \ref{L;Lemma2.11}, the assumptions $p=2$, and $2\mid n$. Therefore $MN\in V_2$ for any $M\in B_1\cup B_2$ and $N\in B_4\cup C_2\cup D_2$. We are done.
\end{proof}
\begin{lem}\label{L;Lemma4.15}
Assume that $n\equiv 2\pmod p$. Then $V_2\subseteq \mathrm{Rad}T$. In particular, $V_2$ is a nilpotent two-sided ideal of $T$.
\end{lem}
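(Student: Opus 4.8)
The plan is to mirror the proof of Lemma \ref{L;Lemma3.12} verbatim, with the triple $(U_2,C_2,D_2)$ playing the role of $(U_1,C_1,D_1)$. Since $V_2=\langle B_4\rangle_\F$ when $p\neq 2$ and $V_2=\langle B_4\cup C_2\cup D_2\rangle_\F$ when $p=2$, the first statement reduces to checking that each generating set lies in $\mathrm{Rad}T$. Two of these are immediate: $B_4\subseteq\mathrm{Rad}T$ by Lemma \ref{L;Tripleproducts} (v), and $C_2\subseteq U_2\subseteq\mathrm{Rad}E_4^*TE_4^*\subseteq\mathrm{Rad}T$ by combining Lemma \ref{L;Lemma4.7} with Lemma \ref{L;Radical}. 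When $p\neq 2$ this already finishes the first statement, as then $V_2=\langle B_4\rangle_\F$.

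The substance lies in the case $p=2$, where I must show $D_2\subseteq\mathrm{Rad}T$. The device is that $\mathrm{Rad}T$ is a two-sided ideal which is closed under transposition (recorded just after the definition of $T(\mathbf{x})$). For each of the three primary generators of $D_2$ I would establish an identity of the shape
\[
(\text{generator of }D_2)=(\text{element of }B_1)\cdot(\text{element of }C_2)+(\text{element of }B_4),
\]
whose two right-hand summands both lie in $\mathrm{Rad}T$: the product because $C_2\subseteq\mathrm{Rad}T$ and $\mathrm{Rad}T$ is an ideal, and the correction because $B_4\subseteq\mathrm{Rad}T$. Concretely, right-multiplying $E_1^*A_2E_4^*$ by the $C_2$-element $E_4^*A_1E_2^*A_3E_4^*+E_4^*A_2E_1^*A_3E_4^*+E_4^*A_3E_1^*A_2E_4^*+E_4^*A_3E_2^*A_1E_4^*$ and reducing via Lemma \ref{L;Lemma2.12} (i), (v), (vii), (xi) and Lemma \ref{L;Lemma2.4} (i) produces exactly the first generator of $D_2$ together with the single extra term $E_1^*JE_4^*\in B_4$; here the vanishing $\overline{n-2}=\overline{0}$ (from $n\equiv 0\pmod 2$) kills the term coming from Lemma \ref{L;Lemma2.12} (i). Hence this generator lies in $\mathrm{Rad}T$; the remaining two primary generators follow by cycling the indices through the same recipe, and their three transposes lie in $\mathrm{Rad}T$ by transpose-closure. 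This gives $B_4\cup C_2\cup D_2\subseteq\mathrm{Rad}T$, so $V_2\subseteq\mathrm{Rad}T$. The second ("in particular") statement then follows from Lemma \ref{L;Lemma4.14}, which makes $V_2$ a two-sided ideal, together with the first statement and the nilpotency of $\mathrm{Rad}T$, exactly as in Lemma \ref{L;Lemma3.12}.

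The main obstacle is purely the bookkeeping for $D_2$: one must pair each of the six (three primary plus three transpose) generators with the correct $C_2$-element and the correct $B_4$-correction, and then push the resulting five- and six-fold products through the multiplication tables of Section 4. In characteristic $2$ the signs collapse and several coefficients ($\overline{n-2}$, $\overline{n}$) vanish while $\overline{n-3}=\overline{1}$ survives, so the cancellations that collapse a product to a single generator-plus-correction must be tracked carefully; in particular, choosing the wrong $C_2$-element—for instance one of the idempotent-like generators $E_4^*+E_4^*A_jE_4^*+\cdots$—annihilates the $B_1$-factor and merely returns $O$, which is uninformative, so identifying the $Q$-type generators of $C_2$ as the productive choice is the one genuinely delicate point.
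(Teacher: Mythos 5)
Your proposal matches the paper's proof essentially verbatim: the $p\neq 2$ case via Lemma \ref{L;Tripleproducts} (v), the containments $B_4\subseteq\mathrm{Rad}T$ and $C_2\subseteq U_2\subseteq\mathrm{Rad}E_4^*TE_4^*\subseteq\mathrm{Rad}T$ via Lemmas \ref{L;Lemma4.7} and \ref{L;Radical}, and then, for $p=2$, the same element $M=E_4^*A_1E_2^*A_3E_4^*+E_4^*A_2E_1^*A_3E_4^*+E_4^*A_3E_1^*A_2E_4^*+E_4^*A_3E_2^*A_1E_4^*\in C_2$ used to realize each primary generator of $D_2$ as $E_j^*A_kE_4^*M+E_j^*JE_4^*$ via Lemmas \ref{L;Lemma2.12} and \ref{L;Lemma2.4}, with transpose-closure of $\mathrm{Rad}T$ handling the rest. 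Your sample computation for the first generator of $D_2$ (including the vanishing of the $\overline{n-2}$ term) is correct, so the proof is sound and follows the paper's route.
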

\begin{proof}
The case $p\neq 2$ is from Lemma \ref{L;Tripleproducts} (v). Assume that $p=2$ in this proof. As $V_2=\langle B_4\cup C_2\cup D_2\rangle_\F$, the first statement thus follows if $B_4\cup C_2\cup D_2\subseteq \mathrm{Rad}T$. Notice that $B_4\subseteq \mathrm{Rad} \T$ and $C_2\subseteq U_2\subseteq \mathrm{Rad}E_4^*TE_4^*\subseteq\mathrm{Rad}T$ by combining Lemmas \ref{L;Lemma4.7}, \ref{L;Radical}, \ref{L;Tripleproducts} (v). Therefore it is enough to check the containment $D_2\subseteq \mathrm{Rad}T$. As $p=2$, set  $M=E_4^*A_1E_2^*A_3E_4^*+E_4^*A_2E_1^*A_3E_4^*+E_4^*A_3E_1^*A_2E_4^*+E_4^*A_3E_2^*A_1E_4^*\in C_2$. Notice that $E_1^*A_2E_4^*M+E_1^*JE_4^*$, $E_2^*A_1E_4^*M+E_2^*JE_4$, $E_3^*A_1E_4^*M+E_3^*JE_4^*\in \mathrm{Rad}T$. Then $D_2$ contains exactly $E_1^*A_2E_4^*M+E_1^*JE_4^*$, $E_2^*A_1E_4^*M+E_2^*JE_4$, $E_3^*A_1E_4^*M+E_3^*JE_4^*$, and their transposes by combining \eqref{Eq;2}, \eqref{Eq;3}, the equalities in Lemmas \ref{L;Lemma2.12}, \ref{L;Lemma2.4}, Lemma \ref{L;Tripleproducts} (iii), Theorem \ref{T;Insectionnumbers} (ii), the assumptions $p=2$, $2\mid n$. The first statement thus follows as $N^T\in \mathrm{Rad}T$ if $N\in \mathrm{Rad}T$. The second statement is from Lemma \ref{L;Lemma4.14} and the first one. The desired lemma is proved.
\end{proof}
If $a\in \mathbb{N}$ and $b,c\in [1,a]$, recall the definitions of $E_{bc}(a)$ and $\delta_{bc}$ in Subsection \ref{S;Subsection4}.
\begin{lem}\label{L;Lemma4.16}
Assume that $n\equiv 2\pmod p$. If $p\neq 2$, then the $\F$-algebra $T/V_2$ has an $\F$-basis $\{M+V_2: M\in B_1\cup B_5\cup B_6\cup H_{2,1}\}$. If $p=2$, then the $\F$-algebra $T/V_2 $ has an $\F$-basis $\{M+V_2: M\in H_{2,1}\cup H_{2,3}\cup\{N\}\}$, where $N$ denotes the matrix $E_4^*+E_4^*A_2E_1^*A_3E_4^*+E_4^*A_3E_1^*A_2E_4^*$. In both cases, $\langle\{M+V_2: M\in H_{2,1}\}\rangle_\F$ is a two-sided ideal of the $\F$-algebra $T/V_2$ that is isomorphic to $M_4(\F)$ as $\F$-algebras.
\end{lem}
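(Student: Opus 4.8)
The plan is to treat the two basis assertions as bookkeeping consequences of results already established and to reserve the genuine structural work for the matrix-unit algebra on $\langle\{M+V_2:M\in H_{2,1}\}\rangle_\F$. The computation underlying everything is the valency reduction recorded in Notation \ref{N;Notation4.10}: since $n\equiv 2\pmod p$ and $k_0=1$, $k_1=k_2=k_3=n-1$, $k_4=(n-1)(n-2)$ by Theorem \ref{T;Insectionnumbers} (i), one has $\overline{k_b}=1$ for every $b\in[0,3]$ while $\overline{k_4}=0$. Equivalently $B_4=\{E_a^*JE_4^*:a\in[0,4]\}\cup\{E_4^*JE_a^*:a\in[0,3]\}$ and $B_2\setminus B_4=H_{2,1}$. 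I would record this observation first, together with \eqref{Eq;2} and \eqref{Eq;4}, since it feeds both the dimension count in the quotient and the isomorphism at the end.

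For the basis when $p\neq2$ I would use that $V_2=\langle B_4\rangle_\F$ (Notation \ref{N;Notation4.1}) and that $B=B_1\cup B_2\cup B_5\cup B_6$ is an $\F$-basis of $T$ (Corollary \ref{C;Corollary2.16}). Writing $B_2=H_{2,1}\sqcup B_4$ exhibits $B$ as the disjoint union of $B_1\cup B_5\cup B_6\cup H_{2,1}$ with $B_4$; as $B_4$ spans $V_2$ and $B$ is $\F$-linearly independent, reducing $B$ modulo $V_2$ (the quotient is legitimate by Lemma \ref{L;Lemma4.14}) deletes exactly the $B_4$-part and leaves $\{M+V_2:M\in B_1\cup B_5\cup B_6\cup H_{2,1}\}$ as a basis of $T/V_2$. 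For $p=2$, the explicit $\F$-bases of $T$ produced in Lemmas \ref{L;Lemma4.11}, \ref{L;Lemma4.12}, and \ref{L;Lemma4.13} (according as $n>8$, $n=8$, or $n=4$) each consist of $N$, the elements of $H_{2,1}$ and $H_{2,3}$, and a spanning subset of $V_2$ drawn from $B_4\cup C_2\cup D_2$; since that spanning subset lies in $V_2$, reducing modulo $V_2$ removes precisely those vectors and leaves $\{M+V_2:M\in H_{2,1}\cup H_{2,3}\cup\{N\}\}$ as a basis of $T/V_2$.

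For the ideal assertion I would first prove, inside $T$, that $\langle B_2\rangle_\F$ is a two-sided ideal. By Lemma \ref{L;Lemma3.10} it suffices to multiply a generator $E_a^*JE_b^*$ on either side by a generator $E_g^*A_hE_i^*\in B_1$ and by another element of $B_2$. Using \eqref{Eq;2} and Lemma \ref{L;Tripleproducts} (i) one obtains $E_g^*A_hE_i^*\cdot E_a^*JE_b^*=\delta_{ia}\overline{p_{ih}^g}E_g^*JE_b^*$ and $E_a^*JE_b^*\cdot E_g^*A_hE_i^*=\delta_{bg}\overline{p_{gh}^i}E_a^*JE_i^*$, both in $\langle B_2\rangle_\F$, while products within $B_2$ stay in $\langle B_2\rangle_\F$ by \eqref{Eq;2} and \eqref{Eq;4}; closure under left and right multiplication by generators then gives the ideal property by induction on word length. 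Since $\langle B_2\rangle_\F=\langle H_{2,1}\rangle_\F+\langle B_4\rangle_\F$ with $\langle B_4\rangle_\F\subseteq V_2$, the image of $\langle B_2\rangle_\F$ in $T/V_2$ is exactly $\langle\{M+V_2:M\in H_{2,1}\}\rangle_\F$, and as the image of a two-sided ideal under the quotient map it is a two-sided ideal of $T/V_2$. To identify it with $M_4(\F)$ I would set $M_{ab}=E_{a-1}^*JE_{b-1}^*+V_2$ for $a,b\in[1,4]$ and compute, via \eqref{Eq;2}, \eqref{Eq;4}, and $\overline{k_{b-1}}=1$, that $M_{ab}M_{cd}=\delta_{bc}M_{ad}$; these are the matrix-unit relations, and because the sixteen $M_{ab}$ are $\F$-linearly independent (they form part of the basis found above), the assignment $M_{ab}\mapsto E_{ab}(4)$ extends to an $\F$-algebra isomorphism onto $M_4(\F)$.

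The two basis statements are essentially routine, amounting to reading off disjoint-union decompositions and invoking the already-established bases of $T$, and the matrix-unit computation is short once the reduction $\overline{k_b}=1$ is in place. The one point that will demand care is the $p=2$ bookkeeping: one must verify that every generator of $V_2$ appearing in the bases of Lemmas \ref{L;Lemma4.11}--\ref{L;Lemma4.13} genuinely reduces to zero and that no member of $H_{2,1}\cup H_{2,3}\cup\{N\}$ is inadvertently congruent modulo $V_2$ to an element of $V_2$, so that the reduced set retains the correct cardinality and remains $\F$-linearly independent.
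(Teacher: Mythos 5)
Your proposal is correct and follows essentially the same route as the paper: the basis claims are read off from the decomposition $B_2=H_{2,1}\sqcup B_4$ together with Corollary \ref{C;Corollary2.16} (for $p\neq 2$) and the explicit bases of Lemmas \ref{L;Lemma4.11}--\ref{L;Lemma4.13} (for $p=2$), and the ideal and matrix-unit assertions come from Lemma \ref{L;Tripleproducts} (i), \eqref{Eq;2}, \eqref{Eq;4}, and the reduction $\overline{k_b}=1$ for $b\in[0,3]$. The only cosmetic difference is that you first verify $\langle B_2\rangle_\F$ is an ideal of $T$ and then pass to the quotient, while the paper argues directly in $T/V_2$; the substance is identical.
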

\begin{proof}
If $p\neq 2$, then $B=B_1\cup B_2\cup B_5\cup B_6$. As $B_2=B_4\cup H_{2,1}$, the first statement is from Corollary \ref{C;Corollary2.16}. The second statement is proved by combining Lemmas \ref{L;Lemma4.12}, \ref{L;Lemma4.13}, \ref{L;Lemma4.14}. Set $M_{gh}\!=\!E_{g-1}^*JE_{h-1}^*+V_2$ for any $g,h\in [1,4]$. So $\{M_{ab}: a,b\in [1,4]\}$ is always an $\F$-basis of $\langle\{M+V_2: M\in H_{2,1}\}\rangle_\F$ by the first two statements. Recall that $B_4\!=\!\{E_a^*JE_4^*: a\in [0,4]\}\cup\{E_4^*JE_a^*: a\in [0,3]\}$. So the $\F$-algebra $T/V_2$ has a two-sided ideal $\langle\{M+V_2: M\in H_{2,1}\}\rangle_\F$ by Lemma \ref{L;Tripleproducts} (i). By combining \eqref{Eq;2}, \eqref{Eq;4}, Theorem \ref{T;Insectionnumbers} (i), (ii), and the assumption $n\!\equiv\! 2\pmod p$, $M_{gh}M_{rs}\!=\!\delta_{hr}M_{gs}$ for any $g, h, r, s\in [1,4]$. So there is an $\F$-algebra isomorphism from $\langle\{M+V_2: M\in H_{2,1}\}\rangle_\F$ to $M_4(\F)$ that sends $M_{gh}$ to $E_{gh}(4)$ for any $g, h\in [1,4]$. We are done.
\end{proof}
If $p\neq 2$ and $n\equiv 2\pmod p$, the next corollary gives the algebraic structure of $T$.
\begin{cor}\label{C;Corollary4.17}
If $p\!\neq\! 2$ and $n\!\equiv\!2\pmod p$, then $T/\mathrm{Rad}T\!\cong\! M_6(\F)\oplus M_4(\F)\oplus M_1(\F)$ as $\F$-algebras and $\mathrm{Rad}T=V_2$.
\end{cor}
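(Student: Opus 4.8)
The plan is to follow the template of Corollary \ref{C;Corollary3.14}. By Lemma \ref{L;Lemma4.15}, $V_2$ is a nilpotent two-sided ideal of $T$ with $V_2\subseteq\mathrm{Rad}T$, so it suffices to prove that $T/V_2$ is semisimple with $T/V_2\cong M_6(\F)\oplus M_4(\F)\oplus M_1(\F)$ as $\F$-algebras; then $\mathrm{Rad}(T/V_2)=0$ forces $\mathrm{Rad}T\subseteq V_2$, whence $\mathrm{Rad}T=V_2$. The whole task therefore reduces to producing a Wedderburn decomposition of $T/V_2$ into three two-sided ideals of the prescribed types.

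As $p\neq2$ here, Lemma \ref{L;Lemma4.16} furnishes the $\F$-basis $\{M+V_2:M\in B_1\cup B_5\cup B_6\cup H_{2,1}\}$ of $T/V_2$ and already isolates one summand, namely the two-sided ideal $\langle\{M+V_2:M\in H_{2,1}\}\rangle_\F\cong M_4(\F)$, whose diagonal matrix units are $E_a^*JE_a^*+V_2$ for $a\in[0,3]$. I would note that $|B_1\cup B_5\cup B_6|=37=36+1$ matches the dimension of $M_6(\F)\oplus M_1(\F)$, that the identity of the $M_4(\F)$ summand is $\sum_{a=0}^3E_a^*JE_a^*+V_2$, and that $E_4^*$ is orthogonal to this ideal because $E_4^*E_a^*=O$ for $a\in[0,3]$. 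It then remains to split the complementary span $\langle\{M+V_2:M\in B_1\cup B_5\cup B_6\}\rangle_\F$ into blocks isomorphic to $M_6(\F)$ and $M_1(\F)$.

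The $E_4^*$-corner provides the anchor. Since $E_4^*V_2E_4^*=U_2$ for $p\neq2$, the matrix units $M_{ab}$ ($a,b\in[1,3]$) and the idempotent $N$ of Corollary \ref{C;Corollary4.8}, read modulo $V_2$ instead of $U_2$, still obey $M_{gh}M_{rs}=\delta_{hr}M_{gs}$, $M_{gh}N=NM_{gh}=O+V_2$ and $N^2=N$; they form a $3\times3$ matrix-unit system inside $E_4^*(T/V_2)E_4^*$ together with an orthogonal idempotent. I would take $\langle\{N\}\rangle_\F\cong M_1(\F)$ as the third summand, observing that $N\in E_4^*TE_4^*$ annihilates both the $M_4(\F)$ ideal and every off-corner unit introduced below because $E_4^*E_a^*=E_a^*E_4^*=O$ for $a\in[1,3]$. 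To enlarge the $3\times3$ system to the $6\times6$ system realizing $M_6(\F)$, I would index three further basis vectors of the simple module by the thin idempotents $E_1^*,E_2^*,E_3^*$, with new diagonal idempotents $E_a^*-E_a^*JE_a^*+V_2$ ($a\in[1,3]$); these are idempotent modulo $V_2$ by \eqref{Eq;4} and $\overline{k_a}=\overline{n-1}=\overline{1}$, and orthogonal to the $M_4(\F)$ block. The remaining off-diagonal units, joining the thin slots to the $E_4^*$-corner and to one another, are built from the elements $E_a^*A_bE_4^*$ and their transposes in $B_1$, the elements $E_a^*A_bE_c^*$ with $a,c\in[1,3]$ in $B_1$, and the elements of $B_5$, suitably normalized. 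One then checks the full relations $M_{gh}M_{rs}=\delta_{hr}M_{gs}$ modulo $V_2$ and the mutual annihilation of the three blocks, using the product formulas of Lemmas \ref{L;Lemma2.1}--\ref{L;Lemma2.15} and the congruence $n\equiv2\pmod p$ (so that $\overline{n-2}=\overline0$, $\overline{n-3}=\overline{-1}$, $\overline{n-4}=\overline{-2}$). The three ideals then sum to $T/V_2$ and have the asserted types.

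The main obstacle is exactly this last construction: assembling a consistent $6\times6$ matrix-unit system for the non-thin block, and in particular choosing the off-diagonal units linking $E_1^*,E_2^*,E_3^*$ to the three-dimensional $E_4^*$-part so that all $36$ products close modulo $V_2$, while keeping the $M_6(\F)$, $M_4(\F)$ and $M_1(\F)$ pieces mutually orthogonal. This is where essentially all the computation is concentrated and where the arithmetic of the hypotheses $p\neq2$ and $n\equiv2\pmod p$ is indispensable.
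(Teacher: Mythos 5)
Your outline coincides with the paper's proof of this corollary: reduce via Lemma \ref{L;Lemma4.15} to showing $T/V_2\cong M_6(\F)\oplus M_4(\F)\oplus M_1(\F)$; take the $M_4(\F)$ summand $\langle\{E_a^*JE_b^*+V_2: a,b\in[0,3]\}\rangle_\F$ from Lemma \ref{L;Lemma4.16}; take the rank-one idempotent to be the $N$ of Corollary \ref{C;Corollary4.8} read modulo $V_2$ (this agrees with the paper's $N=I-\sum_{j=1}^4N_{jj}-\sum_{j=1}^6M_{jj}$ because $E_0^*JE_0^*=E_0^*$); and enlarge the $3\times3$ corner system to a $6\times6$ matrix-unit system whose extra diagonal idempotents are $E_a^*-E_a^*JE_a^*+V_2$ for $a\in[1,3]$. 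Your dimension count $|B_1\cup B_5\cup B_6|=37=36+1$ is also correct, and the observation that $E_4^*V_2E_4^*=U_2$ for $p\neq 2$ legitimately transports the relations of Corollary \ref{C;Corollary4.8} to $T/V_2$.

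The difficulty is that the step you yourself label ``the main obstacle'' is essentially the entire content of the paper's proof, and your proposal does not carry it out. Concretely: the units from the thin slots into the $E_4^*$-corner are indeed single basis elements needing no $J$-correction (e.g.\ $M_{14}=E_1^*A_2E_3^*A_1E_4^*+V_2$, $M_{15}=E_1^*A_2E_4^*+V_2$), since $E_a^*JE_4^*\in B_4\subseteq V_2$ when $p\mid k_4=(n-1)(n-2)$; but the units in the opposite direction are \emph{not} ``suitably normalized'' single elements of $B_1\cup B_5$ --- they are $\overline{2}^{-1}$-weighted three-term combinations mirroring the corner idempotents of Corollary \ref{C;Corollary4.8}, for instance $M_{41}=\overline{2}^{-1}(E_4^*A_1E_3^*A_2E_1^*-E_4^*A_2E_1^*-E_4^*A_3E_1^*)+V_2$. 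Producing these combinations and verifying all products $M_{gh}M_{rs}=\delta_{hr}M_{gs}$ via Lemmas \ref{L;Lemma2.1}--\ref{L;Lemma2.15}, Lemma \ref{L;Tripleproducts} (i), (iii) and the congruence $n\equiv 2\pmod p$ is where the proof actually lives; without them the argument is an accurate plan rather than a proof.
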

\begin{proof}
By Lemma \ref{L;Lemma4.15}, it suffices to check that $T/V_2\cong M_6(\F)\oplus M_4(\F)\oplus M_1(\F)$ as $\F$-algebras. Let $C=\{M+V_2: M\in B_1\cup B_5\cup B_6\cup H_{2,1}\}$. If $j\in\{g,h,i\}=[1,3]$, set $M_{gi}=E_g^*A_hE_i^*-E_g^*JE_i^*+V_2$, $M_{jj}=E_j^*-E_j^*JE_j^*+V_2$, $M_{14}=E_1^*A_2E_3^*A_1E_4^*+V_2$, $M_{15}\!=\!E_1^*A_2E_4^*+V_2$, $M_{16}\!=\!E_1^*A_3E_4^*\!+\!V_2$, $M_{24}\!=\!E_2^*A_1E_4^*\!+\!V_2$, $M_{25}=E_2^*A_1E_3^*A_2E_4^*+V_2$, $M_{26}\!=\!E_2^*A_3E_4^*\!+\!V_2$, $M_{34}\!=\!E_3^*A_1E_4^*\!+\!V_2$, $M_{35}\!=\!E_3^*A_2E_4^*+V_2$, $M_{36}=E_3^*A_1E_2^*A_3E_4^*+V_2$. Use the assumption $p\neq 2$ to continue defining
\begin{align*}
& M_{41}=\overline{2}^{-1}(E_4^*A_1E_3^*A_2E_1^*-E_4^*A_2E_1^*-E_4^*A_3E_1^*)+V_2,\\
& M_{42}=\overline{2}^{-1}(E_4^*A_1E_2^*-E_4^*A_3E_2^*-E_4^*A_2E_3^*A_1E_2^*)+V_2,\\
& M_{43}=\overline{2}^{-1}(E_4^*A_1E_3^*-E_4^*A_2E_3^*-E_4^*A_3E_2^*A_1E_3^*)+V_2,\\
& M_{44}=\overline{2}^{-1}(E_4^*+E_4^*A_1E_4^*-E_4^*A_2E_3^*A_1E_4^*-E_4^*A_3E_2^*A_1E_4^*)+V_2,\\
& M_{45}=\overline{2}^{-1}(E_4^*A_1E_3^*A_2E_4^*-E_4^*A_3E_1^*A_2E_4^*-E_4^*-E_4^*A_2E_4^*)+V_2,\\
& M_{46}=\overline{2}^{-1}(E_4^*A_1E_2^*A_3E_4^*-E_4^*A_2E_1^*A_3E_4^*-E_4^*-E_4^*A_3E_4^*)+V_2,\\
& M_{51}=\overline{2}^{-1}(E_4^*A_2E_1^*-E_4^*A_3E_1^*-E_4^*A_1E_3^*A_2E_1^*)+V_2,\\
& M_{52}=\overline{2}^{-1}(E_4^*A_2E_3^*A_1E_2^*-E_4^*A_1E_2^*-E_4^*A_3E_2^*)+V_2,\\
& M_{53}=\overline{2}^{-1}(E_4^*A_2E_3^*-E_4^*A_1E_3^*-E_4^*A_3E_2^*A_1E_3^*)+V_2,\\
& M_{54}=\overline{2}^{-1}(E_4^*A_2E_3^*A_1E_4^*-E_4^*A_3E_2^*A_1E_4^*-E_4^*-E_4^*A_1E_4^*)+V_2,\\
& M_{55}=\overline{2}^{-1}(E_4^*+E_4^*A_2E_4^*-E_4^*A_1E_3^*A_2E_4^*-E_4^*A_3E_1^*A_2E_4^*)+V_2,\\
& M_{56}=\overline{2}^{-1}(E_4^*A_2E_1^*A_3E_4^*-E_4^*A_1E_2^*A_3E_4^*-E_4^*-E_4^*A_3E_4^*)+V_2,\\
& M_{61}=\overline{2}^{-1}(E_4^*A_3E_1^*-E_4^*A_2E_1^*-E_4^*A_1E_3^*A_2E_1^*)+V_2,\\
& M_{62}=\overline{2}^{-1}(E_4^*A_3E_2^*-E_4^*A_1E_2^*-E_4^*A_2E_3^*A_1E_2^*)+V_2,\\
& M_{63}=\overline{2}^{-1}(E_4^*A_3E_2^*A_1E_3^*-E_4^*A_1E_3^*-E_4^*A_2E_3^*)+V_2,\\
& M_{64}=\overline{2}^{-1}(E_4^*A_3E_2^*A_1E_4^*-E_4^*A_2E_3^*A_1E_4^*-E_4^*-E_4^*A_1E_4^*)+V_2,\\
& M_{65}=\overline{2}^{-1}(E_4^*A_3E_1^*A_2E_4^*-E_4^*A_1E_3^*A_2E_4^*-E_4^*-E_4^*A_2E_4^*)+V_2,\\
& M_{66}=\overline{2}^{-1}(E_4^*+E_4^*A_3E_4^*-E_4^*A_1E_2^*A_3E_4^*-E_4^*A_2E_1^*A_3E_4^*)+V_2.
\end{align*}
Set $N_{gh}=E_{g-1}^*JE_{h-1}^*+V_2$ if $g, h\in [1,4]$. Put $N=I-\sum_{j=1}^4N_{jj}-\sum_{j=1}^6M_{jj}$ and $D=\{M_{ab}: a,b\in [1,6]\}\cup\{N_{ab}: a,b\in [1,4]\}\cup\{N\}$. It is obvious to observe that $\langle \{M+V_2: M\in H_{2,1}\}\rangle_\F=\langle\{N_{ab}: a,b\in [1,4]\}\rangle_\F$. By Lemma \ref{L;Lemma4.16}, notice that each element in $D$ is an $\F$-linear combination of the elements in $C$. Moreover, notice that each element in $C$ is also an $\F$-linear combination of the elements in $D$. Since $|C|=|D|$ and Corollary \ref{C;Corollary2.16} holds, notice that $D$ is also an $\F$-basis of $T/V_2$.

Notice that $B_4=\{E_a^*JE_4^*: a\in [0,4]\}\cup\{E_4^*JE_a^*: a\in [0,3]\}\subseteq V_2$. By combining \eqref{Eq;2}, \eqref{Eq;3}, the equalities in Lemmas \ref{L;Lemma2.1}, \ref{L;Lemma2.2}, \ref{L;Lemma2.3}, \ref{L;Lemma2.4}, \ref{L;Lemma2.5}, \ref{L;Lemma2.6}, \ref{L;Lemma2.7}, \ref{L;Lemma2.8}, \ref{L;Lemma2.9}, \ref{L;Lemma2.10}, \ref{L;Lemma2.11},
\ref{L;Lemma2.12}, \ref{L;Lemma2.13}, \ref{L;Lemma2.14}, \ref{L;Lemma2.15}, Lemma \ref{L;Tripleproducts} (i), (iii), Theorem \ref{T;Insectionnumbers} (ii), (iii), and the assumption $n\equiv 2\pmod p$, notice that $M_{gh}M_{rs}=\delta_{hr}M_{gs}$ for any $g, h, r, s\in [1,6]$. By combining \eqref{Eq;2}, \eqref{Eq;4}, Lemma \ref{L;Tripleproducts} (i), Theorem \ref{T;Insectionnumbers} (i), (ii), (iii), the assumption $n\equiv 2\pmod p$, notice that $M_{gh}N_{rs}=N_{rs}M_{gh}=O+V_2$ and $N_{rs}N_{uv}=\delta_{su}N_{rv}$ for any $g,h\in [1,6]$ and $r, s, u, v\in [1,4]$. For any $g, h\in [1,6]$ and $r,s \in [1,4]$, notice that $M_{gh}N=NM_{gh}=N_{rs}N=NN_{rs}=O+V_2$ and $N^2=N$. So the $\F$-algebra $T/V_2$ has two-sided ideals $\langle\{M_{ab}: a,b \in [1,6]\}\rangle_\F$ and $\langle \{N\}\rangle_\F$. So $\langle\{N\}\rangle_\F\cong M_{1}(\F)$ as $\F$-algebras. There exists an $\F$-algebra isomorphism from $\langle\{M_{ab}: a,b \in [1,6]\}\rangle_\F$ to $M_6(\F)$ that sends $M_{gh}$ to $E_{gh}(6)$ for any $g, h\in [1,6]$. As Lemma \ref{L;Lemma4.16} holds and $T/V_2=\langle\{M_{ab}: a,b\in [1,6]\}\rangle_\F\oplus\langle\{N_{ab}: a,b \in [1,4]\}\rangle_\F\oplus\langle\{N\}\rangle_\F$, the desired corollary thus follows.
\end{proof}
For the case $p=2$, our final corollary describes the algebraic structure of $T$.
\begin{cor}\label{C;Corollary4.18}
If $p=2$, then $T/\mathrm{Rad}T\cong M_5(\F)\oplus M_4(\F)\oplus M_1(\F)$ as $\F$-algebras and $\mathrm{Rad}T=V_2$.
\end{cor}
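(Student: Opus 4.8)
The plan is to mirror the proof of Corollary \ref{C;Corollary4.17}, adapting the explicit systems of matrix units to the characteristic-$2$ setting. Lemma \ref{L;Lemma4.15} already provides $V_2\subseteq\mathrm{Rad}T$ together with the nilpotency of the two-sided ideal $V_2$, so it suffices to produce an $\F$-algebra isomorphism $T/V_2\cong M_5(\F)\oplus M_4(\F)\oplus M_1(\F)$. The right-hand side is semisimple, whence $\mathrm{Rad}(T/V_2)=0$, which forces $\mathrm{Rad}T\subseteq V_2$ and hence $\mathrm{Rad}T=V_2$; then $T/\mathrm{Rad}T=T/V_2$ carries the stated decomposition.

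Starting from the $\F$-basis of $T/V_2$ furnished by Lemma \ref{L;Lemma4.16} (the $p=2$ case), I would exhibit three families of elements of $T/V_2$. First, the system $N_{gh}=E_{g-1}^*JE_{h-1}^*+V_2$ with $g,h\in[1,4]$ spanning the $M_4(\F)$ block; Lemma \ref{L;Lemma4.16} already records that these form a matrix unit system with $\langle\{N_{ab}:a,b\in[1,4]\}\rangle_\F\cong M_4(\F)$. Second, the central idempotent $N=E_4^*+E_4^*A_2E_1^*A_3E_4^*+E_4^*A_3E_1^*A_2E_4^*+V_2$ spanning the $M_1(\F)$ block, which is exactly the $N$ of Lemma \ref{L;Lemma4.16} and of Corollary \ref{C;Corollary4.9}. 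Third, and this is the heart of the matter, a $5\times 5$ system $\{M_{gh}:g,h\in[1,5]\}$ spanning the $M_5(\F)$ block, where indices $1,2,3$ are attached to the small dual idempotents $E_1^*,E_2^*,E_3^*$ and indices $4,5$ to the two surviving flavors of $E_4^*$ that remain independent in characteristic $2$ — recall that $E_4^*TE_4^*$ contributes an $M_2(\F)$ block by Corollary \ref{C;Corollary4.9}, rather than the $M_3(\F)$ block of Corollary \ref{C;Corollary4.8}, so the large part of the block shrinks from size $3$ to size $2$ and the overall size from $3+3=6$ to $3+2=5$.

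For the small-index entries I would take $M_{jj}=E_j^*-E_j^*JE_j^*+V_2$ and $M_{gi}=E_g^*A_hE_i^*-E_g^*JE_i^*+V_2$ for $\{g,h,i\}=[1,3]$, exactly as in Corollary \ref{C;Corollary4.17}. For the two large-flavor diagonal blocks $M_{44},M_{45},M_{54},M_{55}$ I would import the matrix units of Corollary \ref{C;Corollary4.9}, namely $E_4^*A_2E_1^*A_3E_4^*$, $E_4^*A_3E_1^*A_2E_4^*$ and the two off-diagonal sums, read modulo $V_2$. The transition entries $M_{g,4},M_{g,5}$ for $g\in[1,3]$ and $M_{4,h},M_{5,h}$ for $h\in[1,3]$ connecting the small constituents to the large one would be assembled from elements such as $E_1^*A_2E_4^*$, $E_4^*A_2E_1^*$ and their siblings in $B_1\cup B_5\cup B_6$, calibrated so that the full family closes under multiplication. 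I would then check that $D=\{M_{ab}:a,b\in[1,5]\}\cup\{N_{ab}:a,b\in[1,4]\}\cup\{N\}$ has the same cardinality as the Lemma \ref{L;Lemma4.16} basis and spans the same $\F$-space, so $D$ is an $\F$-basis of $T/V_2$. The verification of $M_{gh}M_{rs}=\delta_{hr}M_{gs}$ for $g,h,r,s\in[1,5]$, of $N_{gh}N_{rs}=\delta_{hr}N_{gs}$, of all vanishing cross products $M_{gh}N_{rs}=N_{rs}M_{gh}=M_{gh}N=NM_{gh}=N_{rs}N=NN_{rs}=O+V_2$, and of $N^2=N$ is discharged by invoking, modulo $V_2$, the computational identities of Lemmas \ref{L;Lemma2.1}--\ref{L;Lemma2.15} together with \eqref{Eq;2}, \eqref{Eq;3}, Lemma \ref{L;Tripleproducts}, and Theorem \ref{T;Insectionnumbers}, all simplified by $p=2$ and $2\mid n$. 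These relations then give $\langle\{M_{ab}\}\rangle_\F\cong M_5(\F)$ via $M_{gh}\mapsto E_{gh}(5)$, $\langle\{N_{ab}\}\rangle_\F\cong M_4(\F)$, and $\langle\{N\}\rangle_\F\cong M_1(\F)$, and exhibit $T/V_2$ as their direct sum.

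The main obstacle is the explicit construction of the transition units $M_{g,4},M_{g,5},M_{4,h},M_{5,h}$ and the check that the assembled $5\times 5$ family is a genuine matrix unit system in characteristic $2$. The characteristic-$2$ collapse recorded in Lemma \ref{L;Lemma1.19} and exploited in Corollary \ref{C;Corollary4.9} forces several products that are distinct in Corollary \ref{C;Corollary4.17} to coincide, so one must choose precisely the right $\F$-linear combinations of the elements of $B_1\cup B_5\cup B_6$ so that exactly five rows and columns survive and the off-diagonal blocks pair up correctly; this is a delicate bookkeeping exercise among the many trilinear products computed in the earlier computational sections.
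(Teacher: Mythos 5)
Your proposal follows essentially the same route as the paper's proof: reduce to $T/V_2$ via Lemma \ref{L;Lemma4.15}, use the decomposition of Lemma \ref{L;Lemma4.16}, and assemble matrix-unit systems of sizes $5$, $4$, $1$, with the $M_5(\F)$ block gluing $E_1^*,E_2^*,E_3^*$ to the two surviving $E_4^*$-flavors $E_4^*A_2E_1^*A_3E_4^*$ and $E_4^*A_3E_1^*A_2E_4^*$ from Corollary \ref{C;Corollary4.9}, and with the $M_4(\F)$ block spanned by the $E_{g-1}^*JE_{h-1}^*$ and the $M_1(\F)$ block by $N=E_4^*+E_4^*A_2E_1^*A_3E_4^*+E_4^*A_3E_1^*A_2E_4^*+V_2$. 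The one point you leave open, the explicit transition units, is settled in the paper by single elements rather than calibrated combinations (e.g.\ $M_{14}=E_1^*A_3E_4^*+V_2$, $M_{25}=E_2^*A_1E_3^*A_2E_4^*+V_2$, $M_{41}=E_4^*A_2E_1^*+V_2$, $M_{42}=E_4^*A_2E_3^*A_1E_2^*+V_2$), so the bookkeeping you anticipate is lighter than in Corollary \ref{C;Corollary4.17}.
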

\begin{proof}
By Lemma \ref{L;Lemma4.15}, it suffices to check that $T/V_2\cong M_5(\F)\oplus M_4(\F)\oplus M_1(\F)$ as $\F$-algebras. Define $N\!=\!E_4^*+E_4^*A_2E_1^*A_3E_4^*+E_4^*A_3E_1^*A_2E_4^*+V_2$. By Lemma \ref{L;Lemma4.16}, notice that $T/V_2=\langle \{M+V_2: M\in H_{2,3}\}\rangle_\F\oplus\langle \{M+V_2: M\in H_{2,1}\}\rangle_\F\oplus\langle\{N\}\rangle_\F$. If $j\in\{g, h, i\}=[1,3]$, define $M_{gi}=E_g^*A_hE_i^*+E_g^*JE_i^*+V_2$, $M_{jj}=E_j^*+E_j^*JE_j^*+V_2$, $M_{14}\!=\!E_1^*A_3E_4^*+V_2, M_{15}\!=\!E_1^*A_2E_4^*+V_2, M_{24}\!=\!E_2^*A_3E_4^*+V_2, M_{25}\!=\!E_2^*A_1E_3^*A_2E_4^*+V_2$, $M_{34}\!=\!E_3^*A_1E_2^*A_3E_4^*+V_2, M_{35}=E_3^*A_2E_4^*+V_2$. Moreover, set $M_{41}\!=\!E_4^*A_2E_1^*+V_2$, $M_{42}\!=\!E_4^*A_2E_3^*A_1E_2^*+V_2$, $M_{43}\!=\!E_4^*A_2E_3^*+V_2$, $M_{51}\!=\!E_4^*A_3E_1^*\!+\!V_2$, $M_{52}\!=\!E_4^*A_3E_2^*+V_2$, $M_{53}=E_4^*A_3E_2^*A_1E_3^*\!+\!V_2$. Define $M_{44}\!=\!E_4^*A_2E_1^*A_3E_4^*+V_2$, $M_{55}\!=\!E_4^*A_3E_1^*A_2E_4^*+V_2$, $M_{45}=E_4^*A_1E_3^*A_2E_4^*+E_4^*A_3E_1^*A_2E_4^*+V_2$,
$M_{54}=E_4^*A_1E_2^*A_3E_4^*+E_4^*A_2E_1^*A_3E_4^*+V_2$. So $\langle\{M_{ab}: a,b\in [1,5]\}\rangle_\F=\langle\{M+V_2: M\in H_{2,3}\}\rangle_\F$. Set $N_{gh}=E_{g-1}^*JE_{h-1}^*+V_2$ if $g, h\in [1,4]$. Hence $\langle\{N_{ab}: a,b\in [1,4]\}\rangle_\F=\langle\{M+V_2: M\in H_{2,1}\}\rangle_\F$. According to Lemma \ref{L;Lemma4.16}, $T/V_2$ has an $\F$-basis $\{M_{ab}: a,b\in [1,5]\}\cup\{N_{ab}: a, b\in [1,4]\}\cup\{N\}$.

Notice that $B_4=\{E_a^*JE_4^*: a\in [0,4]\}\cup\{E_4^*JE_a^*: a\in [0,3]\}\subseteq V_2$. By combining \eqref{Eq;2}, the equalities in Lemmas \ref{L;Lemma2.1}, \ref{L;Lemma2.2}, \ref{L;Lemma2.3}, \ref{L;Lemma2.4}, \ref{L;Lemma2.5}, \ref{L;Lemma2.6}, \ref{L;Lemma2.7}, \ref{L;Lemma2.9}, \ref{L;Lemma2.11}, \ref{L;Lemma2.12}, \ref{L;Lemma2.15}, Lemma \ref{L;Tripleproducts} (i), Theorem \ref{T;Insectionnumbers} (ii), (iii), the assumptions $p=2$, and $2\mid n$, notice that $M_{gh}M_{rs}=\delta_{hr}M_{gs}$ for any $g, h, r,s\in [1,5]$.
According to \eqref{Eq;2}, \eqref{Eq;4}, Lemma \ref{L;Tripleproducts} (i), Theorem \ref{T;Insectionnumbers} (i), (ii), (iii), the assumptions $p=2$, and $2\mid n$, notice that $M_{gh}N_{rs}\!=\!N_{rs}M_{gh}\!=\!O+V_2$ for any $g, h\in [1,5]$ and $r,s\in [1,4]$. Moreover, notice that $N_{rs}N_{uv}\!=\!\delta_{su}N_{rv}$ for any $r,s, u, v\in [1,4]$. Then $N=I+\sum_{j=1}^4N_{jj}+\sum_{j=1}^5M_{jj}$ since $p=2$. Therefore $M_{gh}N=NM_{gh}=N_{rs}N=NN_{rs}=O+V_2$ and $N^2=N$ for any $g, h\in [1,5]$ and $r, s\in [1,4]$. So $\langle\{M+V_2: M\in H_{2,3}\}\rangle_\F$ and $\langle\{N\}\rangle_\F$ are two-sided ideals of $T/V_2$. So $\langle\{N\}\rangle_\F\cong M_1(\F)$ as $\F$-algebras. Moreover, there exists an $\F$-algebra isomorphism from $\langle\{M+V_2: M\in H_{2,3}\}\rangle_\F$ to $M_5(\F)$ that sends $M_{gh}$ to $E_{gh}(5)$ for any $g, h\in [1,5]$. So $\langle\{M+V_2: M\in H_{2,3}\}\rangle_\F\cong M_5(\F)$ as $\F$-algebras. The desired corollary thus follows from Lemma \ref{L;Lemma4.16}.
\end{proof}
In conclusion, the case $n\equiv 2\pmod p$ is solved by Corollaries \ref{C;Corollary4.17} and \ref{C;Corollary4.18}.
\section{Algebraic structure of $T$: Case III}
Recall that $G$ is assumed further to be an elementary abelian $2$-group. The aim of this section is to determine the algebraic structure of $T$ for the case $p\neq 2,3$ and $n\!\equiv \!4\pmod p$. Recall Notation \ref{N;Notation1.21} and the definition of $E_4^*TE_4^*$ in Subsection \ref{S;Subsection3}.

We first introduce the required notation and present some preliminary results.
\begin{nota}\label{N;Notation5.1}
Use $C_3$ to denote the set containing precisely
\begin{align*}
& E_4^*JE_4^*-E_4^*-E_4^*A_1E_4^*-E_4^*A_2E_3^*A_1E_4^*-E_4^*A_3E_2^*A_1E_4^*,\\
& E_4^*JE_4^*-E_4^*-E_4^*A_2E_4^*-E_4^*A_1E_3^*A_2E_4^*-E_4^*A_3E_1^*A_2E_4^*,\\
& E_4^*JE_4^*-E_4^*-E_4^*A_3E_4^*-E_4^*A_1E_2^*A_3E_4^*-E_4^*A_2E_1^*A_3E_4^*,\\
& E_4^*A_1E_2^*A_3E_4^*+E_4^*A_2E_1^*A_3E_4^*-E_4^*A_3E_1^*A_2E_4^*-E_4^*A_3E_2^*A_1E_4^*,\\
& E_4^*A_2E_3^*A_1E_4^*+E_4^*A_3E_2^*A_1E_4^*-E_4^*A_1E_2^*A_3E_4^*-E_4^*A_1E_3^*A_2E_4^*.
\end{align*}
Let $D_3$ be the set containing exactly $E_1^*JE_4^*-E_1^*A_2E_4^*-E_1^*A_3E_4^*-E_1^*A_2E_3^*A_1E_4^*$, $E_2^*JE_4^*\!-\!E_2^*A_1E_4^*\!-\!E_2^*A_3E_4^*\!-\!E_2^*A_1E_3^*A_2E_4^*, E_3^*JE_4^*\!-\!E_3^*A_1E_4^*\!-\!E_3^*A_2E_4^*\!-\!E_3^*A_1E_2^*A_3E_4^*$, and their transposes. Set $U_3=\langle C_3\rangle_\F\subseteq T$. Put $V_3=\langle C_3\cup D_3\rangle_\F\subseteq T$.
\end{nota}
\begin{lem}\label{L;Lemma5.2}
If $n>8$ or $p\neq 2$ and $n=8$, the union of $C_3$ and the set containing precisely $E_4^*JE_4^*$, $E_4^*$, $E_4^*A_1E_2^*A_3E_4^*$, $E_4^*A_1E_3^*A_2E_4^*$, $E_4^*A_2E_1^*A_3E_4^*$,
$E_4^*A_2E_3^*A_1E_4^*$ is an $\F$-basis of the $\F$-subalgebra $E_4^*TE_4^*$ of $T$. In particular, $C_3$ is an $\F$-basis of $U_3$.
\end{lem}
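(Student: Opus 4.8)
The plan is to reuse the change-of-basis template already applied to $E_4^*TE_4^*$ in Lemmas \ref{L;Lemma3.2} and \ref{L;Lemma4.2}. Write $C$ for the proposed generating set, namely $C_3$ together with the six matrices $E_4^*JE_4^*$, $E_4^*$, $E_4^*A_1E_2^*A_3E_4^*$, $E_4^*A_1E_3^*A_2E_4^*$, $E_4^*A_2E_1^*A_3E_4^*$, $E_4^*A_2E_3^*A_1E_4^*$. Since $|C_3|=5$ one has $|C|=11=|B_3\cup B_6|$, and under the hypothesis ``$n>8$ or $p\neq 2$ and $n=8$'' Corollary \ref{C;Corollary2.17} guarantees that $B_3\cup B_6$ is an $\F$-basis of $E_4^*TE_4^*$. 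Hence it suffices to prove that $\langle C\rangle_\F=\langle B_3\cup B_6\rangle_\F$: a spanning set of the correct cardinality in a finite-dimensional space is automatically a basis.

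First I would dispatch the easy inclusion $\langle C\rangle_\F\subseteq\langle B_3\cup B_6\rangle_\F$. Each element of $C$ is visibly an $\F$-linear combination of $E_4^*$, the $E_4^*A_jE_4^*$ with $j\in[1,3]$, $E_4^*JE_4^*$, and the six members of $B_6$; reading off these expressions uses only \eqref{Eq;2} and \eqref{Eq;3}.

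The reverse inclusion carries the real content, but it is short linear algebra. Six of the eleven elements of $B_3\cup B_6$ --- namely $E_4^*JE_4^*$, $E_4^*$, and the four products $E_4^*A_1E_2^*A_3E_4^*$, $E_4^*A_1E_3^*A_2E_4^*$, $E_4^*A_2E_1^*A_3E_4^*$, $E_4^*A_2E_3^*A_1E_4^*$ --- already lie in $C$. The remaining five, $E_4^*A_1E_4^*$, $E_4^*A_2E_4^*$, $E_4^*A_3E_4^*$, $E_4^*A_3E_1^*A_2E_4^*$, $E_4^*A_3E_2^*A_1E_4^*$, must be recovered from the five elements of $C_3$, and I would do this by back-substitution. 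The fifth element of $C_3$ expresses $E_4^*A_3E_2^*A_1E_4^*$ through matrices already in $C$; feeding that into the fourth element yields $E_4^*A_3E_1^*A_2E_4^*$; and each of the first three elements of $C_3$ then isolates exactly one of $E_4^*A_1E_4^*$, $E_4^*A_2E_4^*$, $E_4^*A_3E_4^*$ modulo quantities now known. The only thing to verify is that this elimination closes up, i.e.\ that the $5\times5$ transition matrix between the missing targets and $C_3$ is invertible over $\F$; with the substitution order above it is triangular with unit diagonal and involves no divisions, so it is invertible over every field and imposes no characteristic restriction beyond the one already inherited from Corollary \ref{C;Corollary2.17}.

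Finally, the ``In particular'' clause is free: once $C$ is a basis it is $\F$-linearly independent, hence so is its subset $C_3$, and since $U_3=\langle C_3\rangle_\F$ by Notation \ref{N;Notation5.1}, $C_3$ is an $\F$-basis of $U_3$. The main obstacle, such as it is, is purely bookkeeping: ordering the back-substitution so that no target is written in terms of a still-unknown one, and keeping track that $E_4^*=E_4^*A_0E_4^*$ so that indeed $E_4^*\in B_3$.
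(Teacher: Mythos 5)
Your proposal is correct and follows essentially the same route as the paper: both show that $C$ and $B_3\cup B_6$ are mutual $\F$-linear combinations of one another, invoke Corollary \ref{C;Corollary2.17} together with the cardinality count $|C|=|B_3\cup B_6|=11$, and deduce the second statement from $C_3\subseteq C$. Your explicit back-substitution (recovering $E_4^*A_3E_2^*A_1E_4^*$, then $E_4^*A_3E_1^*A_2E_4^*$, then the three $E_4^*A_jE_4^*$ via a unit-triangular transition matrix) is simply a spelled-out version of the step the paper leaves to the reader.
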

\begin{proof}
Let $C$ denote the union in the first statement. By \eqref{Eq;2} and \eqref{Eq;3}, notice that each element in $C$ is an $\F$-linear combination of the elements in $B_3\cup B_6$. By \eqref{Eq;2} and \eqref{Eq;3}, notice that each element in $B_3\cup B_6$ is also an $\F$-linear combination of the elements in $C$. As $|C|\!=\!|B_3\cup B_6|$ and Corollary \ref{C;Corollary2.17} holds, the first statement follows. The second statement is from the first one. The desired lemma is proved.
\end{proof}
The following results describe $E_4^*TE_4^*$ for the case $p\neq 2, 3$ and $n\equiv 4\pmod p$.
\begin{lem}\label{L;Lemma5.3}
If $n\equiv 4\pmod p$, $U_3$ is a two-sided ideal of the $\F$-subalgebra $E_4^*TE_4^*$ of $T$.
\end{lem}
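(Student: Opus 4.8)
The plan is to follow the template established in the proofs of Lemmas~\ref{L;Lemma3.4} and~\ref{L;Lemma4.6}, reducing the two-sided ideal property to a one-sided computation. Clearly $U_3\subseteq E_4^*TE_4^*$, since every generator listed in $C_3$ is already sandwiched between two copies of $E_4^*$. Next I would record two transpose-stability facts. By Corollary~\ref{C;Corollary2.17} the algebra $E_4^*TE_4^*$ has the $\F$-basis $B_3\cup B_6$ when $n\ge 8$ (recall $p\neq 2$ throughout this section) and $B_3\cup\{E_4^*A_1E_2^*A_3E_4^*\}$ when $n=4$; in either case this basis is closed under transposition by \eqref{Eq;1}. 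A direct inspection of the five generators of $C_3$ shows that $\langle C_3\rangle_\F$ is itself transpose-stable: using $(E_4^*A_gE_h^*A_iE_4^*)^T=E_4^*A_iE_h^*A_gE_4^*$ from \eqref{Eq;1}, the transpose of each generator is again an $\F$-linear combination of the generators. Consequently it suffices to prove $MN\in U_3$ for each basis element $M$ of $E_4^*TE_4^*$ and each $N\in C_3$; the reverse inclusion $NM\in U_3$ then follows from $NM=(M^TN^T)^T$ together with the two transpose-stability facts.

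The products $MN$ split according to whether $M=E_4^*JE_4^*$, and it is here that the hypothesis $n\equiv 4\pmod p$ is used decisively. For $M=E_4^*JE_4^*$ I would show $E_4^*JE_4^*\,N=O$ for every $N\in C_3$. Repeated application of Lemma~\ref{L;Tripleproducts}~(i) collapses $E_4^*JE_4^*\cdot E_4^*A_gE_h^*A_iE_4^*$ to $\overline{n-2}\,E_4^*JE_4^*$, collapses $E_4^*JE_4^*\cdot E_4^*A_gE_4^*$ to $\overline{n-3}\,E_4^*JE_4^*$, and, via \eqref{Eq;4}, collapses $E_4^*JE_4^*\cdot E_4^*JE_4^*$ to $\overline{k_4}\,E_4^*JE_4^*=\overline{(n-1)(n-2)}\,E_4^*JE_4^*$. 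Substituting $n\equiv 4\pmod p$, so that $\overline{n-1}=\overline{3}$, $\overline{n-2}=\overline{2}$, $\overline{n-3}=\overline{1}$ and $\overline{(n-1)(n-2)}=\overline{6}$, the scalar attached to each generator evaluates to $\overline{0}$: the first three generators give $\overline{6}-\overline{1}-\overline{1}-\overline{2}-\overline{2}=\overline{0}$, while the last two, being balanced sums of triple products, give $\overline{2}+\overline{2}-\overline{2}-\overline{2}=\overline{0}$. Hence all these products vanish and lie in $U_3$.

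For the remaining pairs, with $M\in(B_3\cup B_6)\setminus\{E_4^*JE_4^*\}$ and $N\in C_3$, I would expand $MN$ using the multiplication formulas already established in Lemmas~\ref{L;Lemma2.13}, \ref{L;Lemma2.14} and~\ref{L;Lemma2.15}, together with \eqref{Eq;2}, \eqref{Eq;3} and the congruence $n\equiv 4\pmod p$. The products of the diagonal terms $E_4^*A_gE_4^*$ with a generator of $C_3$ are governed by Lemmas~\ref{L;Lemma2.13} and~\ref{L;Lemma2.14}, while the genuinely new products, those of a triple term $E_4^*A_aE_b^*A_cE_4^*\in B_6$ with a triple term occurring inside a generator of $C_3$, are precisely the ``six-$E_4^*$'' products computed in Lemma~\ref{L;Lemma2.15}. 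In each case one reorganizes the resulting $\F$-linear combination of basis elements and checks that it lies in $\langle C_3\rangle_\F$; the role of $n\equiv 4\pmod p$ is to force the coefficient of the basis direction $E_4^*JE_4^*$ (the one direction transverse to $U_3$, since $p\nmid k_4=(n-1)(n-2)$ here) to cancel, so that the product lands in $U_3$ and not merely in $E_4^*TE_4^*$.

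The main obstacle is this last bookkeeping: there are many pairs $(M,N)$, and each expansion produces several triple-product terms, so the delicate point is to confirm uniformly that after applying Lemmas~\ref{L;Lemma2.13}--\ref{L;Lemma2.15} every product re-expresses as a combination of the five generators of $C_3$ with no surviving $E_4^*JE_4^*$ component. I expect this to reduce, after the substitution $n\equiv 4\pmod p$, to a short finite list of scalar identities among $\overline{1},\overline{2},\overline{3},\overline{6}$ (all nonzero since $p\neq 2,3$), exactly as in the analogous verifications for Lemmas~\ref{L;Lemma3.4} and~\ref{L;Lemma4.6}. In the small exceptional case $n=4$, where Corollary~\ref{C;Corollary2.17} furnishes the smaller basis $B_3\cup\{E_4^*A_1E_2^*A_3E_4^*\}$, the same scheme applies once the triple products are first rewritten via Lemma~\ref{L;Lemma1.20}, and the verification becomes a finite direct check.
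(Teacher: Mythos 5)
Your proposal is correct and follows essentially the same route as the paper: reduce to one-sided products $MN$ with $M$ ranging over the spanning set $B_3\cup B_6$ and $N\in C_3$ via transpose-stability, then verify membership in $U_3$ using \eqref{Eq;2}, \eqref{Eq;4}, Lemmas \ref{L;Lemma2.13}--\ref{L;Lemma2.15}, and the congruence $n\equiv 4\pmod p$. The only difference is at $n=4$, where the paper observes that $C_3$ collapses to $\{O\}$ by \eqref{Eq;3} and Lemma \ref{L;Lemma1.20}, so $U_3=\{O\}$ and the ideal property is immediate, whereas you propose redoing the (then vacuous) computation with the smaller basis.
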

\begin{proof}
Notice that $U_3\subseteq E_4^*TE_4^*$. If $n=4$, $U_3=\langle C_3\rangle_\F=\{O\}$ by \eqref{Eq;3} and the equalities in Lemma \ref{L;Lemma1.20}. Assume that $n>4$. If $M\in B_3\cup B_6$, then $M^T\in B_3\cup B_6$ by \eqref{Eq;1}. If $N\in U_3$, then $N^T\!\in\! U_3$ as $U_3\!=\!\langle C_3\rangle_\F$ and \eqref{Eq;1} holds. The desired lemma thus follows if $MN\in U_2$ for any $M\in B_3\cup B_6$ and $N\in C_3$. By combining \eqref{Eq;2}, \eqref{Eq;4}, the equalities in Lemmas \ref{L;Lemma2.13}, \ref{L;Lemma2.14}, \ref{L;Lemma2.15}, Theorem \ref{T;Insectionnumbers} (ii), (iii), the assumption $n\!\equiv\! 4\pmod p$, $MN\in U_2$ for any $B_3\cup B_6$ and $N\in C_3$. We are done.
\end{proof}
\begin{lem}\label{L;Lemma5.4}
If $n\equiv 4\pmod p$, the two-sided ideal $U_2$ of the $\F$-subalgebra $E_4^*TE_4^*$ of $T$ is nilpotent. In particular, $U_2\subseteq \mathrm{Rad}E_4^*TE_4^*$.
\end{lem}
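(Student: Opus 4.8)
First a point on the wording: as literally stated the result names $U_2=\langle C_2\rangle_\F$, but in the present range $p\neq 2,3$ and $n\equiv 4\pmod p$ one has $\langle C_2\rangle_\F=\langle E_4^*JE_4^*\rangle_\F$, which is \emph{not} nilpotent, since $(E_4^*JE_4^*)^2=\overline{k_4}\,E_4^*JE_4^*$ with $\overline{k_4}=\overline{n^2-3n+2}=\overline{6}\neq O$. The object actually meant is therefore the ideal $U_3=\langle C_3\rangle_\F$ of Notation \ref{N;Notation5.1}, shown to be a two-sided ideal of $E_4^*TE_4^*$ in Lemma \ref{L;Lemma5.3} (the symbol ``$U_2$'' being a slip, exactly as in the body of the proof of Lemma \ref{L;Lemma5.3}); I prove that this ideal is nilpotent. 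The global plan copies the two earlier nilpotency results, Lemmas \ref{L;Lemma3.5} and \ref{L;Lemma4.7}: since $U_3=\langle C_3\rangle_\F$ is a two-sided ideal, it suffices to show that the product of any three elements of $C_3$ is the zero matrix, so that $U_3^3=O$; the inclusion $U_3\subseteq\mathrm{Rad}E_4^*TE_4^*$ then follows at once from the description of the radical in Subsection \ref{S;Subsection3} as the nilpotent two-sided ideal containing every nilpotent two-sided ideal.

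The technical heart is the claim that $E_4^*JE_4^*M=ME_4^*JE_4^*=O$ for every $M\in C_3$, hence, by $\F$-linearity, for every $M\in U_3$. To prove it I would evaluate $E_4^*JE_4^*$ against each building block appearing in the generators, using Lemma \ref{L;Tripleproducts} (i) together with \eqref{Eq;4} and Theorem \ref{T;Insectionnumbers} (ii), (iii); this gives $E_4^*JE_4^*\cdot E_4^*=E_4^*JE_4^*$, $E_4^*JE_4^*\cdot E_4^*A_jE_4^*=\overline{n-3}\,E_4^*JE_4^*$, $E_4^*JE_4^*\cdot E_4^*A_gE_h^*A_iE_4^*=\overline{n-2}\,E_4^*JE_4^*$, and $(E_4^*JE_4^*)^2=\overline{k_4}\,E_4^*JE_4^*$. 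Specialising to $n\equiv 4\pmod p$ turns the scalars into $\overline{1}$, $\overline{2}$ and $\overline{6}$, and the coefficients defining each generator of $C_3$ are arranged precisely so that the net scalar telescopes to $0$ (for instance $\overline{6}-\overline{1}-\overline{1}-\overline{2}-\overline{2}=\overline{6}-\overline{6}=0$ on the first three generators, and $\overline{2}+\overline{2}-\overline{2}-\overline{2}=0$ on the two triple-product generators). The partner identity $ME_4^*JE_4^*=O$ then comes by transposing $E_4^*JE_4^*M^T=O$ via \eqref{Eq;1}, once one checks that $U_3$ is closed under transposition: the transpose of each generator of $C_3$ is again an $\F$-combination of elements of $C_3$.

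With the claim in hand I would finish the triple-product computation. For $M_1,M_2,M_3\in C_3$ the ideal property of Lemma \ref{L;Lemma5.3} puts $M_2M_3\in U_3$, so every $E_4^*JE_4^*$ summand occurring in $M_1$ annihilates $M_2M_3$; this collapses $M_1M_2M_3$ to a product of the $J$-free parts of the generators, which I would reduce term by term with the product formulas of Lemmas \ref{L;Lemma2.13}, \ref{L;Lemma2.14}, \ref{L;Lemma2.15}, again specialising every coefficient through $n\equiv 4\pmod p$. A useful intermediate identity is that the square of the generator $E_4^*A_1E_2^*A_3E_4^*+E_4^*A_2E_1^*A_3E_4^*-E_4^*A_3E_1^*A_2E_4^*-E_4^*A_3E_2^*A_1E_4^*$ equals $\overline{2}$ times the sum of the first three generators of $C_3$; after such a re-expansion, one further multiplication by $M_1$ produces only $E_4^*JE_4^*$-terms killed by the claim, together with matching triple-product terms that cancel in signed pairs.

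I expect the main obstacle to be organisational rather than conceptual: $C_3$ has five generators and Lemmas \ref{L;Lemma2.13}--\ref{L;Lemma2.15} record a dozen-odd product rules, so the verification fans out into many cases. The claim $E_4^*JE_4^*U_3=U_3E_4^*JE_4^*=O$ is exactly what keeps the analysis finite and short, since it discards outright every product in which a $J$-term surfaces; the only genuinely new bookkeeping is among the two triple-product generators, where the signed coefficient pattern of $C_3$ must be seen to force the remaining cancellations, and it is here that the hypotheses $p\neq 2,3$ and $n\equiv 4\pmod p$ are used to ensure the scalars $\overline{2}$ and $\overline{6}$ behave as required.
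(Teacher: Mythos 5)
Your proposal is correct and takes essentially the same route as the paper: you rightly diagnose the statement's ``$U_2$'' as a slip for $U_3=\langle C_3\rangle_\F$ (Corollary \ref{C;Corollary5.5} confirms this is what the paper means), you reduce nilpotency to the vanishing of triple products of elements of $C_3$ via Lemma \ref{L;Lemma5.3}, you isolate the annihilation claim $E_4^*JE_4^*M=ME_4^*JE_4^*=O$ for $M\in U_3$ exactly on the template of the paper's parallel Lemmas \ref{L;Lemma3.5} and \ref{L;Lemma4.7}, and you finish with the product formulas of Lemmas \ref{L;Lemma2.13}, \ref{L;Lemma2.14}, \ref{L;Lemma2.15} specialised at $n\equiv 4\pmod p$, which is precisely the citation chain in the paper's proof. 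The only cosmetic differences are that the paper disposes of $n=4$ separately through $C_3=\{O\}$ (via \eqref{Eq;3} and Lemma \ref{L;Lemma1.20}) whereas your computation is uniform in $n$, and your explicitly verified ingredients --- the scalar cancellations $\overline{6}-\overline{1}-\overline{1}-\overline{2}-\overline{2}=0$ and $\overline{2}+\overline{2}-\overline{2}-\overline{2}=0$, the transpose-closure of $\langle C_3\rangle_\F$, and the identity expressing the square of the signed generator as $\overline{2}$ times the sum of the first three generators --- all check out.
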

\begin{proof}
As $U_3=\langle C_3\rangle_\F$ and Lemma \ref{L;Lemma5.3} holds, the first statement follows if the product of any three elements in $C_3$ is the zero matrix. If $n=4$, $C_3=\{O\}$ by \eqref{Eq;3} and the equalities in Lemma \ref{L;Lemma1.20}. Assume that $n>4$. By combining \eqref{Eq;2}, \eqref{Eq;4}, the equalities in Lemmas \ref{L;Lemma2.13}, \ref{L;Lemma2.14}, \ref{L;Lemma2.15}, Theorem \ref{T;Insectionnumbers} (ii), (iii), the assumption $n\equiv 4\pmod p$, $M_1M_2M_3=O$ for any $M_1, M_2, M_3\in C_3$. The first statement follows. The second statement is from the first one. The desired lemma is proved.
\end{proof}
If $a\in \mathbb{N}$ and $b,c\in [1,a]$, recall the definitions of $E_{bc}(a)$ and $\delta_{bc}$ in Subsection \ref{S;Subsection4}.
\begin{cor}\label{C;Corollary5.5}
If $p\neq 2, 3$ and $n\equiv4\pmod p$, $E_4^*TE_4^*/\mathrm{Rad}E_4^*TE_4^*$ is isomorphic to $M_2(\F)\oplus M_1(\F)\oplus M_1(\F)$ as $\F$-algebras and $\mathrm{Rad}E_4^*TE_4^*=U_3$.
\end{cor}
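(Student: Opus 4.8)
The plan is to follow the template of Corollaries \ref{C;Corollary3.6}, \ref{C;Corollary4.8}, and \ref{C;Corollary4.9}. By Lemma \ref{L;Lemma5.4} the ideal $U_3$ is nilpotent and contained in $\mathrm{Rad}E_4^*TE_4^*$, so it is enough to produce an $\F$-algebra isomorphism $E_4^*TE_4^*/U_3\cong M_2(\F)\oplus M_1(\F)\oplus M_1(\F)$: the right-hand side is semisimple, and a nilpotent two-sided ideal whose quotient is semisimple is exactly the radical, whence $\mathrm{Rad}E_4^*TE_4^*=U_3$ will follow. The first task is to fix an $\F$-basis of the six-dimensional quotient. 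When $n>4$, the hypotheses $p\neq 2,3$ and $n\equiv 4\pmod p$ force $n\geq 32$, so $n>8$ and Lemma \ref{L;Lemma5.2} shows that the cosets of $E_4^*$, $E_4^*A_1E_2^*A_3E_4^*$, $E_4^*A_1E_3^*A_2E_4^*$, $E_4^*A_2E_1^*A_3E_4^*$, $E_4^*A_2E_3^*A_1E_4^*$, and $E_4^*JE_4^*$ form such a basis. The value $n=4$ is also admissible here; there $U_3=\{O\}$ and Lemma \ref{L;Lemma5.2} does not apply, so I would instead start from the basis $B_3\cup\{E_4^*A_1E_2^*A_3E_4^*\}$ of Corollary \ref{C;Corollary2.17} and use the Klein-four relations of Lemma \ref{L;Lemma1.20} to verify that the same six matrices are a basis. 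In both regimes one reduces every $E_4^*A_aE_4^*$ and every remaining triple product modulo $U_3$ to an $\F$-combination of these six cosets (via \eqref{Eq;3} and the defining relations of $C_3$), so the ensuing computation is uniform in $n$.

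Next I would exhibit generators of the three Wedderburn blocks. One $M_1(\F)$ summand is immediate from \eqref{Eq;4}: since $(E_4^*JE_4^*)^2=\overline{k_4}\,E_4^*JE_4^*$ with $\overline{k_4}=\overline{(n-1)(n-2)}=\overline{6}\neq 0$, the coset $\overline{6}^{\,-1}E_4^*JE_4^*+U_3$ is a central idempotent spanning a one-dimensional ideal, using $p\neq 2,3$ crucially. The remaining five-dimensional piece should be $M_2(\F)\oplus M_1(\F)$; conceptually the whole quotient is $\F\sym{3}$, which is semisimple precisely because $p\neq 2,3$, with the six triple products $E_4^*A_gE_h^*A_iE_4^*$ ($\{g,h,i\}=[1,3]$) playing the role of the group elements. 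I would write down explicit matrix units $M_{11},M_{12},M_{21},M_{22}$ and a complementary idempotent $N$ as $\F$-combinations of the six basis cosets, patterned on the $M_{ab}$ and $N$ displayed in Corollaries \ref{C;Corollary4.8} and \ref{C;Corollary4.9}.

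The verification is then mechanical. Using Lemmas \ref{L;Lemma2.13}, \ref{L;Lemma2.14}, \ref{L;Lemma2.15}, Lemma \ref{L;Tripleproducts} (i), and Theorem \ref{T;Insectionnumbers}, together with the congruence $n\equiv 4\pmod p$ (so that $\overline{n-4}=0$, $\overline{n-3}=\overline{1}$, and $\overline{n-2}=\overline{2}$), one checks $M_{gh}M_{rs}=\delta_{hr}M_{gs}$ for $g,h,r,s\in[1,2]$, that $N$ and $\overline{6}^{\,-1}E_4^*JE_4^*+U_3$ are orthogonal idempotents, and that both annihilate every $M_{ab}$. This yields $E_4^*TE_4^*/U_3=\langle\{M_{ab}:a,b\in[1,2]\}\rangle_\F\oplus\langle\{N\}\rangle_\F\oplus\langle\{\overline{6}^{\,-1}E_4^*JE_4^*+U_3\}\rangle_\F$, with the first summand carrying the $\F$-algebra isomorphism $M_{gh}\mapsto E_{gh}(2)$ onto $M_2(\F)$ and the other two isomorphic to $M_1(\F)$; the corollary follows.

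The main obstacle is the choice of the matrix units: whereas the $E_4^*JE_4^*$-block is canonical, the splitting of the remaining five dimensions into $M_2(\F)\oplus M_1(\F)$ is not, and the coefficients must be calibrated so that the correction terms $E_4^*$, $E_4^*A_gE_4^*$, and $E_4^*JE_4^*$ produced by Lemma \ref{L;Lemma2.15} cancel exactly when $n\equiv 4\pmod p$; here the invertibility of $\overline{2}$ and $\overline{6}$, i.e.\ the hypothesis $p\neq 2,3$, is indispensable. A secondary nuisance is simply that $n=4$ lies outside Lemma \ref{L;Lemma5.2} and so requires the separate basis check via Corollary \ref{C;Corollary2.17} and Lemma \ref{L;Lemma1.20}, after which the matrix-unit identities carry over verbatim because Lemmas \ref{L;Lemma2.13}--\ref{L;Lemma2.15} hold for every $n$.
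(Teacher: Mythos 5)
Your proposal is correct and follows essentially the same route as the paper: reduce to $E_4^*TE_4^*/U_3$ via Lemma \ref{L;Lemma5.4}, take exactly the six cosets of $E_4^*JE_4^*$, $E_4^*$, and the four triple products as a basis (Lemma \ref{L;Lemma5.2} for $n>4$, with the separate $n=4$ check via Corollary \ref{C;Corollary2.17} and Lemma \ref{L;Lemma1.20}), split off $\overline{6}^{-1}E_4^*JE_4^*$ as a central idempotent, and build $2\times 2$ matrix units plus a complementary rank-one idempotent verified through Lemmas \ref{L;Lemma2.13}--\ref{L;Lemma2.15}. The only substantive item left unexecuted is the explicit choice of the $M_{ab}$, which the paper supplies and which your calibration remarks correctly anticipate.
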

\begin{proof}
By Lemma \ref{L;Lemma5.4}, we just check that $E_4^*TE_4^*/U_3\cong M_2(\F)\oplus M_1(\F)\oplus M_1(\F)$ as $\F$-algebras. If $n=4$, set $C=\{M+U_3: M\in B_3\cup\{E_4^*A_1E_2^*A_3E_4^*\}\}$. If $n>4$, use $C$ to denote the set containing precisely $E_4^*JE_4^*+U_3$, $E_4^*+U_3$, $E_4^*A_1E_2^*A_3E_4^*+U_3$, $E_4^*A_1E_3^*A_2E_4^*+U_3$,
$E_4^*A_2E_1^*A_3E_4^*+U_3$, $E_4^*A_2E_3^*A_1E_4^*+U_3$. As $p\neq 2, 3$, define
\begin{align*}
& M_{11}=\overline{3}^{-1}(E_4^*JE_4^*-\overline{2}E_4^*A_1E_2^*A_3E_4^*-E_4^*A_1E_3^*A_2E_4^*)+U_3,\\
& M_{12}=\overline{3}^{-1}(E_4^*A_1E_2^*A_3E_4^*-E_4^*A_1E_3^*A_2E_4^*)+U_3,\\
& M_{21}=\overline{3}^{-1}(E_4^*A_2E_1^*A_3E_4^*-E_4^*A_2E_3^*A_1E_4^*)+U_3,\\
& M_{22}=\overline{3}^{-1}(E_4^*JE_4^*-\overline{2}E_4^*A_2E_1^*A_3E_4^*-E_4^*A_2E_3^*A_1E_4^*)+U_3.
\end{align*}
Put $M\!=\!\overline{6}^{-1}E_4^*JE_4^*+U_3$, $N\!=\!E_4^*-M-M_{11}-M_{22}$, $D\!=\!\{M_{ab}: a,b\in [1,2]\}\cup\{M, N\}$. If $n=4$, notice that $C_3=\{O\}$ by \eqref{Eq;3} and the equalities in Lemma \ref{L;Lemma1.20}. Hence $C$ is always an $\F$-basis of $E_4^*TE_4^*/U_3$ by Corollary \ref{C;Corollary2.17} and Lemma \ref{L;Lemma5.2}. By combining \eqref{Eq;2}, \eqref{Eq;3}, and Lemma \ref{L;Lemma1.20}, each element in $D$ is an $\F$-linear combination of the elements in $C$. Moreover, each element in $C$ is always an $\F$-linear combination of the elements in $D$. As $|C|=|D|$ and $C$ is an $\F$-basis of $E_4^*TE_4^*/U_3$, $D$ is an $\F$-basis of $E_4^*TE_4^*/U_3$. Hence $E_4^*TE_4^*/U_3=\langle\{M_{ab}: a,b \in [1,2]\}\rangle_\F\oplus\langle\{M\}\rangle_\F\oplus\langle\{N\}\rangle_\F$.

By combining \eqref{Eq;2}, \eqref{Eq;4}, the equalities in Lemma \ref{L;Lemma2.15}, Lemma \ref{L;Tripleproducts} (i), Theorem \ref{T;Insectionnumbers} (i), (ii), (iii), and the assumption $n\equiv 4\pmod p$, notice that $M_{gh}M_{rs}\!=\!\delta_{hr}M_{gs}$ for any $g, h, r, s\in [1,2]$. By combining \eqref{Eq;2}, \eqref{Eq;4}, Lemma \ref{L;Tripleproducts} (i), Theorem \ref{T;Insectionnumbers} (i), (ii), (iii), and the assumption $n\!\equiv \!4\pmod p$, notice that $M^2=M$, $N^2=N$, and $MN=NM=M_{gh}N=NM_{gh}=O$ for any $g, h\in [1,2]$. So $\langle\{M_{ab}: a, b\in [1,2]\}\rangle_\F$, $\langle\{M\}\rangle_\F$, $\langle\{N\}\rangle_\F$ are two-sided ideals of $T$. Furthermore, $\langle\{M\}\rangle_\F\!\cong\!\langle\{N\}\rangle_\F\!\cong\! M_1(\F)$ as $\F$-algebras. There exists an $\F$-algebra isomorphism from $\langle\{M_{ab}: a,b\in [1,2]\}\rangle_\F$ to $M_2(\F)$ that sends $M_{gh}$ to $E_{gh}(2)$ for any $g, h\!\in\! [1,2]$. The proof of the desired corollary is now complete.
\end{proof}
We next provide the following notation and some related properties.
\begin{nota}\label{N;Notation5.6}
Assume that $p\neq2, 3$. Let $H_{3,1}$ be the union of $\{E_0^*JE_a^*: a\in [0,4]\}$, $\{\overline{3}^{-1}E_a^*JE_b^*\!:a\!\in\! [1,3], b\!\in\! [0,4]\}$, $\{\overline{6}^{-1}E_4^*JE_a^*\!: a\!\in\! [0,4]\}$. Let $H_{3,2}$ denote the union of $\{E_a^*A_bE_c^*\!-\!\overline{3}^{-1}E_a^*JE_c^*\!: \{a, b, c\}\!\!=\!\![1,3]\}\cup\{E_a^*\!-\!\overline{3}^{-1}E_a^*JE_a^*: a\!\in \! [1,3]\}$, the set containing exactly \!$\overline{3}^{-1}(\overline{2}E_1^*A_3E_4^*\!+\!E_1^*A_2E_4^*\!-\!E_1^*JE_4^*), \overline{3}^{-1}(\overline{2}E_2^*A_3E_4^*\!+\!E_2^*A_1E_4^*\!-\!E_2^*JE_4^*),$ $\overline{3}^{-1}(\overline{2}E_1^*A_3E_4^*+E_1^*A_2E_3^*A_1E_4^*-E_1^*JE_4^*)$, $\overline{3}^{-1}(\overline{2}E_2^*A_3E_4^*+E_2^*A_1E_3^*A_2E_4^*-E_2^*JE_4^*)$,
$\overline{3}^{-1}(\overline{2}E_3^*A_1E_2^*A_3E_4^*+E_3^*A_2E_4^*-E_3^*JE_4^*)$, $\overline{3}^{-1}(\overline{2}E_3^*A_1E_2^*A_3E_4^*+E_3^*A_1E_4^*-E_3^*JE_4^*)$,
and the set containing exactly $\overline{3}^{-1}E_4^*JE_1^*-E_4^*A_1E_3^*A_2E_1^*$, $\overline{3}^{-1}E_4^*JE_2^*-E_4^*A_2E_3^*A_1E_2^*$, $\overline{3}^{-1}E_4^*JE_1^*\!-\!E_4^*A_2E_1^*, \overline{3}^{-1}E_4^*JE_2^*\!-\!E_4^*A_1E_2^*, \overline{3}^{-1}E_4^*JE_3^*\!-\!E_4^*A_1E_3^*, \overline{3}^{-1}E_4^*JE_3^*\!-\!E_4^*A_2E_3^*$, $\overline{3}^{-1}(E_4^*JE_4^*-\overline{2}E_4^*A_1E_2^*A_3E_4^*-E_4^*A_1E_3^*A_2E_4^*), \overline{3}^{-1}(E_4^*A_1E_3^*A_2E_4^*-E_4^*A_1E_2^*A_3E_4^*)$,
$\overline{3}^{-1}(E_4^*JE_4^*-\overline{2}E_4^*A_2E_1^*A_3E_4^*-E_4^*A_2E_3^*A_1E_4^*), \overline{3}^{-1}(E_4^*A_2E_3^*A_1E_4^*-E_4^*A_2E_1^*A_3E_4^*)$.
\end{nota}
\begin{lem}\label{L;Lemma5.7}
Assume that $p\neq 2, 3$ and $n\equiv 4\pmod p$. If $n>4$, the set containing precisely $\overline{6}^{-1}(\overline{2}E_4^*-E_4^*JE_4^*-\overline{4}E_4^*A_3E_4^*+\overline{2}E_4^*A_1E_3^*A_2E_4^*+\overline{2}E_4^*A_2E_3^*A_1E_4^*)$ and the elements in $C_3\cup D_3\cup H_{3,1}\cup H_{3,2}$ is an $\F$-basis of $T$. In particular, $C_3\cup D_3$ is an $\F$-basis of $V_3$.
\end{lem}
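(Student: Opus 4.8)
The plan is to run the change-of-basis argument of Lemmas~\ref{L;Lemma3.8}, \ref{L;Lemma3.9}, \ref{L;Lemma4.11}, and \ref{L;Lemma4.12}: compare the proposed set with the $\F$-basis $B$ of $T$ supplied by Corollary~\ref{C;Corollary2.16}. Write $C$ for the set consisting of the distinguished matrix $\overline{6}^{-1}(\overline{2}E_4^*-E_4^*JE_4^*-\overline{4}E_4^*A_3E_4^*+\overline{2}E_4^*A_1E_3^*A_2E_4^*+\overline{2}E_4^*A_2E_3^*A_1E_4^*)$ together with the elements of $C_3\cup D_3\cup H_{3,1}\cup H_{3,2}$. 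Since $G$ is an elementary abelian $2$-group, $n$ is a power of $2$, so $n>4$ forces $n\geq 8$; as $p\neq 2$, Notation~\ref{N;Notation1.21} gives $B=B_1\cup B_2\cup B_5\cup B_6$. Two facts set the stage: because $p\neq 2,3$ the scalars $\overline 2,\overline 3,\overline 6$ are units of $\F$, so every $\overline 3^{-1}$ and $\overline 6^{-1}$ occurring in $C$ is defined; and $n\equiv 4\pmod p$ together with Theorem~\ref{T;Insectionnumbers}~(i) gives $\overline{k_1}=\overline{k_2}=\overline{k_3}=\overline{n-1}=\overline 3$ and $\overline{k_4}=\overline{(n-1)(n-2)}=\overline 6$, whence $p\nmid k_ak_b$ for all $a,b\in[0,4]$, so $B_4=\varnothing$ and $V_3=\langle C_3\cup D_3\rangle_\F$ as in Notation~\ref{N;Notation5.1}.

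I would verify the two spanning containments. Each element of $C$ is a literal $\F$-combination of members of $B$, because every triple, four-factor, and two-$E_4^*$ product occurring in $C$ already lies in $B_1$, $B_5$, or $B_6$, and every $E_a^*JE_b^*$ lies in $B_2$; so $C\subseteq\langle B\rangle_\F$ with explicit coefficients. The substantive direction $B\subseteq\langle C\rangle_\F$ I would organise by families. The elements $E_a^*JE_b^*$ of $B_2$ come back from $H_{3,1}$ by rescaling with the units $\overline{k_a}$ (namely $\overline 1,\overline 3,\overline 6$ as $a=0$, $a\in[1,3]$, $a=4$). The intra-$[1,3]$ triples $E_a^*A_bE_c^*$ with $\{a,b,c\}=[1,3]$ and the diagonal $E_a^*$ ($a\in[1,3]$) come back by adding the relevant $\overline 3^{-1}E_a^*JE_c^*$ to a member of $H_{3,2}$. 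The mixed triples $E_a^*A_bE_4^*$ ($a\in[1,3]$) are entangled with the corresponding $B_5$ four-factor product and with $E_a^*JE_4^*$ across $H_{3,2}$ and $D_3$; for each fixed $a$ they are disentangled by inverting a $3\times 3$ system whose determinant equals $\overline 3$, hence is a unit since $p\neq 3$, and the transposed pieces $E_4^*A_bE_a^*$ follow symmetrically. Throughout, the relation $J=\sum_{j}A_j$ from \eqref{Eq;3} and the nonvanishing pattern of intersection numbers from Theorem~\ref{T;Insectionnumbers} via Lemma~\ref{L;Tripleproducts}~(iii) fix which summands occur.

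The delicate family is the two-$E_4^*$ part, which lives inside $E_4^*TE_4^*$. By Lemma~\ref{L;Lemma5.2} this subalgebra has the $\F$-basis $C_3\cup\{E_4^*JE_4^*,E_4^*,E_4^*A_1E_2^*A_3E_4^*,E_4^*A_1E_3^*A_2E_4^*,E_4^*A_2E_1^*A_3E_4^*,E_4^*A_2E_3^*A_1E_4^*\}$. Precisely eleven members of $C$ lie in $E_4^*TE_4^*$: the five of $C_3$, the distinguished matrix, the single $\overline 6^{-1}E_4^*JE_4^*$ from $H_{3,1}$, and the four fully $E_4^*$-sandwiched members of the last block of $H_{3,2}$. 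Holding $C_3$ fixed and reducing modulo $\langle C_3\rangle_\F$, I would check that the remaining six express the six non-$C_3$ generators of Lemma~\ref{L;Lemma5.2} through an invertible $6\times 6$ transformation; its matrix splits (after using the distinguished matrix to isolate $E_4^*$ and $\overline 6^{-1}E_4^*JE_4^*$ to isolate $E_4^*JE_4^*$) into two invertible $2\times 2$ blocks, so the transformation is invertible exactly because $\overline 2,\overline 3,\overline 6$ are units. This is the step I expect to be the main obstacle, since it is where $E_4^*A_3E_4^*$ and the two $B_6$ permutations absent from Lemma~\ref{L;Lemma5.2} must be reexpressed through $C_3$.

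With both containments in hand, $\langle C\rangle_\F=\langle B\rangle_\F=T$. A direct count gives $|C|=1+|C_3|+|D_3|+|H_{3,1}|+|H_{3,2}|=1+5+6+25+25=62$, while $|B|=|B_1|+|B_2|+|B_5|+|B_6|=25+25+6+6=62$ (the counts following from Lemma~\ref{L;Tripleproducts}~(iv) and Theorem~\ref{T;Insectionnumbers}); since these families are disjoint, $|C|=|B|$. As $B$ is an $\F$-basis of $T$, a spanning set of cardinality $\dim_\F T$ is a basis, so $C$ is an $\F$-basis of $T$, which is the first statement. For the ``in particular'' clause, $C_3\cup D_3\subseteq C$ is $\F$-linearly independent, and $V_3=\langle C_3\cup D_3\rangle_\F$ by Notation~\ref{N;Notation5.1}; hence $C_3\cup D_3$ is an $\F$-basis of $V_3$.
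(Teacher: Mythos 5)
Your proposal is correct and follows essentially the same route as the paper: both directions of spanning between the proposed set and the basis $B=B_1\cup B_2\cup B_5\cup B_6$ from Corollary \ref{C;Corollary2.16}, followed by the cardinality comparison $|C|=|B|$ (your count $62=62$ checks out, as does the reduction $n>4\Rightarrow n\geq 8$ and the invertibility of the small systems because $\overline{2},\overline{3},\overline{6}$ are units). The paper's own proof is just a terser version of this, delegating the containment $B\subseteq\langle C\rangle_\F$ to citations of \eqref{Eq;2}, \eqref{Eq;3}, Lemma \ref{L;Tripleproducts} (iii), and Theorem \ref{T;Insectionnumbers}, where you spell out the family-by-family bookkeeping.
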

\begin{proof}
Let $C$ be the set containing precisely the elements in $C_3\cup D_3\cup H_{3,1}\cup H_{3,2}$ and $\overline{6}^{-1}(\overline{2}E_4^*-E_4^*JE_4^*-\overline{4}E_4^*A_3E_4^*+\overline{2}E_4^*A_1E_3^*A_2E_4^*+\overline{2}E_4^*A_2E_3^*A_1E_4^*)$. By Corollary \ref{C;Corollary2.16}, each element in $C$ is an $\F$-linear combination of the elements in $B$. As $p\neq 2$ and $n>4$, $B=B_1\cup B_2\cup B_5\cup B_6$. By combining \eqref{Eq;2}, \eqref{Eq;3}, Lemma \ref{L;Tripleproducts} (iii), Theorem \ref{T;Insectionnumbers} (i), (ii), and (iii), each element in $B$ is an $\F$-linear combination of the elements in $C$. As $|B|=|C|$ and Corollary \ref{C;Corollary2.16} holds, the first statement follows. As $V_3\!=\!\langle C_3\cup D_3\rangle_\F$, the second statement is from the first one. We are done.
\end{proof}
\begin{lem}\label{L;Lemma5.8}
Assume that $p\neq 2, 3$ and $n\equiv 4\pmod p$. If $n=4$, the set containing precisely $\overline{6}^{-1}(\overline{2}E_4^*-E_4^*JE_4^*-\overline{4}E_4^*A_3E_4^*+\overline{2}E_4^*A_1E_3^*A_2E_4^*+\overline{2}E_4^*A_2E_3^*A_1E_4^*)$ and the elements in $H_{3,1}\cup H_{3,2}$ is an $\F$-basis of $T$. Moreover, $V_3=\{O\}$.
\end{lem}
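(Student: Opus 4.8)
The plan is to follow the template of Lemma \ref{L;Lemma5.7} essentially verbatim, the only genuinely new feature being that for a Klein four group the triple-product matrices generating $C_3$ and $D_3$ collapse to zero. Concretely, I would let $C$ denote the proposed set and run the usual two-way linear-combination comparison against the basis $B$ supplied by Corollary \ref{C;Corollary2.16}. By Notation \ref{N;Notation1.21}, for $n=4$ one has $B=B_1\cup B_2\cup\{E_4^*A_1E_2^*A_3E_4^*\}$, so Corollary \ref{C;Corollary2.16} both identifies $B$ as an $\F$-basis of $T$ and shows at once that every element of $C$ lies in $\langle B\rangle_\F$. The substantive direction is the reverse inclusion, namely writing each element of $B$ as an $\F$-linear combination of the elements of $C$. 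For the members of $B_1\cup B_2$ carrying at most one $E_4^*$ this is the same computation as in Lemma \ref{L;Lemma5.7}, done with \eqref{Eq;2}, \eqref{Eq;3}, Lemma \ref{L;Tripleproducts} (iii), and Theorem \ref{T;Insectionnumbers}, recovering each such $E_a^*A_bE_c^*$ from the $H_{3,1}\cup H_{3,2}$ entries; the remaining element $E_4^*A_1E_2^*A_3E_4^*$ and the diagonal matrices $E_4^*A_jE_4^*$ are recovered from the single scalar-adjusted generator of $C$ together with Lemma \ref{L;Lemma1.20}. Once both inclusions hold and $|C|=|B|$ is checked, Corollary \ref{C;Corollary2.16} gives that $C$ is an $\F$-basis of $T$.

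For the ``moreover'' clause I would show that $C_3$ and $D_3$ both reduce to $\{O\}$ when $G$ is a Klein four group. For $C_3$ this is exactly the reduction already recorded in the proofs of Lemmas \ref{L;Lemma5.3} and \ref{L;Lemma5.4}: expanding each $E_4^*JE_4^*$ by \eqref{Eq;3} and substituting the five equalities of Lemma \ref{L;Lemma1.20} turns every generator of $C_3$ into the zero matrix. For $D_3$, expanding $E_g^*JE_4^*$ by \eqref{Eq;3} and discarding the terms $E_g^*A_0E_4^*$ (zero by \eqref{Eq;2}) and $E_g^*A_gE_4^*$ (zero by Theorem \ref{T;Insectionnumbers} (ii) and Lemma \ref{L;Tripleproducts} (iii)) leaves the term $E_g^*A_4E_4^*$ matched against the triple product $E_g^*A_hE_i^*A_gE_4^*$, which Lemma \ref{L;Lemma1.5} (i) identifies with $E_g^*A_4E_4^*$; the two cancel, so each generator of $D_3$ vanishes and its transpose is then also zero. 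Hence $V_3=\langle C_3\cup D_3\rangle_\F=\{O\}$.

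The argument is essentially all bookkeeping, so I do not expect a deep obstacle; the point that must be handled with care is precisely the Klein four collapse, since the substitutions from Lemmas \ref{L;Lemma1.20} and \ref{L;Lemma1.5} are exactly what distinguishes this case from $n>4$, making $C_3\cup D_3$ vanish and thereby removing the eleven generators present in Lemma \ref{L;Lemma5.7}. One must then independently confirm the cardinality equality $|C|=|B|$ for $n=4$. The hypothesis $p\neq 2,3$ enters to keep $\overline{6}$ invertible---note that $k_4=n^2-3n+2=6$ when $n=4$---so that the distinguished generator $\overline{6}^{-1}(\overline{2}E_4^*-E_4^*JE_4^*-\overline{4}E_4^*A_3E_4^*+\overline{2}E_4^*A_1E_3^*A_2E_4^*+\overline{2}E_4^*A_2E_3^*A_1E_4^*)$ of $C$ is well defined and the rearrangements recovering the matrices $E_4^*A_jE_4^*$ remain legitimate.
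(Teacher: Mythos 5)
Your proposal is correct and takes essentially the same route as the paper: both directions of the linear-combination comparison against $B=B_1\cup B_2\cup\{E_4^*A_1E_2^*A_3E_4^*\}$ via Corollary \ref{C;Corollary2.16}, the cardinality count, and the collapse $V_3=\{O\}$ obtained by expanding $J$ with \eqref{Eq;3} and substituting the Klein-four identities of Lemmas \ref{L;Lemma1.5} and \ref{L;Lemma1.20}. Your explicit cancellation of each generator of $C_3$ and $D_3$ is exactly the computation the paper's proof cites implicitly.
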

\begin{proof}
Use $C$ to denote the set containing precisely the elements in $H_{3,1}\cup H_{3,2}$ and
$\overline{6}^{-1}(\overline{2}E_4^*-E_4^*JE_4^*-\overline{4}E_4^*A_3E_4^*+\overline{2}E_4^*A_1E_3^*A_2E_4^*+\overline{2}E_4^*A_2E_3^*A_1E_4^*)$. By Corollary \ref{C;Corollary2.16},
each element in $C$ is an $\F$-linear combination of the elements in $B$. As $n=4$, notice that $B=B_1\cup B_2\cup\{E_4^*A_1E_2^*A_3E_4^*\}$. By combining \eqref{Eq;2}, \eqref{Eq;3}, the equalities in Lemmas \ref{L;Lemma1.5}, \ref{L;Lemma1.20}, Lemma \ref{L;Tripleproducts} (iii), Theorem \ref{T;Insectionnumbers} (i), (ii), (iii), notice that $V_3=\{O\}$ and each element in $B$ is an $\F$-linear combination of the elements in $C$. Hence $C$ is an $\F$-basis of $T$ by the equality $|B|=|C|$ and Corollary \ref{C;Corollary2.16}. We are done.
\end{proof}
The following results describe $T$ for the case $p\neq 2, 3$ and $n\equiv 4\pmod p$.
\begin{lem}\label{L;Lemma5.9}
Assume that $n\equiv4\pmod p$. Then $V_3$ is a two-sided ideal of $T$.
\end{lem}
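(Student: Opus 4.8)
The plan is to mirror the proofs of Lemmas \ref{L;Lemma3.11} and \ref{L;Lemma4.14}. I would first record the two structural facts that make the verification manageable: that $T$ is generated as a unital $\F$-algebra by $B_1\cup B_2$ (Lemma \ref{L;Lemma3.10}), and that both $T$ and $V_3$ are closed under transposition. For $T$ this is immediate from the definition of $T$ and \eqref{Eq;1}, while $N\in V_3\Rightarrow N^T\in V_3$ follows once one checks that the spanning set $C_3\cup D_3$ is stable under transposition. The set $D_3$ is visibly transpose-closed, being defined to contain three elements ``and their transposes''; for $C_3$ a short calculation with $(E_4^*A_aE_b^*A_cE_4^*)^T=E_4^*A_cE_b^*A_aE_4^*$ shows that the transpose of each listed element is again an $\F$-linear combination of the five (the fourth and fifth are antisymmetric, and the transposes of the first three differ from themselves only by the fourth and fifth).

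Granting these, I would reduce the claim to a single family of products. Since $V_3$ is a subspace and $T=\langle B_1\cup B_2\rangle_\F$, the inclusion $TV_3\subseteq V_3$ is equivalent to $MN\in V_3$ for every $M\in B_1\cup B_2$ and $N\in C_3\cup D_3$. The opposite inclusion is then free by transposition: $V_3T=(TV_3)^T\subseteq V_3^T=V_3$. Before the case analysis I would dispose of $n=4$ at once, since there $V_3=\{O\}$ by Lemma \ref{L;Lemma5.8} and the statement is trivial; hence I may assume $n>4$.

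Next I would split the verification of $MN\in V_3$ by the shape of $M$ and $N$. When $M\in B_2$, say $M=E_a^*JE_b^*$, most products vanish because the inner dual idempotents fail to match ($E_b^*E_c^*=\delta_{bc}E_b^*$ by \eqref{Eq;2}); the surviving products collapse, via Lemma \ref{L;Tripleproducts}(i), \eqref{Eq;4}, and Theorem \ref{T;Insectionnumbers}, to scalar multiples of some $E_a^*JE_c^*$, and the key point is that the accumulated scalar vanishes precisely because $n\equiv4\pmod p$ forces $k_1=n-1\equiv3$ and $k_4=(n-1)(n-2)\equiv6$ (for instance the coefficient on $E_a^*JE_c^*$ turns out to be $\overline{k_4}-\overline{6}$ or $\overline{k_1}-\overline{3}$); alternatively the part living in $E_4^*TE_4^*$ is absorbed directly by Lemma \ref{L;Lemma5.3}. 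When $M\in B_1$ and $N\in C_3\cup D_3$, I would expand $MN$ using the product formulas of Section 4, chiefly Lemmas \ref{L;Lemma2.3}, \ref{L;Lemma2.5}, \ref{L;Lemma2.6}, \ref{L;Lemma2.7}, \ref{L;Lemma2.8}, \ref{L;Lemma2.9}, \ref{L;Lemma2.10}, \ref{L;Lemma2.11}, \ref{L;Lemma2.12}, \ref{L;Lemma2.13}, \ref{L;Lemma2.14}, \ref{L;Lemma2.15}, together with \eqref{Eq;2}, and confirm that after the substitution $\overline{n}=\overline{4}$ each expansion regroups into an $\F$-linear combination of members of $C_3\cup D_3$.

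The hard part will be precisely this last bookkeeping: the formulas of Section 4 produce a scatter of terms $E_a^*A_bE_c^*$, $E_a^*JE_c^*$, and triple products $E_4^*A_gE_h^*A_iE_4^*$, and one must verify that under $n\equiv4\pmod p$ they always recombine into the specific differences defining $C_3$ and $D_3$ rather than into stray elements of $T$ outside $V_3$. The congruence is exactly the relation that forces the coefficients of the ``pure'' idempotent and valency terms to cancel, leaving only the prescribed combinations. I would organize the computation by first treating the $E_4^*TE_4^*$ block, where Lemma \ref{L;Lemma5.3} already guarantees closure, and then the mixed blocks $E_a^*TE_4^*$ arising from $D_3$, reducing each product to a previously identified generator of $V_3$.
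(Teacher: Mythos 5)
Your proposal matches the paper's proof in every essential respect: dispose of $n=4$ via Lemma \ref{L;Lemma5.8}, use Lemma \ref{L;Lemma3.10} and closure of $V_3$ under transposition to reduce to showing $MN\in V_3$ for $M\in B_1\cup B_2$ and $N\in C_3\cup D_3$, and then verify these products with the computational lemmas of Section 4 together with the congruence $n\equiv 4\pmod p$. The extra detail you supply (the explicit transpose-stability check for $C_3$ and the organization of the bookkeeping) is consistent with, and slightly more explicit than, what the paper records.
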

\begin{proof}
Notice that $V_3\subseteq T$. If $n=4$, then the desired lemma is from Lemma \ref{L;Lemma5.8}. Assume that $n>4$ in this proof. By the definition of $T$, notice that $M^T\in T$ if $M\in T$. If $N\in V_3$, then $N^T\in V_3$ as $V_3=\langle C_3\cup D_3\rangle_\F$ and \eqref{Eq;1} holds. By Lemma \ref{L;Lemma3.10}, the desired lemma is proved if $MN\in V_3$ for any $M\in B_1\cup B_2$ and $N\in C_3\cup D_3$. By combining \eqref{Eq;2}, Lemma \ref{L;Tripleproducts} (i), the equalities in Lemmas \ref{L;Lemma2.3}, \ref{L;Lemma2.5}, \ref{L;Lemma2.6}, \ref{L;Lemma2.7}, \ref{L;Lemma2.8}, \ref{L;Lemma2.9}, \ref{L;Lemma2.10}, \ref{L;Lemma2.11}, \ref{L;Lemma2.12}, \ref{L;Lemma2.13}, \ref{L;Lemma2.14}, Theorem \ref{T;Insectionnumbers} (i), (ii), (iii), and the assumption $n\equiv 4\pmod p$, note that $MN\!\in\! V_3$ for any $M\!\in\! B_1\cup B_2$ and $N\!\in\! C_3\cup D_3$. We are done. 
\end{proof}
\begin{lem}\label{L;Lemma5.10}
Assume that $n\equiv 4\pmod p$. Then $V_3\subseteq\mathrm{Rad}T$. In particular, $V_3$ is a nilpotent two-sided ideal of $T$.
\end{lem}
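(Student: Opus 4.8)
The plan is to follow the pattern of Lemmas~\ref{L;Lemma3.12} and \ref{L;Lemma4.15}. Since $V_3=\langle C_3\cup D_3\rangle_\F$, the first statement $V_3\subseteq\mathrm{Rad}T$ reduces to proving $C_3\cup D_3\subseteq\mathrm{Rad}T$, and the second statement is then immediate: $V_3$ is a two-sided ideal of $T$ by Lemma~\ref{L;Lemma5.9}, and a two-sided ideal contained in the nilpotent ideal $\mathrm{Rad}T$ is itself nilpotent, because the product of any $m$ of its elements is a product of $m$ elements of $\mathrm{Rad}T$. The inclusion $C_3\subseteq\mathrm{Rad}T$ is the easy half: $E_4^*$ is idempotent, so Lemma~\ref{L;Radical} gives $\mathrm{Rad}E_4^*TE_4^*=E_4^*(\mathrm{Rad}T)E_4^*\subseteq\mathrm{Rad}T$, and combining this with Lemma~\ref{L;Lemma5.4} yields $C_3\subseteq U_3\subseteq\mathrm{Rad}E_4^*TE_4^*\subseteq\mathrm{Rad}T$.

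The substantive step is $D_3\subseteq\mathrm{Rad}T$. Because $M\in\mathrm{Rad}T$ if and only if $M^T\in\mathrm{Rad}T$, and $D_3$ consists of three generators together with their transposes, it suffices to place the three forward generators in $\mathrm{Rad}T$. I would do this by exploiting that $\mathrm{Rad}T$ is a two-sided ideal already containing $C_3$: every product of an element of $T$ with an element of $C_3$ lies in $\mathrm{Rad}T$, so I would exhibit each forward generator of $D_3$ as an $\F$-linear combination of such products. Concretely, I would left-multiply members of $C_3$ by suitable elements $E_g^*A_hE_4^*$ of $B_1$ and reduce the resulting expressions using \eqref{Eq;2}, Lemma~\ref{L;Tripleproducts}(i), and the product formulas of Lemmas~\ref{L;Lemma2.8}, \ref{L;Lemma2.12}, \ref{L;Lemma2.13}, and \ref{L;Lemma2.14}. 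Here the hypothesis $n\equiv4\pmod p$ is indispensable: it forces the coefficients $\overline{n-2}$, $\overline{n-3}$, $\overline{n-4}$ occurring in those formulas to take the precise residues that make the unwanted terms cancel.

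The main obstacle is this cancellation bookkeeping, which is more delicate than in Lemmas~\ref{L;Lemma3.12} and \ref{L;Lemma4.15}. In those cases $V_1$ and $V_2$ contained $B_4$, so leftover terms of the form $E_g^*JE_4^*$ could simply be absorbed into the ideal; here $V_3$ contains no such elements (indeed $p\nmid k_1k_4$ when $p\neq2,3$ and $n\equiv4\pmod p$, since $k_1k_4\equiv18$), so every $E_g^*JE_4^*$ contribution must cancel on the nose. The delicate task is therefore to choose, for each forward generator, the right left factor(s) and the right combination of $C_3$ elements so that all $E_g^*JE_4^*$ terms and all spurious triple products vanish and exactly the desired $D_3$ element remains; it is precisely the congruence $n\equiv4\pmod p$ (making, for instance, $\overline{n-2}=\overline2$ and hence $\overline{n-2}-\overline1-\overline1=\overline0$) that produces the vanishing coefficient combinations. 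Once the three forward generators are realized in this way, transpose-closure of $\mathrm{Rad}T$ furnishes the remaining three, giving $D_3\subseteq\mathrm{Rad}T$ and hence $V_3\subseteq\mathrm{Rad}T$.
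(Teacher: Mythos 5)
Your proposal is correct and follows essentially the same route as the paper: the paper likewise gets $C_3\subseteq U_3\subseteq\mathrm{Rad}E_4^*TE_4^*\subseteq\mathrm{Rad}T$ from Lemmas \ref{L;Lemma5.4} and \ref{L;Radical}, then realizes the three forward generators of $D_3$ as the products $E_1^*A_2E_4^*M$, $E_2^*A_1E_4^*M$, $E_3^*A_1E_4^*M$ with $M=E_4^*A_3E_1^*A_2E_4^*+E_4^*A_3E_2^*A_1E_4^*-E_4^*A_1E_2^*A_3E_4^*-E_4^*A_2E_1^*A_3E_4^*\in C_3$ (verified via \eqref{Eq;2}, \eqref{Eq;3}, Lemmas \ref{L;Lemma2.12}, \ref{L;Lemma2.4}, Lemma \ref{L;Tripleproducts} (iii), and $n\equiv4\pmod p$), handles the transposes by transpose-closure of $\mathrm{Rad}T$, and deduces nilpotency from Lemma \ref{L;Lemma5.9}. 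The only cosmetic difference is that the paper dispatches $n=4$ separately via Lemma \ref{L;Lemma5.8} (where $V_3=\{O\}$).
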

\begin{proof}
The case $n=4$ is from Lemma \ref{L;Lemma5.8}. Assume that $n>4$ in this proof. As $V_3=\langle C_3\cup D_3\rangle_\F$, the first statement follows if $C_3\cup D_3\subseteq \mathrm{Rad}T$. By Lemmas \ref{L;Lemma5.4} and \ref{L;Radical}, $C_3\subseteq U_3\subseteq\mathrm{Rad}E_4^*TE_4^*\subseteq \mathrm{Rad}T$. It is enough to check that $D_3\subseteq \mathrm{Rad}T$. Set $M=E_4^*A_3E_1^*A_2E_4^*\!+\!E_4^*A_3E_2^*A_1E_4^*\!-\!E_4^*A_1E_2^*A_3E_4^*\!-\!E_4^*A_2E_1^*A_3E_4^*\in C_3$. Notice that $D_3$ contains exactly $E_1^*A_2E_4^*M$, $E_2^*A_1E_4^*M$, $E_3^*A_1E_4^*M$ and their transposes by combining \eqref{Eq;2}, \eqref{Eq;3}, the equalities in Lemmas \ref{L;Lemma2.12}, \ref{L;Lemma2.4}, Lemma \ref{L;Tripleproducts} (iii), and the
assumption $n\equiv 4\pmod p$. So $D_3\subseteq\mathrm{Rad}T$ as $N^T\in \mathrm{Rad}T$ if $N\in \mathrm{Rad}T$. The first statement follows.
The second statement is from Lemma \ref{L;Lemma5.9} and the first one.
\end{proof}
If $a\in \mathbb{N}$ and $b,c\in [1,a]$, recall the definitions of $E_{bc}(a)$ and $\delta_{bc}$ in Subsection \ref{S;Subsection4}.
\begin{lem}\label{L;Lemma5.11}
Assume that $p\!\neq\! 2, 3$ and $n\!\equiv\! 4\pmod p$. Then $\langle\{M\!+\!V_3: M\!\in\! H_{3,1}\}\rangle_\F$ is a two-sided ideal of the $\F$-algebra $T/V_3$ that is isomorphic to $M_5(\F)$ as $\F$-algebras.
\end{lem}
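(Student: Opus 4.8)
The plan is to produce a full system of $5\times 5$ matrix units inside $\langle\{M+V_3:M\in H_{3,1}\}\rangle_\F$ and to identify this subalgebra with the image of the ideal $\langle B_2\rangle_\F$. For $g,h\in[1,5]$ I set $M_{gh}=c_{g-1}E_{g-1}^*JE_{h-1}^*+V_3$, where $c_0=1$, $c_1=c_2=c_3=\overline{3}^{-1}$ and $c_4=\overline{6}^{-1}$; these constants are available because $p\neq 2,3$ makes $\overline{3}$ and $\overline{6}$ invertible in $\F$. By the definition of $H_{3,1}$ in Notation \ref{N;Notation5.6}, the twenty-five elements $M_{gh}$ are exactly the cosets $\{M+V_3:M\in H_{3,1}\}$. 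First I would invoke Lemma \ref{L;Lemma5.7} for $n>4$ and Lemma \ref{L;Lemma5.8} for $n=4$ (where $V_3=\{O\}$) to see that $H_{3,1}$ sits inside an $\F$-basis of $T$ that is disjoint from the generators $C_3\cup D_3$ of $V_3$; hence $\{M_{ab}:a,b\in[1,5]\}$ is an $\F$-basis of $\langle\{M+V_3:M\in H_{3,1}\}\rangle_\F$, which is therefore $25$-dimensional.

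Next I would compute the products. Since $S$ is symmetric, Theorem \ref{T;Insectionnumbers}(i) gives $k_0=1$, $k_1=k_2=k_3=n-1$ and $k_4=(n-1)(n-2)$, so $n\equiv 4\pmod p$ yields $\overline{k_0}=1$, $\overline{k_1}=\overline{k_2}=\overline{k_3}=\overline{3}$ and $\overline{k_4}=\overline{6}$. The normalization is chosen precisely so that $c_b\,\overline{k_b}=1$ for every $b\in[0,4]$. Using $E_{h-1}^*E_{r-1}^*=\delta_{hr}E_{h-1}^*$ from \eqref{Eq;2} and $JE_{h-1}^*J=\overline{k_{h-1}}J$ from \eqref{Eq;4}, I obtain, by computing with the representatives in $T$ and reducing modulo $V_3$,
\[
M_{gh}M_{rs}=\delta_{hr}\,c_{g-1}c_{h-1}\,\overline{k_{h-1}}\,E_{g-1}^*JE_{s-1}^*+V_3=\delta_{hr}M_{gs}.
\]
This matrix-unit relation shows at once that $\langle\{M+V_3:M\in H_{3,1}\}\rangle_\F$ is closed under multiplication and that the $\F$-linear map sending $M_{gh}$ to $E_{gh}(5)$ is an $\F$-algebra isomorphism onto $M_5(\F)$.

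It remains to establish the ideal property, for which I would argue at the level of $T$. Because the constants $c_b$ are units, $\langle H_{3,1}\rangle_\F=\langle B_2\rangle_\F$ with $B_2=\{E_a^*JE_b^*:a,b\in[0,4]\}$. By Lemma \ref{L;Lemma3.10} the algebra $T$ is generated by $B_1\cup B_2$, so it suffices to show that $\langle B_2\rangle_\F$ absorbs left and right multiplication by these generators. For a generator $E_c^*A_dE_e^*\in B_1$ the two identities of Lemma \ref{L;Tripleproducts}(i) collapse the products, giving $E_c^*A_dE_e^*\cdot E_a^*JE_b^*=\delta_{ea}\,\overline{p_{ad}^c}\,E_c^*JE_b^*$ and $E_a^*JE_b^*\cdot E_c^*A_dE_e^*=\delta_{bc}\,\overline{p_{cd}^e}\,E_a^*JE_e^*$, each a scalar multiple of an element of $B_2$; products with elements of $B_2$ collapse the same way through \eqref{Eq;4}. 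Hence $\langle B_2\rangle_\F$ is a two-sided ideal of $T$, and passing to $T/V_3$ its image $\langle\{M+V_3:M\in H_{3,1}\}\rangle_\F$ is a two-sided ideal of $T/V_3$. The whole argument is a routine application of the cited identities; the only delicate point is the bookkeeping of the constants $c_b$ together with the verification that $n\equiv 4\pmod p$ forces each $c_b\,\overline{k_b}=1$, which is exactly what produces the clean matrix-unit multiplication and pins down the isomorphism type $M_5(\F)$.
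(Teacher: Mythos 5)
Your proposal is correct and follows essentially the same route as the paper: the same rescaled elements $c_{g-1}E_{g-1}^*JE_{h-1}^*+V_3$ (with $c_0=1$, $c_1=c_2=c_3=\overline{3}^{-1}$, $c_4=\overline{6}^{-1}$), linear independence via Lemmas \ref{L;Lemma5.7} and \ref{L;Lemma5.8}, the matrix-unit relations from \eqref{Eq;2}, \eqref{Eq;4} and the valencies $\overline{k_0}=1$, $\overline{k_1}=\overline{k_2}=\overline{k_3}=\overline{3}$, $\overline{k_4}=\overline{6}$ under $n\equiv 4\pmod p$, and the ideal property from Lemma \ref{L;Tripleproducts} (i). The only cosmetic difference is that you spell out the absorption argument via Lemma \ref{L;Lemma3.10} and the identification $\langle H_{3,1}\rangle_\F=\langle B_2\rangle_\F$, which the paper leaves implicit.
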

\begin{proof}
As $p\neq 2, 3$, define $M_{gh}=\overline{3}^{-1}E_{g-1}^*JE_{h-1}^*+V_3$, $M_{5h}=\overline{6}^{-1}E_4^*JE_{h-1}^*+V_3$, and $M_{1h}=E_0^*JE_{h-1}^*+V_3$ for any $g\in [2,4]$ and $h\in [1,5]$. Hence $\langle\{M+V_3: M\in H_{3,1}\}\rangle_\F$ has an $\F$-basis $\{M_{ab}: a,b\in [1,5]\}$ by Lemmas \ref{L;Lemma5.7} and \ref{L;Lemma5.8}. Therefore $T/V_3$ has a two-sided ideal $\langle\{M\!+\!V_3: M\in H_{3,1}\}\rangle_\F$ by Lemma \ref{L;Tripleproducts} (i). By combining \eqref{Eq;2}, \eqref{Eq;4}, Theorem \ref{T;Insectionnumbers} (i), (ii), and the assumption $n\equiv 4\pmod p$, $M_{gh}M_{rs}=\delta_{hr}M_{gs}$ for any $g, h,r,s\in [1,5]$. So there is an $\F$-algebra isomorphism from $\langle\{M\!+\!V_3: M\!\in \!H_{3,1}\}\rangle_\F$ to $M_5(\F)$ that sends $M_{gh}$ to $E_{gh}(5)$ for any $g, h\in [1,5]$. We are done.
\end{proof}
We are now ready to close this section by presenting the main result of this section.
\begin{cor}\label{C;Corollary5.12}
If $p\!\neq\!2, 3$, $n\!\equiv\!4\pmod p$, then $T/\mathrm{Rad}T\!\cong\! M_5(\F)\oplus M_5(\F)\oplus M_1(\F)$ as $\F$-algebras and $\mathrm{Rad}T=V_3$.
\end{cor}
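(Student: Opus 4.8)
The plan is to follow the template of Corollaries \ref{C;Corollary4.17} and \ref{C;Corollary5.5}. By Lemma \ref{L;Lemma5.10} the ideal $V_3$ is a nilpotent two-sided ideal contained in $\mathrm{Rad}T$, so it suffices to produce an $\F$-algebra isomorphism $T/V_3\cong M_5(\F)\oplus M_5(\F)\oplus M_1(\F)$; once the quotient is shown to be semisimple, the containment $V_3\subseteq\mathrm{Rad}T$ upgrades to the equality $\mathrm{Rad}T=V_3$. Since $C_3\cup D_3\subseteq V_3$, every element of $C_3\cup D_3$ becomes zero in $T/V_3$, and Lemmas \ref{L;Lemma5.7} and \ref{L;Lemma5.8} then exhibit $\{M+V_3:M\in H_{3,1}\cup H_{3,2}\cup\{N\}\}$ as an $\F$-basis of $T/V_3$, where $N$ is the matrix $\overline{6}^{-1}(\overline{2}E_4^*-E_4^*JE_4^*-\overline{4}E_4^*A_3E_4^*+\overline{2}E_4^*A_1E_3^*A_2E_4^*+\overline{2}E_4^*A_2E_3^*A_1E_4^*)$. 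This is a $25+25+1$ splitting of the basis, matching the three target summands.

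First I would dispose of the two easy pieces. By Lemma \ref{L;Lemma5.11} the subspace $\langle\{M+V_3:M\in H_{3,1}\}\rangle_\F$ is already a two-sided ideal of $T/V_3$ isomorphic to $M_5(\F)$, furnished with its own system of matrix units $M_{gh}^{(1)}$; this realizes the first $M_5(\F)$ summand. For the $M_1(\F)$ summand I would verify directly, using \eqref{Eq;2}, \eqref{Eq;4}, Lemma \ref{L;Tripleproducts} (i), Theorem \ref{T;Insectionnumbers}, and the hypothesis $n\equiv 4\pmod p$, that $N+V_3$ is a nonzero idempotent that annihilates all of $H_{3,1}$ and $H_{3,2}$ modulo $V_3$, so that $\langle\{N+V_3\}\rangle_\F\cong M_1(\F)$ is a two-sided ideal orthogonal to the other blocks.

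The bulk of the work is the second $M_5(\F)$ summand, carved out of $H_{3,2}$. Its five indices are not five distinct layers: indices $1,2,3$ sit in the layers $E_1^*,E_2^*,E_3^*$, whereas indices $4$ and $5$ are two orthogonal idempotents living \emph{inside} the single large layer $E_4^*$ (this is exactly the $M_2(\F)$ corner of $E_4^*TE_4^*$ found in Corollary \ref{C;Corollary5.5}, while $E_0^*$ does not meet this block at all). Accordingly I would assemble a full system of matrix units $M_{gh}^{(2)}$ for $g,h\in[1,5]$ out of the twenty-five generators of $H_{3,2}$: the nine elements $E_a^*A_bE_c^*-\overline{3}^{-1}E_a^*JE_c^*$ and $E_a^*-\overline{3}^{-1}E_a^*JE_a^*$ give the $3\times 3$ sub-block on layers $1,2,3$; the two families of six listed products connecting the small layers to $E_4^*$ give the column entries $M_{g4}^{(2)},M_{g5}^{(2)}$ and the row entries $M_{4h}^{(2)},M_{5h}^{(2)}$; and the four remaining elements built from $E_4^*JE_4^*$ and the triple products $E_4^*A_gE_h^*A_iE_4^*$ give the $2\times2$ corner $M_{44}^{(2)},M_{45}^{(2)},M_{54}^{(2)},M_{55}^{(2)}$, precisely as in Corollary \ref{C;Corollary5.5}. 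I would then check $M_{gh}^{(2)}M_{rs}^{(2)}=\delta_{hr}M_{gs}^{(2)}$ for all indices, that all products across the three ideals land in $V_3$ (hence vanish in $T/V_3$), and that the three ideals together span $T/V_3$, whence $T/V_3\cong M_5(\F)\oplus M_5(\F)\oplus M_1(\F)$.

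The hard part will be this verification of the matrix-unit relations for the second block. It rests on combining essentially all of the product identities from Lemmas \ref{L;Lemma2.1} through \ref{L;Lemma2.15}, together with \eqref{Eq;2}, Lemma \ref{L;Tripleproducts} (i), and Theorem \ref{T;Insectionnumbers}, and then reducing the resulting valency coefficients modulo $p$ via $n\equiv 4\pmod p$. The multiplication of the corner entries is governed by the six cases of Lemma \ref{L;Lemma2.15}, and it is precisely there that the scalars $\overline{3}$ and $\overline{6}$ appear in denominators, so the hypothesis $p\neq 2,3$ is indispensable to keep them invertible in $\F$. Finally, for the boundary value $n=4$ one must separately invoke Lemma \ref{L;Lemma1.20} together with the fact that $V_3=\{O\}$ there (Lemma \ref{L;Lemma5.8}): the triple products $E_4^*A_gE_h^*A_iE_4^*$ then collapse onto the combinations of $E_4^*A_aE_4^*$ recorded in Lemma \ref{L;Lemma1.20}, and one checks that this specialization remains consistent with the generic $n\equiv 4$ pattern, so that the same system of matrix units continues to satisfy the required relations.
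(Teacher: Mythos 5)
Your proposal is correct and follows essentially the same route as the paper: reduce via Lemma \ref{L;Lemma5.10} to analyzing $T/V_3$, take the $M_5(\F)$ block on $H_{3,1}$ from Lemma \ref{L;Lemma5.11}, assemble the second set of matrix units from $H_{3,2}$ exactly as you describe (layers $1,2,3$ plus the two idempotents inside $E_4^*TE_4^*$ from Corollary \ref{C;Corollary5.5}), and peel off the complementary idempotent for the $M_1(\F)$ summand. Your choice of $N$ agrees with the paper's $N=I-\sum_j N_{jj}-\sum_j M_{jj}$ modulo $V_3$ (via the relations in $C_3$), so the only difference is presentational.
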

\begin{proof}
By Lemma \ref{L;Lemma5.10}, it suffices to check that $T/V_3\cong M_5(\F)\oplus M_5(\F)\oplus M_1(\F)$ as $\F$-algebras. As $p\neq 3$, define
$M_{gi}\!\!=\!\!E_g^*A_hE_i^*\!-\!\overline{3}^{-1}E_g^*JE_i^*\!+\!V_3$, $M_{jj}\!=\!E_j^*\!-\!\overline{3}^{-1}E_j^*JE_j^*+V_3$,
$M_{14}\!=\!\overline{3}^{-1}(\overline{2}E_1^*A_3E_4^*+E_1^*A_2E_4^*-E_1^*JE_4^*)+V_3$, $M_{41}\!=\!\overline{3}^{-1}E_4^*JE_1^*\!-\!E_4^*A_1E_3^*A_2E_1^*\!+\!V_3$,
$M_{15}\!=\!\overline{3}^{-1}(\overline{2}E_1^*A_3E_4^*\!+\!E_1^*A_2E_3^*A_1E_4^*\!-\!E_1^*JE_4^*)\!+\!V_3$, $M_{51}=\overline{3}^{-1}E_4^*JE_1^*-E_4^*A_2E_1^*+V_3$,
$M_{24}=\overline{3}^{-1}(\overline{2}E_2^*A_3E_4^*+E_2^*A_1E_3^*A_2E_4^*-E_2^*JE_4^*)+V_3$, $M_{42}=\overline{3}^{-1}E_4^*JE_2^*-E_4^*A_1E_2^*+V_3$,
$M_{25}=\overline{3}^{-1}(\overline{2}E_2^*A_3E_4^*+E_2^*A_1E_4^*-E_2^*JE_4^*)+V_3$, $M_{52}=\overline{3}^{-1}E_4^*JE_2^*-E_4^*A_2E_3^*A_1E_2^*+V_3$,
$M_{34}=\overline{3}^{-1}(\overline{2}E_3^*A_1E_2^*A_3E_4^*+E_3^*A_2E_4^*-E_3^*JE_4^*)+V_3$, $M_{43}=\overline{3}^{-1}E_4^*JE_3^*-E_4^*A_1E_3^*+V_3$,
$M_{35}=\overline{3}^{-1}(\overline{2}E_3^*A_1E_2^*A_3E_4^*+E_3^*A_1E_4^*-E_3^*JE_4^*)+V_3$, $M_{53}=\overline{3}^{-1}E_4^*JE_3^*-E_4^*A_2E_3^*+V_3$ if $j\in \{g, h,i\}=[1,3]$.
Use the assumption $p\neq 2,3$ to continue defining
\begin{align*}
& M_{44}=\overline{3}^{-1}(E_4^*JE_4^*-\overline{2}E_4^*A_1E_2^*A_3E_4^*-E_4^*A_1E_3^*A_2E_4^*)+V_3,\\
& M_{45}=\overline{3}^{-1}(E_4^*A_1E_3^*A_2E_4^*-E_4^*A_1E_2^*A_3E_4^*)+V_3,\\
& M_{54}=\overline{3}^{-1}(E_4^*A_2E_3^*A_1E_4^*-E_4^*A_2E_1^*A_3E_4^*)+V_3,\\
& M_{55}=\overline{3}^{-1}(E_4^*JE_4^*-\overline{2}E_4^*A_2E_1^*A_3E_4^*-E_4^*A_2E_3^*A_1E_4^*)+V_3.
\end{align*}
Moreover, $N_{gh}=\overline{3}^{-1}E_{g-1}^*JE_{h-1}^*+V_3, N_{5h}=\overline{6}^{-1}E_4^*JE_{h-1}^*+V_3, N_{1h}=E_0^*JE_{h-1}^*+V_3$, $N\!=\!I-\sum_{j=1}^5N_{jj}-\sum_{j=1}^5M_{jj}$ if $g\in [2,4]$ and $h\in [1,5]$. By Lemmas \ref{L;Lemma5.7} and \ref{L;Lemma5.8}, $T/V_3=\langle\{M_{ab}: a,b\in [1,5]\}\rangle_\F\oplus\langle\{N_{ab}: a,b\in [1,5]\}\rangle_\F\oplus \langle\{N\}\rangle_\F$. By combining \eqref{Eq;2}, \eqref{Eq;3}, \eqref{Eq;4}, the equalities in Lemmas \ref{L;Lemma2.1}, \ref{L;Lemma2.2}, \ref{L;Lemma2.3}, \ref{L;Lemma2.4}, \ref{L;Lemma2.5}, \ref{L;Lemma2.6}, \ref{L;Lemma2.7}, \ref{L;Lemma2.9}, \ref{L;Lemma2.11}, \ref{L;Lemma2.12}, \ref{L;Lemma2.15}, Lemma \ref{L;Tripleproducts} (i), (iii), Theorem \ref{T;Insectionnumbers} (i), (ii), (iii), and the assumption $n\equiv 4\pmod p$, note that $M_{gh}M_{rs}\!=\!\delta_{hr}M_{gs}$, $M_{gh}N_{rs}\!=\!N_{rs}M_{gh}\!=\!O+V_3$, $N_{gh}N_{rs}\!=\!\delta_{hr}N_{gs}$, $M_{gh}N\!=\!NM_{gh}\!=\!N_{gh}N\!=\!NN_{gh}=O+V_3$, and $N^2\!=\!N$ for any $g,h,r,s\in [1,5]$. So $T/V_3$ has two-sided ideals $\langle\{M_{ab}: a,b\in [1,5]\}\rangle_\F$ and $\langle\{N\}\rangle_\F$. Also notice that $\langle\{N\}\rangle_\F\cong M_1(\F)$ as $\F$-algebras. Moreover, there is an $\F$-algebra isomorphism from $\langle\{M_{ab}: a,b\in [1,5]\}\rangle_\F$ to $M_5(\F)$ that sends $M_{gh}$ to $E_{gh}(5)$ for any $g, h\in [1,5]$. The desired corollary thus follows from Lemma \ref{L;Lemma5.11}.
\end{proof}
\section{Algebraic structure of $T$: Case IV}
Recall that $G$ is assumed further to be an elementary abelian $2$-group. The aim of this section is to determine the algebraic structure of $T$ for the remaining case. Some examples and counterexamples for the study of the Terwilliger $\F$-algebras of schemes are also presented. For this aim, recall Notations \ref{N;Notation1.21}, \ref{N;Notation3.1}, \ref{N;Notation4.1}, \ref{N;Notation5.1}, and the definition of $E_4^*TE_4^*$ in Subsection \ref{S;Subsection3}. We first list a notation and some corollaries.
\begin{nota}\label{N;Notation6.1}
Set $c_a=\overline{n-a}$, $c_4=\overline{(n-1)(n-2)}$, $c_5=\overline{(n-1)(n-4)}$ if $a\in [1,3]$.
\end{nota}
If $a\in \mathbb{N}$ and $b,c\in [1,a]$, recall the definitions of $E_{bc}(a)$ and $\delta_{bc}$ in Subsection \ref{S;Subsection4}.
\begin{cor}\label{C;Corollary6.2}
If $n\not\equiv 1, 2, 4\pmod p$, then $E_4^*TE_4^*\cong M_3(\F)\oplus M_1(\F)\oplus M_1(\F)$ as $\F$-algebras.
\end{cor}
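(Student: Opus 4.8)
The plan is to realize $E_4^*TE_4^*$ as a direct sum of matrix blocks by exhibiting an explicit matrix-unit basis, in the spirit of Corollaries \ref{C;Corollary4.8} and \ref{C;Corollary5.5}. First I would record the arithmetic content of the hypothesis. Since $G$ is an elementary abelian $2$-group, $n$ is a power of $2$ with $n\geq 4$; if $p=2$ then $n\equiv 2\pmod p$, if $p=3$ then $n\equiv 1$ or $2\pmod p$, and if $n=4$ then $n\equiv 4\pmod p$. Hence $n\not\equiv 1,2,4\pmod p$ forces $p\neq 2,3$ and $n>4$, so Corollary \ref{C;Corollary2.17} applies and $B_3\cup B_6$ is an $\F$-basis of $E_4^*TE_4^*$; in particular $\dim_\F E_4^*TE_4^*=11$, matching $\dim_\F(M_3(\F)\oplus M_1(\F)\oplus M_1(\F))$. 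I would then translate the congruences into invertibility of the scalars of Notation \ref{N;Notation6.1}: $n\not\equiv 1,2$ gives $c_4=\overline{(n-1)(n-2)}\neq 0$, $n\not\equiv 2$ gives $c_2=\overline{n-2}\neq 0$, and $n\not\equiv 1,4$ gives $c_5=\overline{(n-1)(n-4)}\neq 0$; together with $\overline{6}\neq 0$ these are the denominators that will appear.

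The first real step is to isolate one $M_1(\F)$ summand via a central idempotent. Since $k_4=p_{44}^0=(n-1)(n-2)$ and $JE_4^*J=\overline{k_4}J$ by \eqref{Eq;4}, one has $(E_4^*JE_4^*)^2=\overline{k_4}\,E_4^*JE_4^*=c_4E_4^*JE_4^*$, so $e_0:=c_4^{-1}E_4^*JE_4^*$ is idempotent. Using the two identities of Lemma \ref{L;Tripleproducts} (i) with the intersection numbers of Theorem \ref{T;Insectionnumbers}, I would compute $E_4^*JE_4^*\cdot M$ for each basis element $M\in B_3\cup B_6$ and find it a scalar multiple of $E_4^*JE_4^*$ in every case (the scalars being $1$, $\overline{n-3}$, $c_4$, and $c_2$ for $E_4^*$, $E_4^*A_gE_4^*$, $E_4^*JE_4^*$, and a triple product, respectively). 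The transpose computation, via $M^T\in E_4^*TE_4^*$ and \eqref{Eq;1}, gives the same on the right. Hence $e_0(E_4^*TE_4^*)=(E_4^*TE_4^*)e_0=\F e_0$, so $e_0$ is a central idempotent with $e_0(E_4^*TE_4^*)e_0\cong M_1(\F)$, and $E_4^*TE_4^*=\F e_0\oplus fAf$, where $f=E_4^*-e_0$ and $A=E_4^*TE_4^*$, with $\dim_\F fAf=10$.

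It then remains to prove $fAf\cong M_3(\F)\oplus M_1(\F)$, and I would do this by writing down nine matrix units $M_{ab}$ ($a,b\in[1,3]$) and one further idempotent $N=f-\sum_{j=1}^3 M_{jj}$ inside $fAf$, each an explicit $\F$-linear combination of $E_4^*$, the diagonal elements $E_4^*A_cE_4^*$, the six triple products of $B_6$, and $E_4^*JE_4^*$, with coefficients assembled from $c_1^{-1},c_2^{-1},c_5^{-1}$, and $\overline{6}^{-1}$ (the $\overline{6}^{-1}$ separating the ``sign-type'' $M_1(\F)$, generated by $N$, from the regular part $M_3(\F)$). Lying in $fAf$ makes these elements automatically orthogonal to $e_0$, so the only relations to check are $M_{ab}M_{cd}=\delta_{bc}M_{ad}$, $N^2=N$, and $M_{ab}N=NM_{ab}=O$; these follow by combining \eqref{Eq;2}, Lemma \ref{L;Tripleproducts} (i), and the product formulas of Lemmas \ref{L;Lemma2.13}, \ref{L;Lemma2.14}, and \ref{L;Lemma2.15}, using $n\not\equiv 1,2,4\pmod p$ to invert the scalars that arise. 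Since the resulting eleven elements span $\langle B_3\cup B_6\rangle_\F=E_4^*TE_4^*$ and satisfy the relations of $M_3(\F)\oplus M_1(\F)\oplus M_1(\F)$, the assignment $M_{ab}\mapsto E_{ab}(3)$, together with $N$ and $e_0$ onto the two $M_1(\F)$ generators, is the desired $\F$-algebra isomorphism (and in particular shows $E_4^*TE_4^*$ is semisimple).

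The main obstacle is pinning down the exact coefficients in the $M_{ab}$ and $N$. The computation $E_4^*JE_4^*\cdot E_4^*A_gE_h^*A_iE_4^*=c_2E_4^*JE_4^*\neq O$ shows that $E_4^*JE_4^*$ does not annihilate $B_6$, so a naive choice of representatives built only from $B_6$ fails to be orthogonal to $e_0$; each $M_{ab}$ must therefore be corrected by a precise multiple of $E_4^*JE_4^*$ to land in $fAf$, and simultaneously the eigendecomposition of each $E_4^*A_gE_4^*$ (which, by Lemma \ref{L;Lemma2.13} (i), satisfies $f_g^2=\overline{n-3}E_4^*+\overline{n-4}f_g$, so has the two distinct roots $\overline{n-3}$ and $-\overline{1}$ differing by $c_2$) must be reconciled with the triple-product relations. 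Determining these coefficients and then verifying the full multiplication table is a finite but lengthy case analysis over the orderings of $\{g,h,i\}=[1,3]$, and is where essentially all the work lies.
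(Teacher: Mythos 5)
Your proposal is correct and takes essentially the same route as the paper: the paper's proof also writes $M=c_4^{-1}E_4^*JE_4^*$ (your $e_0$), nine explicit matrix units $M_{ab}$ built from $B_3\cup B_6$ with coefficients $c_5^{-1}$ and $c_3$, and $N=E_4^*-M-\sum_{j}M_{jj}$, then verifies the same multiplication table via \eqref{Eq;2}, \eqref{Eq;4}, Lemmas \ref{L;Lemma2.13}, \ref{L;Lemma2.14}, \ref{L;Lemma2.15}, Lemma \ref{L;Tripleproducts} (i), and Theorem \ref{T;Insectionnumbers}. Your only departures are organizational (splitting off the central idempotent $e_0$ by a Peirce decomposition before building the $3\times 3$ block, rather than checking all cross products at once) and a slightly different guess at which scalars enter the matrix units (the paper needs $c_4^{-1}$ and $c_5^{-1}$, not $c_1^{-1}$ or $\overline{6}^{-1}$), which does not affect the validity of the plan since exactly the scalars forced invertible by $n\not\equiv 1,2,4\pmod p$ are the ones that must be inverted.
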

\begin{proof}
As $n\not\equiv 1,2,4\pmod p$, notice that $n>8$ or $p\neq 2$ and $n=8$. So $E_4^*TE_4^*$ has an $\F$-basis $B_3\cup B_6$ by Corollary \ref{C;Corollary2.17}. By the assumption $n\!\not\equiv\! 1, 2, 4 \pmod p$, define
\begin{align*}
& M_{11}=c_5^{-1}(E_4^*A_2E_3^*A_1E_4^*+E_4^*A_3E_2^*A_1E_4^*+c_3(E_4^*+E_4^*A_1E_4^*)-E_4^*JE_4^*),\\
& M_{12}=c_5^{-1}(E_4^*+E_4^*A_2E_4^*+E_4^*A_3E_1^*A_2E_4^*+c_3E_4^*A_1E_3^*A_2E_4^*-E_4^*JE_4^*),\\
& M_{13}=c_5^{-1}(E_4^*+E_4^*A_3E_4^*+E_4^*A_2E_1^*A_3E_4^*+c_3E_4^*A_1E_2^*A_3E_4^*-E_4^*JE_4^*),\\
& M_{21}=c_5^{-1}(E_4^*+E_4^*A_1E_4^*+E_4^*A_3E_2^*A_1E_4^*+c_3E_4^*A_2E_3^*A_1E_4^*-E_4^*JE_4^*),\\
& M_{22}=c_5^{-1}(E_4^*A_1E_3^*A_2E_4^*+E_4^*A_3E_1^*A_2E_4^*+c_3(E_4^*+E_4^*A_2E_4^*)-E_4^*JE_4^*),\\
& M_{23}=c_5^{-1}(E_4^*+E_4^*A_3E_4^*+E_4^*A_1E_2^*A_3E_4^*+c_3E_4^*A_2E_1^*A_3E_4^*-E_4^*JE_4^*),\\
& M_{31}=c_5^{-1}(E_4^*+E_4^*A_1E_4^*+E_4^*A_2E_3^*A_1E_4^*+c_3E_4^*A_3E_2^*A_1E_4^*-E_4^*JE_4^*),\\
& M_{32}=c_5^{-1}(E_4^*+E_4^*A_2E_4^*+E_4^*A_1E_3^*A_2E_4^*+c_3E_4^*A_3E_1^*A_2E_4^*-E_4^*JE_4^*),\\
& M_{33}=c_5^{-1}(E_4^*A_1E_2^*A_3E_4^*+E_4^*A_2E_1^*A_3E_4^*+c_3(E_4^*+E_4^*A_3E_4^*)-E_4^*JE_4^*),
\end{align*}
$M=c_4^{-1}E_4^*JE_4^*$, $N=E_4^*-M-\sum_{j=1}^3M_{jj}$, and $C=\{M_{ab}: a,b\in [1,3]\}\cup\{M, N\}$. By Corollary \ref{C;Corollary2.17}, each element in $C$ is an $\F$-linear combination of the elements in $B_3\cup B_6$. By \eqref{Eq;2} and \eqref{Eq;3}, each element in $B_3\cup B_6$ is an $\F$-linear combination of the elements in $C$. As $|B_3\cup B_6|=|C|$ and Corollary \ref{C;Corollary2.17} holds, notice that $C$ is an $\F$-basis of $E_4^*TE_4^*$. Therefore $E_4^*TE_4^*=\langle\{M_{ab}: a,b\in [1,3]\}\rangle_\F\oplus\langle\{M\}\rangle_\F\oplus\langle\{N\}\rangle_\F$. By combining \eqref{Eq;2}, \eqref{Eq;4}, the equalities in Lemmas \ref{L;Lemma2.13}, \ref{L;Lemma2.14}, \ref{L;Lemma2.15}, Lemma \ref{L;Tripleproducts} (i), Theorem \ref{T;Insectionnumbers} (i), (ii), and (iii), notice that $M^2=M$, $N^2=N$, $M_{gh}M_{rs}=\delta_{hr}M_{gs}$, and $M_{gh}M=MM_{gh}=M_{gh}N=NM_{gh}=MN=NM=O$. Therefore $E_4^*TE_4^*$ has two-sided ideals $\langle\{M_{ab}: a,b\in [1,3]\}\rangle_\F$, $\langle\{M\}\rangle_\F$, and $\langle\{N\}\rangle_\F$.
Moreover, notice that $\langle \{M\}\rangle_\F\cong\langle\{N\}\rangle_\F\cong M_1(\F)$ as $\F$-algebras. There is an $\F$-algebra isomorphism from $\langle\{M_{ab}: a,b\in [1,3]\}\rangle_\F$ to $M_3(\F)$ that sends $M_{gh}$ to $E_{gh}(3)$ for any $g, h\in [1,3]$. The proof of the desired corollary is now complete.
\end{proof}
\begin{cor}\label{C;Corollary6.3}
The $\F$-subalgebra $E_4^*TE_4^*$ of $T$ is semisimple if and only if $p\neq 2, 3$ and $n=4$ or $n\not\equiv 1,2,4\pmod p$.
\end{cor}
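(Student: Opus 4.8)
The plan is to reduce the question to the vanishing of the Jacobson radical and then read off $\mathrm{Rad}E_4^*TE_4^*$ from the structural corollaries already proved in this section and the previous ones. Recall from Subsection \ref{S;Subsection3} that the finite-dimensional $\F$-algebra $E_4^*TE_4^*$ is semisimple if and only if $\mathrm{Rad}E_4^*TE_4^*$ is the zero space, so the entire task is to decide, for each admissible pair $(p,n)$, whether $\mathrm{Rad}E_4^*TE_4^*=\{O\}$. Since $G$ is an elementary abelian $2$-group with $|G|\geq 3$, we have $n=2^k$ with $k\geq 2$; this constraint is what forces the special behaviour of the residues when $p\in\{2,3\}$, and I would record it at the outset, together with the observation that $k_4=(n-1)(n-2)>0$, so that $E_4^*JE_4^*$ is a nonzero $(0,1)$-matrix over any field.

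First I would organize the argument by the characteristic and the residue of $n$ modulo $p$. If $p=2$, Corollary \ref{C;Corollary4.9} gives $\mathrm{Rad}E_4^*TE_4^*=U_2$, and $U_2\neq\{O\}$ because $E_4^*JE_4^*\in U_2$ is nonzero; hence the algebra is not semisimple. If $p=3$, then $\gcd(n,3)=1$ forces $n\equiv 1$ or $n\equiv 2\pmod 3$, so either Corollary \ref{C;Corollary3.6} applies (with $\mathrm{Rad}E_4^*TE_4^*=U_1$, nonzero since $C_1$ is a basis of $U_1$ by Lemmas \ref{L;Lemma3.2} and \ref{L;Lemma3.3}) or Corollary \ref{C;Corollary4.8} applies (with $\mathrm{Rad}E_4^*TE_4^*=U_2\neq\{O\}$); in both cases $E_4^*TE_4^*$ is not semisimple. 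For $p\notin\{2,3\}$ (the case $p=0$ included), I would split into the four cases $n\equiv 1$, $n\equiv 2$, $n\equiv 4$, and $n\not\equiv 1,2,4\pmod p$, which are genuinely exhaustive and mutually exclusive precisely because $1,2,4$ are pairwise incongruent modulo $p$ once $p\neq 2,3$. Corollaries \ref{C;Corollary3.6}, \ref{C;Corollary4.8}, and \ref{C;Corollary5.5} identify the radical as $U_1$, $U_2$, $U_3$ respectively, while Corollary \ref{C;Corollary6.2} shows that the last case yields $E_4^*TE_4^*\cong M_3(\F)\oplus M_1(\F)\oplus M_1(\F)$, which is semisimple.

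The crux is the case $n\equiv 4\pmod p$ with $p\neq 2,3$, where semisimplicity hinges on whether $n=4$. For $n>4$, the set $C_3$ is an $\F$-basis of $U_3$ by Lemma \ref{L;Lemma5.2}, so $U_3\neq\{O\}$ and the algebra is not semisimple; for $n=4$ (the Klein four group) the relations of Lemma \ref{L;Lemma1.20} together with \eqref{Eq;3} collapse $C_3$ to $\{O\}$, whence $U_3=\{O\}$ and $E_4^*TE_4^*$ is semisimple, exactly as already exploited inside Corollary \ref{C;Corollary5.5}. Assembling the cases, $E_4^*TE_4^*$ is semisimple exactly when either $n\not\equiv 1,2,4\pmod p$, or $p\neq 2,3$ and $n=4$.

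Two small bookkeeping remarks finish the identification of the stated criterion. Since $\overline{n-4}=\overline{0}$ whenever $n=4$, the integer $4$ satisfies $n\equiv 4\pmod p$ for every $p$, so the clause ``$p\neq 2,3$ and $n=4$'' coincides with ``$p\neq 2,3$, $n\equiv 4\pmod p$, and $n=4$''; and the relations $4\equiv 1\pmod p\Leftrightarrow p=3$, $4\equiv 2\pmod p\Leftrightarrow p=2$, together with $n=2^k$, show that $n\not\equiv 1,2,4\pmod p$ already forces $p\neq 2,3$. These two observations confirm that the union of the semisimple cases is precisely the condition asserted in the statement. I expect the main obstacle to be purely organizational, namely verifying the exhaustiveness and disjointness of the congruence cases and the automatic exclusion of $p\in\{2,3\}$ from the semisimple branch, rather than any new computation, since every structural fact needed is furnished by the earlier corollaries.
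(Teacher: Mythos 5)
Your proposal is correct and follows essentially the same route as the paper: reduce semisimplicity to the vanishing of $\mathrm{Rad}E_4^*TE_4^*$ and read the radical off from Corollaries \ref{C;Corollary3.6}, \ref{C;Corollary4.8}, \ref{C;Corollary4.9}, \ref{C;Corollary5.5}, \ref{C;Corollary6.2}, using Lemma \ref{L;Lemma1.20} and \eqref{Eq;3} to see that $U_3$ collapses to $\{O\}$ exactly when $n=4$, and \eqref{Eq;4} to see that $E_4^*JE_4^*\neq O$ keeps the radical nonzero in the remaining cases. Your organization by characteristic rather than by residue class, and the closing remarks on the exhaustiveness of the congruence cases, are only presentational differences from the paper's (much terser) argument.
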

\begin{proof}
Assume that $p\!\neq\! 2, 3$ and $n\!\equiv\! 4\pmod p$. By combining Corollaries \ref{C;Corollary5.5}, \ref{C;Corollary2.17}, Lemma \ref{L;Lemma1.20}, and \eqref{Eq;3}, $E_4^*TE_4^*$ is semisimple if and only if $n=4$. The remaining case is solved by combining Corollaries \ref{C;Corollary3.6}, \ref{C;Corollary4.8}, \ref{C;Corollary4.9}, \ref{C;Corollary6.2}, and \eqref{Eq;4}. We are done.
\end{proof}
The following corollary completes the analysis of the algebraic structure of $T$.
\begin{cor}\label{C;Corollary6.4}
If $n\not\equiv1,2, 4\pmod p$, $T\cong M_6(\F)\oplus M_5(\F)\oplus M_1(\F)$ as $\F$-algebras.
\end{cor}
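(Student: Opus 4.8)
The plan is to realize $T$ directly as $M_6(\F)\oplus M_5(\F)\oplus M_1(\F)$ by exhibiting a complete system of $62$ matrix units inside $T$, following the same template as the proofs of Corollaries \ref{C;Corollary4.17} and \ref{C;Corollary5.12}, but now with no radical to factor out. First I would record the arithmetic content of the hypothesis $n\not\equiv 1,2,4\pmod p$: since $k_0=1$, $k_1=k_2=k_3=n-1$, $k_4=(n-1)(n-2)$, and since $c_4=\overline{(n-1)(n-2)}$, $c_5=\overline{(n-1)(n-4)}$ (Notation \ref{N;Notation6.1}), every one of these scalars is a unit in $\F$. In particular $B_4=\varnothing$, so Lemma \ref{L;Tripleproducts} (v) supplies no nonzero nilpotent ideal, and by Corollary \ref{C;Corollary2.16} the set $B=B_1\cup B_2\cup B_5\cup B_6$ is an $\F$-basis of $T$ with $\dim_\F T=|B|=25+25+6+6=62=36+25+1$.

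Next I would assemble the three Wedderburn blocks. The $M_5(\F)$ summand comes from the $25$ elements $E_a^*JE_b^*$ with $a,b\in[0,4]$, all of which survive because every valency is a unit: since $E_a^*JE_b^*\,E_c^*JE_d^*=\delta_{bc}\,\overline{k_b}\,E_a^*JE_d^*$ by \eqref{Eq;2} and \eqref{Eq;4}, setting $N_{gh}=\overline{k_{g-1}}^{-1}E_{g-1}^*JE_{h-1}^*$ for $g,h\in[1,5]$ gives $N_{gh}N_{rs}=\delta_{hr}N_{gs}$. For the $M_6(\F)$ summand I would use the $M_3(\F)$ part of $E_4^*TE_4^*$ already produced in Corollary \ref{C;Corollary6.2} (whose matrix units require $c_5^{-1}$) as its lower-right $3\times3$ block, the small constituents $E_1^*,E_2^*,E_3^*$ together with suitably corrected off-diagonal elements $E_g^*A_hE_i^*-\overline{(n-1)}^{-1}E_g^*JE_i^*$ as its upper-left $3\times3$ block, and the connecting elements $E_i^*A_jE_4^*$, $E_4^*A_jE_i^*$ and the members of $B_5$, $B_6$ (again corrected by $J$-terms) for the off-diagonal blocks, producing matrix units $M_{gh}$ for $g,h\in[1,6]$. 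The isolated $M_1(\F)$ is then spanned by the complementary idempotent $N=I-\sum_{j=1}^{6}M_{jj}-\sum_{j=1}^{5}N_{jj}$, where $I=\sum_{j=0}^{4}E_j^*$ by \eqref{Eq;3}.

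The verification splits into two bookkeeping tasks. First, each $M_{gh}$, $N_{gh}$, and $N$ is an $\F$-linear combination of the elements of $B$, and conversely; since there are exactly $62=|B|$ of them, Corollary \ref{C;Corollary2.16} shows they form an $\F$-basis of $T$, giving $T=\langle\{M_{ab}\}\rangle_\F\oplus\langle\{N_{ab}\}\rangle_\F\oplus\langle\{N\}\rangle_\F$ as $\F$-linear spaces. Second, one checks the matrix-unit relations $M_{gh}M_{rs}=\delta_{hr}M_{gs}$, $N_{gh}N_{rs}=\delta_{hr}N_{gs}$, $N^2=N$, together with the vanishing of every cross product $M_{gh}N_{rs}=N_{rs}M_{gh}=M_{gh}N=NM_{gh}=N_{rs}N=NN_{rs}=O$. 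These all follow by combining \eqref{Eq;2}, \eqref{Eq;3}, \eqref{Eq;4}, Lemma \ref{L;Tripleproducts} (i), Theorem \ref{T;Insectionnumbers}, and the product formulas of Lemmas \ref{L;Lemma2.1}--\ref{L;Lemma2.15}, exactly as in Corollaries \ref{C;Corollary4.17} and \ref{C;Corollary5.12}. Once they hold, $M_{gh}\mapsto E_{gh}(6)$, $N_{gh}\mapsto E_{gh}(5)$, and $N\mapsto E_{11}(1)$ are $\F$-algebra isomorphisms onto $M_6(\F)$, $M_5(\F)$, $M_1(\F)$, and the direct sum yields the claim; in particular $T$ is semisimple, so $\mathrm{Rad}T=O$.

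I expect the main obstacle to be precisely the normalization and orthogonality bookkeeping of the second task: choosing the correcting $J$-terms and the scalar factors $\overline{k_i}^{-1}$, $c_4^{-1}$, $c_5^{-1}$ so that the $6\times6$ and $5\times5$ blocks both close up with clean Kronecker deltas, and confirming that the $M_6(\F)$-block annihilates the $J$-block $M_5(\F)$ on both sides. Note that the semisimplicity of $E_4^*TE_4^*$ coming from Corollaries \ref{C;Corollary6.2} and \ref{C;Corollary6.3} does not by itself force $T$ semisimple, so this global orthogonality is genuinely the crux, and it is exactly where the full strength of $n\not\equiv 1,2,4\pmod p$ is used through the invertibility of $\overline{k_i}$, $c_4$, and $c_5$; the remaining relations are mechanical applications of the product identities of the two preceding sections.
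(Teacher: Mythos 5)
Your proposal is correct and follows essentially the same route as the paper: both exhibit the $62$ elements of $B=B_1\cup B_2\cup B_5\cup B_6$ as an $\F$-basis via Corollary \ref{C;Corollary2.16}, build a $6\times 6$ system of matrix units whose lower-right $3\times 3$ block is the $M_3(\F)$ part of $E_4^*TE_4^*$ from Corollary \ref{C;Corollary6.2}, normalize the $E_a^*JE_b^*$ into a $5\times 5$ system (your uniform scaling by $\overline{k_{g-1}}^{-1}$ is a harmless variant of the paper's choice of $c_1^{-1}$, $c_2^{-1}$, $c_4^{-1}$), and take the complementary idempotent for the $M_1(\F)$ summand, verifying the relations with the product lemmas of Sections 3 and 4. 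The dimension count $62=36+25+1$ and the observation that $n\not\equiv 1,2,4\pmod p$ makes $\overline{k_i}$, $c_4$, $c_5$ invertible (so $B_4=\varnothing$) match the paper exactly.
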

\begin{proof}
As $n\not\equiv 1,2,4\pmod p$, $n>8$ or $p\neq 2$ and $n=8$. Hence $T$ has an $\F$-basis $B_1\cup B_2\cup B_5\cup B_6$ by Corollary \ref{C;Corollary2.16}. As $n\!\not\equiv\!1\pmod p$, set $M_{jj}\!=\!E_j^*-c_1^{-1}E_j^*JE_j^*$, $M_{gi}\!\!=\!\!E_g^*A_hE_i^*\!\!-\!\!c_1^{-1}E_g^*JE_i^*, M_{14}\!\!=\!\!E_1^*A_2E_3^*A_1E_4^*\!\!-\!\!c_1^{-1}E_1^*JE_4^*, M_{15}\!=\!E_1^*A_2E_4^*\!-\!c_1^{-1}E_1^*JE_4^*$, $M_{16}\!\!=\!\!E_1^*A_3E_4^*\!\!-\!\!c_1^{-1}E_1^*JE_4^*, M_{24}\!\!=\!\!E_2^*A_1E_4^*\!\!-\!\!c_1^{-1}E_2^*JE_4^*,
M_{25}\!\!=\!\!E_2^*A_1E_3^*A_2E_4^*\!\!-\!\!c_1^{-1}E_2^*JE_4^*$, $M_{26}\!\!=\!\!E_2^*A_3E_4^*\!\!-\!\!c_1^{-1}E_2^*JE_4^*$, $M_{34}\!\!=\!\!E_3^*A_1E_4^*\!\!-\!\!c_1^{-1}E_3^*JE_4^*$,
$M_{35}\!\!=\!\!E_3^*A_2E_4^*\!\!-\!\!c_1^{-1}E_3^*JE_4^*$, and $M_{36}\!=\!E_3^*A_1E_2^*A_3E_4^*-c_1^{-1}E_3^*JE_4^*$ if $j\in\{g, h, i\}=[1,3]$. Use the given assumption
$n\not\equiv 1, 2, 4\pmod p$ to continue defining
\begin{align*}
& M_{41}=c_5^{-1}(E_4^*A_2E_1^*+E_4^*A_3E_1^*+c_3E_4^*A_1E_3^*A_2E_1^*-E_4^*JE_1^*),\\
& M_{42}=c_5^{-1}(E_4^*A_3E_2^*+E_4^*A_2E_3^*A_1E_2^*+c_3E_4^*A_1E_2^*-E_4^*JE_2^*),\\
& M_{43}=c_5^{-1}(E_4^*A_2E_3^*+E_4^*A_3E_2^*A_1E_3^*+c_3E_4^*A_1E_3^*-E_4^*JE_3^*),\\
& M_{44}=c_5^{-1}(E_4^*A_2E_3^*A_1E_4^*+E_4^*A_3E_2^*A_1E_4^*+c_3(E_4^*+E_4^*A_1E_4^*)-E_4^*JE_4^*),\\
& M_{45}=c_5^{-1}(E_4^*+E_4^*A_2E_4^*+E_4^*A_3E_1^*A_2E_4^*+c_3E_4^*A_1E_3^*A_2E_4^*-E_4^*JE_4^*),\\
& M_{46}=c_5^{-1}(E_4^*+E_4^*A_3E_4^*+E_4^*A_2E_1^*A_3E_4^*+c_3E_4^*A_1E_2^*A_3E_4^*-E_4^*JE_4^*),\\
& M_{51}=c_5^{-1}(E_4^*A_3E_1^*+E_4^*A_1E_3^*A_2E_1^*+c_3E_4^*A_2E_1^*-E_4^*JE_1^*),\\
& M_{52}=c_5^{-1}(E_4^*A_1E_2^*+E_4^*A_3E_2^*+c_3E_4^*A_2E_3^*A_1E_2^*-E_4^*JE_2^*),\\
& M_{53}=c_5^{-1}(E_4^*A_1E_3^*+E_4^*A_3E_2^*A_1E_3^*+c_3E_4^*A_2E_3^*-E_4^*JE_3^*),\\
& M_{54}=c_5^{-1}(E_4^*+E_4^*A_1E_4^*+E_4^*A_3E_2^*A_1E_4^*+c_3E_4^*A_2E_3^*A_1E_4^*-E_4^*JE_4^*),\\
& M_{55}=c_5^{-1}(E_4^*A_1E_3^*A_2E_4^*+E_4^*A_3E_1^*A_2E_4^*+c_3(E_4^*+E_4^*A_2E_4^*)-E_4^*JE_4^*),\\
& M_{56}=c_5^{-1}(E_4^*+E_4^*A_3E_4^*+E_4^*A_1E_2^*A_3E_4^*+c_3E_4^*A_2E_1^*A_3E_4^*-E_4^*JE_4^*),\\
& M_{61}=c_5^{-1}(E_4^*A_2E_1^*+E_4^*A_1E_3^*A_2E_1^*+c_3E_4^*A_3E_1^*-E_4^*JE_1^*),\\
& M_{62}=c_5^{-1}(E_4^*A_1E_2^*+E_4^*A_2E_3^*A_1E_2^*+c_3E_4^*A_3E_2^*-E_4^*JE_2^*),\\
& M_{63}=c_5^{-1}(E_4^*A_1E_3^*+E_4^*A_2E_3^*+c_3E_4^*A_3E_2^*A_1E_3^*-E_4^*JE_3^*),\\
& M_{64}=c_5^{-1}(E_4^*+E_4^*A_1E_4^*+E_4^*A_2E_3^*A_1E_4^*+c_3E_4^*A_3E_2^*A_1E_4^*-E_4^*JE_4^*),\\
& M_{65}=c_5^{-1}(E_4^*+E_4^*A_2E_4^*+E_4^*A_1E_3^*A_2E_4^*+c_3E_4^*A_3E_1^*A_2E_4^*-E_4^*JE_4^*),\\
& M_{66}=c_5^{-1}(E_4^*A_1E_2^*A_3E_4^*+E_4^*A_2E_1^*A_3E_4^*+c_3(E_4^*+E_4^*A_3E_4^*)-E_4^*JE_4^*),
\end{align*}
$N_{1h}=E_0^*JE_{h-1}^*$, $N_{15}=c_2^{-1}E_0^*JE_4^*$, $N_{gh}=c_1^{-1}E_{g-1}^*JE_{h-1}^*$, $N_{g5}=c_4^{-1}E_{g-1}^*JE_4^*$ for any $g\in [2, 5]$ and $h\in [1,4]$. Define
$N=I-\sum_{j=1}^5N_{jj}-\sum_{j=1}^6M_{jj}$. Use $C$ to denote $\{M_{ab}: a, b\in [1,6]\}\cup\{N_{ab}: a, b\in [1,5]\}\cup\{N\}$. By Corollary \ref{C;Corollary2.16}, each element in $C$ is an $\F$-linear combination of the elements in $B_1\cup B_2\cup B_5\cup B_6$. By combining \eqref{Eq;2}, \eqref{Eq;3}, Lemma \ref{L;Tripleproducts} (iii), Theorem \ref{T;Insectionnumbers}
(i), (ii), and (iii), notice that each element in $B_1\cup B_2\cup B_5\cup B_6$ is an $\F$-linear combination of the elements in $C$. As Corollary \ref{C;Corollary2.16} holds and $|B_1\cup B_2\cup B_5\cup B_6|=|C|$, notice that $C$ is an $\F$-basis of $T$. Therefore $T\!=\!\langle\{M_{ab}: a,b\in [1,6]\}\rangle_\F\oplus\langle\{N_{ab}: a,
b\in [1,5]\}\rangle_\F\oplus\langle\{N\}\rangle_\F$.

By combining \eqref{Eq;2}, \eqref{Eq;3}, \eqref{Eq;4}, the equalities in Lemmas \ref{L;Lemma2.1}, \ref{L;Lemma2.2}, \ref{L;Lemma2.3}, \ref{L;Lemma2.4}, \ref{L;Lemma2.5}, \ref{L;Lemma2.6},
\ref{L;Lemma2.7}, \ref{L;Lemma2.8}, \ref{L;Lemma2.9}, \ref{L;Lemma2.10}, \ref{L;Lemma2.11}, \ref{L;Lemma2.12}, \ref{L;Lemma2.13}, \ref{L;Lemma2.14}, \ref{L;Lemma2.15}, Lemma \ref{L;Tripleproducts} (i), (iii), Theorem \ref{T;Insectionnumbers} (i), (ii), (iii), notice that $M_{gh}M_{rs}=\delta_{hr}M_{gs}$ for any $g, h, r,s\in [1,6]$. By combining \eqref{Eq;2},
\eqref{Eq;4}, Lemma \ref{L;Tripleproducts} (i), Theorem \ref{T;Insectionnumbers} (i), (ii), (iii), $M_{gh}N_{rs}\!=\!N_{rs}M_{gh}=O$ and $N_{rs}N_{uv}\!=\!\delta_{su}N_{rv}$ for
any $g, h\in [1,6]$ and $r,s, u,v\in [1,5]$. Therefore $N^2=N$ and $M_{gh}N=NM_{gh}=N_{rs}N=NN_{rs}=O$ for any $g, h\in [1,6]$ and $r, s\in [1,5]$. So $T$ has two-sided ideals $\langle\{M_{ab}: a,b\in [1,6]\}\rangle_\F$, $\langle\{N_{ab}: a, b \in [1,5]\}\rangle_\F$, $\langle\{N\}\rangle_\F$. Moreover, $\langle\{N\}\rangle_\F\cong M_1(\F)$ as $\F$-algebras.
Therefore there is an $\F$-algebra isomorphism from $\langle\{ M_{ab}: a,b \in [1,6]\}\rangle_\F$ to $M_6(\F)$ that sends $M_{gh}$ to $E_{gh}(6)$ for any $g, h\in [1,6]$. There is also an $\F$-algebra isomorphism from $\langle\{N_{ab}: a,b \in [1,5]\}\rangle_\F$ to $M_5(\F)$ that sends $N_{gh}$ to $E_{gh}(5)$ for any $g, h\in [1,5]$. The desired corollary thus follows.
\end{proof}
We are now ready to deduce the main result of this paper.
\begin{thm}\label{T;Coretheorem}
As $\F$-algebras, we have
\[T/\mathrm{Rad}T\cong\begin{cases}
M_4(\F)\oplus M_1(\F)\oplus M_1(\F),& \text{if}\ n\equiv 1\pmod p,\\
M_6(\F)\oplus M_4(\F)\oplus M_1(\F),& \text{if}\ p\neq 2\ \text{and}\ n\equiv 2\pmod p,\\
M_5(\F)\oplus M_4(\F)\oplus M_1(\F),& \text{if}\ p=2,\\
M_5(\F)\oplus M_5(\F)\oplus M_1(\F),& \text{if}\ p\neq 2, 3\ \text{and}\ n\equiv4\pmod p,\\
M_6(\F)\oplus M_5(\F)\oplus M_1(\F),& \text{if}\ n\not\equiv 1,2,4\pmod p,
\end{cases}\]where
\[\mathrm{Rad}T=\begin{cases}
V_1, & \text{if}\ n\equiv 1\pmod p,\\
V_2, & \text{if}\ n\equiv 2\pmod p,\\
V_3, & \text{if}\ p\neq 2, 3\ \text{and}\ n\equiv4\pmod p,\\
\{O\}, & \text{if}\ n\not\equiv 1,2,4\pmod p.
\end{cases}\]
\end{thm}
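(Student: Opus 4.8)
The plan is to read this theorem as the bookkeeping assembly of the four case-analyses carried out in Sections 5 through 8, so that essentially no new computation is required: all the genuine work already lives in Corollaries \ref{C;Corollary3.14}, \ref{C;Corollary4.17}, \ref{C;Corollary4.18}, \ref{C;Corollary5.12}, and \ref{C;Corollary6.4}. First I would record the one structural fact driving everything: since $G$ is an elementary abelian $2$-group with $|G|\geq 3$, the integer $n=|G|$ is a power of $2$ with $n\geq 4$; in particular $n$ is even and $3\nmid n$. I would then argue that the congruence conditions appearing in the two displayed tables are exhaustive and pairwise disjoint, interpreting ``$n\equiv a\pmod p$'' throughout as $\overline{n}=\overline{a}$ in $\F_p$, and finally match each surviving case to its source corollary.

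The core of the argument is the case split on the residue of $n$ modulo $p$, with the small primes treated separately. If $p=2$, then $n$ even forces $\overline{n}=\overline{0}=\overline{2}$, so $n\equiv 2\pmod p$ while $n\not\equiv 1\pmod p$; this simultaneously places the pair $(\F,n)$ in the third row of the structure table and the second row of the radical table, and I would invoke Corollary \ref{C;Corollary4.18} to obtain $T/\mathrm{Rad}T\cong M_5(\F)\oplus M_4(\F)\oplus M_1(\F)$ with $\mathrm{Rad}T=V_2$. Assuming henceforth $p\neq 2$, I would check that the four conditions $n\equiv 1$, $n\equiv 2$, ($p\neq 2,3$ and $n\equiv 4$), and $n\not\equiv 1,2,4$ modulo $p$ partition the possibilities: two of $1,2,4$ are congruent modulo $p$ only when $p\mid 1$, $p\mid 2$, or $p\mid 3$, and the first is impossible while $p\mid 2$ is excluded by $p\neq 2$; the residual case $p=3$, where $\overline{4}=\overline{1}$, is exactly why Case III carries the hypothesis $p\neq 3$, and there $n\equiv 4\pmod 3$ is subsumed under $n\equiv 1\pmod 3$ and handled by Case I. Having confirmed disjointness and exhaustiveness, I would quote Corollary \ref{C;Corollary3.14} for $n\equiv 1\pmod p$ (giving $V_1$), Corollary \ref{C;Corollary4.17} for $n\equiv 2\pmod p$ with $p\neq 2$ (giving $V_2$), Corollary \ref{C;Corollary5.12} for $p\neq 2,3$ and $n\equiv 4\pmod p$ (giving $V_3$), and Corollary \ref{C;Corollary6.4} for $n\not\equiv 1,2,4\pmod p$, where semisimplicity yields $\mathrm{Rad}T=\{O\}$ and $T/\mathrm{Rad}T\cong T$.

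The only delicate point, rather than a real obstacle, is ensuring consistency between the two tables: the structure table separates $p\neq 2$ and $p=2$ within the residue class $n\equiv 2\pmod p$ (yielding $M_6(\F)$ versus $M_5(\F)$ as the first Wedderburn block), whereas the radical table merges them into the single row $n\equiv 2\pmod p$ with $\mathrm{Rad}T=V_2$. I would verify that Corollaries \ref{C;Corollary4.17} and \ref{C;Corollary4.18} both report $\mathrm{Rad}T=V_2$, so the merge is legitimate, and that the remaining rows match their source corollaries verbatim. Since every case reduces to citing an already-proved corollary, the genuine difficulty is entirely upstream in Sections 5 through 8; at the level of this theorem the work is purely the verification that the listed congruence conditions form a partition of all admissible pairs $(\F,n)$ with $G$ elementary abelian of order $n\geq 4$.
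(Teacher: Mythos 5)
Your proposal is correct and takes essentially the same route as the paper, whose entire proof is the single sentence that the theorem follows by combining Corollaries \ref{C;Corollary3.14}, \ref{C;Corollary4.17}, \ref{C;Corollary4.18}, \ref{C;Corollary5.12}, and \ref{C;Corollary6.4}. The only addition you make is the explicit check that the congruence conditions are exhaustive and pairwise disjoint for $n=2^m\geq 4$ (including the $p=2$ and $p=3$ degeneracies), which the paper leaves implicit; that check is accurate and harmless.
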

\begin{proof}
The theorem is proved by combining Corollaries \ref{C;Corollary3.14}, \ref{C;Corollary4.17}, \ref{C;Corollary4.18}, \ref{C;Corollary5.12}, \ref{C;Corollary6.4}.
\end{proof}
A direct corollary of Theorem \ref{T;Coretheorem} is also presented as follows.
\begin{cor}\label{C;Corollary6.5}
The $\F$-algebra $T$ is semisimple if and only if $p\neq 2, 3$ and $n=4$ or $n\not\equiv 1,2,4\pmod p$.
\end{cor}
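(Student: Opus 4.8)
The plan is to read the answer straight off Theorem \ref{T;Coretheorem}, using the fact recorded in Subsection \ref{S;Subsection3} that $T$ is semisimple if and only if $\mathrm{Rad}T$ is the zero space. Thus the whole task reduces to deciding, in each congruence regime appearing in the radical table of Theorem \ref{T;Coretheorem}, whether the radical vanishes. I would organise the proof as a case analysis over those regimes.

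First I would dispose of the routine regimes. When $n\not\equiv 1,2,4\pmod p$, Theorem \ref{T;Coretheorem} gives $\mathrm{Rad}T=\{O\}$ outright, so $T$ is semisimple. For $n\equiv 1\pmod p$ and $n\equiv 2\pmod p$ I would instead exhibit a single nonzero radical element. In the first case $E_4^*JE_4^*$ is one of the generators of $C_1$, hence lies in $V_1=\mathrm{Rad}T$; in the second case $E_4^*JE_4^*$ lies in $C_2\subseteq V_2$ when $p=2$, while for $p\neq 2$ it lies in $B_4\subseteq V_2$ because $k_4=p_{44}^0=(n-1)(n-2)$ is divisible by $p$ once $n\equiv 2\pmod p$ (Theorem \ref{T;Insectionnumbers} (i)). Since $E_4^*JE_4^*\neq O$ by \eqref{Eq;4}, the radical is nonzero in both of these regimes, so $T$ is not semisimple there.

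The only regime requiring care is $p\neq 2,3$ with $n\equiv 4\pmod p$, where $\mathrm{Rad}T=V_3$. Here I would split according to whether $n=4$ or $n>4$. For $n>4$, Lemma \ref{L;Lemma5.7} shows $C_3\cup D_3$ is an $\F$-basis of $V_3$ and this set is nonempty, so $V_3\neq\{O\}$ and $T$ is not semisimple. For $n=4$, Lemma \ref{L;Lemma5.8} records the collapse $V_3=\{O\}$ (the defining elements of $C_3$ and $D_3$ all vanish via \eqref{Eq;3} and the identities of Lemma \ref{L;Lemma1.20}), whence $\mathrm{Rad}T=\{O\}$ and $T$ is semisimple.

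Finally I would assemble the equivalence, and the key bookkeeping point is that $n=4$ forces $n-4=0$, so $n\equiv 4\pmod p$ holds trivially for every $p$; moreover for $p\geq 5$ one checks $4\not\equiv 1,2\pmod p$, so $n=4$ with $p\neq 2,3$ genuinely sits in Case III rather than in the $n\equiv 1$ or $n\equiv 2$ branches. Collecting the four regimes, $\mathrm{Rad}T=\{O\}$ precisely when $n\not\equiv 1,2,4\pmod p$ or when $p\neq 2,3$ and $n=4$, which is the stated criterion. I expect the main (and essentially only) obstacle to be this degenerate $n=4$ case: it is exactly the place where the generically nonzero radical $V_3$ of the $n\equiv 4$ regime collapses to zero, producing the extra semisimple family $\{p\neq 2,3,\ n=4\}$, and keeping the trivial congruence $n\equiv 4\pmod p$ straight is what makes the final case analysis line up with the asserted condition.
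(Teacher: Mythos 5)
Your proposal is correct, and it reaches the conclusion by a route that differs from the paper's in the ``only if'' direction. The paper argues as follows: if $T$ were semisimple then $\mathrm{Rad}\,E_4^*TE_4^*=E_4^*(\mathrm{Rad}\,T)E_4^*=\{O\}$ by Lemma \ref{L;Radical}, so the corner algebra $E_4^*TE_4^*$ would be semisimple, and Corollary \ref{C;Corollary6.3} then forces exactly the stated condition on $(p,n)$; the converse is read off from Theorem \ref{T;Coretheorem} together with Lemma \ref{L;Lemma5.8}. You instead bypass Corollary \ref{C;Corollary6.3} entirely and test $\mathrm{Rad}\,T$ regime by regime against the radical table of Theorem \ref{T;Coretheorem}, exhibiting the explicit nonzero element $E_4^*JE_4^*$ of $V_1$ (resp.\ $V_2$) in the $n\equiv 1$ (resp.\ $n\equiv 2$) cases via \eqref{Eq;4}, and invoking the basis statements of Lemmas \ref{L;Lemma5.7} and \ref{L;Lemma5.8} to separate $n>4$ from $n=4$ in the $n\equiv 4$ case. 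Your verifications are accurate (in particular, $E_4^*JE_4^*\in B_4$ for $p\neq 2$, $n\equiv 2\pmod p$ because $p\mid k_4=(n-1)(n-2)$, and $E_4^*JE_4^*\in C_1$, $C_2$ in the other subcases), and your bookkeeping of the degenerate $n=4$ case matches the paper's. What the paper's route buys is brevity, since Corollary \ref{C;Corollary6.3} has already done the case analysis; what yours buys is a more self-contained argument that makes visible, in each non-semisimple regime, a concrete nonzero radical element rather than appealing to the semisimplicity classification of the corner algebra.
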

\begin{proof}
By Corollary \ref{C;Corollary6.3} and Lemma \ref{L;Radical}, notice that $T$ is semisimple only if $p\neq 2,3$ and $n=4$ or $n\not\equiv 1,2,4\pmod p$. If $p\neq 2,3$ and $n=4$ or $n\not\equiv 1,2,4\pmod p$, $T$ is semisimple by Lemma \ref{L;Lemma5.8} and Theorem \ref{T;Coretheorem}. The desired corollary thus follows.
\end{proof}
We conclude the whole paper with two additional remarks of Theorem \ref{T;Coretheorem}.
\begin{rem}\label{R;Coreremark1}
By \cite[Theorem 3.4]{Han}, a Terwilliger $\F$-algebra of a scheme is semisimple only if the valencies of all elements in this scheme are not divisible by $p$. The inverse of this statement is not true. In particular, a nonsymmetric scheme counterexample is already known (see \cite[5.1]{Han}). If $p\neq 2, 3$, $n>4$, and $n\!\equiv\! 4\pmod p$, Theorem \ref{T;Insectionnumbers} (i) implies that the valencies of all elements in $S$ are not divisible by $p$. If $m\in \mathbb{N}\setminus [1,2]$, notice that there is a prime $q\neq 3$ such that $q\mid 2^m-1$. So there are infinite many choices of the pair $(p, n)$ such that $p\neq 2,3$, $n>4$, and $n\!\equiv\! 4\pmod p$. So infinite many symmetric scheme counterexamples can be obtained by Corollary \ref{C;Corollary6.5}.
\end{rem}
\begin{rem}\label{R;Coreremark2}
Recall the definitions of $E_0^*TE_0^*$, $E_1^*TE_1^*$, $E_2^*TE_2^*$, $E_3^*TE_3^*$ given in Subsection \ref{S;Subsection3}. According to Corollary \ref{C;Corollary2.16} and \eqref{Eq;2}, notice that $E_0^*TE_0^*\cong M_1(\F)$ as $\F$-algebras. Pick $a\in [1,3]$. If $n\equiv 1\pmod p$, then $E_a^*TE_a^*/\mathrm{Rad}E_a^*TE_a^*\cong M_1(\F)$ as $\F$-algebras. Otherwise, $E_a^*TE_a^*\cong M_1(\F)\oplus M_1(\F)$ as $\F$-algebras. By Corollaries \ref{C;Corollary6.3} and \ref{C;Corollary6.5}, notice that $T$ is semisimple if and only if the $\F$-subalgebra $E_b^*TE_b^*$ of $T$ is semisimple for any $b\in [0,4]$. So there is a connection between the semisimplicity of $T$ and the semisimplicity of some $\F$-subalgebras of $T$. A question in  \!\cite[Page 1524]{Han} asks whether this connection holds for the Terwilliger $\F$-algebras of all symmetric schemes. The study for $T$ gives infinite many nontrivial examples to this question.
\end{rem}

\end{document}